\title[A Metrizable Contracting Boundary]{A Metrizable Topology on the
  Contracting Boundary of a Group}
\author{Christopher H. Cashen}
\address{Faculty of Mathematics\\University of
  Vienna\\Oskar-Morgenstern-Platz 1\\1090 Vienna, Austria}
\author{John M. Mackay}
\address{School of Mathematics\\University of Bristol\\Bristol, UK}
\thanks{The first author thanks the Isaac Newton Institute for Mathematical
  Sciences for support and hospitality during the program \textit{Non-positive curvature: group actions and cohomology} where work
  on this paper was undertaken. This work is partially supported by
  EPSRC Grant Number EP/K032208/1 and by the Austrian Science Fund (FWF): P30487-N35.\\  
  The second author was supported in part by the National Science Foundation under Grant DMS-1440140 while visiting the Mathematical Sciences Research Institute in Berkeley, California, during the Fall 2016 semester, and in part by EPSRC grant EP/P010245/1.}
\subjclass[2010]{20F65, 20F67}
\keywords{contracting boundary, Morse boundary, boundary at infinity,
  contracting geodesic, divagation}
\theoremstyle{plain}
\newtheorem{theorem}{Theorem}[section]
\newtheorem{lemma}{Lemma}[section]
\newtheorem{proposition}{Proposition}[section]
\newtheorem{corollary}{Corollary}[section]
\theoremstyle{remark}
\newtheorem*{remark}{Remark}
\newtheorem{observation}{Observation}[section]
\theoremstyle{definition}
\newtheorem{definition}{Definition}[section]
\newtheorem{example}{Example}[section]
\def\makeautorefname#1#2{\expandafter\def\csname#1autorefname\endcsname{#2}}
\let\fullref\autoref
\let\c@lemma=\c@theorem 
\let\c@proposition=\c@theorem 
\let\c@corollary=\c@theorem 
\let\c@definition=\c@theorem 
\let\c@example=\c@theorem 
\let\c@question=\c@theorem 
\let\c@observation=\c@theorem 
\DeclareMathOperator{\diam}{diam}
\DeclareMathOperator{\invl}{invl}
\DeclareMathOperator{\R}{\mathbb{R}} 
\newcommand{\bdry}{\partial} 
\newcommand{\cbdry}{\bdry_c}
\newcommand{\closure}[1]{\overline{#1}} 
\newcommand{\act}{\curvearrowright} 
\newcommand{\from}{\colon\thinspace} 
\newcommand{\into}{\hookrightarrow} 
\newcommand{\nbhd}{N}
\newcommand{\bp}{o}
\newcommand{\thistopology}{topology of fellow-travelling quasi-geodesics\xspace}
\newcommand{\fq}{\mathcal{FQ}}
\newcommand{\fg}{\mathcal{FG}}
\newcommand{\dl}{\mathcal{DL}}
\newcommand{\casen}[2]{\vspace{1pt} \noindent \emph{Case #1:}{ #2.} }
\newcommand{\one}{\mathds{1}}
\newcommand{\cL}{L}
\newcommand{\cA}{A}
\newcommand{\theconstantformerlyknownasLambda}{M}
\newcommand{\limit}{\Lambda}
\begin{document}
\begin{abstract}
The `contracting boundary' of a proper geodesic metric space consists
of equivalence classes of geodesic rays that behave like rays in a
hyperbolic space.
We introduce a geometrically relevant, quasi-isometry invariant topology on the contracting
boundary.
When the space is the Cayley graph of a finitely generated group we
show that our new topology is metrizable.

\end{abstract}
\maketitle

\section{Introduction}\label{sec:intro}
There is a long history in geometry of attaching a `boundary
at infinity' or `ideal boundary' to a space. 
When a group acts geometrically on a space we might wonder to what
extent the group and the boundary of the space are related. 
In the setting of (Gromov) hyperbolic groups this relationship is very
strong: the boundary is determined by the group, up to homeomorphism.
In particular, the boundaries of all Cayley graphs of a hyperbolic
group are homeomorphic, so it makes sense to call any one of these
boundaries the boundary \emph{of the group}. 
This is not true, for example, in the case of a group acting
geometrically on a non-positively curved space: Croke and Kleiner
\cite{CroKle00} gave an example of a group acting geometrically on two
different CAT(0) spaces with non-homeomorphic visual boundaries, so
there is not a well-defined visual boundary associated to the group.

Charney and Sultan \cite{ChaSul15} sought to rectify this problem by
defining a `contracting boundary' for CAT(0) spaces. 
Hyperbolic boundaries and visual boundaries of CAT(0) spaces can be
constructed as equivalence classes of geodesic rays emanating from a
fixed basepoint.
These represent the metrically distinct ways of `going to infinity'.
Charney and Sultan's idea was to restrict attention to ways of going
to infinity in hyperbolic directions:
They consider equivalence classes of geodesic rays
that are `contracting', which is a way of quantifying how hyperbolic
such rays are.
They topologize the resulting set using a direct limit construction,
and show that this topology is preserved by quasi-isometries.
However, their construction has drawbacks:
basically, it has too many open sets.
In general it is not first countable.

In this paper we define a bordification of a proper geodesic metric
space by adding a contracting boundary with a quasi-isometry invariant topology.
When the space is a Cayley graph of a finitely generated group, we
prove that the topology on the boundary is
metrizable, which is a significant improvement over the direct limit
topology.
(See \fullref{motivation} for a motivating example.)
Furthermore, our topology  more closely resembles the topology of the boundary of a hyperbolic space,
which we hope will make it easier to work with.

Our contracting boundary consists of equivalence classes of
`contracting quasi-geodesics'.
The definition of contraction we use follows that of Arzhantseva,
Cashen, Gruber, and Hume~\cite{ArzCasGrub}; this is weaker than that of Charney
and Sultan, so our construction applies to more general spaces.
For example, we get contracting quasi-geodesics from cyclic subgroups
generated by non-peripheral elements of relatively
hyperbolic groups~\cite{DruSap05},
pseudo-Anosov elements of mapping class groups~\cite{Min96,DucRaf09},
fully irreducible free group automorphisms~\cite{Alg11}, and
generalized loxodromic
elements of acylindrically hyperbolic groups~\cite{Osi16,DahGuiOsi17,BesBroFuj15,Sis16}.
On CAT(0) spaces the two definitions agree, so our boundary is the
same as theirs \emph{as a set}, but our topology is coarser.

Cordes~\cite{Cor15} has defined a `Morse boundary' for
proper geodesic metric spaces by applying Charney and Sultan's direct
limit construction to the set of equivalence classes of Morse geodesic rays. 
This boundary has been further studied by Cordes and
Hume~\cite{CorHum16-stable-morse}, who relate it to the notion of
`stable subgroups' introduced by Durham and Taylor~\cite{DurTay15};
for a recent survey of these developments\footnote{In even more recent
  developments,
  Behrstock \cite{Beh17} produces interesting examples of right-angled
  Coxeter
  groups whose Morse boundaries contain a circle, and Charney and
  Murray \cite{ChaMur17} give conditions that guarantee that a
  homeomorphism between Morse boundaries of CAT(0) spaces is induced
  by a quasi-isometry.}, see Cordes~\cite{Cordes17-survey}.
It turns out that our notion of contracting geodesic is equivalent to
the Morse condition, and our contracting boundary agrees with the
Morse boundary as a set, but, again, our topology is coarser.

If the underlying space is hyperbolic then all of these boundaries
are homeomorphic to the Gromov boundary.
At the other extreme, all of these boundaries are empty in spaces with
no hyperbolic directions.
In particular, it follows from work of Drutu and  Sapir \cite{DruSap05}
that groups that are \emph{wide}, that is, no asymptotic cone
contains a cut point, will have empty contracting boundary.
This includes groups satisfying a law: for instance, solvable groups or
bounded torsion groups.

\medskip

The boundary of a proper hyperbolic space can be topologized as
follows. 
If $\zeta$ is a point in the boundary, an equivalence class of
geodesic rays issuing from the chosen basepoint, we declare a small
neighborhood of $\zeta$ to consist of boundary points $\eta$ such that
if $\alpha\in\zeta$ and $\beta\in\eta$ are representative geodesic
rays then $\beta$ closely fellow-travels $\alpha$ for a long time.
In proving that this topology is invariant under quasi-isometries,
hyperbolicity is used at two key points. 
The first is that quasi-isometries take geodesic rays uniformly close
to geodesic rays.
In general a quasi-isometry only takes a geodesic ray to a quasi-geodesic
ray, but hyperbolicity implies that this is within bounded distance of
a geodesic ray, with bound depending only on the quasi-isometry and
hyperbolicity constants. 
The second use of hyperbolicity is to draw a clear distinction between
fellow-travelling and not, which is used to show that the time for
which two geodesics fellow-travel is roughly preserved by quasi-isometries.
If $\alpha$ and $\beta$ are non-asymptotic geodesic rays issuing from a common
basepoint in a hyperbolic space, then closest point projection sends
$\beta$ to a bounded subset $\alpha([0,T_0])$ of $\alpha$, and there
is a transition in the behavior of $\beta$ at time $T_0$.
For $t<T_0$ the distance from $\beta(t)$ to $\alpha$ is bounded and
the diameter of the projection of $\beta([0,t])$ to $\alpha$ grows
like $t$.
After this time
$\beta$ escapes \emph{quickly} from $\alpha$, that is, $d(\beta(t),\alpha)$
grows like $t-T_0$, and the diameter of the projection of
$\beta([T_0,t])$ is bounded. 

We recover the second point for non-hyperbolic spaces using the contraction property.
Our definition of a \emph{contracting} set $Z$, see \fullref{def:contracting},
is that the diameter of the projection of a ball tangent to $Z$ is bounded
by a function of the radius of the ball whose growth rate is less than
linear. 
Essentially this means that sets far from $Z$ have large diameter
compared to the diameter of their projection. 
In contrast to the hyperbolic case, it is not true, in general, that if
$\alpha$ is a contracting geodesic ray and $\beta$ is a geodesic ray
not asymptotic to $\alpha$ then $\beta$ has  bounded projection to
$\alpha$. 
However, we \emph{can} still  characterize the escape of $\beta$
from $\alpha$ by the relation between the growth of the projection of $\beta([0,t])$ to
$\alpha$ and the distance from $\beta(t)$ to $\alpha$.
The main technical tool we introduce is a divagation estimate that
says if $\alpha$ is contracting and $\beta$ is a quasi-geodesic then
$\beta$ cannot wander slowly away from $\alpha$; if it is to escape,
it must do so quickly. 
More precisely, once $\beta$ exceeds a
threshold distance from $\alpha$, depending on the quasi-geodesic constants of $\beta$ and the contraction
function for $\alpha$, then the distance from $\beta(t)$ to $\alpha$
grows superlinearly compared to the growth of the projection of
$\beta([0,t])$ to $\alpha$.
In fact, for the purpose of proving that fellow-travelling time is
roughly preserved by quasi-isometries it will be enough to know that the this
relationship is at least a fixed linear function.

The first point cannot be recovered, and, in fact,  the topology as described
above, using only geodesic rays, is not quasi-isometry invariant for non-hyperbolic spaces \cite{Cas16gromovboundary}.
Instead, we introduce a finer topology that we call the \emph{topology of fellow-travelling quasi-geodesics}.
The idea is that $\eta$ is close to $\zeta$ if all \emph{quasi}-geodesics tending to $\eta$ 
closely fellow-travel quasi-geodesics tending to $\zeta$ for a long time.
See \fullref{def:nbhds} for a precise definition. 
Using our divagation estimates we show that this topology is
quasi-isometry invariant. 

The use of quasi-geodesic rays in our definition is quite natural in
the setting of coarse geometry, since then the rays under
consideration do not depend on the choice of a particular metric
within a fixed quasi-isometry class. 
Geodesics, on the other hand, are highly sensitive to the choice of
metric, and it is only the presence of a very strong hypothesis like
global hyperbolicity that allows us to define a quasi-isometry
invariant boundary topology using geodesics alone.

\medskip
\begin{example}\label{motivation}
Consider $H:=\langle a,b\mid
[a,b]=1\rangle*\langle c\rangle$, which can be thought of as the fundamental group of a
flat, square torus wedged with a circle. 
Let $X$ be the universal cover, with basepoint $\bp$ above the wedge
point. 

Connected components of the preimage of the torus are Euclidean planes
isometrically embedded in $X$.
Geodesic segments contained in such a plane behave more like Euclidean
geodesics than hyperbolic geodesics.
In fact, a geodesic ray $\alpha$ based at $\bp$ is contracting if and only if there exists a
bound $B_\alpha$ such that $\alpha$ spends time at most time $B_\alpha$ in any one
of the planes. 
Let $\alpha(\infty)$ denote the equivalence class of this ray as a
point in the contracting boundary.

In Charney and Sultan's topology, if $(\alpha^n)_{n\in\mathbb{N}}$ is a
sequence of contracting geodesic rays with the $B_{\alpha^n}$
unbounded, then $(\alpha^n(\infty))$ is not a convergent sequence in
the contracting boundary.
Murray \cite{Mur15}  uses this fact to show that the 
contracting boundary is not first countable. 

In the \thistopology it will turn out that $(\alpha^n(\infty))$
converges if and only if  there exists a
contracting geodesic $\alpha$ in $X$ such that the projections of the
$\alpha^n$ to geodesics in the Bass-Serre tree of $H$ (with respect to
the given free product splitting of $H$) converge to the projection of
$\alpha$. 

From another point of view, $H$ is hyperbolic relative to the Abelian
subgroup $A:=\langle a,b\rangle$. 
We show in \fullref{thm:relhyp} that this implies that there is a
natural map from the contracting boundary of $H$ to the Bowditch
boundary of the pair $(H,A)$, and, with the \thistopology, that this
map is a topological embedding. 
The embedding statement cannot be true for Charney and Sultan's
topology, since it is not first countable.
\end{example}

\medskip

After some preliminaries in \fullref{sec:prelim}, we define the
contraction property and recall/prove some basic technical results in
\fullref{sec:contraction} concerning the behavior of geodesics relative to contracting sets.
In \fullref{sec:contraction2} we extend these results to
quasi-geodesics, and derive the key divagation estimates, see
\fullref{cor:asymp} and \fullref{keylemma}.

In \fullref{sec:topology} introduce the topology of fellow-travelling
quasi-geodesics and show that it is first countable, Hausdorff, and regular.
In \fullref{sec:invariance} we prove that it is also quasi-isometry invariant.

We compare other possible topologies in \fullref{sec:comparison}.

In \fullref{sec:metrizability} we consider the case of a finitely
generated group. 
In this case we prove that the contracting boundary is second countable, hence metrizable.

We also prove a weak version of North-South dynamics for the action of a
group on its contracting boundary in \fullref{sec:dynamics}, in the spirit of Murray's work~\cite{Mur15}.

Finally, in \fullref{sec:compact} we show that the contracting
boundary of an infinite, finitely generated group is non-empty and compact if and only if the group
is hyperbolic.

\bigskip
We thank the referee for a careful reading of our paper.

\section{Preliminaries}\label{sec:prelim}
Let $X$ be a metric space with metric $d$.  For $Z \subset X$, define:
\begin{itemize}
\item $N_rZ:=\{x\in X\mid
\exists z\in Z,\, d(z,x)<r\}$
\item $N^c_rZ:=\{x\in X\mid
\forall z\in Z,\, d(z,x)\geq r\}$
\item $\bar{N}_rZ:=\{x\in X\mid \exists
z\in Z,\, d(z,x)\leq r\}$
\item $\bar{N}^c_rZ:=\{x\in X\mid \forall
z\in Z,\, d(z,x)> r\}$
\end{itemize}

For $\cL\geq 1$ and $\cA\geq 0$, a map $\phi\from (X,d_X)\to (X',d_{X'})$ is an \emph{$(\cL,\cA)$--quasi-isometric
embedding} if for all $x,\,y\in X$:
\[\frac{1}{\cL}d_X(x,y)-\cA \leq d_{X'}(\phi(x),\phi(y))\leq \cL d_X(x,y)+\cA\]
If, in addition, $\bar{N}_\cA\phi(X)={X'}$ then $\phi$ is an \emph{$(\cL,\cA)$--quasi-isometry}.
A \emph{quasi-isometry inverse} $\bar{\phi}$ of a quasi-isometry
$\phi\from X\to {X'}$ is a quasi-isometry $\bar{\phi}\from {X'} \to X$ such
that the compositions $\phi\circ\bar{\phi}$ and $\bar{\phi}\circ\phi$
are both bounded distance from the identity map on the respective space.

A \emph{geodesic} is an isometric embedding of an interval. 
A \emph{quasi-geodesic} is a quasi-isometric embedding of an interval.
If $\alpha\from I\to X$ is a quasi-geodesic, we often use $\alpha_t$
to denote $\alpha(t)$, and conflate $\alpha$ with its image in $X$.
When $I$ is of the form $[a,b]$ or $[a,\infty)$ we will assume, by
precomposing $\alpha$ with a translation of the domain, that $a=0$.
We use $\alpha+\beta$ and $\bar{\alpha}$ to denote concatenation and
reversal, respectively.

A metric space is \emph{geodesic} if every pair of points can be
connected by a geodesic.

A metric space if \emph{proper} if closed balls are compact. 

It is often convenient to improve quasi-geodesics to be continuous,
which can be accomplished by the following lemma.
\begin{lemma}[{Taming quasi-geodesics \cite[Lemma~III.H.1.11]{BriHae99}}]
  If $X$ is a geodesic metric space and $\gamma\from [a,b]\to X$ is an
  $(\cL,\cA)$--quasi-geodesic then there exists a continuous
  $(\cL,2(\cL+\cA))$--quasi-geodesic $\gamma'$ such that
  $\gamma_a=\gamma'_a$, $\gamma_b=\gamma'_b$ and the Hausdorff
  distance between $\gamma$ and $\gamma'$ is at most $\cL+\cA$.
\end{lemma}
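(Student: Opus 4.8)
The plan is to build $\gamma'$ by linearly interpolating $\gamma$ between a discrete set of sample times; that choice of interpolation is really the whole idea, after which the three assertions follow from elementary distance estimates. I do not expect a genuine obstacle here beyond carefully tracking the additive constants contributed by the (possibly short) outermost interpolation intervals.

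First, set $\Sigma := \{a,b\}\cup(\Z\cap[a,b])$ and list it as $a=t_0<t_1<\dots<t_N=b$, so that $t_{i+1}-t_i\le 1$ for every $i$. Since $X$ is geodesic, for each $i$ choose a geodesic from $\gamma_{t_i}$ to $\gamma_{t_{i+1}}$ and let $\gamma'\from[a,b]\to X$ restrict on $[t_i,t_{i+1}]$ to that geodesic traversed at constant speed. Consecutive pieces agree at the shared endpoint $\gamma'_{t_i}=\gamma_{t_i}$, so $\gamma'$ is continuous, and by construction $\gamma'_a=\gamma_a$ and $\gamma'_b=\gamma_b$.

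For the Hausdorff bound, fix $t\in[t_i,t_{i+1}]$. Then $\gamma'_t$ lies on a geodesic joining $\gamma_{t_i}$ and $\gamma_{t_{i+1}}$, so $d(\gamma'_t,\gamma_{t_i})\le d(\gamma_{t_i},\gamma_{t_{i+1}})\le \cL(t_{i+1}-t_i)+\cA\le \cL+\cA$; since $\gamma_{t_i}$ lies on $\gamma$, every point of $\gamma'$ is within $\cL+\cA$ of $\gamma$. Conversely $d(\gamma_t,\gamma_{t_i})\le \cL(t-t_i)+\cA\le \cL+\cA$ and $\gamma_{t_i}=\gamma'_{t_i}$ lies on $\gamma'$, so every point of $\gamma$ is within $\cL+\cA$ of $\gamma'$; hence the Hausdorff distance between $\gamma$ and $\gamma'$ is at most $\cL+\cA$.

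Finally, the quasi-geodesic inequalities. Given $s\le t$ in $[a,b]$, let $s^{+}$ be the smallest element of $\Sigma$ with $s^{+}\ge s$ and $t^{-}$ the largest with $t^{-}\le t$. If there is no element of $\Sigma$ strictly between $s$ and $t$, then $s$ and $t$ lie in a single interpolation interval and a direct computation on that one geodesic gives $d(\gamma'_s,\gamma'_t)\le \cL|s-t|+\cA$, which suffices; so assume $s^{+}\le t^{-}$. As in the previous paragraph, $d(\gamma'_s,\gamma_{s^{+}})$ and $d(\gamma'_t,\gamma_{t^{-}})$ are each at most $\cL+\cA$, being sub-arcs of single interpolation geodesics, while $\frac{1}{\cL}|t^{-}-s^{+}|-\cA\le d(\gamma_{s^{+}},\gamma_{t^{-}})\le \cL|t^{-}-s^{+}|+\cA$ because $s^{+},t^{-}\in\Sigma$ and $\gamma$ is an $(\cL,\cA)$--quasi-geodesic. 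Combining these three estimates with the triangle inequality, and using $|s-t|-2\le |t^{-}-s^{+}|\le|s-t|$ together with $\cL\ge1$, one obtains
\[\frac{1}{\cL}|s-t|-2(\cL+\cA)\le d(\gamma'_s,\gamma'_t)\le \cL|s-t|+2(\cL+\cA).\]
Thus $\gamma'$ is a continuous $(\cL,2(\cL+\cA))$--quasi-geodesic with the stated endpoints and Hausdorff bound. The only point requiring care is to absorb the additive errors coming from the two outermost, possibly short, interpolation intervals into the constant $2(\cL+\cA)$.
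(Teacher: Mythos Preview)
Your construction is exactly the paper's: the proof there reads, in its entirety, ``Define $\gamma'$ to agree with $\gamma$ at the endpoints and at integer points of $[a,b]$, and then connect the dots by geodesic interpolation.'' You have supplied the details the paper omits, so the approach is identical.

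One small point: your final triangle-inequality combination does not quite yield the additive constant $2(\cL+\cA)$ as claimed. With $d(\gamma'_s,\gamma_{s^+}),\,d(\gamma'_t,\gamma_{t^-})\le \cL+\cA$ and $d(\gamma_{s^+},\gamma_{t^-})\le \cL|t^--s^+|+\cA$, the upper bound comes out to $\cL|s-t|+2\cL+3\cA$, and the lower bound picks up an extra $2/\cL$ from $|t^--s^+|\ge|s-t|-2$. This is bookkeeping, not a gap in the idea; the paper itself does not verify the constants and simply cites Bridson--Haefliger, and for every application in the paper only the existence of \emph{some} constants depending on $(\cL,\cA)$ matters.
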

\begin{proof}
  Define $\gamma'$ to agree with $\gamma$ at the endpoints and at
  integer points of $[a,b]$, and then connect the dots by geodesic
  interpolation.  
\end{proof}

A subspace $Z$ of a geodesic metric space $X$ is \emph{$\cA$--quasi-convex}
for some $A \geq 0$
if every geodesic connecting points in $Z$ is contained in
$\bar{N}_\cA Z$.

If $f$ and $g$ are functions then we say $f\preceq g$ if there exists 
a constant $C>0$ such that $f(x)\leq Cg(Cx+c)+C$ for all $x$. 
If $f\preceq g$ and $g\preceq f$ then we write $f\asymp g$.

\bigskip

We will give a detailed account of the contracting property in the
next section, but let us first take a moment to recall alternate
characterizations, which will prove useful later in the paper.

A subspace $Z$ of a metric space $X$ is \emph{$\mu$--Morse} 
for some $\mu:[1,\infty)\times[0,\infty)\rightarrow\R$ if for every
$\cL\geq 1$ and every $\cA\geq 0$, every
$(\cL,\cA)$--quasi-geodesic with endpoints in $Z$ is
contained in $\bar{N}_{\mu(\cL,\cA)}Z$.
We say $Z$ is \emph{Morse} if there exists $\mu$ such that it is
$\mu$--Morse.
It is easy to see that the property of being Morse is invariant under
quasi-isometries. 
In particular, a subset of a finitely generated group $G$ is Morse in one
Cayley graph of $G$ if and only if it is Morse in every Cayley graph
of $G$. 
Thus, we can speak of a \emph{Morse subset} of $G$ without specifying
a finite generating set. 

A set $Z$ is called \emph{$t$--recurrent}\footnote{This characterization was
    introduced in \cite{DruMozSap10} with $t=1/3$ for $Z$ a quasi-geodesic. The idea is that a short curve must pass near the `middle
    third' of the subsegment of $Z$ connecting its endpoints. The
    property, again only for quasi-geodesics, but for variable $t$, is called `middle recurrence' in \cite{AouDurTay16}.}, for $t\in(0,1/2)$, if for
every $C\geq 1$ there exists $D\geq 0$ such that if $p$ is a path with
endpoints $x$ and $y$ on $Z$ such that the ratio of the length of $p$ to the
distance between its endpoints is at most $C$, then there exists a
point $z\in Z$ such that $d(p,z)\leq D$ and
$\min\{d(z,x),\,d(z,y)\}\geq td(x,y)$. 
The set $Z$ is called \emph{recurrent}
if it is $t$--recurrent for every $t\in (0,1/2)$.
\begin{theorem}\label{morseequivalent}
  Let $Z$ be a subset of a geodesic metric space $X$. The following
  are equivalent:
  \begin{enumerate}
\item $Z$ is Morse.\label{item:morse}
  \item $Z$ is contracting.\label{item:contracting}
\item $Z$ is recurrent.\label{item:recurrent}
\item There exists $t\in (0,1/2)$ such that $Z$ is $t$-recurrent.\label{item:trecurrent}
  \end{enumerate}
Moreover, each of the equivalences are `effective', in the sense that
the defining function of one property determines the defining
functions of each of the others.
\end{theorem}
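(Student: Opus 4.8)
The strategy is to prove a cycle of implications. The most natural order is $\eqref{item:contracting}\Rightarrow\eqref{item:morse}\Rightarrow\eqref{item:recurrent}\Rightarrow\eqref{item:trecurrent}\Rightarrow\eqref{item:contracting}$, keeping careful track at each stage of how the new defining function is computed from the old one, so that the "effective" clause falls out automatically. Several of these implications are already essentially in the literature (e.g. Morse $\Leftrightarrow$ contracting is in \cite{ArzCasGrub}, and recurrence-type characterizations appear in \cite{DruMozSap10, AouDurTay16}), so part of the work is assembling and uniformizing the constants rather than inventing arguments from scratch.

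First, $\eqref{item:contracting}\Rightarrow\eqref{item:morse}$: given an $(\cL,\cA)$--quasi-geodesic $\gamma$ with endpoints in $Z$, I would suppose some point $\gamma_s$ lies at distance $R$ from $Z$ with $R$ large, look at the maximal subinterval of $\gamma$ staying outside $N_{R/2}Z$ containing $s$, and cover it by balls of radius comparable to their distance from $Z$; the sublinear contraction bound forces the projection of this subinterval to $Z$ to have diameter small compared to its length, while the quasi-geodesic inequality forces the length to be comparable to the distance between its endpoints, which in turn is comparable to (twice) its projection diameter plus a bounded error — a contradiction once $R$ exceeds a threshold depending only on $\cL$, $\cA$, and the contraction function. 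This yields $\mu(\cL,\cA)$ explicitly. For $\eqref{item:morse}\Rightarrow\eqref{item:recurrent}$: given a path $p$ from $x$ to $y$ on $Z$ of length at most $C\,d(x,y)$, reparametrize it to a continuous $(C',C')$--quasi-geodesic (this is a standard length-reparametrization, not the taming lemma), apply the Morse property to put $p$ within $\bar N_{\mu(C',C')}Z$, and then argue that the point of $p$ "halfway along" must project near a point of $Z$ that is at distance at least $t\,d(x,y)$ from both $x$ and $y$ for any fixed $t<1/2$, using that a bounded-diameter projected image cannot happen because $p$ has to traverse a definite fraction of the way from $x$ to $y$. The implication $\eqref{item:recurrent}\Rightarrow\eqref{item:trecurrent}$ is trivial.

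The implication $\eqref{item:trecurrent}\Rightarrow\eqref{item:contracting}$ is the one I expect to be the main obstacle, since it runs "backwards" from a single recurrence exponent $t$ to the full sublinear contraction estimate. The idea is: take a ball $B=\bar N_r(x)$ tangent to $Z$ (so $d(x,Z)=r$) and two points $z_1,z_2$ in its projection to $Z$; I want to bound $d(z_1,z_2)$ sublinearly in $r$. Build a path from $z_1$ to $z_2$ by going $z_1\to($near $x) \to z_2$ along geodesics, of length at most roughly $2r + d(z_1,z_2)$; if $d(z_1,z_2)$ were large compared to $r$ this path has length/endpoint-distance ratio bounded by a constant $C$, so $t$--recurrence produces a point $z\in Z$ within $D=D(C)$ of the path and at distance $\geq t\,d(z_1,z_2)$ from each $z_i$. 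But every point of the path is within $\sim r + D$ of $x$... wait, that only gives a linear bound. The actual argument must be iterated: one shows that the projection diameter $\rho(r):=\sup\{\diam \pi_Z(\bar N_r x): d(x,Z)=r\}$ satisfies a recursion like $\rho(2r)\leq \rho(r) + (\text{bounded})$ forced by applying $t$--recurrence at each scale, which integrates to $\rho(r)\preceq \log r$, hence sublinear. I would need to set this recursion up carefully — bootstrapping recurrence at geometrically growing scales and summing the bounded errors — and this scale-induction is where the real content, and the risk of a subtle constant error, lies. Throughout, I would record each implication's constant-dependence so the final sentence of the theorem (effectiveness) is immediate.
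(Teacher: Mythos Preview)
Your cycle $\eqref{item:contracting}\Rightarrow\eqref{item:morse}\Rightarrow\eqref{item:recurrent}\Rightarrow\eqref{item:trecurrent}\Rightarrow\eqref{item:contracting}$ differs from the paper's, which cites the literature for $\eqref{item:morse}\Leftrightarrow\eqref{item:contracting}$ (from \cite{ArzCasGrub}), $\eqref{item:contracting}\Rightarrow\eqref{item:recurrent}$ and $\eqref{item:trecurrent}\Rightarrow\eqref{item:morse}$ (from \cite{AouDurTay16}), with $\eqref{item:recurrent}\Rightarrow\eqref{item:trecurrent}$ obvious. The choice of cycle is not cosmetic here: your route has a genuine gap.

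The problem is your step $\eqref{item:morse}\Rightarrow\eqref{item:recurrent}$. You propose to take a path $p$ from $x$ to $y$ with $|p|\leq C\,d(x,y)$, ``reparametrize it to a continuous $(C',C')$--quasi-geodesic'', and apply the Morse property. But such a path is \emph{not} a quasi-geodesic under any reparametrization: arc-length parametrization makes it $1$--Lipschitz, but there is no lower bound, since $p$ may backtrack freely (go from $x$ halfway to $y$, return to $x$, then go to $y$; this has $C=2$ and is not an $(\cL,\cA)$--quasi-geodesic for any $\cL,\cA$). So you cannot invoke $\mu$ to put $p$ into $\bar N_{\mu(C',C')}Z$, and even if $p$ did stay near $Z$, your claim that the ``halfway'' point of $p$ projects near a point of $Z$ far from both endpoints is unjustified. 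The reason the paper (following \cite{AouDurTay16}) proves $\eqref{item:contracting}\Rightarrow\eqref{item:recurrent}$ instead is precisely that the contracting property bounds projections of \emph{arbitrary} sets (balls), so one can handle non-quasi-geodesic paths directly by covering $p$ with balls tangent to $Z$ and controlling the projection.

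Your step $\eqref{item:trecurrent}\Rightarrow\eqref{item:contracting}$ is also the harder of the two possible closings. You correctly notice that the naive argument gives only a linear bound and that an iteration is needed; but the recursion you sketch is vague, and in fact the cleaner route (again as in \cite{AouDurTay16}) is $\eqref{item:trecurrent}\Rightarrow\eqref{item:morse}$: an $(\cL,\cA)$--quasi-geodesic with endpoints on $Z$ \emph{is} a path with length-to-distance ratio bounded in terms of $\cL,\cA$, so $t$--recurrence applies directly to it (and to its subsegments), giving points of $Z$ near its ``middle'' at every scale, from which the Morse bound follows. Reordering your cycle to $\eqref{item:contracting}\Rightarrow\eqref{item:recurrent}\Rightarrow\eqref{item:trecurrent}\Rightarrow\eqref{item:morse}\Rightarrow\eqref{item:contracting}$ (with the last arrow being your existing $\eqref{item:contracting}\Rightarrow\eqref{item:morse}$ argument run in reverse via \cite{ArzCasGrub}) would repair both issues.
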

\begin{proof}
  The equivalence of \eqref{item:morse} and \eqref{item:contracting} is
  proved in \cite{ArzCasGrub}. That \eqref{item:recurrent} implies
  \eqref{item:trecurrent} is obvious.
The implications `\eqref{item:contracting} implies \eqref{item:recurrent}' and
`\eqref{item:trecurrent} implies \eqref{item:morse}' are proved in
\cite{AouDurTay16} for the case that $Z$ is a quasi-geodesic, but
their proofs go through with minimal change for arbitrary subsets $Z$.
\end{proof}

\section{Contraction}\label{sec:contraction}
\begin{definition}
  We call a function $\rho$ \emph{sublinear} if it is non-decreasing,
  eventually non-negative, and $\lim_{r\to\infty}\rho(r)/r=0$.
\end{definition}
\begin{definition}\label{def:contracting}
  Let $X$ be a proper geodesic metric space.
Let $Z$ be a closed subset of $X$, and let $\pi_Z\from X\to
2^Z:x\mapsto\{z\in Z\mid d(x,z)=d(x,Z)\}$ be
closest point projection to $Z$.
Then, for a sublinear function $\rho$, we say that $Z$ is
\emph{$\rho$--contracting} if for all $x$ and $y$ in $X$:
\[d(x,y)\leq d(x,Z)\implies \diam \pi_Z(x)\cup\pi_Z(y)\leq\rho(d(x,Z))\]

We say $Z$ is \emph{contracting} if there exists a sublinear function
$\rho$ such that $Z$ is $\rho$--contracting.
We say a collection of subsets $\{Z_i\}_{i\in\mathcal{I}}$ is
\emph{uniformly contracting} if there exists a sublinear function
$\rho$ such that for every $i\in\mathcal{I}$ the set $Z_i$ is $\rho$--contracting. 
\end{definition}
We shorten $\pi_Z$ to $\pi$ when $Z$ is clear from context.

Let us stress that the closest point projection map is set-valued, and
there is no bound on the diameter of image sets other than that
implied by the definition.

In a tree every convex subset is $\rho$--contracting where $\rho$ is
identically 0. 
More generally, in a hyperbolic space a set is contracting if and only
if it is quasi-convex.
In fact, in this case more is true: the contraction function is bounded in terms of
the hyperbolicity and quasi-convexity constants.
We call a set \emph{strongly contracting} if it is contracting with
bounded contraction function.

The more general \fullref{def:contracting} was introduced by
Arzhantseva, Cashen, Gruber, and Hume to characterize Morse
geodesics in small cancellation groups \cite{ArzCasGrua}.

The concept of strong contraction (sometimes simply called
`contraction' in the literature) has been studied
before, notably by Minsky \cite{Min96} to
describe axes of pseudo-Anosov mapping classes in Teichm\"uller space,
by Bestvina and Fujiwara \cite{BesFuj09} to describe axes of
rank-one isometries of CAT(0) spaces (see also Sultan \cite{Sul14}),
and by Algom-Kfir \cite{Alg11} to describe axes of fully irreducible
free group automorphisms acting on Outer Space.

Masur and Minsky \cite{MasMin99} introduced a different notion of
contraction that requires the existence of constants $A$ and $B$ such that:
\[d(x,y)\leq d(x,Z)/A\implies \diam \pi_Z(x)\cup\pi_Z(y)\leq B\]
This is satisfied, for example, by axes of pseudo-Anosov elements in
the mapping class group (as opposed to Teichm\"uller space).
Some authors refer to this property as `contraction', eg \cite{Beh06,
  DucRaf09, AbbBehDur17}.
It is not hard to show that this version implies the
version in \fullref{def:contracting} with the contraction function $\rho$ being logarithmic.

We now recall some further results about contracting sets in a geodesic metric space $X$.

\begin{lemma}[{\cite[Lemma~6.3]{ArzCasGrub}}]\label{Hausdorff}
Given a sublinear function $\rho$ and a constant $C\geq 0$ there
exists a sublinear function $\rho'\asymp \rho$ such that if $Z\subset X$ and
$Z'\subset X$ have Hausdorff distance at most $C$ and $Z$ is
$\rho$--contracting then $Z'$ is $\rho'$--contracting.
\end{lemma}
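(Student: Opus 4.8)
First I would fix two harmless normalizations: we may take $\rho\geq 0$ everywhere (replacing $\rho$ by $\max\{\rho,0\}$ only enlarges the right-hand side of the defining implication and keeps $\rho$ sublinear), and I would record that for constants $a\geq 1$, $b\geq 0$ one has $\rho(ar+b)\preceq\rho(r)$ and $1\preceq\rho$, so any finite sum of such terms is still $\asymp\rho$ and sublinear. The core of the proof is to show that closest-point projection to $Z'$ is coarsely controlled by closest-point projection to $Z$; I would not try to get the reverse control, since we do not yet know $Z'$ is contracting and, as the definition of $\rho'$--contracting only mentions $\pi_{Z'}$, it is not needed. (One could alternatively invoke \fullref{morseequivalent}: $Z$ Morse $\Rightarrow$ $Z'$ Morse, since a quasi-geodesic with endpoints in $Z'$ extends, by segments of length $\leq C$, to one with endpoints in $Z$, which stays near $Z$ and hence near $Z'$. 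But extracting $\rho'\asymp\rho$ from the effectiveness clause of \fullref{morseequivalent} is not transparent, so I prefer to argue directly with contraction functions.)

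The key lemma I would establish is: \emph{if $Z$ is $\rho$--contracting, $x\in X$, $k\geq 0$ and $\hat z\in Z$ with $d(x,\hat z)\leq d(x,Z)+k$, then $\diam\bigl(\{\hat z\}\cup\pi_Z(x)\bigr)\leq\rho(d(x,Z))+2k$}. For the proof: since $\hat z\in Z$ we have $d(x,\hat z)\geq d(x,Z)$, so there is a point $m$ at distance $d(x,Z)$ from $x$ on a geodesic $[x,\hat z]$, with $d(m,\hat z)=d(x,\hat z)-d(x,Z)\leq k$. Because $d(x,m)\leq d(x,Z)$, contraction applied to the pair $(x,m)$ gives $\diam\pi_Z(x)\cup\pi_Z(m)\leq\rho(d(x,Z))$, while any $w_m\in\pi_Z(m)$ has $d(w_m,m)=d(m,Z)\leq d(m,\hat z)\leq k$, so $d(w_m,\hat z)\leq 2k$; the triangle inequality then bounds $d(w,\hat z)$ for every $w\in\pi_Z(x)$. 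I would also note the easy companion estimate: \emph{$d(x,y)\leq k$ implies $\diam\pi_Z(x)\cup\pi_Z(y)\leq\rho(\max\{d(x,Z),d(y,Z)\})+4k$} --- immediate from contraction if $d(x,y)\leq d(x,Z)$, and otherwise $x,y$ both lie within $2k$ of $Z$, forcing all the points in sight to be within $O(k)$ of each other.

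Then I would take $x,y$ with $d(x,y)\leq d(x,Z')$ and set $R:=d(x,Z)$. Hausdorff distance $\leq C$ gives $|d(p,Z)-d(p,Z')|\leq C$ for all $p$, so $R\leq d(x,Z')+C$ and $d(x,y)\leq d(x,Z')\leq R+C$. First bound $\diam\pi_Z(x)\cup\pi_Z(y)$: let $y'$ lie on $[x,y]$ at distance $\min\{d(x,y),R\}$ from $x$; contraction on $(x,y')$ gives $\diam\pi_Z(x)\cup\pi_Z(y')\leq\rho(R)$, and $d(y,y')\leq C$ with $d(y,Z),d(y',Z)\leq 2R+C$, so the companion estimate bounds $\diam\pi_Z(y)\cup\pi_Z(y')$ by $\rho(2R+C)+4C$; chaining through $\pi_Z(y')$ yields $\diam\pi_Z(x)\cup\pi_Z(y)\leq\rho(R)+\rho(2R+C)+4C$. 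Next, for any $z'\in\pi_{Z'}(x)$ pick $\hat z\in Z$ with $d(z',\hat z)\leq C$; then $d(x,\hat z)\leq d(x,Z')+C\leq R+2C$, so the key lemma (with $k=2C$) places $\hat z$, hence $z'$, within $\rho(R)+O(C)$ of every point of $\pi_Z(x)$; similarly $\pi_{Z'}(y)$ lies within $\rho(2R+C)+O(C)$ of $\pi_Z(y)$. Adding these up bounds $\diam\pi_{Z'}(x)\cup\pi_{Z'}(y)$ by $\diam\pi_Z(x)\cup\pi_Z(y)$ plus $\rho(R)+\rho(2R+C)+O(C)$, hence by $\rho''(R)$ for some sublinear $\rho''\asymp\rho$. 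Since $R\leq d(x,Z')+C$, taking $\rho'(t):=\rho''(t+C)$ --- still sublinear and $\asymp\rho$ --- gives $\diam\pi_{Z'}(x)\cup\pi_{Z'}(y)\leq\rho'(d(x,Z'))$, so $Z'$ is $\rho'$--contracting.

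The one genuinely substantive step is the key lemma of the second paragraph: it is what upgrades the crude ``within $C$ of a nearest point of $Z$'' supplied by the Hausdorff bound to ``within $\rho(d(x,Z))+O(C)$ of the nearest-point projection'', using contraction to pass from a nearby point of $Z$ to an actual nearest point. Everything after that is bookkeeping with the triangle inequality and the closure of $\preceq$ under sums, scalings, and additive constants; the main thing to keep straight is that the argument only ever controls $\pi_{Z'}$ in terms of $\pi_Z$, which is exactly what the definition of $\rho'$--contracting for $Z'$ requires.
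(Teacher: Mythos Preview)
Your proof is correct. The paper does not actually prove this lemma; it is simply quoted from \cite[Lemma~6.3]{ArzCasGrub} without argument, so there is no in-paper proof to compare against. Your direct argument---reducing everything to the ``key lemma'' that an almost-closest point $\hat z\in Z$ lies within $\rho(d(x,Z))+O(k)$ of the genuine projection $\pi_Z(x)$---is exactly the right idea and makes the $\rho'\asymp\rho$ conclusion transparent, which is the part that would be opaque if one instead routed through the Morse characterization of \fullref{morseequivalent}. The bookkeeping (the companion estimate, the chaining through $\pi_Z(y')$, and the final passage from $R=d(x,Z)$ to $d(x,Z')$ via $R\leq d(x,Z')+C$) is all sound; the constants are slightly loose but that is irrelevant for the $\asymp$ conclusion.
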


\begin{theorem}[{Geodesic Image Theorem \cite[Theorem~7.1]{ArzCasGrub}}]\label{GIT}
For $Z \subset X$, 
there exists a sublinear function $\rho$ so that $Z$ is
$\rho$--contracting if and only if there exists a sublinear function
$\rho'$ and a constant $\kappa_\rho$ so that
for every geodesic segment $\gamma$,
with endpoints denoted $x$ and $y$, if $d(\gamma, Z)\geq \kappa_\rho$ then
$\diam\pi (\gamma)\leq \rho'(\max\{d(x,Z),\,d(y,Z)\})$.
Moreover $\rho'$ and $\kappa_\rho$ depend only on $\rho$ and vice-versa, with $\rho'\asymp\rho$. 
\end{theorem}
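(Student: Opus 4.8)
The plan is to prove the two implications separately; the substantive one is that $\rho$--contraction of $Z$ forces the geodesic image estimate, so I describe that first and in most detail.

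Starting from a $\rho$--contracting $Z$, I would first normalize: replace $\rho$ by a comparable sublinear function for which $r\mapsto\rho(r)/r$ is non-increasing, and fix $\kappa_\rho$ large enough that $\rho(r)/r$ lies below a small absolute constant for all $r\ge\kappa_\rho$. Fix a geodesic $\gamma\from[0,L]\to X$ with $d(\gamma,Z)\ge\kappa_\rho$, put $f(t)=d(\gamma_t,Z)$ — a $1$--Lipschitz function bounded below by $\kappa_\rho$ — and $D=\max\{f(0),f(L)\}$. One small reduction: if $f$ is minimized at $\gamma_{t^*}$ then $\pi(\gamma|_{[0,t^*]})$ and $\pi(\gamma|_{[t^*,L]})$ both meet $\pi(\gamma_{t^*})$, so $\diam\pi(\gamma)$ is at most the sum of the two diameters and I may assume the minimum of $f$ occurs at an endpoint. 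The basic estimate then comes from a greedy subdivision $t_0=0$, $t_{i+1}=\min\{t_i+\tfrac12 f(t_i),\,L\}$: since $d(\gamma_{t_i},\gamma_t)\le f(t_i)=d(\gamma_{t_i},Z)$ for every $t\in[t_i,t_{i+1}]$, \fullref{def:contracting} applied to $\gamma_{t_i}$ against each such $\gamma_t$, chained along the subdivision via the triangle inequality, gives $\diam\pi(\gamma)\le 2\sum_i\rho(f(t_i))$.

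Controlling this sum is the heart of the matter, and it is where contraction is used beyond bookkeeping. The same basic estimate, applied to a sub-geodesic of $\gamma$, shows that $\gamma$ \emph{cannot fellow-travel $Z$}: if $\gamma$ stayed at distance within a bounded factor of some $s\ge\kappa_\rho$ across a parameter interval of length $\gg s$, then on that sub-geodesic the estimate would read $(\text{length})-O(s)\le\diam\pi\le O\!\left(\tfrac{\text{length}}{s}\,\rho(s)\right)$, using the trivial lower bound that the endpoints of the sub-geodesic project near their feet on $Z$; this forces $s\preceq\rho(s)$, contradicting the choice of $\kappa_\rho$. With this ``no lingering'' fact in hand, moving away from the endpoint realizing its minimum $f$ rises at a definite rate and $\gamma$ cannot stray far from $Z$ relative to $D$, so the values $f(t_i)$ run through (essentially) a geometric progression between scales $\asymp\kappa_\rho$ and $\asymp D$, with boundedly many indices per scale; summing $\rho$ over such a progression bounds $\diam\pi(\gamma)$ by a sublinear function of $D$, proving the qualitative statement. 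I expect the genuinely hard point to be upgrading ``sublinear'' to ``$\asymp\rho$'', since the crude ``$O(1)$ terms per scale'' bound only yields roughly $(\#\text{scales})\cdot\rho(D)$, which is too lossy when $\rho$ grows slowly. To obtain $\rho'\asymp\rho$ I would reweigh the sum as $\sum_i f(t_i)\cdot\tfrac{\rho(f(t_i))}{f(t_i)}$, bound the total parameter-length at each scale by $O(s)$ via no-lingering, use monotonicity of $\rho(r)/r$, and telescope; carrying this through — together with a bound of the form $M\lesssim D$ on the excursion height of $\gamma$, itself a bootstrapped consequence of contraction and the sublinearity of $\rho$ — is the delicate calculation.

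For the converse, suppose the estimate holds with $\rho',\kappa_\rho$, and let $x,y$ satisfy $d(x,y)\le d(x,Z)=:R$. Since always $\diam\pi(x)\cup\pi(y)\le 4R$, it suffices to treat $R$ large. If $[x,y]$ stays outside $\bar{N}_{\kappa_\rho}Z$, the hypothesis applies directly and — as $\pi(x),\pi(y)\subseteq\pi([x,y])$ — bounds $\diam\pi(x)\cup\pi(y)$ by $\rho'(2R)$. Otherwise let $p,q$ be the first and last points of $[x,y]$ at distance $\kappa_\rho$ from $Z$; applying the hypothesis to the maximal sub-arcs of $[x,y]$ outside $\bar{N}_{\kappa_\rho}Z$ shows $\pi(x)$ is close to $p$ and $\pi(y)$ is close to $q$, while the contrapositive of the ``no lingering'' principle bounds the total length of the part of $[x,y]$ inside $\bar{N}_{\kappa_\rho}Z$, hence $d(p,q)$, by a sublinear function of $R$. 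Assembling these gives $\diam\pi(x)\cup\pi(y)\le\rho(R)$ for a sublinear $\rho$, and following the constants through both directions gives $\rho\asymp\rho'$. Alternatively, since \fullref{morseequivalent} supplies the equivalence of contraction with the Morse and recurrence properties, one could instead deduce recurrence of $Z$ from the geodesic image estimate and invoke that theorem.
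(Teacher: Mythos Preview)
This theorem is not proved in the paper---it is quoted from \cite{ArzCasGrub}---but Section~4 proves a quasi-geodesic version (\fullref{QGIT}) and a sublinear strengthening (\fullref{QGIT2}) whose arguments specialize to give \fullref{GIT}. Your greedy subdivision and the chain bound $\diam\pi(\gamma)\le\sum_i\rho(f(t_i))$ are the same opening move as the paper's proof of \fullref{QGIT}; the difference is in how the sum is controlled.

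The paper avoids scale analysis entirely. Using full step size $f(t_i)$ so that (for a geodesic) $\sum_i f(t_i)=|\gamma|$, it combines this with $|\gamma|\le f(0)+\diam\pi(\gamma_0)\cup\pi(\gamma_T)+f(T)$ and $\diam\pi\le\sum\rho(f(t_i))$ to obtain $\sum_i\bigl(f(t_i)-\rho(f(t_i))\bigr)\le 2f(T)+\rho(f(0))$. The threshold $\kappa_\rho$ is chosen so that $r-\rho(r)\ge\rho(r)$ for $r\ge\kappa_\rho$, yielding $\diam\pi\le 2f(T)+2\rho(f(0))$---still linear when both endpoints are far from $Z$. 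The sublinear upgrade, which you rightly flag as the hard step, is done in \fullref{QGIT2} by a threshold bootstrap: for each $n$ pick $\kappa_n$ with $\rho(r)/r\le 1/(3n)$ for $r\ge\kappa_n$, rerun the same inequality above $\kappa_n$ to gain a factor $1/n$, split $\gamma$ at its first visit to $\bar N_{\kappa_n}Z$, and argue by contradiction that $\limsup\rho'(r)/r=0$. This is a good deal more direct than your dyadic-scale plan.

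Your alternative route has a genuine gap. The no-lingering observation is correct, but it only shows that each \emph{single} maximal interval on which $f$ lies in a band $[s,Cs]$ has length $O(s)$; it does not bound the number of such intervals. Reducing to the case that $\min f$ occurs at an endpoint does not make $f$ monotone, and nothing prevents $f$ from oscillating between adjacent dyadic bands many times (subject only to the global constraint $|\gamma|\le O(D)$, which permits $O(D/s)$ visits to scale $s$ and hence only recovers a linear bound). So ``$f$ rises at a definite rate'' and ``boundedly many indices per scale'' are not justified as stated, and the reweighted sum $\sum f(t_i)\cdot\rho(f(t_i))/f(t_i)$ inherits the same problem.

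For the converse you are working harder than necessary on $d(p,q)$. Since $f$ is $1$--Lipschitz with $f(0)=R$, the first point $p$ of $[x,y]$ at distance $\kappa_\rho$ from $Z$ occurs at parameter at least $R-\kappa_\rho$; but $|\gamma|=d(x,y)\le R$, so the entire remaining arc $[p,y]$ has length at most $\kappa_\rho$, whence $d(p,q)\le\kappa_\rho$ outright. No appeal to no-lingering (which you derived from contraction, not from the geodesic-image hypothesis available here) is needed.
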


An easy consequence is that there exists a $\kappa'_\rho$ such that if
$\gamma$ is a geodesic segment with endpoints at distance at most
$\kappa_\rho$ from a $\rho$--contracting set $Z$ then $\gamma\subset\bar{N}_{\kappa'_\rho}(Z)$.

The following is a special case of \cite[Proposition~8.1]{ArzCasGrub}.
\begin{lemma}\label{quasiconvexunion}
  Given a sublinear function $\rho$ and a constant $C\geq 0$ there
  exists a constant $B$ such that if $\alpha$ and $\beta$ are
  $\rho$--contracting geodesics such that their 
  initial points $\alpha_0$ and $\beta_0$ satisfy
  $d(\alpha_0,\beta_0)=d(\alpha,\beta)\leq C$ then $\alpha\cup\beta$
  is $B$--quasi-convex.
\end{lemma}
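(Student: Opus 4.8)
The plan is to push everything through the Geodesic Image Theorem (\fullref{GIT}). Fix the data $\rho'$, $\kappa_\rho$ attached to $\rho$ by \fullref{GIT}, enlarge $\kappa_\rho$ so that $\kappa_\rho\ge C$, and record the easy consequence (of \fullref{GIT} and the observation after it): a geodesic whose endpoints lie within $\kappa_\rho$ of a $\rho$--contracting set $Z$ lies in $\bar{N}_{\kappa''}Z$ for a constant $\kappa''=\kappa''(\rho,C)$ — if it ever comes within $\kappa_\rho$ of $Z$, cut it at the first and last such points, apply \fullref{GIT} to the two outer arcs to bound their lengths and the observation to the inner arc. Now let $\gamma$ be a geodesic with endpoints in $\alpha\cup\beta$. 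If both endpoints lie on $\alpha$ then $\gamma$ is a $(1,0)$--quasi-geodesic with endpoints on the Morse set $\alpha$ (by \fullref{morseequivalent}), so $\gamma\subseteq\bar{N}_{\mu(1,0)}\alpha$, where $\mu$ is the Morse gauge determined by $\rho$; likewise if both lie on $\beta$. So assume the endpoints are $\alpha_s\in\alpha$ and $\beta_t\in\beta$. If $d(\alpha_s,\beta)\le\kappa_\rho$ or $d(\beta_t,\alpha)\le\kappa_\rho$, then $\gamma$ has both endpoints near one of the two geodesics and we are done by the preceding consequence; so assume both distances exceed $\kappa_\rho$.

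Parametrize $\gamma\from[0,\ell]\to X$ from $\alpha_s$ to $\beta_t$; let $u$ be the largest parameter with $d(\gamma_u,\alpha)\le\kappa_\rho$ and $v$ the smallest with $d(\gamma_v,\beta)\le\kappa_\rho$. Then $\gamma|_{[0,u]}\subseteq\bar{N}_{\kappa''}\alpha$ and $\gamma|_{[v,\ell]}\subseteq\bar{N}_{\kappa''}\beta$, and these two arcs cover $\gamma$ whenever $v\le u$. So the one thing left is to control the middle arc $\gamma|_{[u,v]}$ in the case $u<v$; on it $\gamma$ stays at distance $\ge\kappa_\rho$ from each of $\alpha,\beta$, and since each of its points is within $(v-u)+\kappa_\rho$ of $\alpha$, it suffices to bound $\delta:=v-u$ by a constant depending only on $\rho$ and $C$.

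For the bound on $\delta$: applying \fullref{GIT} to $\gamma|_{[u,v]}$ (which is far from both geodesics) forces its projections to $\alpha$ and to $\beta$ to have diameter $\le\rho'(\delta+\kappa_\rho)$; together with $d(\gamma_u,\alpha),d(\gamma_v,\beta)\le\kappa_\rho$ this gives, for suitable choices $\alpha_p\in\pi_\alpha(\gamma_u)$ and $\beta_q\in\pi_\beta(\gamma_v)$, that $\min\{d(\alpha_p,\beta),\,d(\beta_q,\alpha)\}\ge\delta-\rho'(\delta+\kappa_\rho)-2\kappa_\rho$. The hypothesis $d(\alpha_0,\beta_0)=d(\alpha,\beta)$ — which gives $\alpha_0\in\pi_\alpha(\beta_0)$ and $\beta_0\in\pi_\beta(\alpha_0)$ — enters next: with \fullref{GIT} and its corollary it shows that $\alpha$ meets a neighbourhood of $\beta$ precisely along an initial arc $\alpha|_{[0,\sigma]}$ (so $d(\alpha_a,\beta)\le(a-\sigma)+\kappa_\rho$ for $a\ge\sigma$), symmetrically for $\beta|_{[0,\tau]}$, and, crucially, that these exit points are matched: $d(\alpha_\sigma,\beta_\tau)\le E$ for some $E=E(\rho,C)$. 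Granting this, once $\delta$ exceeds a threshold $d(\alpha_p,\beta)$ is large, so $p>\sigma$ and $d(\alpha_p,\beta)\le(p-\sigma)+\kappa_\rho$; on the other hand, following $\pi_\alpha(\gamma_u)$ backward along $\gamma|_{[u,v]}$, then over the short geodesic $[\gamma_v,\beta_q]$, then along $\beta|_{[\tau,q]}$ (each piece far from $\alpha$, so each step controlled by \fullref{GIT}), and finally over $[\beta_\tau,\alpha_\sigma]$, bounds $p-\sigma=d(\alpha_p,\alpha_\sigma)$ by a few terms $\rho'(\delta+O(\kappa_\rho))$ plus $O(E)$. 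Comparing the two estimates for $d(\alpha_p,\beta)$ gives an inequality $\delta\le C_1\rho'(C_1\delta+C_1)+C_1$ with $C_1=C_1(\rho,C)$; since $\rho'$ is sublinear the right-hand side is eventually smaller than $\delta$, so $\delta$ is bounded, and $B$ can be taken to be the maximum of the finitely many constants produced.

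The hard part is the matched-exit-points claim $d(\alpha_\sigma,\beta_\tau)\le E(\rho,C)$; this is exactly where the alignment hypothesis is indispensable — otherwise $\alpha$ could fellow-travel the far end of $\beta$ and the claim would fail. One proves it by a coarse-inverse argument: the maps carrying a point of $\alpha|_{[0,\sigma]}$, resp.\ $\beta|_{[0,\tau]}$, to the parameter of a nearest point on $\beta$, resp.\ $\alpha$, are coarsely $1$--Lipschitz, both fix $0$ by the hypothesis, and \fullref{GIT} makes them coarse inverses of each other, so each sends its right endpoint to within a bounded distance of the other's. This bookkeeping of the contraction estimates is the content of \cite[Proposition~8.1]{ArzCasGrub}, of which the present lemma is the stated special case; in a full write-up I would invoke it directly or reproduce the above argument.
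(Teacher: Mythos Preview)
Your proposal is correct, and in fact the paper gives no self-contained proof at all: it simply records that the lemma is a special case of \cite[Proposition~8.1]{ArzCasGrub}. You arrive at exactly the same citation in your final sentence, so you and the paper agree. The difference is that you have additionally sketched what such a proof looks like --- splitting a geodesic $\gamma$ with one endpoint on each of $\alpha,\beta$ into an initial arc close to $\alpha$, a terminal arc close to $\beta$, and a middle gap of length $\delta$, then bounding $\delta$ via \fullref{GIT} and a matched-exit-points estimate $d(\alpha_\sigma,\beta_\tau)\le E(\rho,C)$. The ingredients are sound; the only place that would need genuine care in a full write-up is the coarse-inverse-maps argument for the exit-points bound (one has to pin down why the nearest-point maps between the fellow-travelling subsegments are coarsely monotone and coarsely onto, not merely coarsely Lipschitz), and you correctly identify this as the place where the alignment hypothesis $d(\alpha_0,\beta_0)=d(\alpha,\beta)$ is essential. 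That is precisely the content of the cited proposition, so deferring to it --- as both you and the paper do --- is the right call.
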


The next two lemmas are easy-to-state generalizations of results that
are known for strong contraction. 
The proofs are rather tedious, due to the weak hypotheses, so we
postpone them until after \fullref{lem:almosttriangle}.

\begin{lemma}\label{lem:subsegment}
Given a sublinear function $\rho$ there is a sublinear function
$\rho'\asymp\rho$ such that every subsegment of a $\rho$--contracting
geodesic is $\rho'$--contracting.
\end{lemma}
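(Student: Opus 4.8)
The plan is to verify the contraction inequality of \fullref{def:contracting} for a subsegment $\beta:=\alpha|_{[a,b]}$ of a $\rho$--contracting geodesic $\alpha$, routing everything through the Geodesic Image Theorem (\fullref{GIT}) for $\alpha$. The guiding picture is that closest-point projection to $\beta$ agrees coarsely with closest-point projection to $\alpha$, except that a point whose $\alpha$--projection has parameter outside $[a,b]$ gets ``clamped'' to the nearer endpoint $\alpha_a$ or $\alpha_b$.

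The first step is a single-point estimate: there is a sublinear $\rho_1\asymp\rho$ so that for every $x$ one has $\diam\pi_\beta(x)\le\rho_1(d(x,\beta))$ and, moreover, $\pi_\beta(x)\subseteq\bar N_{\rho_1(d(x,\beta))}(\pi_\alpha(x)\cup\{\alpha_a,\alpha_b\})$. If some $q=\alpha_u\in\pi_\alpha(x)$ has $u\in[a,b]$, then $q\in\beta$ forces $d(x,\beta)=d(x,\alpha)$ and $\pi_\beta(x)=\pi_\alpha(x)\cap\beta$, so $\diam\pi_\beta(x)\le\rho(d(x,\alpha))$ directly. Otherwise every $q=\alpha_u\in\pi_\alpha(x)$ has, say, $u<a$; here I would first establish a coarse monotonicity estimate $d(x,\alpha_v)\ge d(x,\alpha)+(v-u)-\rho'(d(x,\alpha))-O(1)$ for all $v\ge u$, obtained by applying \fullref{GIT} to the initial portion of the geodesic $[x,\alpha_v]$ that stays at distance $\ge\kappa_\rho$ from $\alpha$ (this portion has $\pi_\alpha$--image of diameter $\le\rho'(d(x,\alpha))$, which pins down where it can first meet $\bar N_{\kappa_\rho}(\alpha)$). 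Applying this with $v=s$, where $\alpha_s=p\in\pi_\beta(x)$, together with $d(x,\alpha_a)\ge d(x,\beta)=d(x,p)$ and $d(x,\alpha_a)\le d(x,\alpha)+(a-u)$, forces $s-a\le\rho'(d(x,\alpha))+O(1)$, i.e. $p$ lies $\rho'(d(x,\beta))+O(1)$--close to $\alpha_a$. Finally, since $\diam\pi_\alpha(x)\le\rho(d(x,\alpha))$, the parameters of $\pi_\alpha(x)$ span an interval of length $\le\rho(d(x,\beta))$, so the two endpoints $\alpha_a,\alpha_b$ cannot both be ``active'' unless $\diam\beta\le\rho(d(x,\beta))$, in which case $\diam\pi_\beta(x)\le\diam\beta$ is already controlled.

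For the two-point estimate, take $x,y$ with $d(x,y)\le d(x,\beta)$, so also $d(y,\beta)\le 2d(x,\beta)$, and bound $d(\pi_\beta(x),\pi_\beta(y))$. If the geodesic $[x,y]$ stays at distance $\ge\kappa_\rho$ from $\alpha$, then \fullref{GIT} gives $\diam\pi_\alpha([x,y])\le\rho'(\max\{d(x,\beta),d(y,\beta)\})\le\rho'(2d(x,\beta))$; feeding this and the single-point estimate into the picture above controls $d(\pi_\beta(x),\pi_\beta(y))$ up to a sublinear error, because the endpoint terms $\alpha_a,\alpha_b$ only intervene when the (localized) $\alpha$--projections of $x$ and $y$ are themselves near those endpoints, or when $\beta$ is short. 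If instead $[x,y]$ enters $\bar N_{\kappa_\rho}(\alpha)$, that near point lies near $\alpha\setminus\beta$, and a short triangle-inequality computation shows it forces one of $x,y$ to lie within $O(\kappa_\rho)$ of $\beta$, reducing the problem to the single-point estimate for the other. Assembling the cases, $\diam(\pi_\beta(x)\cup\pi_\beta(y))\le\rho'_\beta(d(x,\beta))$ for a sublinear $\rho'_\beta$; since every bound produced has the form $C\rho(Cr+c)+C$, we get $\rho'_\beta\asymp\rho$ as required.

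The main obstacle is bookkeeping rather than a single hard idea. The two delicate points are: (i) when $x$, or the whole segment $[x,y]$, is closer to $\alpha\setminus\beta$ than to $\beta$, the pair $(x,y)$ need \emph{not} satisfy the contraction hypothesis for $\alpha$, so one cannot apply the raw inequality and must argue through the geodesic-image formulation; and (ii) one must rule out the two endpoints $\alpha_a,\alpha_b$ conspiring to make $\diam\pi_\beta$ large, which is exactly where the sublinear bound on $\diam\pi_\alpha$ is used to show at most one endpoint is relevant at a time. Once these are handled, carrying the $\asymp\rho$ bookkeeping through the case split is routine.
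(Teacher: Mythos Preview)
Your overall strategy matches the paper's: route everything through the Geodesic Image Theorem, first prove a single-point ``clamping'' estimate (your $\rho_1$ is the paper's equation (3): $\diam(\gamma_{j_0}\cup\pi_{\gamma_J}(x))\le\max\{2\kappa'_\rho,\rho''(d(x,\gamma_I))\}$), then do a case analysis for the pair $x,y$. The single-point part is fine.

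The gap is in your two-point case split. You divide according to whether $[x,y]$ enters $\bar N_{\kappa_\rho}(\alpha)$, and in the ``enters'' case you assert (i) the near point lies near $\alpha\setminus\beta$ and (ii) a triangle-inequality computation forces one of $x,y$ within $O(\kappa_\rho)$ of $\beta$. Claim (i) is unjustified: $[x,y]$ can perfectly well approach $\beta$ itself. When it does, your triangle inequality does work (if $z\in[x,y]$ has $d(z,\beta)\le\kappa_\rho$ then $d(x,z)\ge d(x,\beta)-\kappa_\rho$, so $d(z,y)\le\kappa_\rho$ and $d(y,\beta)\le 2\kappa_\rho$). But when the near point is genuinely near $\alpha\setminus\beta$ and far from $\beta$, claim (ii) is simply false: take $x,y$ both hovering near $\alpha$ at parameters well to the left of $a$; then $[x,y]$ may dip to within $\kappa_\rho$ of $\alpha$ while both $x$ and $y$ remain far from $\beta$. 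In that scenario the correct conclusion is not that $x$ or $y$ is near $\beta$, but that $\pi_\alpha(x)$ and $\pi_\alpha(y)$ are both on the same side of $[a,b]$, so by clamping both $\pi_\beta(x)$ and $\pi_\beta(y)$ sit near the same endpoint.

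The paper avoids this by splitting on the \emph{positions of $\pi_\alpha(x)$ and $\pi_\alpha(y)$} relative to $[a,b]$ rather than on whether $[x,y]$ approaches $\alpha$. The hardest case is when $\pi_\alpha(x)$ lies to one side and $\pi_\alpha(y)$ meets the other; there the paper picks the first point $z$ on $[x,y]$ whose $\alpha$-projection hits $[a,b]$, observes $d(z,y)\le d(z,\gamma_J)$, and reduces the pair $(z,y)$ to an earlier case while controlling $(x,z)$ via clamping. Your decomposition can be repaired along these lines, but not via the triangle-inequality shortcut you wrote down.
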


\begin{lemma}\label{lem:combination}
  Given a sublinear function $\rho$ there is a sublinear function
$\rho'\asymp\rho$ such that if $\alpha$ and $\beta$ are
$\rho$--contracting geodesic rays or segments such that
$\gamma:=\bar{\alpha}+\beta$ is geodesic, then $\gamma$ is $\rho'$--contracting.
\end{lemma}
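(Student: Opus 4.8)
The plan is to reduce the statement to the Geodesic Image Theorem (\fullref{GIT}): it suffices to show that there exist a sublinear $\rho'$ and a constant $\kappa$ such that every geodesic segment $\sigma$ with $d(\sigma, \gamma) \geq \kappa$ has $\diam \pi_\gamma(\sigma) \leq \rho'(\max\{d(x,\gamma), d(y,\gamma)\})$, where $x, y$ are the endpoints of $\sigma$. Write $\gamma = \bar\alpha + \beta$, so $\gamma$ is the concatenation of two $\rho$--contracting geodesics meeting at the common point $p := \alpha_0 = \beta_0$, with $\gamma$ itself geodesic. The key geometric input is that closest-point projection to $\gamma$ is closely comparable to the pair of projections to $\alpha$ and to $\beta$: for any point $w$, either $\pi_\gamma(w)$ lies (coarsely) on the $\alpha$--side, in which case it is coarsely $\pi_\alpha(w)$, or it lies on the $\beta$--side and is coarsely $\pi_\beta(w)$, and in the ambiguous case both projections are coarsely within bounded distance of $p$. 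I would make this precise using \fullref{quasiconvexunion} applied with $C = 0$ (since $d(\alpha_0,\beta_0) = 0$), which gives that $\alpha \cup \beta = \gamma$ is $B$--quasi-convex for a $B$ depending only on $\rho$; combined with \fullref{Hausdorff} this lets me pass freely between $\gamma$ and $\alpha \cup \beta$ up to bounded error and an adjustment of the contraction function within its $\asymp$--class.

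The main steps, in order, are as follows. First, fix a geodesic segment $\sigma$ from $x$ to $y$ with $d(\sigma,\gamma) \geq \kappa$, where $\kappa$ is chosen larger than the thresholds $\kappa_\rho$ coming from \fullref{GIT} applied to $\alpha$ and to $\beta$ (and larger than the quasi-convexity constant $B$). Second, split into cases according to how $\pi_\gamma(\sigma)$ sits relative to $p$. If $\pi_\gamma(\sigma)$ is coarsely contained in $\alpha$, then I argue $\pi_\gamma(\sigma)$ is within bounded Hausdorff distance of $\pi_\alpha(\sigma)$; since $d(\sigma, \alpha) \geq d(\sigma,\gamma) \geq \kappa \geq \kappa_\rho$, \fullref{GIT} for $\alpha$ bounds $\diam \pi_\alpha(\sigma)$ by $\rho'(\max\{d(x,\alpha), d(y,\alpha)\}) \leq \rho'(\max\{d(x,\gamma),d(y,\gamma)\} + B)$, which is the desired bound after absorbing the additive constant into the $\asymp$--class. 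The symmetric case with $\beta$ in place of $\alpha$ is identical. Third, the remaining case is that $\pi_\gamma(\sigma)$ meets both sides of $p$ in an essential way; here I use the contraction property of $\alpha$ and of $\beta$ separately to see that the part of $\pi_\gamma(\sigma)$ on the $\alpha$--side has diameter $\preceq \rho'(\max\{d(x,\gamma),d(y,\gamma)\} + B)$, likewise for the $\beta$--side, and the two pieces are glued across the bounded-diameter region near $p$, so the total diameter is controlled by the sum, which is again $\asymp \rho(\max\{d(x,\gamma),d(y,\gamma)\})$. Finally, assemble these into a single sublinear $\rho'$ with $\rho' \asymp \rho$ and invoke the ``moreover'' clause of \fullref{GIT} to conclude that $\gamma$ is $\rho'$--contracting.

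The hard part will be Step 2 --- the claim that $\pi_\gamma(\sigma)$ is coarsely $\pi_\alpha(\sigma)$ when it lands on the $\alpha$--side. Because the contraction hypothesis is weak (sublinear, not bounded) and closest-point projection is set-valued with no a priori diameter bound, I cannot simply quote a strong-contraction lemma; I expect to need the quasi-convexity of $\gamma$ from \fullref{quasiconvexunion} together with a careful triangle-inequality argument comparing $d(w,\alpha)$, $d(w,\beta)$, and $d(w,\gamma)$, using that the concatenation point $p$ is a genuine vertex of the geodesic $\gamma$. This is presumably exactly the kind of ``tedious due to weak hypotheses'' argument the authors allude to, and it is why the lemma is deferred until after \fullref{lem:almosttriangle}; I would anticipate using that almost-triangle result (or its proof technique) to handle the bookkeeping of which side a projection point falls on. Once that comparison is established, the rest is a routine reduction to \fullref{GIT}.
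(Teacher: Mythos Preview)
Your overall strategy---verify the Geodesic Image Theorem criterion for $\gamma$ by reducing to GIT for $\alpha$ and for $\beta$---is a reasonable idea, but the sketch has two real problems.

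First, a circularity: you suggest using \fullref{lem:almosttriangle} ``to handle the bookkeeping,'' and interpret the deferral of the proof as a hint that \fullref{lem:almosttriangle} feeds into this lemma. It is the reverse. Look at the proof of \fullref{lem:almosttriangle}: it explicitly invokes \fullref{lem:subsegment} and \fullref{lem:combination}. The proofs are deferred only for expository flow; you cannot use \fullref{lem:almosttriangle} here. (Similarly, \fullref{quasiconvexunion} and \fullref{Hausdorff} are unnecessary: $\gamma=\bar\alpha+\beta$ is \emph{geodesic} by hypothesis, hence $0$--quasi-convex, and $\alpha\cup\beta=\gamma$ exactly.)

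Second, the third case is where the GIT approach breaks down, not where it glues together. If $\pi_\gamma(x)\subset\alpha$ and $\pi_\gamma(y)\subset\beta$, applying GIT to $\sigma$ with target $\alpha$ gives a bound in terms of $\max\{d(x,\alpha),d(y,\alpha)\}$, but $d(y,\alpha)$ can be arbitrarily large compared to $d(y,\gamma)$ when $y$ lies far down the $\beta$--side. Splitting $\sigma$ at a transition point $z$ does not help either: you would need $d(z,\gamma)\leq\max\{d(x,\gamma),d(y,\gamma)\}$, which fails for a general geodesic $\sigma$. The paper avoids this by working directly with the contraction \emph{definition} rather than the GIT criterion: given $x,y$ with $d(x,y)\leq d(x,\gamma)$, one first proves (via a geodesic from $x$ to $p=\alpha_0=\beta_0$ and GIT for $\alpha$, $\beta$) that any point whose $\gamma$--projection hits both sides has both $\pi_\alpha$ and $\pi_\beta$ sublinearly close to $p$; then in the mixed case one finds a transition point $w$ on the geodesic from $x$ to $y$, and the ball condition gives $d(w,y)\leq d(w,\gamma)$, so one can apply contraction of $\alpha$ to $(x,w)$ and of $\beta$ to $(w,y)$. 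The ball hypothesis is exactly what makes the intermediate-point distances controllable, which is why the direct approach succeeds where the GIT-criterion approach stalls.
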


Given $C\geq 0$ a \emph{geodesic $C$--almost triangle} is a trio of
geodesics $\alpha^i\from [a_i,b_i]\to X$, for $i\in\{0,\,1,\,2\}$ and
$a_i\leq 0\leq b_i\in\mathbb{R}\cup\{-\infty,\infty\}$, such that
 for each $i\in\{0,\,1,\,2\}$, with
 scripts taken modulo 3, we have:
 \begin{itemize}
 \item $b_i<\infty$ if and only if $a_{i+1}>-\infty$.
   \item If $b_i$ and $a_{i+1}$ are finite then
     $d(\alpha^i_{b_i},\alpha^{i+1}_{a_{i+1}})\leq C$.
     \item If $b_i$ and $a_{i+1}$ are not finite then $\alpha^i_{[0,\infty)}$ and
       $\bar{\alpha}^{i+1}_{[0,\infty)}=\alpha^{i+1}_{(-\infty,0]}$
       are asymptotic.
 \end{itemize}

\begin{lemma}\label{lem:almosttriangle}
Given a sublinear function $\rho$ and constant $C\geq 0$ there is a sublinear
function $\rho'\asymp\rho$ such that if $\alpha$, $\beta$, and
$\gamma$ are a geodesic $C$--almost triangle and $\alpha$ and $\beta$
are $\rho$--contracting then $\gamma$ is $\rho'$--contracting.
\end{lemma}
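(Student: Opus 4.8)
The plan is to show that $\gamma$ lies in a bounded neighbourhood of $W:=\alpha\cup\beta$ (a \emph{thin almost-triangle} estimate) and then to transport the contraction of $\alpha$ and $\beta$ onto $\gamma$. After a routine reduction we may assume $\alpha\from[0,a]\to X$, $\beta\from[0,b]\to X$, $\gamma\from[0,c]\to X$ are compact geodesics with $d(\alpha_a,\beta_0)\le C$, $d(\beta_b,\gamma_0)\le C$ and $d(\gamma_c,\alpha_0)\le C$, and $\alpha,\beta$ are $\rho$--contracting: an infinite side is handled by truncating it, using that a geodesic is $\rho'$--contracting once all of its finite subsegments are uniformly $\rho'$--contracting.

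\emph{Step 1: the almost-triangle is thin.} The reversal $\bar\alpha$ and $\beta$ have initial points within $C$ of each other, so \fullref{quasiconvexunion}---indeed the full strength of \cite[Proposition~8.1]{ArzCasGrub}, of which it records the quasi-convexity consequence---provides a sublinear $\rho_W\asymp\rho$, depending only on $\rho$ and $C$, for which $W:=\alpha\cup\beta$ is $\rho_W$--contracting; hence $W$ is $\mu_W$--Morse by \fullref{morseequivalent}. The endpoints of $\gamma$ lie within $C$ of $\beta_b\in W$ and of $\alpha_0\in W$, so prepending the geodesic $[\beta_b,\gamma_0]$ and appending $[\gamma_c,\alpha_0]$ (each of length at most $C$) makes $\gamma$ into a quasi-geodesic with endpoints in $W$ and constants depending only on $C$; the Morse property of $W$ then gives $\gamma\subseteq\bar{N}_D W$ with $D$ depending only on $\rho$ and $C$.

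\emph{Step 2: transport.} I would then verify that $\gamma$ is $t$--recurrent for a fixed $t\in(0,\tfrac12)$; by the effective equivalences of \fullref{morseequivalent} this makes $\gamma$ $\rho'$--contracting with $\rho'\asymp\rho$ determined by $\rho$ and $C$. Let $p$ be a path from $x=\gamma_{s_1}$ to $y=\gamma_{s_2}$ with $s_1<s_2$ and $\operatorname{length}(p)\le C'\,d(x,y)$, and put $\ell:=s_2-s_1$; if $\ell$ is bounded in terms of $\rho,C,C'$ the required point is produced for free, so assume $\ell$ is large. By Step 1 every point of $\gamma_{[s_1,s_2]}$ lies within $D$ of $\alpha$ or of $\beta$; the crucial sub-claim is that $\gamma$ does not oscillate, i.e.\ the set of $s$ with $d(\gamma_s,\alpha)\le D$ and the set with $d(\gamma_s,\beta)\le D$ are, up to pieces of bounded length, complementary subintervals of $[s_1,s_2]$ whose common endpoint is uniformly close to the vertex $Q:=\alpha_a\approx\beta_0$. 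One proves this by applying the Geodesic Image Theorem (\fullref{GIT}) to $\alpha$ and to $\beta$ on excursions of $\gamma$ away from one of them, together with the sublinearity of $\rho$ and the quasi-convexity of $W$. Granting the sub-claim, project $x$ and $y$ onto whichever of $\alpha,\beta$ is within $D$ and splice in connecting segments of length at most $D$: this converts $p$ into a path of comparable length-to-distance ratio with endpoints on $\alpha$, on $\beta$, or one on each, the mixed case being reduced to the pure ones by cutting $p$ at a point near $Q$. In a pure case---say both endpoints on $\alpha$, so that $\gamma_{[s_1,s_2]}$ is within $D$ of $\alpha$ throughout---the recurrence of $\alpha$ from \fullref{morseequivalent} supplies a point $\zeta$ of $\alpha$ within a controlled distance of $p$ and lying in the middle portion of $[\pi_\alpha(x),\pi_\alpha(y)]$; since $\pi_\alpha(\gamma_{[s_1,s_2]})$ is coarsely connected one finds $s''\in[s_1,s_2]$ with $\pi_\alpha(\gamma_{s''})$ near $\zeta$, and then $\gamma_{s''}$ is near $\zeta$, hence near $p$, while $s''$ is forced into the middle portion of $[s_1,s_2]$. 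This is exactly what $t$--recurrence of $\gamma$ requires.

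The main obstacle is the \emph{no oscillation} sub-claim of Step 2, together with the crossover bookkeeping near $Q$: one must show that a geodesic confined to a bounded neighbourhood of $\alpha\cup\beta$ makes essentially a single pass from the $\beta$--side to the $\alpha$--side, and then carry the recurrence data for $\alpha$ and $\beta$ across that crossover with uniformly controlled constants. Step 1 is what makes this feasible, by pinning $\gamma$ to the explicitly understood set $W$; the rest is estimation with sublinear errors of the kind the discussion before the lemma has already flagged as ``rather tedious''.
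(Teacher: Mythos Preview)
Your Step~1 is in the right spirit, but note that the paper only needs $W=\alpha\cup\beta$ to be \emph{quasi-convex} (exactly what \fullref{quasiconvexunion} records), not Morse: it never argues directly that $\gamma$ lies in a bounded neighbourhood of $W$. Instead the paper introduces an auxiliary geodesic $\delta$ joining $\alpha_0$ to $\beta_b$, so that $\delta\subset\bar N_B W$ follows from quasi-convexity alone. One then picks a point of $\delta$ close to both $\alpha$ and $\beta$, splitting $\delta$ into two subsegments each with endpoints within $B$ of a single side; since $\alpha$ and $\beta$ are $\rho$--contracting, \fullref{GIT} forces each half of $\delta$ to be Hausdorff-close to a subsegment of $\alpha$ (respectively $\beta$). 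Now \fullref{lem:subsegment}, \fullref{Hausdorff}, and \fullref{lem:combination} make $\delta$ itself $\rho''$--contracting with $\rho''\asymp\rho$; a final application of \fullref{GIT} shows that $\gamma$, having endpoints $C$--close to those of the contracting geodesic $\delta$, is bounded Hausdorff distance from $\delta$, and one more use of \fullref{Hausdorff} finishes.

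Your detour through $t$--recurrence, and in particular the ``no oscillation'' sub-claim you flag as the main obstacle, is precisely what this route sidesteps. Once you pass to the auxiliary geodesic $\delta$ and pin the endpoints of each half to a single contracting side, \fullref{GIT} does the oscillation control for free: a geodesic segment with both endpoints close to a $\rho$--contracting set stays uniformly close to it throughout. Your recurrence argument can likely be completed, but the mixed-case reduction (``cutting $p$ at a point near $Q$'') is genuinely delicate---$p$ is an arbitrary path with prescribed length-to-distance ratio and need not pass anywhere near $Q$---and the whole of Step~2 becomes unnecessary once \fullref{lem:subsegment} and \fullref{lem:combination} are available. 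Those lemmas are stated just before this one for exactly this purpose.
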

\begin{proof}
First suppose $\alpha$, $\beta$, and $\gamma$ are segments.
  By \fullref{quasiconvexunion}, there exists a $B$ depending only on
  $\rho$ and $C$ such that $\alpha\cup\beta$ is $B$--quasi--convex.
Thus, we can replace $\alpha\cup\beta$ by a single geodesic segment
$\delta$ whose endpoints are $C$--close to the endpoints of $\gamma$.
Furthermore, $\delta$ is a union of two subsegments, 
one of which has endpoints within distance $B$ of $\alpha$, and the
other of which has endpoints within distance $B$ of $\beta$.
Consequently, by \fullref{GIT} there exists $B'$ so that these
two subsegments are
$B'$--Hausdorff equivalent to subsegments of $\alpha$ and of $\beta$,
respectively. 
Applying \fullref{lem:subsegment}, \fullref{Hausdorff}, and
\fullref{lem:combination}, there is a $\rho''\asymp\rho$ depending on
$\rho$ and $B'$ such that
$\delta$ is $\rho''$--contracting.
\fullref{GIT} implies that since $\gamma$ and $\delta$ are close at
their endpoints, they stay close along their entire lengths, so their
Hausdorff distance is determined by $\rho''$ and $C$, hence by $\rho$
and $C$. 
Applying \fullref{Hausdorff} again, we conclude $\gamma$ is
$\rho'$--contracting with $\rho'\asymp \rho''\asymp\rho$ depending only on
$\rho$ and $C$.  

In the case of an ideal triangle, where not all three sides are
segments, replace $C$ by $\max\{C,\kappa_\rho\}$. 
\fullref{GIT} implies that if, say, $\gamma$ and $\bar{\alpha}$ have
asymptotic tails then the set of points $\gamma$ that come
$\kappa_\rho$--close to $\alpha$ is unbounded. 
Truncate the triangle at such a $C$--close pair of points.
Doing the same for other ideal vertices, we get a $C$--almost triangle
to which we can apply the previous argument and conclude that a
subsegment of $\gamma$ is $\rho'$--contracting. 
Since $\gamma$ comes $\kappa_\rho$--close to ${\alpha}$ on an
unbounded set, we can repeat the argument for larger and larger almost
triangles approximating $\alpha$, $\beta$, $\gamma$, and find that
every subsegment of $\gamma$ is contained in a $\rho'$--contracting subsegment,
which implies that $\gamma$ itself is $\rho'$--contracting.
\end{proof}

\begin{definition}
  If $Z$ is a subset of $\mathbb{R}$ define the \emph{interval of
    $Z$}, $\invl(Z)$, to be the smallest closed interval containing
  $Z$. 
If $\gamma\from I\to X$ is a geodesic and $Z$ is a subset of
  $\gamma$ let $\invl(Z):=\gamma(\invl(\gamma^{-1}(Z)))$.
\end{definition}

\begin{proof}[Proof of \fullref{lem:subsegment}]
  Let $\gamma\from I\to X$ be a $\rho$--contracting geodesic. 
Let $J:=[j_0,j_1]$  be  a subinterval of $I$. 
Let  $\rho''\asymp \rho$ be the function given by \fullref{GIT}, and
let $\kappa'_\rho$ be the constant defined there.
We claim it
suffices to take $\rho'(r):=2(2\kappa'_\rho+\rho''(2r)+\rho(2r))$.

First we show that if $\pi_{\gamma_I}(x)$ misses $\gamma_J$ then
$\pi_{\gamma_J}(x)$ is relatively close to one of the endpoints of
$\gamma_J$.
This is automatic if $\diam \gamma_J\leq\rho(d(x,\gamma_J))$,
so assume not. 
With this assumption, $\pi_{\gamma_I}(x)$ cannot contain points on both sides of
$\gamma_J$, that is, if $\gamma^{-1}(\pi_{\gamma_I}(x))$ contains a
point less than $j_0$ then it does not also contain one greater than
$j_1$, and vice versa.
Suppose that $\gamma^{-1}(\pi_{\gamma_I}(x))$ is contained in $(-\infty,j_0)$.
Let $\beta$ be a geodesic from $x$ to a point $y$ in $\pi_{\gamma_J}(x)$.
There exists a first time $s$ such that $d(\beta_s,\gamma_I)=\kappa_\rho$.
By \fullref{GIT},
$\diam \pi_{\gamma_I}(\beta|_{[0,s]})\leq \rho''(d(x,\gamma_I))$.
Suppose that $\gamma_{j_0}\in\invl
(\pi_{\gamma_I}(\beta|_{[0,s]}))$. 
Then there is a first time $s'\in[0,s]$, such that $\pi_I(\beta_{s'})$
contains a point in $\gamma_{[j_0,\infty)}$.
By the assumption on the diameter of $\gamma_J$, we actually have $\pi_{\gamma_I}(\beta_{s'})\cap \gamma_J \neq \emptyset$,
so $y \in \pi_{\gamma_J}(\beta_{s'}) \subset \pi_{\gamma_I}(\beta_{s'})
\subset \pi_{\gamma_I}(\beta|_{[0,s]})$ and
$\diam \gamma_{j_0}\cup\pi_{\gamma_J}(x)\leq  \rho''(d(x,\gamma_I))$.
Otherwise, if $\gamma_{j_0}\notin\invl
(\pi_{\gamma_I}(\beta|_{[0,s]}))$, then let $t>s$ be the first time such that $\gamma_{j_0}\in\invl
\pi_{\gamma_I}(\beta|_{[s,t]})$. 
Again, $y\in\pi_{\gamma_I}(\beta_t)$. 
Since the points of $\beta$ after $\beta_s$, are contained in $\bar{N}_{\kappa'_\rho}\gamma$, 
for all small $E>0$ we have $\diam
\pi_{\gamma_I}\beta_{t-E}\cup\pi_{\gamma_I}\beta_t\leq
E +2\kappa'_\rho$. 
Therefore, $d(\gamma_{j_0},y)\leq
d(y,\pi_{\gamma_I}(\beta_{t-E}))\leq E+2\kappa'_\rho$, for all sufficiently small $E$.
We conclude:
\begin{equation}
  \label{eq:3}
 \diam \gamma_{j_0}\cup\pi_{\gamma_J}(x)\leq \max\{2\kappa'_\rho,\rho''(d(x,\gamma_I))\} 
\end{equation}

Now suppose $x$ and $y$ are points such that $d(x,y)\leq
d(x,\gamma_J)$. 
Note that $d(y,\gamma_J)\leq 2d(x,\gamma_J)$.
We must show $\diam \pi_{\gamma_J}(x)\cup
\pi_{\gamma_J}(y)$ is bounded by a sublinear function of
$d(x,\gamma_J)$. 
There are several cases, depending on whether
$\pi_{\gamma_I}(x)$ and $\pi_{\gamma_I}(y)$ hit $\gamma_J$.

\casen{1}{$\gamma_J\cap\pi_{\gamma_I}(x)\neq \emptyset$ and
  $\gamma_J\cap\pi_{\gamma_I}(y)\neq \emptyset$} 
In this case $\pi_{\gamma_J}(x)\subset \pi_{\gamma_I}(x)$, and
likewise for $y$, so: 
\[\diam \pi_{\gamma_J}(x)\cup
\pi_{\gamma_J}(y)\leq \diam \pi_{\gamma_I}(x)\cup
\pi_{\gamma_I}(y)\leq\rho(d(x,\gamma_I))=\rho(d(x,\gamma_J))\] 

\casen{2}{$\gamma^{-1}(\pi_{\gamma_I}(x))<j_0$ and
  $\gamma^{-1}(\pi_{\gamma_I}(y))<j_0$}
By (\ref{eq:3}) twice:
\begin{align*}
  \diam\pi_{\gamma_J}(x)\cup\pi_{\gamma_J}(y)&\leq
2\max\{2\kappa'_\rho,\rho''(d(x,\gamma_I)),\rho''(d(y,\gamma_I))\}\\&\leq
2\max\{2\kappa'_\rho,\rho''(2d(x,\gamma_J))\}
\end{align*}

\casen{3}{$\gamma^{-1}(\pi_{\gamma_I}(y))<j_0$ and
  $\gamma_J\cap\pi_{\gamma_I}(x)\neq \emptyset$}
In this case $\pi_{\gamma_J}(x)\subset \pi_{\gamma_I}(x)$ and
  $d(x,y)\leq d(x,\gamma_J)=d(x,\gamma_I)$, so $\diam
  \pi_{\gamma_I}(y)\cup\pi_{\gamma_J}(x)\leq \rho(d(x,\gamma_J))$.
By hypothesis,
$\gamma_{j_0}\in\invl(\pi_{\gamma_I}(y)\cup\pi_{\gamma_J}(x))$, and by
(\ref{eq:3}): $ \diam \gamma_{j_0}\cup\pi_{\gamma_J}(y)\leq
\max\{2\kappa'_\rho,\rho''(2d(x,\gamma_J))\}$.
Thus: 
\[\diam \pi_{\gamma_J}(x)\cup\pi_{\gamma_J}(y)\leq \max\{\rho(d(x,\gamma_J)) ,2\kappa'_\rho,\rho''(2d(x,\gamma_J))\}\]

\casen{4}{$\gamma^{-1}(\pi_{\gamma_I}(x))<j_0$ and
  $\pi_{\gamma_I}(y) \cap \gamma_{[j_0,\infty)} \neq \emptyset$} 
If $j_1-j_0\leq 2\rho(2d(x,\gamma_J))$ then there is nothing more to 
prove, so assume not. 
Let $\beta$ be a geodesic from $x$ to $y$.
For all $z\in\beta$:
\[d(x,z)+d(z,y)=d(x,y)\leq d(x,\gamma_J)\leq d(x,z)+d(z,\gamma_J)\]
This implies $d(z,y)\leq d(z,\gamma_J)$.
Let $z$ be the first point on $\beta$ such that $\gamma^{-1}(\pi_{\gamma_I}(z))$ contains a point
greater than or equal to $j_0$.
By the hypothesis on $|J|$,
$\gamma^{-1}(\pi_{\gamma_I}(z))<j_1$. 
This means $\diam
\pi_{\gamma_J}(z)\cup\pi_{\gamma_J}(y)$ is controlled by one of the
previous cases, and it suffices to control $\diam \pi_{\gamma_J}(x)\cup\pi_{\gamma_J}(z)$.

We know from (\ref{eq:3}) that $\pi_{\gamma_J}(x)$ is
$\max\{2\kappa'_\rho,\rho''(d(x,\gamma_J))\}$-close to $\gamma_{j_0}$, so it suffices to control $\diam
\gamma_{j_0}\cup\pi_{\gamma_J}(z)$.
Take a point $w\neq z$ on $\beta$ before $z$ such that $d(z,w)\leq d(z,\gamma_I)$.
By hypothesis, $\gamma_{j_0}\in\invl
\pi_{\gamma_I}(w)\cup\pi_{\gamma_I}(z)$, but $\diam
\pi_{\gamma_I}(w)\cup\pi_{\gamma_I}(z)\leq
\rho(d(z,\gamma_I))=\rho(d(z,\gamma_J))\leq \rho(2d(x,\gamma_J))$.

\smallskip
Up to symmetric arguments, this exhausts all the cases. 
\end{proof}

\begin{proof}[Proof of \fullref{lem:combination}]
Let $\alpha$ and $\beta$ be $\rho$--contracting geodesic segments or
rays with $\alpha_0=\beta_0$ such that $\gamma:=\bar{\alpha}+\beta$ is
geodesic.

First suppose that $x$ is a point such that
$\pi_\gamma(x)\cap\alpha\neq\emptyset$ and
$\pi_\gamma(x)\cap\beta\neq\emptyset$.
Let $\delta$ be a geodesic from $x$ to $\alpha_0=\beta_0$.
Recall from \fullref{GIT} that once $\delta$ enters the
$\kappa_\rho$--neighborhood of either $\alpha$ or $\beta$ then it cannot
leave the $\kappa'_\rho$--neighborhood. 
Thus, $\delta$ intersects at most one of $\bar N_{\kappa_\rho}\alpha\setminus
N_{2\kappa'_\rho}\alpha_0$ or $\bar N_{\kappa_\rho}\beta\setminus N_{2\kappa'_\rho}\beta_0$.
Without loss of generality, suppose $\delta$ does not intersect $\bar N_{\kappa_\rho}\beta\setminus
N_{2\kappa'_\rho}\beta_0$.
Let $t$ be the first time such that $d(\delta_t,\beta)=\kappa_\rho$.
Then $d(\delta_t,\beta_0)\leq 2\kappa'_\rho$ and, by \fullref{GIT}, there
is a sublinear $\rho''\asymp\rho$ such that $\diam
\pi_\beta(\delta|_{[0,t]})\leq \rho''(d(x,\beta))=\rho''(d(x,\gamma))$.
In particular, this means $\diam \pi_\beta(x)\cup\beta_0\leq\diam
\pi_\beta(\delta)\leq 4\kappa'_\rho+\rho''(d(x,\gamma))$.
Now let $\delta'$ be a geodesic from $x$ to a point $x'\in\pi_\beta(x)$,
and project $\delta'$ to $\alpha$.
Since $\bar{\alpha}+\beta$ is geodesic, $\diam \pi_\alpha(x)\cup\alpha_0\leq
\diam\pi_\alpha\delta'\leq \rho''(\max\{d(x,\alpha),d(x',\alpha)\})$ by
\fullref{GIT}.
We have already established that $d(x',\alpha)\leq 4\kappa'_\rho+\rho''(d(x,\gamma))$.
Since $\rho''$ grows sublinearly, 
$d(x,\alpha)>4\kappa'_\rho+\rho''(d(x,\gamma))$ except for $d(x,\alpha)$
less than some bound depending only on $\rho$ and $\rho''$. 
We conclude that there is a sublinear function $\rho'''\asymp\rho$
depending only on $\rho$ such that
$\diam\pi_\alpha(x)\cup\alpha_0\leq\rho'''(d(x,\gamma))$ and
$\diam\pi_\beta(x)\cup\beta_0\leq\rho'''(d(x,\gamma))$, hence $\diam\pi_\gamma(x)\leq 2\rho'''(d(x,\gamma))$.

Now suppose $x,\,y\in X$ are points such that $d(x,y)\leq
d(x,\gamma)$.
There are several cases according to where $\pi_\gamma(x)$ and
$\pi_\gamma(y)$ lie.

\casen{1}{$\pi_\gamma(x)\cap\alpha\neq\emptyset\neq\pi_\gamma(y)\cap\alpha$}
Then $d(x,y)\leq d(x,\gamma)=d(x,\alpha)$, so contraction for $\alpha$
implies $\diam \pi_\alpha(x)\cup\pi_\alpha(y)\leq
\rho(d(x,\alpha))=\rho(d(x,\gamma))$. 
There are four sub-cases to check, according to whether
$\pi_\gamma(x)$ and $\pi_\gamma(y)$ hit $\beta$. These are easy to
check, with the worst bound being
$\diam\pi_\gamma(x)\cup\pi_\gamma(y)\leq \rho(d(x,\gamma))+2\rho'''(2d(x,\gamma))$.

\casen{2}{$\pi_\gamma(x)\cap\beta=\emptyset=\pi_\gamma(y)\cap\alpha$}
Let $\delta$ be a geodesic from $x$ to $y$.
Let $w$ be the first point on $\delta$ such that
$\pi_\gamma(w)\cap\beta\neq\emptyset$.
Then $d(w,\alpha)=d(w,\beta)=d(w,\gamma)\leq 2d(x,\gamma)$ and
$d(y,\beta)=d(y,\gamma)\leq 2d(x,\gamma)$.
We can apply that $\alpha$ is $\rho$-contracting to the pair $x,w$ since
$d(x,w) \leq d(x,y) \leq d(x,\gamma) = d(x,\alpha)$.
Likewise, we can apply that $\beta$ is $\rho$-contracting to $w,y$ since
$d(x,w)+d(w,y) = d(x,y) \leq d(x,\gamma) \leq d(x,w)+d(w,\gamma)$
so $d(w,y) \leq d(w,\gamma)$.
We conclude:
\begin{align*}
  \diam\pi_\gamma(x)\cup\pi_\gamma(y)&\leq\diam\pi_\alpha(x)\cup\pi_\alpha(w)+\diam\pi_\gamma(w)\\&\qquad+\diam\pi_\beta(w)\cup\pi_\beta(y)\\
&\leq  \rho(d(x,\alpha))+2\rho'''(d(w,\gamma))+\rho(d(w,\beta))\\
&\leq   \rho(d(x,\gamma))+2\rho'''(2d(x,\gamma))+\rho(2d(x,\gamma))
\end{align*}

By symmetry these two cases cover all possibilities, so it suffices to
define $\rho'(r):=2\rho(2r)+2\rho'''(2r)$.
\end{proof}

\section{Contraction and Quasi-geodesics}\label{sec:contraction2}
In this section we explore the behavior of a quasi-geodesic ray based
at a point in a contracting set $Z$. 
The main conclusion is that such a ray can stay close to $Z$ for an
arbitrarily long time, but once it exceeds a certain threshold
distance depending on the quasi-geodesic constants and the contraction
function then the ray must escape $Z$ at a definite linear
rate.

\begin{definition}\label{def:kappa}
  Given a sublinear function $\rho$ and constants $\cL\geq 1$ and
  $\cA\geq 0$, define:
  \[\kappa(\rho,\cL,\cA):=\max\{3\cA,3\cL^2,1+\inf\{R>0\mid \forall
  r\geq R,\, 3\cL^2\rho(r)\leq r\}\}\]
Define: \[\kappa'(\rho,\cL,\cA):= (\cL^2+2)(2\kappa(\rho,\cL,\cA)+\cA)\]
\end{definition}

\begin{remark}
  For the rest of the paper $\kappa$ and $\kappa'$ always refer to the
  functions defined in \fullref{def:kappa}. We use them frequently and
  without further reference.
\end{remark}

This definition implies that for $r\geq \kappa(\rho,\cL,\cA)$ we have:
 \begin{equation}
    \label{eq:1}
    r-\cL^2\rho(r)-\cA\geq \frac{1}{3}r\geq \cL^2\rho(r)
  \end{equation}

  An inspection of the proof of \cite[Theorem 7.1]{ArzCasGrub} gives that $\kappa(\rho,1,0)\geq \kappa_\rho$ and $\kappa'(\rho,1,0)\geq \kappa'_\rho$, so the results of
the previous section still hold using $\kappa(\rho,1,0)$ and
$\kappa'(\rho,1,0)$.  
Enlarging the constants lets us give unified proofs for geodesics and quasi-geodesics.

\begin{theorem}[Quasi-geodesic Image Theorem]\label{QGIT}
Let $Z\subset X$ be $\rho$--contracting. Let $\beta\from [0,T]\to X$ be a continuous
$(\cL,\cA)$--quasi-geodesic segment.  
If $d(\beta,Z)\geq \kappa(\rho,\cL,\cA)$ then: 
\[\diam\pi(\beta_0)\cup\pi(\beta_T) \leq \frac{L^2+1}{L^2}\left(A+d(\beta_T,Z)\right)+\frac{L^2-1}{L^2}d(\beta_0,Z)+2\rho(d(\beta_0,Z))\]  
\end{theorem}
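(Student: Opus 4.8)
The plan is to reduce the Quasi-geodesic Image Theorem to the (geodesic) Geodesic Image Theorem (\fullref{GIT}) by a subdivision-and-interpolation argument. First I would subdivide the domain $[0,T]$ into subintervals on which $\beta$ moves a controlled amount, replacing $\beta$ on each piece by a geodesic. Concretely, pick times $0=t_0<t_1<\dots<t_n=T$ so that consecutive points $\beta_{t_{i}}$ and $\beta_{t_{i+1}}$ are close — say within a distance comparable to $\cL+\cA$, which is possible for a continuous quasi-geodesic by the intermediate value theorem applied to $s\mapsto d(\beta_0,\beta_s)$. Let $\gamma_i$ be a geodesic from $\beta_{t_i}$ to $\beta_{t_{i+1}}$; since $\beta$ stays at distance $\geq\kappa(\rho,\cL,\cA)\geq\kappa_\rho$ from $Z$ and the $\gamma_i$ lie near $\beta$ (their length is at most $\cL|t_{i+1}-t_i|+\cA$, which is small), each $\gamma_i$ also stays $\kappa_\rho$--far from $Z$, at least after replacing $\kappa(\rho,\cL,\cA)$ by a slightly smaller but still adequate constant — this is where the factor of $3$ in \fullref{def:kappa} buys room. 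Then \fullref{GIT} gives $\diam\pi(\gamma_i)\leq\rho'(\max\{d(\beta_{t_i},Z),d(\beta_{t_{i+1}},Z)\})$ for each $i$.

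The second step is to concatenate these estimates. The naive bound $\diam\pi(\beta_0)\cup\pi(\beta_T)\leq\sum_i\diam\pi(\gamma_i)$ is useless because $n$ is unbounded. Instead I would track how $d(\beta_{t_i},Z)$ varies along the chain. The key geometric input is that once $\beta$ is far from $Z$ the projection behaves monotonically: using contraction, one shows that if $d(\beta_{t_i},Z)$ is above threshold then $\pi(\beta_{t_{i+1}})$ either is within the sublinear error of $\pi(\beta_{t_i})$, or $d(\beta_{t_{i+1}},Z)$ has grown. Summing the sublinear errors telescopes against the growth of $d(\beta_\bullet,Z)$: the total diameter of the projection is controlled by $d(\beta_0,Z)+d(\beta_T,Z)$ plus a single sublinear term, because each unit of projection-displacement is "paid for" either by a unit of change in distance-to-$Z$ or by the one-time sublinear cost near the closest approach. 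Carefully bookkeeping the quasi-geodesic constants $(\cL,\cA)$ through this — distances can be distorted by a factor $\cL$ and an additive $\cA$ at each comparison — is what produces the exact coefficients $\frac{\cL^2+1}{\cL^2}$, $\frac{\cL^2-1}{\cL^2}$, and the inequality $3\cL^2\rho(r)\leq r$ built into $\kappa$; the asymmetry between the $d(\beta_0,Z)$ and $d(\beta_T,Z)$ coefficients reflects that the quasi-geodesic is traversed from $0$ to $T$.

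I expect the main obstacle to be the telescoping/monotonicity argument in the second step: making precise the claim that the projection "moves forward monotonically" once $\beta$ has escaped the threshold neighborhood of $Z$, and ensuring that the single sublinear term $2\rho(d(\beta_0,Z))$ genuinely suffices rather than accumulating one such term per subdivision point. The resolution should be to use \fullref{eq:1}, i.e. that for $r\geq\kappa(\rho,\cL,\cA)$ one has $r-\cL^2\rho(r)-\cA\geq\frac13 r\geq\cL^2\rho(r)$, which forces $d(\beta_\bullet,Z)$ to be genuinely increasing (at a definite linear rate) past the threshold, so that the projection lengths along the chain are dominated by a geometric-type sum that collapses to the stated linear-in-$d(\beta_0,Z),d(\beta_T,Z)$ bound with just the one sublinear correction coming from the behavior near $\beta_0$. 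A secondary technical point is passing from the continuous-quasi-geodesic subdivision back to arbitrary quasi-geodesics, but this is handled by the Taming lemma already recalled in \fullref{sec:prelim}, at the cost of replacing $(\cL,\cA)$ by $(\cL,2(\cL+\cA))$.
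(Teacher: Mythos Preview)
Your proposal has a genuine gap in the second step. The claim that ``$d(\beta_\bullet,Z)$ is genuinely increasing (at a definite linear rate) past the threshold'' is precisely the content of \fullref{cor:asymp}, which is \emph{derived from} \fullref{QGIT}, so invoking it here is circular. And without it your telescoping doesn't work: with constant-size steps of length $\approx \cL+\cA$ the number $n$ of subdivision points is proportional to $T$, and $\sum_i \rho'(d(\beta_{t_i},Z))$ could be of order $n\cdot\rho'(\kappa)$, which is unbounded. Nothing in your argument prevents $\beta$ from oscillating---staying at distance roughly $\kappa$ from $Z$ for a long time while its projection creeps forward---and that is exactly the scenario the theorem must rule out.

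The paper's proof uses a different subdivision that avoids this entirely: take $t_{i+1}$ to be the \emph{first} time with $d(\beta_{t_i},\beta_{t_{i+1}})=d(\beta_{t_i},Z)$. This is the natural choice because the contraction hypothesis applies directly (no need for \fullref{GIT}): each step is a ball tangent to $Z$, so $\diam\pi(\beta_{t_i})\cup\pi(\beta_{t_{i+1}})\leq\rho(d(\beta_{t_i},Z))$. The point is that now the step \emph{length} is also $d(\beta_{t_i},Z)$, so the quasi-geodesic lower bound gives $T\geq\frac{1}{\cL}\sum_i(d(\beta_{t_i},Z)-\cA)$, while the detour-through-projections upper bound gives $\frac{T}{\cL}-\cA\leq d(\beta_0,Z)+d(\beta_T,Z)+\sum_i\rho(d(\beta_{t_i},Z))$. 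Comparing these two estimates for $T$ yields $\sum_i\bigl(d(\beta_{t_i},Z)-\cL^2\rho(d(\beta_{t_i},Z))-\cA\bigr)$ bounded above by a linear expression in $d(\beta_0,Z)$ and $d(\beta_T,Z)$; now \eqref{eq:1} converts the left side into $\cL^2\sum_i\rho(d(\beta_{t_i},Z))$, which dominates $\diam\pi(\beta_0)\cup\pi(\beta_T)$. No monotonicity is needed---the argument is a single global comparison of $T$ computed two ways, and the telescoping you were looking for is hidden in the fact that step size equals distance-to-$Z$.
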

The proof generalizes the proof of the Geodesic Image Theorem to work
for quasi-geodesics.
We typically apply the result when $d(\beta_T,Z)=\kappa(\rho,L,A)$, in
which case the theorem says that for fixed $\rho$, $\cL$, and $\cA$ the projection diameter of $\beta$ is
bounded in terms of $d(\beta_0,Z)$.
In particular, when $\beta$ is geodesic, or, more generally, when
$\cL=1$, the bound is sublinear in $d(\beta_0,Z)$, and we recover a
version of the Geodesic Image Theorem.
With a little more work we can prove this stronger statement for
quasi-geodesics as well.
Although we do not need it in this paper, the stronger version may be of independent interest, so we include a proof at the
end of this section (see \fullref{QGIT2}). 
\begin{proof}[Proof of \fullref{QGIT}]
  Let $t_0:=0$.  For each $i \in \mathbb{N}$ in turn, let $t_{i+1}$ be the first time such that
$d(\beta_{t_i},\beta_{t_{i+1}})=d(\beta_{t_i},Z)$, or set $t_{i+1}=T$
if no such time exists.
Let $j$ be the first index such that $d(\beta_{t_j},\beta_T)\leq d(\beta_{t_j},Z)$.

\begin{align*}
T&=T-t_j+\sum_{i=0}^{j-1}(t_{i+1}-t_i)\\
&\geq
  \frac{1}{\cL}\left(d(\beta_{t_j},\beta_T)-d(\beta_{t_j},Z)+\sum_{i=0}^j(d(\beta_{t_i},Z)-\cA)\right)\\
&\geq \frac{1}{\cL}\left(-d(\beta_T,Z)+\sum_{i=0}^j(d(\beta_{t_i},Z)-\cA)\right)\\
\end{align*}

On the other hand:
\begin{align*}
  \frac{T}{\cL}-\cA&\leq d(\beta_0,\beta_T)\\
&\leq d(\beta_0,Z)+\diam \pi(\beta_0)\cup\pi(\beta_T)+d(Z,\beta_T)\\
&\leq d(\beta_0,Z)+d(\beta_T,Z)+\sum_{i=0}^j\rho(d(\beta_{t_i},Z))
\end{align*}
Combining these gives:
  \begin{multline*}
\sum_{i=1}^j \left( d(\beta_{t_i},Z)-\cL^2\rho(d(\beta_{t_i},Z))-\cA \right)\\
\leq d(\beta_T,Z)+\cL^2\left(\cA+d(\beta_0,Z)+d(\beta_T,Z)\right)\\\qquad-\left(d(\beta_{0},Z)-\cL^2\rho(d(\beta_{0},Z))-\cA\right)    
  \end{multline*}
By (\ref{eq:1}), the left-hand side is at least
$L^2\sum_{i=1}^j\rho(d(\beta_{t_i},Z))$, so:
\begin{align*}
  \diam\pi(\beta_0)&\cup\pi(\beta_T)\leq
                                     \sum_{i=0}^j\rho(d(\beta_{t_i},Z))\\
&\leq \frac{L^2+1}{L^2}\left(A+d(\beta_T,Z)\right)+\frac{L^2-1}{L^2}d(\beta_0,Z)+2\rho(d(\beta_0,Z))\qedhere
\end{align*}
\end{proof}

\begin{corollary}\label{cor:asymp}
  Let $Z$ be $\rho$--contracting and let $\beta$ be a continuous
  $(\cL,\cA)$--quasi-geodesic ray with $d(\beta_0, Z)\leq \kappa(\rho,\cL,\cA)$.
There are two possibilities:
\begin{enumerate}
\item 
The set $\{t\mid d(\beta_t,Z)\leq \kappa(\rho,\cL,\cA)\}$ is unbounded and $\beta$ is contained in the
$\kappa'(\rho,\cL,\cA)$--neighborhood of $Z$.\label{item:asymptotic}
\item
There exists a last time $T_0$ such
that $d(\beta_{T_0},Z)=\kappa(\rho,\cL,\cA)$ and:
\begin{equation}
  \label{eq:2} \forall t, \quad
d(\beta_t,Z)\geq \frac{1}{2L}(t-T_0)-2(\cA+\kappa(\rho,\cL,\cA))\tag{$\star$}
\end{equation}
\end{enumerate}
\end{corollary}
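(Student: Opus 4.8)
The plan is to extract the dichotomy directly from the Quasi-geodesic Image Theorem (\fullref{QGIT}), using the contrapositive: if $\beta$ ever gets far from $Z$ then it must escape at a linear rate. First I would set $\kappa := \kappa(\rho,\cL,\cA)$ and $\kappa' := \kappa'(\rho,\cL,\cA)$, and consider the set $S := \{t \mid d(\beta_t, Z) \leq \kappa\}$, which contains $0$ by hypothesis. If $S$ is unbounded, I would deduce case \eqref{item:asymptotic}: for any $t$, pick $t' \geq t$ with $d(\beta_{t'},Z) \leq \kappa$, look at the subsegment $\beta|_{[0,t']}$, and observe that if some point on it exceeded distance $\kappa$ from $Z$, then restricting to the maximal subinterval on which $d(\beta_{\cdot},Z) \geq \kappa$ (with endpoints at distance exactly $\kappa$, using continuity of $\beta$ and of $d(\cdot,Z)$) and applying \fullref{QGIT} to that subsegment would force $\diam \pi(\cdot) \cup \pi(\cdot)$ to be bounded by a constant determined by $\kappa$, $\cL$, $\cA$; combined with the triangle inequality $d(\beta_0, \beta_{t'}) \leq d(\beta_0,Z) + \diam(\cdots) + d(\beta_{t'},Z)$ this bounds how far out $\beta$ can travel, and tracing through the definition of $\kappa'$ in \fullref{def:kappa} one checks $d(\beta_t,Z) \leq \kappa'$. (This is essentially the ``easy consequence'' of the Geodesic Image Theorem stated after \fullref{GIT}, now for quasi-geodesics.)

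If instead $S$ is bounded, let $T_0 := \sup S$; by continuity $d(\beta_{T_0}, Z) = \kappa$ and $d(\beta_t, Z) > \kappa$ for all $t > T_0$, so $T_0$ is the last time with $d(\beta_{T_0},Z) = \kappa$. To prove \eqref{eq:2}, fix $t > T_0$ (the inequality is trivial for $t \leq T_0$ since the right side is then at most $-2(\cA+\kappa) < 0 \leq d(\beta_t,Z)$). Apply \fullref{QGIT} to $\beta|_{[T_0, t]}$: this is a continuous $(\cL,\cA)$--quasi-geodesic segment staying at distance $> \kappa$ from $Z$ on $(T_0,t]$ and distance $= \kappa$ at $T_0$, so its hypothesis is met, giving
\[\diam \pi(\beta_{T_0}) \cup \pi(\beta_t) \leq \tfrac{\cL^2+1}{\cL^2}\bigl(\cA + d(\beta_t,Z)\bigr) + \tfrac{\cL^2-1}{\cL^2} d(\beta_{T_0},Z) + 2\rho(d(\beta_{T_0},Z)).\]
Since $d(\beta_{T_0},Z) = \kappa$ and $3\cL^2\rho(\kappa) \leq \kappa$ by \eqref{eq:1} (as $\kappa \geq \kappa(\rho,\cL,\cA)$), the last three terms are bounded by an affine function of $d(\beta_t,Z)$ with controlled constants, roughly $d(\beta_t,Z) + \cA + 2\kappa$ up to the $\cL$-dependent factors; I would simplify this to, say, $\diam \pi(\beta_{T_0})\cup\pi(\beta_t) \leq 2d(\beta_t,Z) + 2\cA + \kappa$ or whatever clean bound the algebra yields. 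On the other hand, $\frac{1}{\cL}(t - T_0) - \cA \leq d(\beta_{T_0}, \beta_t) \leq d(\beta_{T_0},Z) + \diam\pi(\beta_{T_0})\cup\pi(\beta_t) + d(\beta_t, Z)$. Combining the two displays, $\frac{1}{\cL}(t-T_0) - \cA \leq \kappa + \bigl(2d(\beta_t,Z) + 2\cA + \kappa\bigr) + d(\beta_t,Z)$, i.e. $d(\beta_t,Z) \geq \frac{1}{3\cL}(t-T_0) - \text{const}$; the factor here should sharpen to $\frac{1}{2\cL}$ once I use the precise constant from \fullref{QGIT} rather than a crude bound, and the additive constant should fit under $2(\cA+\kappa)$.

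The main obstacle is purely bookkeeping: making the constants in the second case come out exactly as $\frac{1}{2L}$ and $2(\cA + \kappa(\rho,\cL,\cA))$ rather than something merely comparable. The inequality \eqref{eq:1} from \fullref{def:kappa} is designed precisely to absorb the $\rho$ term and the $\frac{\cL^2-1}{\cL^2}d(\beta_{T_0},Z)$ term at the threshold $\kappa$, so the coefficients should work out, but I expect to need to be careful about whether to apply \fullref{QGIT} on $[T_0,t]$ directly or on a slightly shifted window, and about the $\cL$ versus $\cL^2$ appearing in the quasi-geodesic lower bound $\frac{1}{\cL}(t-T_0) - \cA$. A secondary subtlety is the existence of $T_0$ and the ``last time'' claims: these need continuity of $t \mapsto d(\beta_t, Z)$ (immediate from $\beta$ continuous and $Z$ closed) and the observation that once $d(\beta_t,Z) > \kappa$ it cannot return to $\leq \kappa$ — but the latter is not automatic and is in fact exactly the content of the dichotomy, so I would phrase it as: either $S$ is unbounded (case 1) or $\sup S =: T_0 < \infty$ and then run the argument above on $[T_0, t]$ for every $t > T_0$, noting that the QGIT hypothesis $d(\beta|_{[T_0,t]}, Z) \geq \kappa$ holds because... in fact it need not, so I should instead apply QGIT to the first subinterval of $[T_0,t]$ on which $\beta$ has left and returned to distance $\kappa$, handle that piece, and induct — or, more cleanly, let $T_0$ be the \emph{last} time $d(\beta_{T_0},Z) = \kappa$ (which exists precisely when $S$ is bounded, since $d(\beta_t,Z)\to\infty$ is not assumed but $d(\beta_t,Z) > \kappa$ eventually holds on $(T_0,\infty)$ by definition of $T_0$ as the supremum, using continuity to get equality at $T_0$). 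With $T_0$ so chosen, $d(\beta_s, Z) > \kappa$ for all $s \in (T_0, t]$, the QGIT hypothesis is genuinely satisfied, and the estimate goes through.
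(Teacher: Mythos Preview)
Your proposal is correct and follows essentially the same route as the paper: bound excursions between consecutive $\kappa$--close times via \fullref{QGIT} to get case \eqref{item:asymptotic}, then apply \fullref{QGIT} on $[T_0,t]$ together with the quasi-geodesic lower bound on $d(\beta_{T_0},\beta_t)$ for case \eqref{eq:2}. The one bookkeeping point you flagged is resolved in the paper by applying \fullref{QGIT} to the \emph{reversed} segment (treating $\beta_t$ as the initial point and $\beta_{T_0}$ as the terminal point), so that $d(\beta_t,Z)$ picks up the smaller coefficient $\tfrac{\cL^2-1}{\cL^2}+\tfrac{2}{3\cL^2}=\tfrac{3\cL^2-1}{3\cL^2}$ rather than $\tfrac{\cL^2+1}{\cL^2}$; with your orientation the linear coefficient comes out as $\tfrac{\cL}{2\cL^2+1}<\tfrac{1}{2\cL}$, which is not quite enough.
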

\begin{proof}
Let $\kappa:=\kappa(\rho,\cL,\cA)$.
Let $[a,b]$ be a maximal interval such
 that $d(\beta_t,Z)\geq \kappa$ for $t\in[a,b]$ and
 $d(\beta_{a},Z)=d(\beta_{b},Z)=\kappa$.

For $t\in [a,b]$ we have $d(\beta_t,Z)\leq \kappa+\cL\cdot(b-a)/2
+\cA$.
Since $\beta$ is quasi-geodesic:
\[(b-a)\leq \cL(A+d(\beta_a,\beta_b))\leq \cL(A+2\kappa+\diam
 \pi(\beta_a)\cup\pi(\beta_b))\]

 \fullref{QGIT} implies:
 \begin{align*}
\diam 
 \pi(\beta_a)\cup\pi(\beta_b)&\leq\frac{L^2+1}{L^2}\left(A+\kappa\right)+\frac{L^2-1}{L^2}\kappa+\frac{2\kappa}{3L^2}\\
&=\frac{L^2+1}{L^2}A+\frac{6L^2+2}{3L^2}\kappa
 \end{align*}
Putting these estimates together yields:
\[d(\beta_t,Z)< (\cL^2+2)(2\kappa+\cA)=\kappa'(\rho,\cL,\cA)\]
Thus, once $\beta$ leaves the
$\kappa'(\rho,\cL,\cA)$--neighborhood of $Z$ it can never return
to the $\kappa(\rho,\cL,\cA)$--neighborhood of $Z$.
If $\{t\mid d(\beta_t,Z)\leq \kappa\}$ is unbounded then $\beta$ never
leaves the
$\kappa'(\rho,\cL,\cA)$--neighborhood of $Z$.

Suppose now that there does exist some last time $T_0$ such that
$d(\beta_{T_0},Z)=\kappa$.
Any segment $\beta_{[T_0,t]}$ stays outside
$\nbhd_{\kappa}Z$, so apply \fullref{QGIT} to see:
\begin{align*}
  \frac{t-T_0}{\cL}-\cA&\leq d(\beta_t,\beta_{T_0})\\
&\leq d(\beta_t,Z)+\diam\pi(\beta_t)\cup\pi(\beta_{T_0})+\kappa\\
&\leq \frac{6\cL^2-1}{3\cL^2}d(\beta_t,Z)+\frac{\cL^2+1}{\cL^2}(\cA+\kappa)+\kappa
\end{align*}
Thus:
\[d(\beta_t,Z)\geq\frac{3L}{6L^2-1}(t-T_0)-\frac{6L^2+3}{6L^2-1}(\cA+\kappa)\qedhere\]
\end{proof}

\begin{lemma}\label{lem:hausdorff}
Suppose $\alpha$ is a continuous, $\rho$--contracting
$(\cL,\cA)$--quasi-geodesic and $\beta$ is a continuous
$(\cL,\cA)$--quasi-geodesic ray such that $d(\alpha_0,\beta_0)\leq
\kappa(\rho,\cL,\cA)$.
If there are $r,\,s\in [0,\infty)$ such that $d(\alpha_r,\beta_s)\leq
\kappa(\rho,\cL,\cA)$ then $d_{Haus}(\alpha_{[0,r]},\beta_{[0,s]})\leq
\kappa'(\rho,\cL,\cA)$.
If $\alpha_{[0,\infty)}$ and $\beta_{[0,\infty)}$ are asymptotic then
their Hausdorff distance is at most $\kappa'(\rho,\cL,\cA)$.
\end{lemma}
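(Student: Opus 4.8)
The main tool is \fullref{cor:asymp}, applied to the ray $\beta$ with $Z:=\alpha$; writing $\kappa:=\kappa(\rho,\cL,\cA)$ and $\kappa':=\kappa'(\rho,\cL,\cA)$, this is legitimate since $\alpha$ is $\rho$--contracting and $d(\beta_0,\alpha)\le d(\beta_0,\alpha_0)\le\kappa$. I claim that in every case the relevant initial portion of $\beta$ lies in $\bar{N}_{\kappa'}\alpha$. In the asymptotic case the escape alternative of \fullref{cor:asymp} is impossible, since \eqref{eq:2} would force $d(\beta_t,\alpha)\to\infty$, contradicting boundedness of $d_{Haus}(\alpha,\beta)$; so the first alternative holds and $\beta\subseteq\bar{N}_{\kappa'}\alpha$. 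In the finite case ($d(\alpha_r,\beta_s)\le\kappa$), if the first alternative holds then again $\beta_{[0,s]}\subseteq\bar{N}_{\kappa'}\alpha$; if the escape alternative holds, let $T_0$ be the last time with $d(\beta_{T_0},\alpha)=\kappa$, note that \eqref{eq:2} gives $d(\beta_t,\alpha)>\kappa$ for every $t>T_0$, so $d(\beta_s,\alpha)\le d(\beta_s,\alpha_r)\le\kappa$ forces $s\le T_0$, and the observation inside the proof of \fullref{cor:asymp} that $\beta$ never re-enters $\bar{N}_{\kappa}\alpha$ after leaving $\bar{N}_{\kappa'}\alpha$ gives $\beta_{[0,s]}\subseteq\beta_{[0,T_0]}\subseteq\bar{N}_{\kappa'}\alpha$.

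It remains to upgrade this to the two inclusions $\beta_{[0,s]}\subseteq\bar{N}_{\kappa'}\alpha_{[0,r]}$ and $\alpha_{[0,r]}\subseteq\bar{N}_{\kappa'}\beta_{[0,s]}$, which together are exactly the assertion $d_{Haus}(\alpha_{[0,r]},\beta_{[0,s]})\le\kappa'$ (and similarly in the asymptotic case, where one takes $r=s=\infty$). The idea is to track the closest-point projection $\pi:=\pi_\alpha$ along $\beta$: applying \fullref{QGIT} to the excursions of $\beta$ away from $\alpha$ — the maximal subintervals on which $\beta$ stays at distance at least $\kappa$ from $\alpha$ — exactly as in the proof of \fullref{cor:asymp}, one bounds how far $\pi(\beta_t)$ can travel before $\beta$ returns to within $\kappa$ of $\alpha$, so that $t\mapsto\pi(\beta_t)$ is coarsely monotone and runs from $\pi(\beta_0)$ (within a controlled distance of $\alpha_0$, using $d(\beta_0,\alpha_0)\le\kappa$ and that $\alpha$ is a quasi-geodesic) to $\pi(\beta_s)$ (within a controlled distance of $\alpha_r$), running all the way out to infinity along $\alpha$ in the asymptotic case. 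Coarse monotonicity together with the endpoint matching rules out $\beta$ overshooting $\alpha_r$ by more than $\kappa'$ — such an overshoot would require $\beta$ to backtrack inside $\bar{N}_{\kappa'}\alpha$ by an amount incompatible with its $(\cL,\cA)$--quasi-geodesic constants — and at the same time shows that $\pi(\beta_{[0,s]})$ comes within $\kappa'$ of every point of $\alpha_{[0,r]}$; the two inclusions follow.

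The main obstacle is this last step. Because the contraction function $\rho$ is only sublinear and not bounded, $\pi$ is merely coarsely Lipschitz along $\beta$ with a modulus that grows with $d(\cdot,\alpha)$; consequently the coarse monotonicity and the no-overshoot bound are not one-line estimates but must be assembled by chaining the \fullref{QGIT} inequality over successive excursions, mirroring the computation in \fullref{cor:asymp} closely enough that the constants land on exactly $\kappa'$ rather than some larger constant. One should also dispose separately of the trivial case $\alpha_{[0,r]}=\alpha$, which can occur when $\alpha$ is a segment and there is then no overshoot to rule out.
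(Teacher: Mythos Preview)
Your first paragraph is correct and matches the paper's invocation of \fullref{cor:asymp}.

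The remainder, however, is not a proof but a programme, as your third paragraph concedes. You assert ``coarse monotonicity'' of the set-valued map $t\mapsto\pi_\alpha(\beta_t)$ and a no-overshoot bound without establishing either, and you flag that getting the constants to land on exactly $\kappa'$ is ``the main obstacle.'' That obstacle is genuine: a naive backtracking-versus-quasi-geodesic argument of the sort you sketch produces a bound involving $L$ and $A$ multiplicatively, not $\kappa'$ on the nose, and monotonicity of a set-valued projection along a quasi-geodesic near a merely sublinearly contracting set is neither obvious nor, as you have stated it, even well-defined.

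The paper's route for the reverse containment is different and bypasses monotonicity entirely. It bounds the \emph{gaps} in the static image $\pi_\alpha(\beta\cap\bar N_\kappa\alpha)$: if $(a,b)$ is a maximal open subinterval of the domain of $\alpha$ missed by this image, then the gap arises either from a jump of $\pi_\alpha$ while $\beta$ remains inside $\bar N_\kappa\alpha$ (forcing $d(\alpha_a,\alpha_b)\le 2\kappa$) or from an excursion of $\beta$ outside $\bar N_\kappa\alpha$, where \fullref{QGIT} with both endpoints at height $\kappa$ bounds $d(\alpha_a,\alpha_b)$ explicitly. For $c\in(a,b)$ one then has
\[
d(\alpha_c,\beta)\le\kappa+\min\{d(\alpha_a,\alpha_c),d(\alpha_b,\alpha_c)\}\le\kappa+A+\tfrac{L}{2}(b-a)\le\kappa+A+\tfrac{L}{2}\bigl(LA+L\,d(\alpha_a,\alpha_b)\bigr),
\]
and substituting the gap bound yields a quantity below $\kappa'$. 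This single static estimate per gap is precisely the computation you are missing; it replaces all of your dynamic projection-tracking and makes the constant come out right without any appeal to monotonicity.
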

\begin{proof}
 
\fullref{cor:asymp} \eqref{item:asymptotic} reduces the asymptotic
case to the bounded case and shows that $\beta$ is contained in
  $\bar{N}_{\kappa'(\rho,\cL,\cA)}\alpha$. 

For the other direction, suppose that $(a,b)$ is a maximal open subinterval of the domain of $\alpha$ such
that $\alpha_{(a,b)}\cap\pi_\alpha(\beta\cap\bar{N}_{\kappa(\rho,\cL,\cA)}\alpha)=\emptyset$. 
For subsegments of $\beta$ contained in
$\bar{N}_{\kappa(\rho,\cL,\cA)}\alpha$ the projection to $\alpha$ has
jumps of size at most $2\kappa(\rho,\cL,\cA)$.
For subsegments of $\beta$ outside
$\bar{N}_{\kappa(\rho,\cL,\cA)}\alpha$ the largest possible gap in the
projection is bounded by \fullref{QGIT} by:
\begin{align}  \label{eq:14}
  d(\alpha_a,\alpha_b)&\leq\frac{\cL^2+1}{\cL^2}(\cA+\kappa(\rho,\cL,\cA))+\frac{\cL^2-1}{\cL^2}\kappa(\rho,\cL,\cA)+2\rho(\kappa(\rho,\cL,\cA))\\\notag
&\leq
\frac{\cL^2+1}{\cL^2}(\cA+\kappa(\rho,\cL,\cA))+\frac{\cL^2-1}{\cL^2}\kappa(\rho,\cL,\cA)+\frac{2\kappa(\rho,\cL,\cA)}{3\cL^2}\\\notag
&=\frac{\cL^2+1}{\cL^2}\cA+\frac{6\cL^2+2}{6\cL^2}\cdot 2\kappa(\rho,\cL,\cA) 
\end{align}
This is greater than $2\kappa(\rho,\cL,\cA)$.

In either case, for $c\in (a,b)$ we have:
\begin{align}  \label{eq:15a}
  d(\alpha_c,\beta)&\leq
\kappa(\rho,\cL,\cA)+\min\{d(\alpha_a,\alpha_c),\,d(\alpha_b,\alpha_c)\}\\\notag
&\leq \kappa(\rho,\cL,\cA)+\cA+\cL\frac{b-a}{2}\\\notag
&\leq \kappa(\rho,\cL,\cA)+\cA+\cL\frac{\cL\cA+\cL d(\alpha_a,\alpha_b)}{2}
\end{align}
 Substitute \eqref{eq:14} into \eqref{eq:15a} and observe that the
 resulting bound is less than $\kappa'(\rho,\cL,\cA)$, which was
 defined to be $(\cL^2+2)(\cA+2\kappa(\rho,\cL,\cA))$.
\end{proof}

\begin{lemma}\label{cor:uniformcontraction}
If $\alpha$ is a $\rho$--contracting geodesic ray and $\beta$ is a
continuous $(\cL,\cA)$--quasi-geodesic ray asymptotic to
$\alpha$ with $\alpha_0=\beta_0$ then $\beta$ is $\rho'$--contracting
where $\rho'\asymp \rho$ depends only on $\rho$, $\cL$, and $\cA$. 
\end{lemma}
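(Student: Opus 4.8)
The plan is to deduce $\rho'$--contraction of $\beta$ from $\rho$--contraction of $\alpha$ via a controlled Hausdorff distance estimate, combined with \fullref{Hausdorff}. Since $\alpha$ and $\beta$ are asymptotic quasi-geodesic rays with common basepoint, and $\alpha$ is $\rho$--contracting, \fullref{lem:hausdorff} (the asymptotic case) immediately gives $d_{Haus}(\alpha,\beta)\leq\kappa'(\rho,\cL,\cA)$, a bound depending only on $\rho$, $\cL$, and $\cA$. So $\beta$ lies within bounded Hausdorff distance of a $\rho$--contracting set. The only issue is that \fullref{Hausdorff} is stated for \emph{arbitrary} subsets $Z,Z'$ at bounded Hausdorff distance, and so in principle already does the job once we know $\alpha$ is $\rho$--contracting as a \emph{set}; hence one would want to conclude $\beta$ is $\rho'$--contracting with $\rho'\asymp\rho$ depending on $\rho$, $\cL$, $\cA$, since the Hausdorff bound $\kappa'(\rho,\cL,\cA)$ depends on exactly those data.

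First I would invoke \fullref{lem:hausdorff}: because $\alpha_0=\beta_0$ (so certainly $d(\alpha_0,\beta_0)\leq\kappa(\rho,\cL,\cA)$) and $\alpha$, $\beta$ are asymptotic, we get $d_{Haus}(\alpha,\beta)\leq\kappa'(\rho,\cL,\cA)=:C$. Then I would apply \fullref{Hausdorff} with this constant $C$: since $\alpha$ is $\rho$--contracting and $d_{Haus}(\alpha,\beta)\leq C$, there is a sublinear $\rho'\asymp\rho$, depending only on $\rho$ and $C$, hence only on $\rho$, $\cL$, $\cA$, such that $\beta$ is $\rho'$--contracting. That is the whole argument.

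The one subtlety worth flagging — and the place I'd expect the actual proof to spend a sentence — is the hypothesis of \fullref{lem:hausdorff} that $\alpha$ be a \emph{continuous} $(\cL,\cA)$--quasi-geodesic, whereas here $\alpha$ is a geodesic; but a geodesic is continuous and is a $(\cL,\cA)$--quasi-geodesic for any $\cL\geq 1$, $\cA\geq 0$ (in particular one may take the same $(\cL,\cA)$ as for $\beta$), so $\kappa(\rho,\cL,\cA)$ and $\kappa'(\rho,\cL,\cA)$ are legitimate constants to use and the lemma applies verbatim. Similarly one should note $\alpha$ being $\rho$--contracting as a geodesic is exactly $\rho$--contracting as a subset, which is the input \fullref{Hausdorff} wants. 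No genuine obstacle remains; this lemma is really a one-line corollary packaging \fullref{lem:hausdorff} together with \fullref{Hausdorff}.
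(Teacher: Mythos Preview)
Your proposal is correct and follows exactly the paper's approach: the paper's proof is literally the one-liner that \fullref{lem:hausdorff} bounds the Hausdorff distance between $\alpha$ and $\beta$ in terms of $\rho$, $\cL$, $\cA$, and then \fullref{Hausdorff} gives the desired $\rho'$. Your additional remarks about why the hypotheses of \fullref{lem:hausdorff} apply are correct but not needed in the paper's terse version.
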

\begin{proof}
\fullref{lem:hausdorff} says the Hausdorff distance between $\alpha$ and
$\beta$ is bounded in terms of $\rho$, $L$, and $A$, so the claim follows from \fullref{Hausdorff}.
\end{proof} 

The next lemma gives the key divagation estimate, which gives us lower
bounds on fellow-travelling distance.
\begin{lemma}\label{keylemma}
  Let $\alpha$ be a $\rho$-contracting geodesic ray, and let $\beta$
  be a continuous
  $(\cL,\cA)$--quasi-geodesic ray with
  $\alpha_0=\beta_0=\bp$.
Given some $R$ and $J$, suppose there exists a point $x\in\alpha$ with
$d(x,\bp)\geq R$ and $d(x,\beta)\leq J$.
Let $y$ be the last point on the subsegment of $\alpha$ between $\bp$
and $x$ such that
$d(y,\beta)=\kappa(\rho,\cL,\cA)$.
There is a constant $\theconstantformerlyknownasLambda\leq 2$ and a function $\lambda(\phi,p,q)$
defined for sublinear $\phi$,
$p\geq 1$, and $q\geq 0$ such that $\lambda$ is monotonically
increasing in $p$ and $q$ and:
\[d(x,y) \leq \theconstantformerlyknownasLambda J + \lambda(\rho,\cL,\cA)\]
Thus:
\[d(\bp,y)\geq R-\theconstantformerlyknownasLambda J-\lambda(\rho,\cL,\cA)\]
\end{lemma}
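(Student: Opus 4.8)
The plan is to extract from the hypotheses a controlled quasi-geodesic segment of $\beta$ joining a point near $\bp$ to a point near $x$, and apply the Quasi-geodesic Image Theorem (\fullref{QGIT}) to bound how far along $\alpha$ this segment projects. First I would locate the relevant piece of $\beta$: since $\alpha_0 = \beta_0 = \bp$ and $d(x,\beta) \le J$, there is a point $x'\in\beta$ with $d(x,x')\le J$; let $\beta|_{[0,s]}$ be the subsegment of $\beta$ from $\bp$ to $x'$. The point $y$ is, by definition, the last point on $\alpha|_{[\bp,x]}$ with $d(y,\beta)=\kappa(\rho,\cL,\cA)$; since $d(\bp,\beta)=0<\kappa$, such a point exists provided $d(x,\beta)\le \kappa$ fails somewhere, and if $d(x,\beta)$ never exceeds $\kappa$ then by \fullref{cor:asymp}\eqref{item:asymptotic} the whole relevant arc stays in the $\kappa'$-neighborhood and the bound is immediate with $\lambda$ absorbing the constants. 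So assume $y$ is a genuine transition point, and let $\beta|_{[s_y, s]}$ (or the appropriate subsegment) be the portion of $\beta$ after its last $\kappa$-close approach to $y$; this segment stays at distance $\ge\kappa(\rho,\cL,\cA)$ from $\alpha$, so \fullref{QGIT} applies to it.

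The key estimate comes from running \fullref{QGIT} on this escaping segment of $\beta$ together with the fact that its endpoints are controlled: one endpoint is $\kappa$-close to $y\in\alpha$, and the other is within $J$ of $x\in\alpha$. Thus $\diam \pi_\alpha(\text{this segment}) \ge d(y,x) - O(\kappa) - O(J)$, since $\pi_\alpha$ of a point $\kappa$-close to $y$ is within $\rho(\kappa)+\kappa$ of $y$, and similarly $\pi_\alpha(x')$ is within $\rho(J+d(x,\alpha))+\cdots$ — wait, more carefully: $x$ itself lies on $\alpha$, so $\pi_\alpha(x')$ is within $\rho(d(x',\alpha))+d(x,x')\le \rho(J)+J$ of $x=\pi_\alpha(x)$ using contraction of $\alpha$ applied to $x,x'$ (legitimate since $d(x,x')\le J\le d(x',\alpha)+J$, and if $d(x',\alpha)$ is large this needs the triangle-style argument, if small it's trivial). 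On the other hand, \fullref{QGIT} bounds $\diam\pi_\alpha(\beta|_{[s_y,s]})$ from above in terms of $d(\beta_{s_y},\alpha)$, which is $\le\kappa$, and $d(\beta_s,\alpha)=d(x',\alpha)\le d(x,x')\le J$: the $\cL^2$-dependent linear-in-$J$ term is exactly the source of the constant $\theconstantformerlyknownasLambda\le 2$, since the coefficient $\tfrac{\cL^2+1}{\cL^2}\le 2$, while all the $\kappa$- and $\cA$- and $\rho(\kappa)$-terms get swept into $\lambda(\rho,\cL,\cA)$. Combining the upper and lower bounds on the projection diameter gives $d(y,x) \le \theconstantformerlyknownasLambda J + \lambda(\rho,\cL,\cA)$ with $\lambda$ monotone in $\cL,\cA$ by inspection.

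Finally, $d(\bp,y)\ge d(\bp,x) - d(x,y)\ge R - \theconstantformerlyknownasLambda J - \lambda(\rho,\cL,\cA)$ since $y$ lies on the geodesic $\alpha$ between $\bp$ and $x$, so $d(\bp,y)=d(\bp,x)-d(y,x)\ge R - d(x,y)$.

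The main obstacle I anticipate is bookkeeping around the case analysis: distinguishing whether $\beta$ ever leaves $\bar N_\kappa \alpha$ at all between $\bp$ and its approach to $x$, handling the possibility that $d(x',\alpha)$ is comparable to or larger than $J$ when relating $\pi_\alpha(x')$ to $x$ (one either invokes contraction of $\alpha$ directly, or notes $d(x,x')\le J$ places $x'$ crudely within $J$ of $\alpha$ so its projection is within $2J$ of $x$ by the triangle inequality, avoiding contraction entirely), and making sure the \fullref{QGIT} bound is applied to a segment that genuinely stays $\kappa$-far from $\alpha$ — which is where \fullref{cor:asymp}'s "can never return" conclusion is needed to guarantee $[s_y,s]$ is a single escaping arc rather than several. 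None of these is deep; the real content is simply that \fullref{QGIT} turns the quasi-geodesic's $\cL^2$-distortion into the factor $\theconstantformerlyknownasLambda\le 2$ in front of $J$.
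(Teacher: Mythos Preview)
Your overall strategy matches the paper's: isolate the arc of $\beta$ that has already escaped the $\kappa$--neighborhood of $\alpha$ and apply \fullref{QGIT} to bound how far its projection to $\alpha$ can travel. The difficulty is purely in the constant. Your route from $\pi_\alpha(x')$ back to $x$ uses the triangle inequality $d(\pi_\alpha(x'),x)\le d(x',\alpha)+d(x',x)\le 2J$, and this $2J$ is \emph{added} to the projection--diameter bound from \fullref{QGIT}, whose $J$--coefficient is $\tfrac{\cL^2+1}{\cL^2}$. The net coefficient of $J$ in your estimate for $d(x,y)$ is therefore at least $2+\tfrac{\cL^2+1}{\cL^2}$, so you obtain $M\le 4$, not the asserted $M\le 2$.

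The paper avoids this loss by never projecting $x'$ at all. Instead, take a geodesic from $\beta_b$ (your $x'$) to $x$, let $w$ be its first entry into $\bar N_{\kappa(\rho,1,0)}\alpha$, and apply \fullref{QGIT} a second time (now with $\cL=1$, $\cA=0$) to the segment from $\beta_b$ to $w$. The point is that $d(w,x)=J-d(\beta_b,w)$ since $w$ lies on a geodesic of length $J$, and $d(\beta_b,w)$ is essentially $d(\beta_b,\alpha)$ up to additive constants. This produces a \emph{negative} term $-d(\beta_b,\alpha)$ in the estimate for $d(y,x)$, which partially cancels the positive $\tfrac{\cL^2-1}{\cL^2}d(\beta_b,\alpha)$ and $\rho(d(\beta_b,\alpha))$ terms coming from the first application of \fullref{QGIT} (oriented with $\beta_b$ as the far endpoint). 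After the cancellation the surviving coefficient of $J$ is exactly $\tfrac{\cL^2+1}{\cL^2}\le 2$. So the missing idea is this second projection via a geodesic rather than a crude triangle bound; without it you prove the lemma only with a weaker constant (which would still suffice for every downstream application after adjusting constants, but does not prove the statement as written).

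A secondary imprecision: you define $s_y$ as the time of ``last $\kappa$-close approach to $y$'', but what you need is the last time $\beta$ is $\kappa$-close to $\alpha$ (or to $\alpha$ restricted between $\bp$ and $x$); these are different conditions, and relating the resulting point to $y$ requires a short argument (the paper introduces an auxiliary $y'$ for this). This is fixable but should be made precise.
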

\begin{proof}
If $d(x,\beta)\leq \kappa(\rho,\cL,\cA)$ then $y=x$ and we are done.
Otherwise, let $a$ be the last time such that
$\beta_a$ is $\kappa(\rho,\cL,\cA)$--close to $\alpha$ between $\bp$ and $x$,
and let $y' \in \alpha$ be the last point of $\alpha$ with
$d(\beta_a,y')=\kappa(\rho,\cL,\cA)$.
Note $d(y,x)\leq d(y',x)$.

Now let $b$ be the first
time such that $d(\beta_b,x)=J$.
The subsegment $\beta_{[a,b]}$ stays outside
$N_{\kappa(\rho,\cL,\cA)}\alpha$.
Pick a geodesic from $\beta_b$ to $x$ and let $w$ be the first point
such that $d(w,\alpha)=\kappa(\rho,1,0)$.
Pick $z\in\pi(\beta_b)$ and $v\in\pi(w)$, and let $W:=d(\beta_b,w)$, $Y:=d(y', z)$,
$Z:=d(z,v)$, and $X:=d(v,x)$, see \fullref{fig:keylemma}.

\begin{figure}[h]
  \centering
\labellist
\tiny
\pinlabel $\geq R$ [t] at 169 0
\pinlabel $Y$ [t] at 132 46
\pinlabel $Z$ [t] at 231 46
\pinlabel $X$ [t] at 289 46
\pinlabel $\leq J$ [bl] at 280 110
\pinlabel $W$ [l] at 245 98
\pinlabel $\kappa(\rho,\cL,\cA)$ [r] at 0 72
\pinlabel $\kappa(\rho,1,0)$ [l] at 335 63
\small
\pinlabel $y'$ [t] at 60 53
\pinlabel $y$ [t] at 90 50
\pinlabel $z$ [tr] at 210 44
\pinlabel $v$ [br] at 233 70
\pinlabel $w$ [bl] at 265 70
\pinlabel $\beta_a$ [br] at 64 88
\pinlabel $\beta_b$ [br] at 208 145
\pinlabel $\bp$ [r] at 1 58
\pinlabel $x$ [l] at 340 49
\pinlabel $\alpha$ [t] at 35 59
\pinlabel $\beta$ [br] at 119 99
\endlabellist
  \includegraphics[width=.6\textwidth]{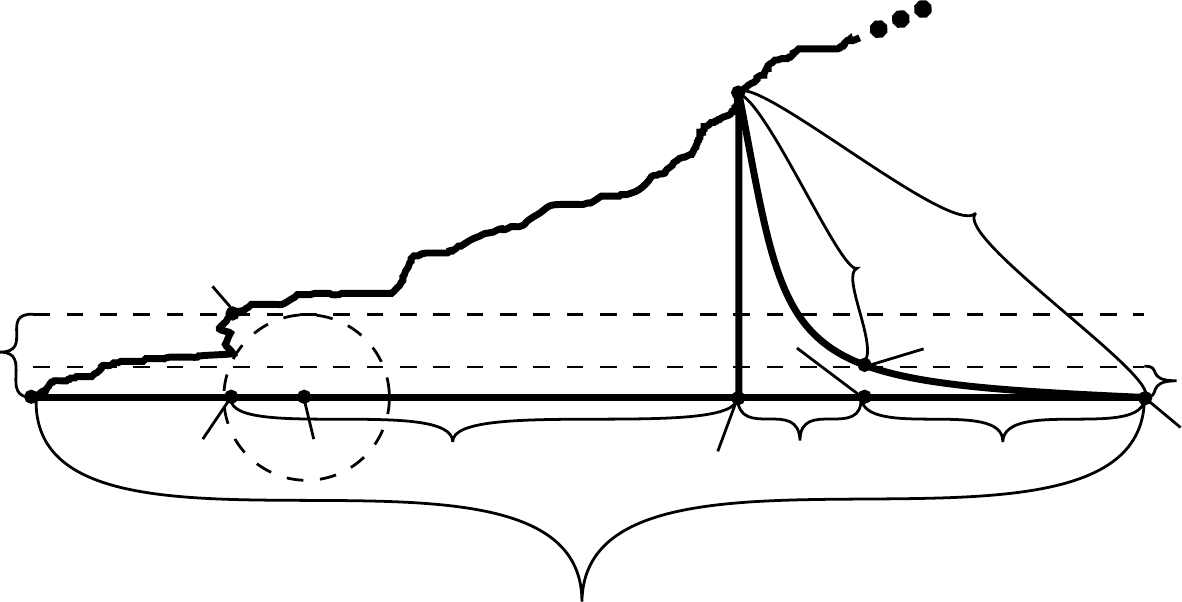}
  \caption{Setup for \fullref{keylemma}}
  \label{fig:keylemma}
\end{figure}

We have $W\geq d(\beta_b,\alpha)-Z-\kappa(\rho,1,0)$, and $X\leq
J-W+\kappa(\rho,1,0)$, so:
\begin{align*}
  d(y',x)&\leq X+Y+Z\\
&\leq Y+Z+J-d(\beta_b,\alpha)+Z+2\kappa(\rho,1,0)
\end{align*}
Apply \fullref{QGIT} to the subsegment of $\gamma$ between $\gamma_a$ to
$\gamma_b$ to bound $Y$. 
Apply \fullref{QGIT} to the subsegment of the chosen geodesic from
$\beta_b$ to $x$ between $\beta_b$ and $w$ to bound $Z$. (Note in the
latter case that we are applying \fullref{QGIT} to a geodesic, so use $\cL=1$
and $\cA=0$ for this case.)
Combining these bounds for $Y$ and $Z$ with the bound on $d(x,y')$
above yields:
\[d(y',x)\leq J+6\kappa(\rho,1,0)+\frac{\cL^2+1}{\cL^2}(\cA+\kappa(\rho,\cL,\cA))-\frac{1}{L^2}d(\beta_b,\alpha)+6\rho(d(\beta_b,\alpha))\]   
Now use the facts that $\rho(d(\beta_b,\alpha))\leq
\frac{d(\beta_b,\alpha)}{3L^2}$ and $d(\beta_b,\alpha)\leq J$ to
achieve:
\begin{align*}
  d(y,x)\leq d(y',x)&\leq
\frac{L^2+1}{L^2}J+6\kappa(\rho,1,0)+\frac{\cL^2+1}{\cL^2}(\cA+\kappa(\rho,\cL,\cA))\\
&\leq 2J+6\kappa(\rho,1,0)+2(\cA+\kappa(\rho,\cL,\cA))
\end{align*}

Set
$\theconstantformerlyknownasLambda:=2$ and $\lambda(\phi,p,q):=6\kappa(\phi,1,0)+2(q+\kappa(\phi,p,q))$.
\end{proof}

Here is an application of \fullref{keylemma} that we will use in
\fullref{sec:dynamics}.

\begin{lemma}\label{lem:nonasymptoticmakesquasigeodesic}
Given a sublinear function $\rho$ and constants $\cL\geq 1$,
$\cA\geq 0$ there exist constants $\cL'\geq 1$ and $\cA'\geq 0$
such that if $\alpha$ is a $\rho$-contracting geodesic ray or segment and $\beta$
  is a continuous
  $(\cL,\cA)$--quasi-geodesic ray not asymptotic to $\alpha$ with
  $\alpha_0=\beta_0=\bp$, then we obtain a continuous
  $(\cL',\cA')$--quasi-geodesic by following $\alpha$ backward until
  $\alpha_{s_0}$, 
  then following a geodesic from $\alpha_{s_0}$ to $\beta_{t_0}$,
  then following $\beta$, where $\beta_{t_0}$ is the last
  point of $\beta$ at distance $\kappa(\rho,\cL,\cA)$ from $\alpha$,
  and where $\alpha_{s_0}$ is the last point of $\alpha$ at distance
  $\kappa(\rho,\cL,\cA)$ from $\beta_{t_0}$.
\end{lemma}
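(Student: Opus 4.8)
The claim is that the path obtained by concatenating a backward subray of $\alpha$, a geodesic "bridge," and a forward subray of $\beta$ is itself a quasi-geodesic, with constants depending only on $\rho$, $\cL$, $\cA$. The plan is to verify the quasi-geodesic inequality for the arclength parametrization of the concatenation $\gamma := \bar\alpha_{[0,s_0]} + [\alpha_{s_0}, \beta_{t_0}] + \beta_{[t_0,\infty)}$ by checking it for pairs of points according to which of the three pieces they lie on. The upper bound (Lipschitz-type estimate) is immediate since each piece is an $(\cL,\cA)$-quasi-geodesic — or a geodesic — and the bridge has length $d(\alpha_{s_0},\beta_{t_0}) = \kappa(\rho,\cL,\cA)$, a constant; concatenating finitely many such pieces with bounded-length middle piece gives a Lipschitz bound with additive error absorbing the bridge length. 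The substance is the lower bound: if $p$ lies on the $\alpha$-piece at parameter $s$ (so $\gamma$-parameter is $s_0 - s$ measured from the bridge) and $q$ lies on the $\beta$-piece at parameter $t$, I must show $d(p,q)$ is bounded below by a fixed fraction of $(s_0 - s) + \kappa(\rho,\cL,\cA) + (t - t_0) \asymp d_\gamma(p,q)$, minus a constant.

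**Key steps.** First, the case where both points lie on the same piece is trivial. Second, for the mixed case the essential point is that once $\beta$ passes $\beta_{t_0}$, \fullref{cor:asymp}\eqref{item:asymptotic} must fail (since $\beta$ is not asymptotic to $\alpha$), so we are in case \eqref{item:asymp}: $T_0 = t_0$ is the last time $d(\beta_t,\alpha) = \kappa(\rho,\cL,\cA)$, and $(\star)$ gives $d(\beta_t,\alpha) \geq \frac{1}{2\cL}(t-t_0) - 2(\cA + \kappa(\rho,\cL,\cA))$. So a point $q = \beta_t$ far out along $\beta$ is far from all of $\alpha$, in particular from $p$, which lies on $\alpha$. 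This handles $d(p,q)$ from below by $\frac{1}{2\cL}(t - t_0)$ up to a constant. Third, to also recover the $(s_0 - s)$ term — i.e., to see $q$ is far from $p$ when $p$ is deep along the $\alpha$-piece but $t$ is only moderately past $t_0$ — I would invoke \fullref{keylemma}: it controls how far $\beta$ can reach back along $\alpha$, bounding $d(x,y)$ for any $x \in \alpha$ that $\beta$ comes within $J$ of. Setting $J = d(\beta,p)$ and $x = p$, \fullref{keylemma} forces $d(\bp, \beta) \geq d(\bp,p) - \theconstantformerlyknownasLambda\, d(\beta,p) - \lambda(\rho,\cL,\cA)$, i.e., if $\beta$ were close to $p$ then $\beta$'s closest approach to $\bp$ would already be at parameter $\geq d(\bp,p) - O(1)$ along $\alpha$, forcing $t - t_0$ to be comparably large by the quasi-geodesic property of $\beta$ and the Hausdorff bound of \fullref{lem:hausdorff} applied up to time $t_0$. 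Combining the two regimes (whichever of $(s_0-s)$, $(t-t_0)$ dominates, that term is recovered up to a uniform multiplicative constant), I get the lower bound $d(p,q) \geq \frac{1}{C} d_\gamma(p,q) - C$ for a constant $C = C(\rho,\cL,\cA)$, and similarly when $p$ is on the bridge or $q$ is on the bridge, using that the bridge has bounded length. Setting $\cL' := C$, $\cA' := C$ completes the proof; continuity is clear since each piece is continuous and consecutive pieces share an endpoint.

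**Main obstacle.** The delicate point is the simultaneous recovery of both the $(s_0 - s)$ and $(t - t_0)$ contributions to $d_\gamma(p,q)$ from the single quantity $d(p,q)$: a priori $\beta$ could "waste" a lot of its length loitering near $\alpha$ before $\beta_{t_0}$, so the correspondence between the $\beta$-parameter $t$ and geometric distance from $\bp$ is only coarse. This is exactly what \fullref{lem:hausdorff} and \fullref{keylemma} are designed to handle — the former pins $\beta_{[0,t_0]}$ to within $\kappa'(\rho,\cL,\cA)$ of $\alpha_{[0,r]}$ for the matching $r$, and the latter bounds the "backtracking" — so the argument should go through, but bookkeeping the constants through the case analysis and making sure monotonicity of $\lambda$ is used correctly (so the final $\cL',\cA'$ genuinely depend only on $\rho,\cL,\cA$) is where care is needed.
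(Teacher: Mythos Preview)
Your plan is correct and matches the paper's approach: verify the lower quasi-geodesic bound for a point $\alpha_s$ (with $s\geq s_0$) against a point $\beta_t$ (with $t\geq t_0$), using \fullref{cor:asymp}~\eqref{eq:2} to get $d(\alpha_s,\beta_t)\gtrsim (t-t_0)$ and \fullref{keylemma} to get $d(\alpha_s,\beta_t)\gtrsim (s-s_0)$, while absorbing the bounded-length bridge into the additive constant. The paper carries this out slightly more cleanly by simply \emph{averaging} the two lower bounds (yielding explicit constants $\cL'=4\cL$ and a concrete $\cA'$) rather than splitting into ``whichever of $(s-s_0)$, $(t-t_0)$ dominates''; it also does not need \fullref{lem:hausdorff}. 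Your invocation of \fullref{keylemma} is stated a bit loosely (the displayed inequality ``$d(\bp,\beta)\geq\dots$'' should read $d(\bp,y)\geq\dots$, since $\beta_0=\bp$), but the intended content---that $s-s_0\leq \theconstantformerlyknownasLambda\, d(\alpha_s,\beta_t)+\lambda(\rho,\cL,\cA)$---is exactly what the paper uses.
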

\begin{proof}
Define $\kappa:=\kappa(\rho,\cL,\cA)$ and $\theconstantformerlyknownasLambda$ and
$\lambda:=\lambda(\rho,\cL,\cA)$ from \fullref{keylemma}. Recall
$\theconstantformerlyknownasLambda\leq 2\leq 2\cL$.
It suffices to take 
$\cA':=\left(\frac{(4\cL+1)\kappa+\lambda}{4\cL}+A\right)$
and $\cL':=4\cL$.
Since we have constructed a concatenation of three quasi-geodesic
segments, it suffices to check that points on different 
segments are not too close together.
Since $\cA'>\cA+\kappa$ we may ignore the short middle
segment.
Thus, we need to check for
$s\geq s_0$ and $t\geq t_0$ that
$d(\alpha_s,\beta_t)\geq\frac{s-s_0+t-t_0+\kappa}{\cL'}-\cA'$.

For such $s$ and $t$, let $x:=\alpha_s$, $y:=\alpha_{s_0}$, and
$z:=\beta_t$.
By \fullref{keylemma}, $s-s_0=d(x,y)\leq \theconstantformerlyknownasLambda d(x,z)+\lambda<2\cL d(x,z)+\lambda$.
Choose some point $z'\in\pi_{\alpha}(z)$.
By \fullref{cor:asymp} (\ref{eq:2}) we have $d(z,x)\geq d(z,z')\geq \frac{t-t_0}{2\cL}-2(\cA+\kappa)$.
Now average these two lower bounds for $d(x,z)$:
\begin{align*}d(\alpha_s,\beta_t)&=d(x,z)\geq \frac{1}{2}\left(
  \frac{s-s_0}{2\cL}-\frac{\lambda}{2\cL}+\frac{t-t_0}{2\cL}-2(A+\kappa)\right)\\
&\geq \frac{s-s_0+t-t_0+\kappa}{4\cL}-\left(\frac{\lambda}{4\cL}+\frac{4\cL+1}{4\cL}\kappa+A\right)\qedhere
\end{align*}
\end{proof}

To close this section we give the stronger formulation of the
Quasi-geodesic Image Theorem:
\begin{theorem}\label{QGIT2}
Given a sublinear function $\rho$ and constants $\cL\geq 1$ and
$\cA\geq 0$ there is a sublinear function $\rho'$ such that if $Z$ is
 $\rho$--contracting and $\beta\from [0,T]\to X$ is a continuous
$(\cL,\cA)$--quasi-geodesic segment with  $d(\beta,Z)=d(\beta_T,Z)=
\kappa(\rho,\cL,\cA)$ then $\diam\pi(\beta_0)\cup\pi(\beta_T)\leq\rho'(d(\beta_0,Z))$. 
\end{theorem}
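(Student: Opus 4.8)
The plan is to reduce to the ``escape'' direction and run the subdivision argument against the linear escape estimate of \fullref{cor:asymp}. Write $\kappa:=\kappa(\rho,\cL,\cA)$, $\kappa':=\kappa'(\rho,\cL,\cA)$, and $D:=d(\beta_0,Z)$. First I would dispose of the case $D\leq\kappa'$: there \fullref{QGIT} already bounds $\diam\pi(\beta_0)\cup\pi(\beta_T)$ by a constant depending only on $\rho,\cL,\cA$, which gets folded into $\rho'$. So assume $D>\kappa'$. The hypothesis $d(\beta,Z)=\kappa$ gives $d(\beta_t,Z)\geq\kappa$ for all $t$, so I would reverse $\beta$ to get a continuous $(\cL,\cA)$--quasi-geodesic $\bar\beta\from[0,T]\to X$ running from $\bar\beta_0=\beta_T$ (at distance $\kappa$) to $\bar\beta_T=\beta_0$ (at distance $D$), still $\kappa$--far from $Z$. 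Let $\tau$ be the last time with $d(\bar\beta_\tau,Z)=\kappa$; it exists because $t=0$ qualifies, and $\tau<T$. Using $\diam\pi(x)\cup\pi(z)\leq\diam\pi(x)\cup\pi(y)+\diam\pi(y)\cup\pi(z)$ (valid since $\pi(y)\neq\emptyset$ as $X$ is proper), it suffices to bound $\diam\pi(\bar\beta_0)\cup\pi(\bar\beta_\tau)$ and $\diam\pi(\bar\beta_\tau)\cup\pi(\bar\beta_T)$ separately. The first is immediate: $\bar\beta|_{[0,\tau]}$ has both endpoints at distance $\kappa$ from $Z$ and is $\kappa$--far from $Z$, so \fullref{QGIT} bounds it by a constant depending only on $\rho,\cL,\cA$.

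The real work is the piece $\bar\beta|_{[\tau,T]}$. For $t>\tau$ we have $d(\bar\beta_t,Z)>\kappa$, so every subsegment $\bar\beta|_{[\tau,t]}$ is $\kappa$--far from $Z$; rerunning the argument of \fullref{cor:asymp} on these subsegments (nothing there uses that the quasi-geodesic is a ray) yields constants $c>0$ and $C_0\geq 0$, depending only on $\rho,\cL,\cA$, with $d(\bar\beta_t,Z)\geq c(t-\tau)-C_0$ on $[\tau,T]$. Combined with the trivial bound $d(\bar\beta_t,Z)\leq\cL(t-\tau)+\cA+\kappa$, this says $d(\bar\beta_t,Z)$ is coarsely a fixed linear function of $t-\tau$; in particular $T-\tau\leq(D+C_0)/c$. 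Then I would subdivide $\bar\beta|_{[\tau,T]}$ in the usual way: $s_0:=\tau$, and $s_{j+1}$ the first time $\geq s_j$ with $d(\bar\beta_{s_j},\bar\beta_{s_{j+1}})=d(\bar\beta_{s_j},Z)=:e_j$, or $s_{j+1}:=T$ if no such time exists; this terminates at some $s_M=T$ since each non-final step has length $\geq(e_j-\cA)/\cL\geq(\kappa-\cA)/\cL>0$. The contraction property gives $\diam\pi(\bar\beta_{s_j})\cup\pi(\bar\beta_{s_{j+1}})\leq\rho(e_j)$, so $\diam\pi(\bar\beta_\tau)\cup\pi(\bar\beta_T)\leq\sum_{j<M}\rho(e_j)$, and the goal becomes to bound this sum by a sublinear function of $D$.

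The key observation is that linear escape makes the subdivision points \emph{geometrically spread}: since $s_{j+1}-s_j\geq(e_j-\cA)/\cL\geq\big(c(s_j-\tau)-C_0-\cA\big)/\cL$, the quantities $s_j-\tau$ grow by at least a fixed factor $\theta>1$ per step once they pass a threshold depending only on $\rho,\cL,\cA$, which happens after a bounded number of steps (each non-final step has length $\geq(\kappa-\cA)/\cL$); and for $j$ past that bounded index one has $e_j\asymp s_j-\tau$ by the two-sided linear comparison. From this two things follow: $\sum_{j<M}e_j=O(D)$, because the $s_j-\tau$ form (eventually) a geometric progression with largest term $\leq T-\tau=O(D)$, hence $\sum_{j<M}(s_j-\tau)=O(D)$; and for any $R$ only boundedly many indices have $e_j<R$, since $e_j\geq c\theta^{\,j-j_1}t^{**}-C_0\to\infty$. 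Now the sublinear-summation trick finishes it: given $\epsilon>0$ pick $R_\epsilon$ with $\rho(r)\leq\epsilon r$ for $r\geq R_\epsilon$, and split
\[\sum_{j<M}\rho(e_j)\ \leq\ \#\{j:e_j<R_\epsilon\}\cdot\rho(R_\epsilon)\ +\ \epsilon\sum_{j<M}e_j\ \leq\ C(\epsilon)+\epsilon\cdot O(D),\]
with $C(\epsilon)$ depending only on $\epsilon,\rho,\cL,\cA$. As $\epsilon>0$ is arbitrary, $\sum_{j<M}\rho(e_j)=o(D)$; taking $\rho'$ to be a non-decreasing majorant of this bound together with the two constants from the easy cases gives a sublinear function depending only on $\rho,\cL,\cA$, as required.

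I expect the difficulty to be bookkeeping rather than conceptual: one must check that the escape estimate of \fullref{cor:asymp} goes through verbatim for a segment (its proof only invokes \fullref{QGIT} on subsegments that stay $\kappa$--far from $Z$, so it should), handle the anomalous final step $s_{M-1}\to s_M$ of the subdivision --- where $d(\bar\beta_{s_{M-1}},\bar\beta_T)$ may be strictly smaller than $e_{M-1}$, so the clean comparisons must be replaced by a crude bound there --- and verify that the $\epsilon$--dependent estimate for $\sum\rho(e_j)$ genuinely assembles into one sublinear function whose implicit constants depend only on $\rho$, $\cL$, and $\cA$.
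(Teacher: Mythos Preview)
Your argument is correct and takes a genuinely different route from the paper's.

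The paper's proof defines $\rho'(r)$ as the supremum of the relevant projection diameters and argues by contradiction: assuming $\limsup \rho'(r)/r=2\epsilon>0$, it introduces higher thresholds $\kappa_n$ (so that outside $N_{\kappa_n}Z$ the QGIT bound acquires a factor $1/n$), picks $n$ with $n\epsilon>2$, and shows the offending segments would have $d(\beta_0,Z)$ bounded, a contradiction. Your approach is instead direct: after splitting off the bounded-projection piece $\bar\beta|_{[0,\tau]}$, you use the linear escape estimate of \fullref{cor:asymp} on $\bar\beta|_{[\tau,T]}$ to force the subdivision points $s_j$ to spread geometrically, then bound $\sum_j\rho(e_j)$ by the $\epsilon$--splitting trick. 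The paper's version is shorter and the $\kappa_n$ device is clean; your version is more constructive (an explicit $\rho'$ is in principle extractable) and makes the underlying mechanism---geometric growth of the ball radii in the subdivision---transparent.

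Two minor bookkeeping points you flagged are indeed harmless. The proof of \eqref{eq:2} in \fullref{cor:asymp} only uses \fullref{QGIT} on subsegments staying outside $N_\kappa Z$, so it applies verbatim to $\bar\beta|_{[\tau,t]}$. For the final step $s_{M-1}\to s_M=T$: the contraction bound $\diam\pi(\bar\beta_{s_{M-1}})\cup\pi(\bar\beta_T)\leq\rho(e_{M-1})$ still holds since $d(\bar\beta_{s_{M-1}},\bar\beta_T)\leq e_{M-1}$ by definition of termination, and $e_{M-1}\leq \cL(s_{M-1}-\tau)+\cA+\kappa$ is still controlled by the geometric pattern through index $M-1$. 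Also note that $M=O(\log D)$ (geometric growth), so the $M(\cA+\kappa)$ term arising in $\sum e_j$ is $o(D)$ and causes no trouble.
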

\begin{proof}
  Define $\rho'(r):=\sup_\beta \diam \pi(\beta_0)\cup\pi(\beta_T)$
  where the supremum is taken over all continuous
  $(\cL,\cA)$--quasi-geodesic segments $\beta$ such that
  $d(\beta,Z)=\kappa(\rho,\cL,\cA)$ is realized at one endpoint of $\beta$ and
  the other endpoint is at distance at most $r$ from $Z$.
Suppose that $\rho'$ is not sublinear, so suppose $\limsup_{r\to\infty}\rho'(r)/r=2\epsilon>0$. 
Then there exists a sequence $(r_i)\to\infty$ such that for each $i$
there exists a continuous $(\cL,\cA)$--quasi-geodesic segment
$\beta^{(i)}\from [0,T_i]\to X$ with
$d(\beta^{(i)}_{T_i},Z)=\kappa(\rho,\cL,\cA)$ and
$d(\beta^{(i)}_0,Z)\leq r_i$ and
$\diam\pi(\beta^{(i)}_0)\cup\pi(\beta^{(i)}_{T_i})\geq \epsilon r_i$,
so that $\diam\pi(\beta^{(i)}_0)\cup\pi(\beta^{(i)}_{T_i})\geq
\epsilon d(\beta^{(i)}_0,Z)$.

For $n\in\mathbb{N}$ define $\kappa_n$ large enough so that for all
$r\geq \kappa_n$ we have $r-\cL^2\rho(r)-\cA\geq \frac{1}{3}r\geq n\cL^2\rho(r)$
(recalling (\ref{eq:1}), $\kappa_1=\kappa(\rho,\cL,\cA)$).
The proof of \fullref{QGIT} shows that if a continuous
$(\cL,\cA)$--quasi-geodesic segment stays outside the
$\kappa_n$--neighborhood of $Z$ then:
\begin{align}
\diam\pi(\beta_0)\cup\pi(\beta_T)&
\leq \frac{L^2+1}{nL^2}(A+d(\beta_T,Z))+\frac{L^2-1}{nL^2}d(\beta_0,Z)+\frac{n+1}{n}\rho(d(\beta_0,Z)) \notag \\
& \leq \frac{1}{n}\left(2A+2 d(\beta_T,Z)+d(\beta_0,Z)\right)  
  \label{eq:9}
\end{align}

For $\epsilon>0$ as above, choose $n\in\mathbb{N}$ large enough that
$n\epsilon>2$.
For all sufficiently large $i$ we have that $\diam\pi(\beta^{(i)}_0)\cup\pi(\beta^{(i)}_{T_i})>2A+2\kappa_1+\kappa_n$.
By (\ref{eq:9}) for $n=1$, we have $d(\beta^{(i)}_0,Z)>\kappa_n$. 
Let $s_i>0$ be the first time such that $d(\beta^{(i)}_{s_i},Z)=\kappa_n$.

\begin{align*}
  \epsilon d(\beta^{(i)}_0,Z)&\leq\diam\pi(\beta^{(i)}_{T_i})\cup\pi(\beta^{(i)}_0)\\
&\leq \diam\pi(\beta^{(i)}_{T_i})\cup\pi(\beta^{(i)}_{s_i})+
  \diam\pi(\beta^{(i)}_{s_i})\cup\pi(\beta^{(i)}_{0})\\
&\leq
  \left(2A+2\kappa_1+\kappa_n\right)+\frac{1}{n}\left(2A+2\kappa_n+d(\beta^{(i)}_0,Z)\right)\\
&\leq \left(2A+2\kappa_1+\kappa_n\right)+\frac{\epsilon}{2}\left(2A+2\kappa_n+d(\beta^{(i)}_0,Z)\right)\\
\end{align*}

Solving for $d(\beta^{(i)}_0,Z)$, we find that it is
bounded, independent of $i$.
By (\ref{eq:9}) for $n=1$, this would bound
$\diam\pi(\beta^{(i)}_0)\cup\pi(\beta^{(i)}_{T_i})$, independent of
$i$, whereas we have
assumed $\diam\pi(\beta^{(i)}_0)\cup\pi(\beta^{(i)}_{T_i})\geq
\epsilon r_i\to\infty$.
This is a contradiction, so we conclude $\lim_{r\to\infty} \rho'(r)/r=0$.
\end{proof}

\section{The contracting boundary and the \thistopology}\label{sec:topology}
\begin{definition}
  Let $X$ be a proper geodesic metric space with basepoint $\bp$.
Define $\cbdry X$ to be the set of contracting quasi-geodesic rays based at $\bp$
modulo Hausdorff equivalence.
\end{definition}

\begin{lemma}
For each $\zeta\in\cbdry X$:
\begin{itemize}
\item  The set of contracting geodesic rays in 
$\zeta$ is non-empty.
\item There is a sublinear function: \[\rho_\zeta(r):=\sup_{\alpha,x,y}\diam\pi_\alpha(x)\cup\pi_\alpha(y)\]
Here the supremum is taken over geodesics $\alpha\in\zeta$ and
points $x$ and $y$ such that $d(x,y)\leq d(x,\alpha)\leq r$.
\item Every geodesic in $\zeta$ is
$\rho_\zeta$--contracting.
\end{itemize}

\end{lemma}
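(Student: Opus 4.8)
The plan is to prove the three bullet points in sequence, since each depends on the previous one. The first bullet — existence of a contracting geodesic ray in the Hausdorff class $\zeta$ — is the crux and the main obstacle; once we have it, the other two follow quickly from the machinery already developed.

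For the first bullet, I would start with an arbitrary contracting quasi-geodesic ray $q \in \zeta$, say $(\cL,\cA)$--quasi-geodesic and $\rho$--contracting, and I may assume $q$ is continuous after applying the taming lemma (which preserves Hausdorff class and contraction up to $\asymp$, using \fullref{Hausdorff}). Since $X$ is proper and geodesic, for each $n$ choose a geodesic segment $\gamma^n$ from $\bp=q_0$ to $q_n$. By Arzela--Ascoli (properness), a subsequence of the $\gamma^n$ converges uniformly on compact sets to a geodesic ray $\alpha$ based at $\bp$. The key claims are then: (a) $\alpha$ is Hausdorff-equivalent to $q$, hence $\alpha \in \zeta$; and (b) $\alpha$ is contracting. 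For (a): each $\gamma^n$ is a geodesic with endpoints $\bp$ and $q_n$ on the $\rho$--contracting set $q$, so by the Morse property of $q$ (which holds since $q$ is contracting, by \fullref{morseequivalent}, with Morse function determined by $\rho,\cL,\cA$), $\gamma^n$ lies in a uniformly bounded neighborhood of $q$; conversely, since $q$ is a quasi-geodesic with endpoints $\bp,q_n$ and $\gamma^n$ is $\rho'$--contracting for a uniform $\rho'$ — here I would instead argue directly that the initial segment $q_{[0,n]}$ stays uniformly close to $\gamma^n$ using that $q$ is a quasi-geodesic from $\bp$ to $q_n$ together with, say, \fullref{lem:hausdorff} applied to $\alpha=\gamma^n$ (after checking $\gamma^n$ is uniformly contracting) and $\beta=q_{[0,n]}$ — we get a uniform two-sided Hausdorff bound between $\gamma^n$ and $q_{[0,n]}$ on the relevant range. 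Passing to the limit gives a uniform Hausdorff bound between $\alpha$ and $q$, so $\alpha \in \zeta$. For (b): $\alpha$ has uniformly bounded Hausdorff distance from the contracting quasi-geodesic $q$, so \fullref{Hausdorff} (or its quasi-geodesic analog) gives that $\alpha$ is $\rho''$--contracting with $\rho'' \asymp \rho$.

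The delicate point — and what I expect to be the real obstacle — is establishing that the geodesics $\gamma^n$ are \emph{uniformly} contracting, so that the lemmas of \fullref{sec:contraction2} can be applied with constants independent of $n$, and so that the limit argument produces honest uniform bounds rather than bounds degenerating with $n$. For this I would invoke \fullref{morseequivalent}: $q$ being $\rho$--contracting makes it $\mu$--Morse with $\mu$ determined by $\rho$; then each $\gamma^n$, being a geodesic with endpoints on $q$, is inside $\bar N_{\mu(1,0)} q$; conversely $q_{[0,n]}$ is a $(\cL,\cA)$--quasi-geodesic with the same endpoints as $\gamma^n$, so the two are uniformly Hausdorff-close; then \fullref{Hausdorff} upgrades the contraction of $q$ to uniform contraction of the $\gamma^n$. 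This needs a small amount of care because \fullref{Hausdorff} as stated compares two \emph{sets}, which is fine here.

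Granting the first bullet, the second and third are essentially bookkeeping. Fix any geodesic $\alpha_0 \in \zeta$; it is $\rho_0$--contracting for some sublinear $\rho_0$ by the first bullet. For the second bullet, observe that any other geodesic $\alpha \in \zeta$ has Hausdorff distance from $\alpha_0$ bounded by a constant $C$ depending only on $\zeta$ (both are within bounded distance of any fixed representative quasi-geodesic), so by \fullref{Hausdorff} every geodesic $\alpha \in \zeta$ is $\rho_\zeta'$--contracting for a single sublinear $\rho_\zeta' \asymp \rho_0$. Consequently the supremum defining $\rho_\zeta(r)$ is, for each fixed $r$, a supremum of quantities each bounded by $\rho_\zeta'(r)$, hence $\rho_\zeta(r) \le \rho_\zeta'(r) < \infty$; monotonicity of $\rho_\zeta$ in $r$ is immediate from the definition (enlarging $r$ enlarges the index set of the supremum), and $\rho_\zeta(r)/r \to 0$ since $\rho_\zeta \le \rho_\zeta'$ and $\rho_\zeta'$ is sublinear. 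This proves $\rho_\zeta$ is sublinear, which is the second bullet. The third bullet is then immediate: for any geodesic $\alpha \in \zeta$ and any $x,y$ with $d(x,y) \le d(x,\alpha)$, setting $r := d(x,\alpha)$ we have by the very definition of $\rho_\zeta$ that $\diam \pi_\alpha(x) \cup \pi_\alpha(y) \le \rho_\zeta(r) = \rho_\zeta(d(x,\alpha))$, so $\alpha$ is $\rho_\zeta$--contracting.
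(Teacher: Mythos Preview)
Your approach is correct and follows the same strategy as the paper: extract a geodesic ray $\alpha$ as an Arzel\`a--Ascoli limit of geodesic segments from $\bp$ to points on a contracting quasi-geodesic representative $q \in \zeta$, argue that $\alpha$ is Hausdorff-equivalent to $q$ and hence contracting by \fullref{Hausdorff}, then observe that all geodesics in $\zeta$ are uniformly Hausdorff-close to $q$ and hence uniformly contracting, which bounds $\rho_\zeta$ by a sublinear function.

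The main difference is that you take a detour through what you flag as ``the real obstacle,'' uniform contraction of the approximating segments $\gamma^n$, which the paper never needs. The paper simply notes (via \fullref{GIT}, or equivalently the Morse property of $q$) that each $\gamma^n$, and hence the limit ray $\alpha$, lies in a bounded neighborhood of $q$ with bound depending only on the contraction data of $q$; then \fullref{cor:asymp} and \fullref{lem:hausdorff}, applied with the contracting set being $q$ itself rather than $\gamma^n$, give that \emph{every} geodesic ray asymptotic to $q$ has Hausdorff distance at most $\kappa'(\rho,\cL,\cA)$ from $q$, and \fullref{Hausdorff} converts this into a single sublinear $\rho''$ bounding the contraction of all geodesics in $\zeta$ at once. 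Your route through uniform contraction of the $\gamma^n$ also has a small gap as written: the step ``$q_{[0,n]}$ is a quasi-geodesic with the same endpoints as $\gamma^n$, so the two are uniformly Hausdorff-close'' is not justified, and your earlier attempt to invoke \fullref{lem:hausdorff} with $\alpha = \gamma^n$ is circular since that lemma requires $\alpha$ to already be contracting. The fix is to apply \fullref{lem:hausdorff} with $\alpha = q_{[0,n]}$, which is contracting by \fullref{lem:subsegment}; but the whole step is avoidable.
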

\begin{proof}
By definition, $\zeta$ is an equivalence class of contracting
quasi-geodesic rays, so there exists some $\rho'$--contracting $(\cL,\cA)$--quasi-geodesic ray
  $\beta\in\zeta$ based at $\bp$.
Since $X$ is proper, a sequence of geodesic segments connecting $\bp$
to $\beta_i$ for $i\in\mathbb{N}$ has a subsequence that converges to a geodesic
$\alpha'$.
By \fullref{GIT},
all of these geodesic segments, hence $\alpha'$
as well, are contained in a bounded neighborhood of $\beta$, with
bound depending only on $\rho'$, so there do exist geodesics asymptotic to $\beta$.
Furthermore, \fullref{cor:asymp} implies that geodesic rays asymptotic
to $\beta$ have uniformly bounded Hausdorff distance from $\beta$,
with bound depending on $\rho'$, $\cL$, and $\cA$.
By \fullref{Hausdorff}, all such geodesics are $\rho''$--contracting for
some $\rho''\asymp\rho'$ depending on $\rho'$, $\cL$, and
$\cA$.

The function $\rho_\zeta$ is non-decreasing and bounds projection
diameters by definition. The fact that there exists a sublinear
function $\rho''$ such that all geodesics in $\zeta$ are
$\rho''$--contracting implies $\rho_\zeta\leq\rho''$, so $\rho_\zeta$
is also sublinear.
\end{proof}

\begin{definition}\label{def:nbhds}
Let $X$ be a proper geodesic metric space.
Take $\zeta\in\cbdry X$.
Fix a geodesic ray $\alpha^\zeta\in\zeta$.
 For each $r\geq 1$ define $U(\zeta,r)$ to be the set of
 points $\eta\in\cbdry X$ such that for all $\cL\geq 1$ and
 $\cA\geq 0$ and every continuous
 $(\cL,\cA)$--quasi-geodesic ray $\beta\in\eta$ we have
 $d(\beta,\alpha^\zeta\cap \nbhd^c_{r}\bp)\leq \kappa(\rho_\zeta,\cL,\cA)$.
\end{definition}

Informally, $\eta\in U(\zeta,r)$ means that inside the ball of radius $r$ about the basepoint quasi-geodesics in $\eta$ fellow-travel $\alpha^\zeta$ just as
closely as quasi-geodesics in $\zeta$ do.
Alternatively, quasi-geodesics in $\eta$ do not escape from
$\alpha^\zeta$ until after they leave the ball of radius $r$ about the basepoint.

\begin{definition}
Define the  \emph{\thistopology} on $\cbdry X$ by:
\[\fq:=\{U\subset \cbdry X\mid \forall \zeta\in U,\, \exists r\geq 1,\,
U(\zeta,r)\subset U\}\]
The contracting boundary equipped
with this topology is denoted $\cbdry^\fq X$.
\end{definition}

We do not assume that the sets $U(\zeta,r)$ are open in the topology
$\fq$.
Indeed, from the definition it is not even clear that $U(\zeta,r)$ is
a neighborhood of $\zeta$, but we will show that this is the case.

\begin{proposition}\label{prop:topology}
$\fq$ is a topology on $\cbdry X$, and
  for each $\zeta\in\cbdry X$ the collection $\{U(\zeta,n)\mid n\in \mathbb{N}\}$ is a neighborhood
  basis at $\zeta$.
\end{proposition}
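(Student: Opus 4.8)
The plan splits into the topology axioms, which are routine, and the neighborhood‑basis statement, which is the substance.

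The one preliminary point for the topology axioms is \emph{monotonicity}: $U(\zeta,s)\subseteq U(\zeta,r)$ whenever $s\geq r$, since $\alpha^\zeta\cap\nbhd^c_s\bp\subseteq\alpha^\zeta\cap\nbhd^c_r\bp$ and hence $d(\beta,\alpha^\zeta\cap\nbhd^c_s\bp)\geq d(\beta,\alpha^\zeta\cap\nbhd^c_r\bp)$ for every $\beta$, so the inequality defining membership in $U(\zeta,s)$ implies the one defining membership in $U(\zeta,r)$. Given this, $\emptyset$ and $\cbdry X$ lie in $\fq$ and $\fq$ is closed under arbitrary unions directly from the definition, while closure under finite intersections follows by taking $t=\max\{r,s\}$: if $U(\zeta,r)\subseteq U$ and $U(\zeta,s)\subseteq V$ then $U(\zeta,t)\subseteq U\cap V$. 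Monotonicity also disposes of one half of the neighborhood‑basis claim: any $U\in\fq$ containing $\zeta$ contains $U(\zeta,r)$ for some real $r\geq1$, hence contains $U(\zeta,\lceil r\rceil)$.

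Next I would show $\zeta\in U(\zeta,n)$. Let $\beta\in\zeta$ be a continuous $(\cL,\cA)$--quasi-geodesic ray; it is asymptotic to $\alpha^\zeta$ and based at $\bp$, so \fullref{cor:asymp} applies with $Z=\alpha^\zeta$, and alternative \eqref{item:asymptotic} must hold, since \eqref{eq:2} would force $d(\beta_t,\alpha^\zeta)\to\infty$, contradicting \fullref{lem:hausdorff}. Thus $\{t\mid d(\beta_t,\alpha^\zeta)\leq\kappa(\rho_\zeta,\cL,\cA)\}$ is unbounded; for $t$ large in this set $\beta_t$ lies within $\kappa(\rho_\zeta,\cL,\cA)$ of a point of $\alpha^\zeta$ at distance $\geq d(\beta_t,\bp)-\kappa(\rho_\zeta,\cL,\cA)\geq n$ from $\bp$, that is, of a point of $\alpha^\zeta\cap\nbhd^c_n\bp$. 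As $\beta$, $\cL$, $\cA$ were arbitrary, $\zeta\in U(\zeta,n)$. It now remains to prove that each $U(\zeta,n)$ is a neighborhood of $\zeta$. For this I would consider $W:=\{\eta\in\cbdry X\mid\exists\,k\geq1,\ U(\eta,k)\subseteq U(\zeta,n)\}$; then $\zeta\in W$ (take $k=n$), and $W\subseteq U(\zeta,n)$ because $\eta\in U(\eta,k)$ by what was just proved, so it suffices to show $W\in\fq$. Unwinding the definition of $\fq$, this reduces to a \emph{nesting lemma}: for all $\eta\in\cbdry X$ and $m\geq1$ there is $m'\geq m$ such that for every $\xi\in U(\eta,m')$ there is $k\geq1$ with $U(\xi,k)\subseteq U(\eta,m)$. (Apply this to a pair $(\eta,k_0)$ witnessing $\eta\in W$: for each $\xi\in U(\eta,m')$ one then gets $U(\xi,k)\subseteq U(\eta,k_0)\subseteq U(\zeta,n)$, i.e.\ $\xi\in W$.)

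To prove the nesting lemma one must show, given $\theta\in U(\xi,k)$, that for every $\cL,\cA$ and every continuous $(\cL,\cA)$--quasi-geodesic $\delta\in\theta$ one has $d(\delta,\alpha^\eta\cap\nbhd^c_m\bp)\leq\kappa(\rho_\eta,\cL,\cA)$. This is automatic whenever $\kappa(\rho_\eta,\cL,\cA)>m$, since then $d(\delta,\alpha^\eta\cap\nbhd^c_m\bp)\leq d(\delta_0,\alpha^\eta_m)=m<\kappa(\rho_\eta,\cL,\cA)$; and because $\kappa(\rho_\eta,\cL,\cA)\geq\max\{3\cA,3\cL^2\}$, only the (bounded) pairs with $\cA\leq m/3$ and $\cL\leq\sqrt{m/3}$ remain. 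For such a pair, feeding the geodesic $\alpha^\xi$ (a $(1,0)$--quasi-geodesic) into the definition of $U(\eta,m')$ and invoking \fullref{lem:hausdorff} shows $\alpha^\xi$ fellow-travels $\alpha^\eta$, within $\kappa'(\rho_\eta,1,0)$, out to radius roughly $m'$; with $k$ taken large, $\theta\in U(\xi,k)$ forces $\delta$ to pass $\kappa(\rho_\xi,\cL,\cA)$--close to $\alpha^\xi$ at radius $\geq k$, and \fullref{lem:hausdorff} again puts a point of $\delta$ within a bounded distance of a point of $\alpha^\eta$ of radius roughly $m'$; finally \fullref{keylemma}, applied to $\alpha^\eta$ and $\delta$, upgrades this to a genuinely $\kappa(\rho_\eta,\cL,\cA)$--close passage at a point of $\alpha^\eta$ of distance at least $m'$ minus a controlled amount from $\bp$ — equivalently, \fullref{cor:asymp} applied to $\delta$ and $\alpha^\eta$ lands in the non-asymptotic alternative whose escape time $T_0$ is thereby forced past $\cL(m+\cA+\kappa(\rho_\eta,\cL,\cA))$, making $\delta_{T_0}$ the required witness. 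The main obstacle is bookkeeping the constants: $m'$ must be fixed before the quasi-geodesic data (and before $\xi$) are seen, yet the chained fellow-travelling steps and \fullref{keylemma} introduce errors of size $\kappa'(\rho,\cL,\cA)$ and $\lambda(\rho,\cL,\cA)$; these have to be absorbed using the finiteness of the relevant $(\cL,\cA)$ together with the fact that such errors are controlled by $\kappa(\rho,\cL,\cA)$, which is exactly the tolerance built into \fullref{def:nbhds}. Carrying this out uniformly is the delicate part of the proof.
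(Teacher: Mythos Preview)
Your overall structure matches the paper's: the reduction to showing the ``interior'' set $W$ is open, and the observation that this reduces to a nesting lemma (the paper's \fullref{lem:annulus}), are exactly right. The monotonicity, the proof that $\zeta\in U(\zeta,n)$ via \fullref{cor:asymp}, and the reduction to bounded $(\cL,\cA)$ via \fullref{obs} are all correct and match the paper.

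The gap is in your proof of the nesting lemma. Your chain through $\alpha^\xi$ introduces an error of size $\kappa'(\rho_\xi,\cL,\cA)$: when you invoke \fullref{lem:hausdorff} to say that $\delta$ fellow-travels $\alpha^\xi$ on initial segments, the Hausdorff bound is $\kappa'(\rho_\xi,\cL,\cA)$, and this feeds into the $J$ of \fullref{keylemma}. For $\delta$ to come $\kappa(\rho_\eta,\cL,\cA)$--close to $\alpha^\eta$ outside radius $m$, you need $m'\geq m+2\kappa'(\rho_\xi,\cL,\cA)+\text{(terms in }\rho_\eta,\cL,\cA)$. But $m'$ must be fixed \emph{before} $\xi$ is chosen, and $\rho_\xi$ is the contraction function of an arbitrary point of $U(\eta,m')$, which is completely uncontrolled. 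Your final paragraph suggests these errors are ``controlled by $\kappa(\rho,\cL,\cA)$, which is exactly the tolerance built into \fullref{def:nbhds}'', but the tolerance there is $\kappa(\rho_\eta,\cL,\cA)$, not $\kappa(\rho_\xi,\cL,\cA)$; these are different $\rho$'s and there is no relation between them.

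The paper resolves this with the tail-wagging lemma (\fullref{tailwag}), whose explicit purpose is to produce a recombined quasi-geodesic whose constants depend only on the input quasi-geodesic constants, \emph{not} on any contraction function. In the proof of \fullref{lem:annulus} one argues by contradiction: if $\xi\in U(\eta,R')$ but $\xi\notin U(\zeta,r)$, take a witness $\gamma\in\xi$ and wag its tail onto $\alpha^\eta$ to produce a $(2\cL+1,\cA)$--quasi-geodesic $\delta\in\eta$ that agrees with $\gamma$ on a long initial segment. Then \fullref{keylemma} applied to $\delta$ and $\alpha^\zeta$ involves only $\rho_\zeta$ and $(2\cL+1,\cA)$, giving a bound on $R$ that is independent of $\rho_\eta$ (the analogue of your $\rho_\xi$). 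This is the missing idea.
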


\begin{corollary}
  $\cbdry^\fq X$ is first countable.
\end{corollary}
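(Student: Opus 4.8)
The statement is an immediate corollary of \fullref{prop:topology}, so the proof is a one-line deduction and the only ``work'' is to recall the relevant definition and check that it applies verbatim. Recall that a topological space is \emph{first countable} if every point admits a countable neighborhood basis, i.e.\ a countable collection of neighborhoods of the point such that every neighborhood of the point contains a member of the collection. The plan is simply to invoke the second conclusion of \fullref{prop:topology}: for each $\zeta\in\cbdry X$, the collection $\{U(\zeta,n)\mid n\in\mathbb{N}\}$ is a neighborhood basis at $\zeta$, and this collection is manifestly indexed by $\mathbb{N}$, hence countable. Therefore every point of $\cbdry^\fq X$ has a countable neighborhood basis, which is exactly the assertion.

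\textbf{A remark on a possible worry.} One might hesitate because, as noted in the text just before \fullref{prop:topology}, it is not asserted that the sets $U(\zeta,r)$ are themselves open in $\fq$. This is harmless: the definition of first countability only requires a countable \emph{neighborhood} basis, and neighborhoods need not be open. \fullref{prop:topology} already establishes that each $U(\zeta,n)$ is a neighborhood of $\zeta$ (it contains an open set containing $\zeta$) and that these exhaust the neighborhoods of $\zeta$ up to refinement, so no extra argument is needed. If one prefers a basis of open neighborhoods, one may instead take, for each $n$, the $\fq$--interior $\mathring{U}(\zeta,n)$; these are open, contain $\zeta$ by \fullref{prop:topology}, and still form a countable neighborhood basis since $\mathring{U}(\zeta,n)\subset U(\zeta,n)$. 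Either way the conclusion follows.

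\textbf{Expected obstacle.} There is essentially none: all the substance is contained in \fullref{prop:topology}, which we are entitled to assume. The only thing to be careful about is phrasing, namely matching the ``neighborhood basis'' conclusion of the proposition to the textbook definition of first countability and not accidentally claiming the $U(\zeta,n)$ are open when that has not been proved.

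\begin{proof}
By \fullref{prop:topology}, for each $\zeta\in\cbdry X$ the countable collection $\{U(\zeta,n)\mid n\in\mathbb{N}\}$ is a neighborhood basis at $\zeta$. Hence every point of $\cbdry^\fq X$ has a countable neighborhood basis, so $\cbdry^\fq X$ is first countable.
\end{proof}
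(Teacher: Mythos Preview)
Your proof is correct and matches the paper's approach: the corollary is stated immediately after \fullref{prop:topology} with no separate proof, since it follows directly from the proposition's assertion that $\{U(\zeta,n)\mid n\in\mathbb{N}\}$ is a neighborhood basis at each $\zeta$. Your additional remarks about neighborhoods versus open sets are accurate and appropriately cautious, though not strictly necessary.
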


\begin{observation}\label{obs}
  Suppose $\eta\notin U(\zeta,r)$. By definition, for some $\cL$ and
  $\cA$ there exists a continuous $(\cL,\cA)$--quasi-geodesic
  $\beta\in\eta$ such that $d(\beta,\alpha^\zeta\cap \nbhd^c_{r}\bp)>
  \kappa(\rho_\zeta,\cL,\cA)$.
Since $\bp\in\beta$, this is not possible if
$\kappa(\rho_\zeta,\cL,\cA)\geq r$.
Thus, in light of \fullref{def:kappa}, the quasi-geodesic $\beta$ witnessing $\eta\notin U(\zeta,r)$
must be an $(\cL,\cA)$--quasi-geodesic with $\cL^2<r/3$ and $\cA<r/3$.
\end{observation}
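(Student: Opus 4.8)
The plan is to derive an a priori upper bound on the quantity $d(\beta,\alpha^\zeta\cap\nbhd^c_{r}\bp)$ that is completely independent of the quasi-geodesic $\beta$, and then to compare it with the inequality built into the definition of non-membership in $U(\zeta,r)$.

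The first step is to exploit that $\alpha^\zeta$ is a geodesic ray issuing from $\bp$: for every $r\geq 1$ the point $\alpha^\zeta_r$ lies on $\alpha^\zeta$ and satisfies $d(\alpha^\zeta_r,\bp)=r$, so in particular $\alpha^\zeta_r\in\alpha^\zeta\cap\nbhd^c_{r}\bp$ and this set is non-empty. Since any $\beta\in\eta$ is a quasi-geodesic ray based at $\bp$, we have $\bp=\beta_0\in\beta$, and therefore
\[
d(\beta,\alpha^\zeta\cap\nbhd^c_{r}\bp)\le d(\beta_0,\alpha^\zeta_r)=d(\bp,\alpha^\zeta_r)=r.
\]

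The second step is to unwind \fullref{def:nbhds}: the statement $\eta\notin U(\zeta,r)$ means exactly that there are $\cL\ge 1$, $\cA\ge 0$ and a continuous $(\cL,\cA)$--quasi-geodesic ray $\beta\in\eta$ with $d(\beta,\alpha^\zeta\cap\nbhd^c_{r}\bp)>\kappa(\rho_\zeta,\cL,\cA)$. Feeding in the bound from the first step forces $\kappa(\rho_\zeta,\cL,\cA)<r$. The third step is to invoke the formula in \fullref{def:kappa}, which gives $\kappa(\rho_\zeta,\cL,\cA)\ge\max\{3\cA,3\cL^2\}$; hence $3\cA<r$ and $3\cL^2<r$, i.e.\ $\cA<r/3$ and $\cL^2<r/3$, which is the assertion.

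I do not anticipate any real obstacle; this is essentially a manipulation of the definitions. The only point that deserves a moment's thought is confirming that $\alpha^\zeta\cap\nbhd^c_{r}\bp$ is non-empty and contains a point at distance exactly $r$ from $\bp$ (otherwise the bound $d(\beta,\alpha^\zeta\cap\nbhd^c_{r}\bp)\le r$ would be vacuous or fail), and this is immediate because $\alpha^\zeta$ is an isometric embedding of $[0,\infty)$ with $\alpha^\zeta_0=\bp$, together with the hypothesis $r\geq 1>0$.
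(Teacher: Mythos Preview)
Your proof is correct and follows essentially the same approach as the paper, which presents the argument inline within the observation itself. You have simply made explicit the point $\alpha^\zeta_r\in\alpha^\zeta\cap\nbhd^c_r\bp$ realizing the bound $d(\beta,\alpha^\zeta\cap\nbhd^c_r\bp)\le r$, whereas the paper leaves this implicit in the phrase ``Since $\bp\in\beta$, this is not possible if $\kappa(\rho_\zeta,\cL,\cA)\geq r$.''
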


The proof of \fullref{prop:topology} depends on two lemmas.
The first is a recombination result for quasi-geodesics.
Its key feature is that the quasi-geodesic constants of the result
depend only on the quasi-geodesic constants of the input, not on the
contraction function.

\begin{lemma}[Tail wagging]\label{tailwag}
Let $\rho$ be a sublinear function. Let $\cL\geq 1$ and $\cA\geq 0$. 
Let $T\geq 11\kappa'(\rho,\cL,\cA)$ and $S\geq T+6\kappa'(\rho,\cL,\cA)+6\kappa'(\rho,1,0)$.
Suppose $\alpha$ is a $\rho$--contracting geodesic ray based at $\bp$,
$\gamma$ is a continuous $(\cL,\cA)$--quasi-geodesic ray based at
$\bp$ such that $d(\gamma,\alpha_{[T,\infty)})\leq
\kappa(\rho,\cL,\cA)$, and $\beta$ is a geodesic ray based at $\bp$
such that $d(\beta,\alpha_{[S,\infty)})\leq \kappa(\rho,1,0)$.
Then there are continuous $(2\cL+1,\cA)$--quasi-geodesic rays that
agree with $\gamma$ until a point within distance
$11\kappa'(\rho,\cL,\cA)$ of $\alpha_T$ and share tails with $\alpha$
and $\beta$, respectively. 
\end{lemma}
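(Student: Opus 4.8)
The plan is to build the desired quasi-geodesic by concatenating three pieces: an initial segment of $\gamma$, then a short geodesic "jump", then a tail of $\alpha$ (and similarly with $\beta$). The work is to choose the jump point carefully so that (a) the result genuinely fellow-travels $\alpha$ from then on, and (b) the quasi-geodesic constants of the concatenation depend only on $\cL$ and $\cA$, not on $\rho$. First I would use the hypothesis $d(\gamma,\alpha_{[T,\infty)})\leq \kappa(\rho,\cL,\cA)$ together with \fullref{cor:asymp} applied to $\gamma$: either $\gamma$ stays in the $\kappa'(\rho,\cL,\cA)$--neighborhood of $\alpha$ forever (the asymptotic case, which is easy — $\gamma$ is already essentially a tail of $\alpha$), or there is a last time after which $\gamma$ escapes $\alpha$ at a definite linear rate. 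In the escape case, let $\gamma_{s_0}$ be the last point of $\gamma$ within $\kappa(\rho,\cL,\cA)$ of $\alpha$; by \fullref{lem:hausdorff} the portion of $\gamma$ up to $\gamma_{s_0}$ is $\kappa'(\rho,\cL,\cA)$--Hausdorff-close to the corresponding portion of $\alpha$ (running up to some $\alpha_{r_0}$ with $d(\gamma_{s_0},\alpha_{r_0})\leq\kappa(\rho,\cL,\cA)$). Since $d(\gamma,\alpha_{[T,\infty)})\leq\kappa(\rho,\cL,\cA)$, the point $\gamma_{s_0}$ — being the \emph{last} close point — must have $r_0 \geq T$, and in fact $\gamma_{s_0}$ lies within roughly $\kappa'(\rho,\cL,\cA)$ of $\alpha_{r_0}$ with $r_0\geq T$; a short estimate using the bounds $T\geq 11\kappa'(\rho,\cL,\cA)$ places a suitable cut point on $\gamma$ within distance $11\kappa'(\rho,\cL,\cA)$ of $\alpha_T$.

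The main step is then to verify the quasi-geodesic inequality for the concatenation $\eta := \gamma|_{[0,s_0]} + [\gamma_{s_0},\alpha_{r_0}] + \alpha_{[r_0,\infty)}$. The upper bound is automatic since each piece is $(\cL,\cA)$ (or geodesic) and we are concatenating along short overlaps. For the lower bound I would split into cases by which pieces two given parameter values land in. Within a single piece there is nothing to prove. The delicate case is a point $p$ on the initial $\gamma$-segment and a point $q$ on the $\alpha$-tail: here I would use that $\gamma|_{[0,s_0]}$ stays within $\kappa'(\rho,\cL,\cA)$ of an initial segment of $\alpha$ (by \fullref{lem:hausdorff}), so $p$ is close to some $\alpha_{r_p}$ with $r_p \leq r_0$, while $q = \alpha_{r_q}$ with $r_q \geq r_0$; since $\alpha$ is geodesic, $d(\alpha_{r_p},\alpha_{r_q}) = r_q - r_p$ is large, and one checks this dominates a linear function of the combined parameter distance along $\eta$ from $p$ to $q$, with multiplicative constant controlled by $\cL$ (the factor $2\cL+1$ arises from comparing arclength along $\gamma$ to distance in $X$ and absorbing the overlap length $\asymp \kappa'$). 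The additive constant $\cA$ survives because the overlap lengths, though they depend on $\rho$, only enter as a bounded reparametrization that can be absorbed into the quasi-geodesic on either side — this is exactly the point where the hypotheses $T\geq 11\kappa'(\rho,\cL,\cA)$ and $S\geq T + 6\kappa'(\rho,\cL,\cA) + 6\kappa'(\rho,1,0)$ are used, to guarantee there is enough room past the cut point for the tail of $\alpha$ and the tail of $\beta$ to be genuinely far from the initial $\gamma$-segment. For the $\beta$ version one runs the identical argument, additionally invoking \fullref{lem:hausdorff} (with $\cL=1$, $\cA=0$) to see that $\beta$ and $\alpha_{[S,\infty)}$ are $\kappa'(\rho,1,0)$--Hausdorff close, so replacing the $\alpha$-tail by a $\beta$-tail changes distances by a bounded amount.

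The hard part will be bookkeeping the case analysis for the lower bound while keeping the emerging constant exactly $2\cL+1$ and the additive constant exactly $\cA$ — i.e. making sure none of the $\rho$-dependent overlap lengths leak into the final quasi-geodesic constants. I expect the resolution is that each $\kappa'$-sized overlap is "paid for" by being swallowed into a slightly-reparametrized copy of $\gamma$ (resp. $\alpha$, $\beta$) on one side, using that a quasi-geodesic remains a quasi-geodesic after precomposing with a bi-Lipschitz reparametrization of bounded multiplicative distortion, and that concatenating two $(\cL,\cA)$--quasi-geodesics overlapping in a bounded set yields a $(2\cL+1,\cA)$--quasi-geodesic — a standard estimate whose constants happen not to involve the overlap size. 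Once that is isolated as a clean sub-lemma, the rest is assembling the pieces and checking, via the explicit inequalities on $T$ and $S$, that the cut point is where the statement claims.
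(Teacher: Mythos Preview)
Your construction has a real gap at exactly the point you flagged as ``the hard part.'' The claim that concatenating two $(\cL,\cA)$--quasi-geodesics across a bounded overlap yields a $(2\cL+1,\cA)$--quasi-geodesic with constants \emph{independent of the overlap size} is false. Concretely: take $p=\gamma_t$ just before the cut and $q=\alpha_{r_0}$ just after. The parameter distance along your ray is $(s_0-t)+|\text{jump}|$, where $|\text{jump}|$ is of order $\kappa(\rho,\cL,\cA)$, while $d(p,q)$ can be as small as you like (nothing prevents $\gamma_t$ from sitting right on top of $\alpha_{r_0}$, since $\gamma_{[0,s_0]}$ fellow-travels $\alpha_{[0,r_0]}$). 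The lower quasi-geodesic bound then forces the additive constant to absorb a term of size $\kappa$, which depends on $\rho$. No bi-Lipschitz reparametrization of one side fixes this: the spatial coincidence of $p$ and $q$ is the problem, not the parametrization.

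The paper avoids this by \emph{not} using a short jump. Instead it connects $\gamma_{[0,T']}$ to $\beta_{[S',\infty)}$ (with $T'$ chosen near $T$) by a \emph{minimizing} geodesic $\delta$: pick $t_0\le T'$, $r_0\ge S'$ realising $d(\gamma_{[0,T']},\beta_{[S',\infty)})$ and let $\delta$ be a geodesic from $\gamma_{t_0}$ to $\beta_{r_0}$. The minimizing property gives $d(\delta_u,\gamma_t)\ge d(\delta_u,\gamma_{t_0})=u$ for every $t\le T'$ (else $d(\gamma_t,\beta_{r_0})<|\delta|$, contradicting minimality); this inequality is precisely what makes $\gamma_{[0,t_0]}+\delta$ an honest $(2\cL+1,\cA)$--quasi-geodesic with no $\rho$--dependence. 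For the remaining cross-estimate (a point on $\gamma$ versus a point on $\beta$) the paper uses that $|\delta|\ge 6(\kappa'(\rho,\cL,\cA)+\kappa'(\rho,1,0))$ --- this is exactly why the hypothesis $S\ge T+6\kappa'(\rho,\cL,\cA)+6\kappa'(\rho,1,0)$ is there --- so the long $\delta$ segment contributes enough to the parameter distance to swallow the $\kappa'$--sized errors coming from the fellow-travelling. Your proposal never produces a long connecting segment, so it has nothing to pay these errors with.
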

\begin{proof}
  We construct the quasi-geodesic ray that shares a tail with
  $\beta$. The construction for the $\alpha$--tail is similar, but with
  easier estimates. 

Let $T'':=T-3\kappa'(\rho,1,0)-\kappa'(\rho,\cL,\cA)$.
Let $T'$ be the first time at which $\gamma$ comes within distance
$\kappa'(\rho,\cL,\cA)$ of $\alpha_{[T'',\infty)}$. 
Let $S'$ be such that $d(\beta_{S'},\alpha_{[S,\infty)})\leq \kappa(\rho,1,0)$. 
Let $t_0\leq T'$ and $r_0\geq S'$ be times such that
$d(\gamma_{t_0},\beta_{r_0})=d(\gamma_{[0,T']},\beta_{[S',\infty)})$,
and let $\delta$ be a geodesic from $\gamma_{t_0}$ to $\beta_{r_0}$.
There are times $b$, $c$, $b'$, and $c'$ such that
$d(\gamma_{t_0},\alpha_b),\,d(\gamma_{T'},\alpha_c)\leq\kappa'(\rho,\cL,\cA)$,
$d(\beta_{b'},\alpha_b),\,d(\beta_{c'},\alpha_c)\leq\kappa'(\rho,1,0)$.
For any  $t\leq t_0$ there exist $a$ and $a'$ such that
$d(\gamma_t,\alpha_a)\leq\kappa'(\rho,\cL,\cA)$ and
$d(\alpha_a,\beta_{a'})\leq \kappa(\rho,1,0)$.
See \fullref{fig:tailwag}.
\begin{figure}[h]
  \centering
\labellist
\pinlabel $\alpha$ [b] at 227 60
\pinlabel $\beta$ [bl] at 221 29 
\pinlabel $\gamma$ [br] at 232 46
\pinlabel $\delta$ [b] at 177 14
\small
\pinlabel $\bp$ [r] at 1 41
\pinlabel $\alpha_a$ [b] at 50 49
\pinlabel $\alpha_b$ [b] at 94 53
\pinlabel $\alpha_c$ [b] at 113 53
\pinlabel $\alpha_{T''}$ [b] at 104 53
\pinlabel $\alpha_T$ [b] at 132 53
\pinlabel $\alpha_S$ [b] at 212 57
\pinlabel $\beta_{S'}$ [tr] at 210 41
\pinlabel $\beta_{r_0}$ [l] at 224 12
\pinlabel $\beta_r$ [l] at 221 0
\pinlabel $\beta_{a'}$ [t] at 53 43
\pinlabel $\beta_{b'}$ [t] at 97 46
\pinlabel $\beta_{c'}$ [t] at 113 47
\pinlabel $\gamma_t$ [t] at 52 27
\pinlabel $\gamma_{t_0}$ [t] at 97 18
\pinlabel $\gamma_{T'}$ [tl] at 100 32 
\endlabellist
\includegraphics[width=\textwidth]{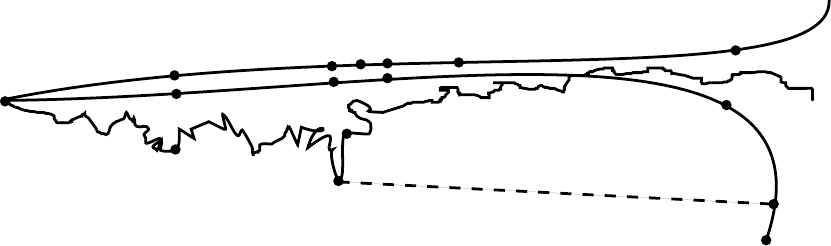}
  \caption{Wagging the tail of $\gamma$.}
  \label{fig:tailwag}
\end{figure}

The desired quasi-geodesic ray is
$\gamma_{[0,t_0]}+\delta+\beta_{[r_0,\infty)}$.

First, we verify $d(\gamma_{t_0},\alpha_T)\leq
11\kappa'(\rho,\cL,\cA)$.
The definitions of $t_0$ and $r_0$ demand $d(\gamma_{t_0},\beta_{r_0})\leq
d(\gamma_{T'},\beta_{S'})$.
The left-hand side is at least
$S'-b'-(\kappa'(\rho,\cL,\cA)+\kappa'(\rho,1,0))$, while the
right-hand side is no more than
$S'-c'+(\kappa'(\rho,\cL,\cA)+\kappa'(\rho,1,0))$, so $c'-b'\leq 2 (\kappa'(\rho,\cL,\cA)+\kappa'(\rho,1,0))$.
Since $T''\leq c\leq T''+2\kappa'(\rho,\cL,\cA)$ we have $d(\alpha_c,\alpha_T) \leq \kappa'(\rho,\cL,\cA)+3\kappa'(\rho,1,0)$.
Together, these allow us to estimate:
\begin{align*}
  d(\gamma_{t_0},\alpha_T)&\leq
                            d(\gamma_{t_0},\beta_{b'})+d(\beta_{b'},\beta_{c'})+d(\beta_{c'},\alpha_{c})+d(\alpha_c,\alpha_T)\\
&\leq
  (\kappa'(\rho,\cL,\cA)+\kappa'(\rho,1,0))+c'-b'+\kappa'(\rho,1,0) \\ & \qquad +(\kappa'(\rho,\cL,\cA)+3\kappa'(\rho,1,0))\\
&\leq 7\kappa'(\rho,1,0)+4\kappa'(\rho,\cL,\cA)\leq 11\kappa'(\rho,\cL,\cA)
\end{align*}

Next we verify the quasi-geodesic constants. 
Since we have  a concatenation of quasi-geodesics, we only need to
check that points on different pieces are not closer than
they ought to be with respect to the parameterization. 

First we claim $\gamma_{[0,t_0]}+\delta$ is an
$(\cL',\cA)$--quasi-geodesic for $\cL':=2\cL+1$.
This is true for $\gamma_{[0,t_0]}$ and $\delta$ individually.
Suppose there are $0\leq t< t_0$ and $0<u\leq|\delta|$ such that
$d(\gamma_t,\delta_u)<\frac{t_0-t+u}{\cL'}-\cA$.
Now, $d(\delta_u,\gamma_t)\geq d(\delta_u,\gamma_{t_0})=u$, which
implies $u<\frac{\cL'}{\cL'-1}(\frac{t_0-t}{\cL'}-\cA)$.
But then:
\begin{align*}
  \frac{t_0-t}{\cL}-\cA&\leq d(\gamma_t,\gamma_{t_0})\leq
                                  d(\gamma_t,\delta_u)+d(\delta_u,\gamma_{t_0})\\
&\leq \left(\frac{t_0-t+u}{\cL'}-\cA\right)+u
\end{align*}
Plugging in the value for $\cL'$ and the bound for $u$
yields a contradiction.

The same argument shows $\delta+\beta_{[r_0,\infty)}$ is a $(3,0)$--quasi-geodesic.

Now consider points $\gamma_t$ and $\beta_r$ for $t\leq t_0$
and $r\geq r_0$. 

\begin{align*}
 d(\gamma_t,\beta_r)&\geq
                      r-a'-(\kappa'(\rho,\cL,\cA)+\kappa'(\rho,1,0))\\
&=r-r_0+r_0-b'+b'-a'-(\kappa'(\rho,\cL,\cA)+\kappa'(\rho,1,0))\\
&\geq r-r_0+d(\gamma_{t_0},\beta_{r_0})+d(\gamma_t,\gamma_{t_0})-4
  (\kappa'(\rho,\cL,\cA)+\kappa'(\rho,1,0))\\
&\geq\frac{t_0-t+r-r_0+|\delta|}{2\cL+1}-\cA+|\delta|\frac{2\cL}{2\cL+1}-4 (\kappa'(\rho,\cL,\cA)+\kappa'(\rho,1,0))
\end{align*}
Thus, the ray we have constructed is a $(2\cL+1,\cA)$--quasi-geodesic,
since $|\delta|\geq 6 (\kappa'(\rho,\cL,\cA)+\kappa'(\rho,1,0))$, as
we now verify:
\begin{align*}
  |\delta|&\geq r_0-b'-(\kappa'(\rho,\cL,\cA)+\kappa'(\rho,1,0))\\
&\geq r_0-b-(\kappa'(\rho,\cL,\cA)+2\kappa'(\rho,1,0))\\
&\geq S'-T''-(\kappa'(\rho,\cL,\cA)+2\kappa'(\rho,1,0))\\
&\geq S-T''- (\kappa'(\rho,\cL,\cA)+3\kappa'(\rho,1,0))\\
&\geq 6 (\kappa'(\rho,\cL,\cA)+\kappa'(\rho,1,0))\qedhere
\end{align*}
\end{proof}

\begin{lemma}\label{lem:annulus}
  For every  sublinear function $\rho$ and $r\geq 1$ there exists a
  number $\psi(\rho,r)>r$ such that for every $R\geq \psi(\rho,r)$ and every
  $\zeta\in\cbdry X$ such that
  $\rho_\zeta\leq\rho$ we have that for every $\eta\in U(\zeta,R)$ there
  exists an $R'$ such that $U(\eta,R')\subset U(\zeta,r)$.
\end{lemma}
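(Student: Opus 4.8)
The plan is to show that if $\eta\in U(\zeta,R)$ for $R$ sufficiently large relative to $r$, then every quasi-geodesic in $\eta$ must fellow-travel $\alpha^\zeta$ very closely out to radius roughly $R$, and in particular out to radius $r$, and moreover that quasi-geodesics in $\zeta$ and $\eta$ fellow-travel each other past radius $r$. This will allow us to find $R'$ so that $\eta'\in U(\eta,R')$ forces $\eta'$ to fellow-travel $\alpha^\eta$, hence $\alpha^\zeta$, out to radius $r$, giving $\eta'\in U(\zeta,r)$.

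\emph{Step 1: Choosing a geodesic in $\eta$ near $\alpha^\zeta$.}
Fix the geodesic $\alpha^\zeta\in\zeta$ used to define the $U(\zeta,\cdot)$. By \fullref{obs}, any quasi-geodesic witnessing failure of membership in $U(\zeta,R)$ has controlled constants, but here we want the positive direction: $\eta\in U(\zeta,R)$ means the chosen geodesic $\alpha^\eta\in\eta$ satisfies $d(\alpha^\eta,\alpha^\zeta\cap\nbhd^c_R\bp)\leq\kappa(\rho_\zeta,1,0)$, since a geodesic is a $(1,0)$--quasi-geodesic. In particular there is a point of $\alpha^\eta$ within $\kappa(\rho_\zeta,1,0)$ of a point of $\alpha^\zeta$ at distance $\geq R$ from $\bp$. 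Applying \fullref{lem:hausdorff} (with $\cL=1$, $\cA=0$, using that $\alpha^\zeta$ is $\rho_\zeta$--contracting and $\alpha^\eta_0=\alpha^\zeta_0=\bp$), the initial segments of $\alpha^\eta$ and $\alpha^\zeta$ up to the matched points are $\kappa'(\rho_\zeta,1,0)$--Hausdorff close. By \fullref{Hausdorff} and \fullref{lem:subsegment}, $\alpha^\eta$ is $\rho'$--contracting for some $\rho'\asymp\rho_\zeta$ depending only on $\rho$; so $\rho_\eta\leq\rho'$ for a function depending only on $\rho$.

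\emph{Step 2: Defining $\psi(\rho,r)$ and $R'$.}
Since the $\kappa$ and $\kappa'$ functions are monotone in $\rho$ in the appropriate sense and the quasi-geodesic constants of any relevant witness are bounded via \fullref{obs}, there is a uniform bound $K=K(\rho,r)$ on all of $\kappa(\rho_\zeta,\cL,\cA)$, $\kappa(\rho_\eta,\cL,\cA)$, $\kappa'(\rho_\zeta,\cL,\cA)$, $\kappa'(\rho_\eta,\cL,\cA)$ over the $(\cL,\cA)$ with $\cL^2<r/3$, $\cA<r/3$. Set $\psi(\rho,r):=r+100K(\rho,r)$ (any constant comfortably absorbing the Hausdorff and tail-wagging errors below), and given $\eta\in U(\zeta,R)$ with $R\geq\psi(\rho,r)$, set $R':=r+50K(\rho,r)$. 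The point of taking $R$ large is that the fellow-travelling of $\alpha^\eta$ with $\alpha^\zeta$ is guaranteed out to radius near $R$, hence comfortably past $R'$, hence past $r$, with all error terms under control.

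\emph{Step 3: Verifying $U(\eta,R')\subset U(\zeta,r)$.}
Take $\eta'\in U(\eta,R')$ and let $\beta\in\eta'$ be a continuous $(\cL,\cA)$--quasi-geodesic. By \fullref{obs}, we may assume $\cL^2<r/3$, $\cA<r/3$, so all $\kappa,\kappa'$ values are $\leq K$. Membership $\eta'\in U(\eta,R')$ gives $d(\beta,\alpha^\eta\cap\nbhd^c_{R'}\bp)\leq\kappa(\rho_\eta,\cL,\cA)\leq K$, so $\beta$ passes within $K$ of a point of $\alpha^\eta$ at distance $\geq R'$ from $\bp$. Now the key geometric input: by Step 1, that point of $\alpha^\eta$ lies within $\kappa'(\rho_\zeta,1,0)\leq K$ of $\alpha^\zeta$, at distance $\geq R'-K$ from $\bp$ (hence $\geq r$, since $R'>r+K$), and in fact within $K$ of a point of $\alpha^\zeta\cap\nbhd^c_{r}\bp$, because the matched point along $\alpha^\zeta$ has parameter close to its arclength distance from $\bp$, so shifting by $K$ still lands past radius $r$. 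Chaining the triangle inequality, $d(\beta,\alpha^\zeta\cap\nbhd^c_{r}\bp)\leq\kappa(\rho_\eta,\cL,\cA)+\kappa'(\rho_\zeta,1,0)\leq 2K$. This is not quite what the definition of $U(\zeta,r)$ demands — we need the bound $\kappa(\rho_\zeta,\cL,\cA)$, not $2K$ — so the final step is to replace the crude estimate $2K$ with the sharp $\kappa(\rho_\zeta,\cL,\cA)$: apply \fullref{cor:asymp} to $\beta$ relative to the $\rho_\zeta$--contracting geodesic $\alpha^\zeta$. Since $\beta$ comes within $2K\ll r$ of $\alpha^\zeta$ at a point past radius $r$, and $\beta_0=\bp\in\alpha^\zeta$, \fullref{cor:asymp}\eqref{item:asymptotic} rules out early escape: $\beta$ cannot have left $\nbhd_{\kappa'(\rho_\zeta,\cL,\cA)}\alpha^\zeta$ before reaching that point, so it stays within $\kappa(\rho_\zeta,\cL,\cA)$ of $\alpha^\zeta$ — more precisely, $d(\beta,\alpha^\zeta\cap\nbhd^c_r\bp)\leq\kappa(\rho_\zeta,\cL,\cA)$, because a point of $\beta$ realizing distance $\leq\kappa(\rho_\zeta,\cL,\cA)$ to $\alpha^\zeta$ near the far matched point projects past radius $r$. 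Hence $\eta'\in U(\zeta,r)$.

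\emph{Main obstacle.}
The delicate point is Step 3's final move: the naive chaining of triangle inequalities produces a fellow-travelling constant like $2K$, which is weaker than the definitional requirement $\kappa(\rho_\zeta,\cL,\cA)$ appearing in $U(\zeta,r)$. Closing this gap requires invoking \fullref{cor:asymp} (or equivalently \fullref{lem:hausdorff}) to upgrade the weak bound to the sharp threshold: the corollary's dichotomy forces $\beta$ to remain in the $\kappa(\rho_\zeta,\cL,\cA)$--neighborhood of $\alpha^\zeta$ up until the escape time $T_0$, and since $\beta$ demonstrably returns close to $\alpha^\zeta$ far out (past radius $r$), the escape time — if it exists — must be beyond the relevant portion. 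One must also take care that the parameter of the matched point along $\alpha^\zeta$ is close to its distance from $\bp$ (true since $\alpha^\zeta$ is a geodesic) so that the radius-$r$ condition is genuinely met after absorbing the $O(K)$ errors; this is exactly why $\psi(\rho,r)$ and $R'$ are chosen with generous slack over $r$.
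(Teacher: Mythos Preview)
There is a genuine gap in Step~1 that undermines the whole argument. You claim that because the initial segment of $\alpha^\eta$ is Hausdorff-close to an initial segment of $\alpha^\zeta$, the ray $\alpha^\eta$ is $\rho'$--contracting with $\rho'$ depending only on $\rho$, and hence $\rho_\eta\leq\rho'$. But \fullref{Hausdorff} and \fullref{lem:subsegment} only control the contraction function of that \emph{initial segment}; the tail of $\alpha^\eta$, after it diverges from $\alpha^\zeta$, can have an arbitrarily bad contraction function. So $\rho_\eta$ is not bounded in terms of $\rho$ alone. Consequently your constant $K(\rho,r)$ in Step~2, which is supposed to dominate $\kappa(\rho_\eta,\cL,\cA)$ and $\kappa'(\rho_\eta,\cL,\cA)$, does not exist as a function of $\rho$ and $r$ only.

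You might hope to repair this by letting $R'$ depend on $\rho_\eta$ (which the lemma permits). But this does not save Step~3. The chaining through $\alpha^\eta$ shows $\beta$ is $J$--close to $\alpha^\zeta$ at some point $x$ with $d(\bp,x)$ at most roughly the divergence time $T_0\approx R$, where $J=\kappa'(\rho_\eta,\cL,\cA)+\kappa'(\rho_\zeta,1,0)$. Applying \fullref{keylemma} then yields $d(\bp,y)\geq R-O_\rho(1)-\theconstantformerlyknownasLambda J-\lambda(\rho_\zeta,\cL,\cA)$, and you need this to exceed $r$. Since $J$ contains the uncontrolled term $\kappa'(\rho_\eta,\cL,\cA)$, this forces $\psi(\rho,r)$ itself to depend on $\rho_\eta$, violating the uniformity the lemma demands.

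The paper's proof avoids this trap by a different mechanism: rather than chaining through $\alpha^\eta$, it argues by contradiction and uses the tail-wagging \fullref{tailwag} to replace the tail of a witness quasi-geodesic $\gamma\in\xi$ (witnessing $\xi\notin U(\zeta,r)$) with a tail of $\alpha^\eta$, producing a $(2\cL+1,\cA)$--quasi-geodesic $\delta\in\eta$. The point is that the constants $(2\cL+1,\cA)$ depend only on $r$, so testing $\eta\in U(\zeta,R)$ against $\delta$ involves only $\kappa(\rho_\zeta,2\cL+1,\cA)$, which is bounded by $\rho$ and $r$. All $\rho_\eta$--dependence is confined to $R'$ (via the hypotheses of \fullref{tailwag}), where it is harmless. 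Your direct chaining approach cannot achieve this separation.
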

\begin{proof}
It suffices to take
$\psi(\rho,r):=r+\theconstantformerlyknownasLambda\kappa(\rho,2\sqrt{r/3}+1,r/3)+\lambda(\rho,\sqrt{r/3},r/3)$,
where $\theconstantformerlyknownasLambda$ and $\lambda$ are as in \fullref{keylemma}.

Suppose $R\geq \psi(\rho,r)$ and 
  $\zeta$ is a point in $\cbdry X$ such that
  $\rho_\zeta\leq\rho$.
Suppose that $\eta\in U(\zeta,R)$ with $\eta\neq\zeta$.
Let $T_0$ be the last time such that
$d(\alpha^\eta_{T_0},\alpha^\zeta)=\kappa(\rho_\zeta,1,0)$.
Set:
\[R':=T_0+2\kappa'(\rho_\zeta,2\sqrt{r/3}+1,r/3)+4\kappa(\rho_\zeta,1,0)+28\kappa'(\rho_\eta,\sqrt{r/3},r/3)+6\kappa'(\rho_\eta,1,0)\]

Suppose that there exists a point $\xi\in U(\eta,R')$ such that
$\xi\notin U(\zeta,r)$.
The latter implies there exists an $\cL\geq 1$ and $\cA\geq 0$ and a
continuous $(\cL,\cA)$--quasi-geodesic $\gamma\in\xi$ such that $d(\gamma,N_r^c\bp\cap\alpha^\zeta)>\kappa(\rho_\zeta,\cL,\cA)$.
By \fullref{obs}, we have $\cL^2,\,\cA<r/3$.
Set $\alpha:=\alpha^\eta$, $\beta:=\alpha^\xi$,
$T:=T_0+4\kappa(\rho_\zeta,1,0)+22\kappa'(\rho_\eta,\sqrt{r/3},r/3)+2\kappa'(\rho_\zeta,2\sqrt{r/3}+1,r/3)$,
and $S:=R'\geq T+6\kappa'(\rho_\eta,\cL,\cA)+6\kappa'(\rho_\eta,1,0)$.
Apply \fullref{tailwag} to $\alpha$, $\beta$, $\gamma$, $T$, and $S$ to produce a continuous
$(2\cL+1,\cA)$--quasi-geodesic $\delta\in\eta$ that agrees with
$\gamma$ at least until a point $z$ in the ball of radius
$11\kappa'(\rho_\eta,\cL,\cA)$ about $\alpha^\eta_T$.

By \fullref{cor:asymp} \eqref{eq:2} we have
$d(\alpha_T,\alpha^\zeta)\geq (T-T_0)/2-2\kappa(\rho_\zeta,1,0)$,
which implies $d(z,\alpha^\zeta)\geq \kappa'(\rho_\zeta,2\cL+1,\cA)$, so
by point $z$ the ray
$\delta$
has already escaped $\alpha^\zeta$ and can never return to its
$\kappa(\rho_\zeta,2\cL+1,\cA)$--neighborhood.
Therefore, the only points of $\delta$ in the
$\kappa(\rho_\zeta,2\cL+1,\cA)$--neighborhood of $\alpha^\zeta$ are
those that were contributed by $\gamma$. 
By construction, $\gamma$ does not come
$\kappa(\rho_\zeta,\cL,\cA)$--close to $\alpha^\zeta$ outside the ball
of radius $r$.
By applying \fullref{keylemma}, we see that $\delta$ is a witness to 
$\eta\notin U(\zeta,R)$.
This is a contradiction, so  $U(\eta,R')\subset U(\zeta,r)$.
\end{proof}

\begin{proof}[Proof of \fullref{prop:topology}]
 For every $\zeta\in\cbdry X$ and $1\leq r<r'$ we have $\zeta\in
  U(\zeta,r')\subset U(\zeta,r)$.
 The nesting is immediate from \fullref{def:nbhds}, and $\zeta\in
  U(\zeta,r)$ by \fullref{cor:asymp}.
Now it is easy to see that $\fq$ is a topology.
That a set of the form $U(\zeta,r)$ is a neighborhood of
$\zeta$ in this topology follows from 
showing that the set \[ U:=\{\eta\in U(\zeta,r) \mid \exists R_\eta,\,
U(\eta,R_\eta)\subset U(\zeta,r)\} \] is open, since then $\zeta \in U \subset U(\zeta,r)$.
Now if $\eta \in U$ then there exists $R_\eta$ so 
that $U(\eta,R_\eta)\subset U(\zeta,r)$.
\fullref{lem:annulus} says that
for all $\xi\in U(\eta,\psi(\rho_\eta,R_\eta))$ there exists $R'$ with
$U(\xi,R')\subset U(\eta,R_\eta) \subset U(\zeta,r)$. 
Therefore $U(\eta, \psi(\rho_\eta,R_\eta))\subset U$ and so $U$ is open.
\end{proof}

From this proof we observe the following consequence.
\begin{corollary}\label{cor:annulus}
  For every $\zeta\in\cbdry X$ and $r\geq 1$ there exists an open set $U$
  such that $U(\zeta,\psi(\rho_\zeta,r))\subset U\subset U(\zeta,r)$.
\end{corollary}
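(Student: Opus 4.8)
The plan is to re-examine the set $U$ constructed inside the proof of \fullref{prop:topology} and to observe that, once we feed \fullref{lem:annulus} the sublinear function $\rho_\zeta$ itself, that same set already witnesses the corollary. So essentially no new construction is needed: the corollary is a bookkeeping consequence of the proof we just gave.

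Concretely, recall that in the proof of \fullref{prop:topology}, for a fixed $\zeta\in\cbdry X$ and $r\geq 1$ we set
\[ U:=\{\eta\in U(\zeta,r)\mid \exists R_\eta,\ U(\eta,R_\eta)\subset U(\zeta,r)\}, \]
and we showed that $U$ is open in $\fq$ and that $\zeta\in U\subset U(\zeta,r)$. The inclusion $U\subset U(\zeta,r)$ is immediate from the definition of $U$, and openness was already established there (applying \fullref{lem:annulus} with $\rho=\rho_\eta$ at each $\eta\in U$), so for those two assertions I would simply cite that proof verbatim.

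The only remaining point is the inclusion $U(\zeta,\psi(\rho_\zeta,r))\subset U$. First I would note that $\psi(\rho_\zeta,r)>r$, so by the nesting of the sets $U(\zeta,\cdot)$ from \fullref{def:nbhds} we have $U(\zeta,\psi(\rho_\zeta,r))\subset U(\zeta,r)$; in particular any $\eta\in U(\zeta,\psi(\rho_\zeta,r))$ lies in $U(\zeta,r)$. Then I would invoke \fullref{lem:annulus} with the choice $\rho:=\rho_\zeta$ (so the hypothesis $\rho_\zeta\leq\rho$ is trivially satisfied) and $R:=\psi(\rho_\zeta,r)=\psi(\rho,r)$: it yields, for each such $\eta$, a radius $R'$ with $U(\eta,R')\subset U(\zeta,r)$, and taking $R_\eta:=R'$ puts $\eta\in U$. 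Thus $U(\zeta,\psi(\rho_\zeta,r))\subset U\subset U(\zeta,r)$ with $U$ open, as claimed.

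I do not anticipate a genuine obstacle here. The one thing to be careful about is aligning the quantifiers in \fullref{lem:annulus}, which is phrased for a fixed sublinear $\rho$ dominating $\rho_\zeta$: one must point out explicitly that the legitimate choice $\rho=\rho_\zeta$ is exactly what produces the threshold $\psi(\rho_\zeta,r)$ that appears in the statement of the corollary.
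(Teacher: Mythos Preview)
Your proposal is correct and matches the paper's intent exactly: the corollary is stated in the paper as an immediate observation from the proof of \fullref{prop:topology}, and you have spelled out precisely that observation, namely that the open set $U$ built there already satisfies $U(\zeta,\psi(\rho_\zeta,r))\subset U$ once one applies \fullref{lem:annulus} with $\rho=\rho_\zeta$ and $R=\psi(\rho_\zeta,r)$. Indeed, this is just the special case $\eta=\zeta$, $R_\eta=r$ of the openness argument in that proof.
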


\begin{proposition}\label{prop:choices}
  The topology $\fq$ does not depend on the choice of basepoint or on
  the choices of
  the representative geodesic rays for each point in $\cbdry X$.
\end{proposition}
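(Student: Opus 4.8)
The plan is to show that, for any two admissible choices, the two families $\{U(\zeta,n)\mid n\in\mathbb{N}\}$ and $\{\tilde U(\zeta,n)\mid n\in\mathbb{N}\}$ are mutually cofinal at every $\zeta\in\cbdry X$: each $U(\zeta,n)$ contains some $\tilde U(\zeta,m)$, and conversely. Since \fullref{prop:topology} says these families are neighborhood bases for the respective topologies, mutual cofinality forces the topologies to have the same open sets. The two settings are symmetric — either two geodesic rays representing the same class, or one class viewed from two basepoints — so in each case I would only prove one direction of cofinality and deduce the other by interchanging the two choices.

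The key step is a transfer argument built on the divagation estimate. Suppose $\alpha$ and $\tilde\alpha$ are $\rho_\zeta$-contracting geodesic rays based at $\bp$ at Hausdorff distance at most some $D$. Given $\eta\in\tilde U(\zeta,m)$ and a continuous $(\cL,\cA)$-quasi-geodesic ray $\beta\in\eta$, the definition of $\tilde U(\zeta,m)$ puts $\beta$ within $\kappa(\rho_\zeta,\cL,\cA)$ of a point of $\tilde\alpha$ lying in $\nbhd^c_m\bp$; pushing that point onto $\alpha$ costs at most $D$, producing $x\in\alpha$ with $d(x,\bp)\geq m-D$ and $d(x,\beta)\leq\kappa(\rho_\zeta,\cL,\cA)+D$. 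Applying \fullref{keylemma} to $\alpha$ and $\beta$ with $R:=m-D$ and $J:=\kappa(\rho_\zeta,\cL,\cA)+D$ yields a point $y\in\alpha$ with $d(y,\beta)\leq\kappa(\rho_\zeta,\cL,\cA)$ and
\[ d(\bp,y)\;\geq\; m-D-\theconstantformerlyknownasLambda\bigl(\kappa(\rho_\zeta,\cL,\cA)+D\bigr)-\lambda(\rho_\zeta,\cL,\cA)\;=:\;m-C(\rho_\zeta,\cL,\cA), \]
so $\beta$ already meets the $\kappa(\rho_\zeta,\cL,\cA)$-neighborhood of $\alpha\cap\nbhd^c_{m-C(\rho_\zeta,\cL,\cA)}\bp$. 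By \fullref{obs} only quasi-geodesics with $\cL^2<n/3$ and $\cA<n/3$ can witness $\eta\notin U(\zeta,n)$, and $C$ is non-decreasing in $\cL$ and $\cA$ (because $\kappa$ and $\lambda$ are), so I would take $m:=n+C\bigl(\rho_\zeta,\,1+\sqrt{n/3},\,n/3\bigr)$; then every relevant $\beta\in\eta$ reaches $\alpha\cap\nbhd^c_n\bp$ within the prescribed distance, giving $\tilde U(\zeta,m)\subseteq U(\zeta,n)$.

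For the representatives, two rays $\alpha^\zeta,\tilde\alpha^\zeta$ in the same class are asymptotic $\rho_\zeta$-contracting geodesic rays from $\bp$, so \fullref{lem:hausdorff} bounds their Hausdorff distance by $\kappa'(\rho_\zeta,1,0)$ and the transfer argument (and its mirror, exchanging $\alpha^\zeta$ and $\tilde\alpha^\zeta$) applies directly. For the basepoint, I would fix basepoints $\bp,\bp'$, fix a geodesic $\gamma$ from $\bp'$ to $\bp$, and use the bijection $\Phi$ sending the Hausdorff class of a ray $\beta$ from $\bp$ to the class of the ray $\gamma+\beta$ from $\bp'$; prepending $\gamma$ changes any set by Hausdorff distance at most $d(\bp,\bp')$, which preserves the contraction property by \fullref{Hausdorff} and preserves the relation of being asymptotic, so $\Phi$ is a well-defined bijection. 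Given representatives $\alpha^\zeta$ from $\bp$ and $\tilde\alpha^{\Phi\zeta}$ from $\bp'$, these are asymptotic contracting geodesic rays, hence at Hausdorff distance bounded by some $D'$ depending only on $\rho_\zeta$ and $d(\bp,\bp')$ (via \fullref{Hausdorff} and \fullref{lem:hausdorff}); and a continuous $(\cL,\cA)$-quasi-geodesic $\beta'\in\Phi\eta$ from $\bp'$ corresponds to the continuous $(\cL,\cA+d(\bp,\bp'))$-quasi-geodesic obtained by prepending a geodesic from $\bp$ to $\bp'$, which lies in $\eta$. Once $m$ is large enough (again uniformly, via \fullref{obs}) this prepended segment cannot supply the point $x$, so the transfer argument runs with $D'$ in place of $D$ and gives, for each $n$, an $m$ with $\Phi\bigl(U_\bp(\zeta,m)\bigr)\subseteq U_{\bp'}(\Phi\zeta,n)$, where $U_\bp$ and $U_{\bp'}$ are the basic neighborhoods built from $\bp$ and $\bp'$; interchanging $\bp$ and $\bp'$ gives the reverse inclusion, so $\Phi$ and $\Phi^{-1}$ carry neighborhood bases to cofinal families and $\Phi$ is a homeomorphism.

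I expect the only real obstacle to be the additive loss $D$ (or $D'$) incurred when a close approach to one ray is converted into a close approach to the other: a crude triangle-inequality argument would only give fellow-travelling at distance $\kappa(\rho_\zeta,\cL,\cA)+D$, whereas \fullref{def:nbhds} insists on the exact constant $\kappa(\rho_\zeta,\cL,\cA)$. Invoking \fullref{keylemma} is precisely what trades this constant loss for a bounded loss in the radius $m$, and \fullref{obs} is what makes the radius loss uniform over the finitely constrained family of quasi-geodesics that could witness a membership failure, so that a single $m$ works for all of them.
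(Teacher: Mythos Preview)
Your proof is correct and follows essentially the same route as the paper: the bijection is induced by prefixing rays with a geodesic segment between basepoints, and the cofinality of neighborhood bases is established by the combination of \fullref{obs} (to bound the relevant quasi-geodesic constants in terms of the target radius) and \fullref{keylemma} (to convert a close approach at distance $\kappa+D$ into a close approach at the exact distance $\kappa$, at a bounded cost in radius). The paper collapses the two cases by treating the change-of-representative as the degenerate case $\bp=\bp'$ of the basepoint argument, whereas you treat them separately; and the paper tracks two distinct contraction functions $\rho$ and $\rho'$ for the two reference rays rather than writing $\rho_\zeta$ throughout, which you would need to do as well in the basepoint case since $\rho_{\Phi\zeta}$ (defined via $\bp'$-based geodesics) need not equal $\rho_\zeta$---but this only affects the bookkeeping, not the argument.
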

\begin{proof}
Let $\mathcal{C}$ be the set of contracting quasi-geodesic rays based
at $\bp$ and let $\mathcal{C}'$ be the set of contracting
quasi-geodesic rays based at $\bp'$. 
  There is a map $\phi\from \mathcal{C}\to\mathcal{C}'$ by prefixing
  $\gamma\in\mathcal{C}$ with a chosen geodesic segment from $\bp'$ to $\bp$.
The map $\phi$ clearly induces a bijection $\cbdry\phi$ between
contracting boundaries of $X$ with respect to different basepoints,
and the inverse map can be achieved by simply prefixing quasi-geodesic
rays by a geodesic from $\bp$ to $\bp'$. We check that $\cbdry\phi$ is
an open map.
For $\zeta\in\cbdry^\fq X$ and $r\geq 1$ we show for sufficiently
large $R$ that $U'(\cbdry\phi(\zeta),R)\subset \cbdry\phi(U(\zeta,r))$,
where $U'(\cbdry\phi(\zeta),R)$ denotes the appropriate neighborhood of $\cbdry\phi(\zeta)$
defined with $\bp'$ as basepoint.

Let $\alpha:=\alpha^\zeta$ be the reference geodesic for $\zeta$ based
at $\bp$, and let $\alpha'$ be the reference geodesic for
$\cbdry\phi(\zeta)$ based at $\bp'$. 
Then $\alpha'$ is bounded Hausdorff distance from $\alpha$. 
Suppose $\alpha$ is $\rho$--contracting and $\alpha'$ is $\rho'$--contracting.
\fullref{GIT} implies that $\alpha'$ eventually comes within distance $\kappa(\rho,1,0)$ of $\alpha$, and 
\fullref{QGIT} implies that this first happens at 
some time no later than
$d(\bp',\alpha)+3\kappa(\rho,1,0)+2\rho(d(\bp',\alpha))$.  After
that time $\alpha'$ remains in the $\kappa'(\rho,1,0)$--neighborhood of
$\alpha$. 
Assume $R>d(\bp',\alpha)+3\kappa(\rho,1,0)+2\rho(d(\bp',\alpha))$.

Assume further that $R>r+2d(\bp,\bp')$ and 
suppose
$\eta\in U'(\cbdry\phi(\zeta),R)$.
Let $\gamma\in\cbdry\phi^{-1}(\eta)$ be an arbitrary continuous
$(\cL,\cA)$--quasi-geodesic.
Our goal is to show that if $R$ is chosen sufficiently large with
respect to $\rho$, $\rho'$, and $r$, then such a $\gamma$ must come
within distance $\kappa(\rho,\cL,\cA)$
of $\alpha$ outside $N_r\bp$.
We then conclude $\cbdry\phi^{-1}(U'(\cbdry\phi(\zeta),R))\subset
U(\zeta,r)$.
By \fullref{obs}, it suffices to consider the case $\cL^2,\,\cA< r/3$.

Now, $\gamma':=\phi(\gamma)\in\eta$ is a continuous
$(\cL,\cA+2d(\bp,\bp'))$--quasi-geodesic.
Since $\eta\in U'(\cbdry\phi(\zeta),R)$ there exists a point
$x'\in\alpha'$ such that $d(\gamma',x')\leq
\kappa(\rho',\cL,\cA+2d(\bp,\bp'))$ and $d(x',\bp')\geq R$.
The first restriction on $R$ implies there is a point $x\in\alpha$
such that $d(x,x')\leq \kappa'(\rho,1,0)$, so $d(\gamma',x)\leq
\kappa'(\rho,1,0)+\kappa(\rho',\cL,\cA+2d(\bp,\bp'))$.
We also have $d(x,\bp)\geq R-\kappa'(\rho,1,0)-d(\bp,\bp')$.
Assuming further that $R>2d(\bp,\bp')+2\kappa'(\rho,1,0)+\kappa(\rho',\cL,\cA+2d(\bp,\bp'))$, we have that the point of $\gamma'$ close to $x$ is actually a point
of $\gamma$.
Let $y$ be the last point of $\alpha$ at distance
$\kappa(\rho,\cL,\cA)$ from $\gamma$ (see \fullref{fig:changeofbasepoint}), and  apply \fullref{keylemma}  to
find:
\begin{multline*}
d(\bp,y)\geq
R-\kappa'(\rho,1,0)-d(\bp,\bp')\\\quad-\theconstantformerlyknownasLambda(\kappa'(\rho,1,0)+\kappa(\rho',\sqrt{r/3},r/3+2d(\bp,\bp')))-\lambda(\rho,\sqrt{r/3},r/3)
\end{multline*}

\begin{figure}[h]
  \centering
\labellist
\small
\pinlabel $\bp$ [r] at 1 18
\pinlabel $\bp'$ [r] at 50 125
\pinlabel $\zeta$ [l] at 307 22
\pinlabel $\eta$ [b] at 226 127
\pinlabel $\gamma$ [l] at 228 86
\pinlabel $\gamma'$ [r] at 222 86
\pinlabel $\alpha$ [t] at 273 16
\pinlabel $\alpha'$ [b] at 273 25
\pinlabel $x$ [t] at 187 5
\pinlabel $x'$ [t] at 166 9
\pinlabel $y$ [t] at 137 5 
\tiny
\pinlabel $\leq \kappa(\rho',\cL,\cA+2d(\bp,\bp'))$ [l] at 225 43
\pinlabel $\leq \kappa'(\rho,1,0)$ [l] at 229 0
\pinlabel $\geq R$ [l] at 139 99
\endlabellist
  \includegraphics[width=.6\textwidth]{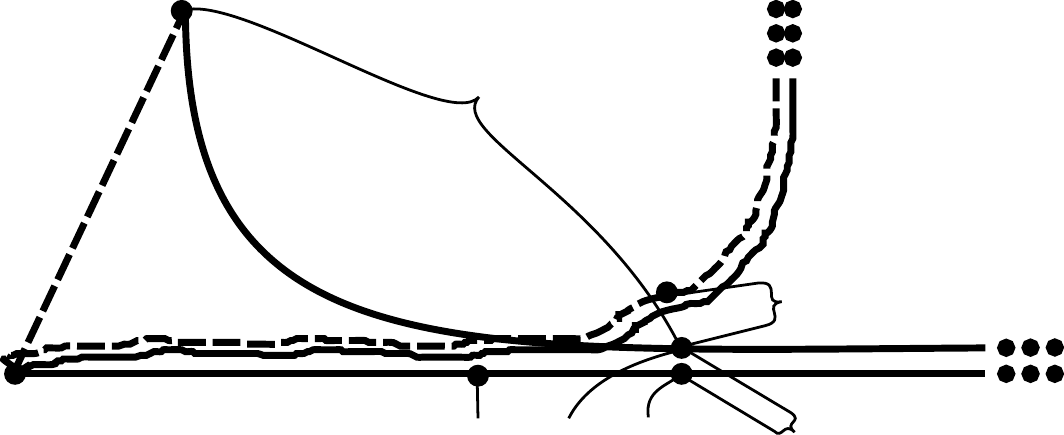}
  \caption{Change of basepoint}
  \label{fig:changeofbasepoint}
\end{figure}

Assuming that $R$ was chosen large enough to guarantee the right-hand side is at
least $r$, we have that $\gamma$ comes within distance $\kappa(\rho,\cL,\cA)$
of $\alpha$ outside $N_r\bp$. 
\end{proof}

\begin{proposition}\label{hausdorff}
  $\cbdry^\fq X$ is Hausdorff.
\end{proposition}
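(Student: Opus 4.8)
The plan is to show that for distinct points $\zeta,\eta\in\cbdry X$ there is a radius $r$ with $U(\zeta,r)\cap U(\eta,r)=\emptyset$, and then to invoke \fullref{cor:annulus} to upgrade these to disjoint \emph{open} neighborhoods: \fullref{cor:annulus} supplies an open $V_\zeta$ with $\zeta\in U(\zeta,\psi(\rho_\zeta,r))\subset V_\zeta\subset U(\zeta,r)$, and likewise an open $V_\eta\ni\eta$ with $V_\eta\subset U(\eta,r)$, so $V_\zeta\cap V_\eta=\emptyset$.

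First I would extract a divergence estimate for the chosen representative geodesic rays $\alpha:=\alpha^\zeta$ and $\beta:=\alpha^\eta$, both based at $\bp$. Since $\zeta\neq\eta$ these are not Hausdorff equivalent, so by \fullref{lem:hausdorff} they are not asymptotic. Apply \fullref{cor:asymp} to the $\rho_\zeta$--contracting set $\alpha$ and the geodesic (hence continuous $(1,0)$--quasi-geodesic) ray $\beta$, noting $d(\beta_0,\alpha)=0\leq\kappa(\rho_\zeta,1,0)$. If alternative \eqref{item:asymptotic} held, then $\beta\subset\bar{N}_{\kappa'(\rho_\zeta,1,0)}\alpha$; since $\beta$ is a geodesic ray we have $d(\bp,\beta_t)=t\to\infty$ while $\beta_t$ stays within $\kappa'(\rho_\zeta,1,0)$ of $\alpha$, so $\alpha$ comes uniformly close to $\beta$ along an unbounded set of parameters. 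This rules out alternative \eqref{eq:2} for the set $\beta$ and the ray $\alpha$, so \fullref{cor:asymp} forces $\alpha\subset\bar{N}_{\kappa'(\rho_\eta,1,0)}\beta$ as well; then $\alpha$ and $\beta$ have finite Hausdorff distance, contradicting $\zeta\neq\eta$. Hence $\beta$ satisfies alternative \eqref{eq:2} with respect to $\alpha$: there is $T_0$ with $d(\beta_{T_0},\alpha)=\kappa(\rho_\zeta,1,0)$ and $d(\beta_t,\alpha)\geq\tfrac12(t-T_0)-2\kappa(\rho_\zeta,1,0)$ for all $t$. Abbreviate $\kappa'_\zeta:=\kappa'(\rho_\zeta,1,0)$, $\kappa'_\eta:=\kappa'(\rho_\eta,1,0)$, set $C:=2\kappa'_\zeta+2\kappa'_\eta$, choose $U_0$ with $\tfrac12(U_0-T_0)-2\kappa(\rho_\zeta,1,0)>C$, and put $r:=U_0+\kappa'_\zeta+\kappa'_\eta+1$; then in particular $d(\alpha_{U_0},\beta_{U_0})\geq d(\beta_{U_0},\alpha)>C$.

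Next I claim $U(\zeta,r)\cap U(\eta,r)=\emptyset$. Suppose $\xi$ lies in the intersection, with representative geodesic ray $\sigma:=\alpha^\xi$, a continuous $(1,0)$--quasi-geodesic based at $\bp$. Applying \fullref{def:nbhds} with $\cL=1$, $\cA=0$ to $\xi\in U(\zeta,r)$ yields parameters $t\geq r$ and $s$ with $d(\alpha_t,\sigma_s)\leq\kappa(\rho_\zeta,1,0)$; since $\alpha$ is $\rho_\zeta$--contracting and $d(\alpha_0,\sigma_0)=0$, \fullref{lem:hausdorff} gives $d_{Haus}(\alpha_{[0,t]},\sigma_{[0,s]})\leq\kappa'_\zeta$. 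Because both rays emanate from $\bp$ (so $d(\bp,\alpha_u)=d(\bp,\sigma_u)=u$), comparing distances to $\bp$ forces $s\geq t-\kappa'_\zeta\geq r-\kappa'_\zeta$, and, for each $u\leq s$, produces $w\leq t$ with $d(\sigma_u,\alpha_w)\leq\kappa'_\zeta$ and $|u-w|\leq\kappa'_\zeta$, hence $d(\sigma_u,\alpha_u)\leq 2\kappa'_\zeta$. Symmetrically, $\xi\in U(\eta,r)$ gives $s'\geq r-\kappa'_\eta$ and $d(\sigma_u,\beta_u)\leq 2\kappa'_\eta$ for all $u\leq s'$. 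Since $U_0<r-\kappa'_\zeta\leq s$ and $U_0<r-\kappa'_\eta\leq s'$, taking $u=U_0$ gives $d(\alpha_{U_0},\beta_{U_0})\leq d(\alpha_{U_0},\sigma_{U_0})+d(\sigma_{U_0},\beta_{U_0})\leq 2\kappa'_\zeta+2\kappa'_\eta=C$, contradicting $d(\alpha_{U_0},\beta_{U_0})>C$. This proves the claim, and the proposition follows.

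I expect the main obstacle to be the divergence estimate: wringing from \fullref{cor:asymp} the statement that two non-asymptotic contracting geodesic rays through $\bp$ must diverge at a definite linear rate, in particular eliminating the ``bounded'' alternative \eqref{item:asymptotic}, which one does by playing the two instances of \fullref{cor:asymp} (one for each ray) against each other. The rest is bookkeeping with the constants $\kappa,\kappa'$, once one observes that $\xi\in U(\zeta,r)$ already constrains the \emph{representative geodesic} of $\xi$ near $\alpha^\zeta$, so the geodesic-to-geodesic estimate \fullref{lem:hausdorff} applies directly rather than the full quasi-geodesic divagation machinery.
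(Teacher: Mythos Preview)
Your proof is correct and follows essentially the same approach as the paper's: both use \fullref{cor:asymp} to see that $\alpha^\zeta$ and $\alpha^\eta$ diverge, and then obtain disjointness of $U(\zeta,r)$ and $U(\eta,r)$ by testing the definition only against geodesic representatives. The paper's argument is somewhat terser, invoking the ``once you leave the $\kappa'$--neighborhood you cannot return to the $\kappa$--neighborhood'' consequence of \fullref{cor:asymp} directly rather than your route through \fullref{lem:hausdorff} and a time-matching estimate, and it relies on \fullref{prop:topology} (that each $U(\cdot,r)$ is already a neighborhood) in place of your explicit appeal to \fullref{cor:annulus}.
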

\begin{proof}
Let $\zeta$ and $\eta$ be distinct points in $\cbdry X$.
Let $\alpha:=\alpha^\zeta$ and $\beta:=\alpha^\eta$ be representative geodesic
rays. 
Let $R$ be large enough that the $\kappa'(\rho_\zeta,1,0)$--neighborhood of
$\alpha_{[R,\infty)}$ is disjoint from the $\kappa'(\rho_\eta,1,0)$--neighborhood of
$\beta_{[R,\infty)}$. Such an $R$ exists by \fullref{cor:asymp}.

Choose $\xi\in U(\zeta,R)$.
Let $\gamma\in\xi$ be a geodesic ray. 
Since $\xi\in U(\zeta,R)$ there exists a point $x\in\alpha$ and
$y\in\gamma$ with
$d(x,\bp)\geq R$ and $d(x,y)\leq \kappa(\rho_\zeta,1,0)$.
By construction $d(y,\beta)>\kappa'(\rho_\eta,1,0)$, so, by
\fullref{cor:asymp}, the final visit of $\gamma$  to the
$\kappa(\rho_\eta,1,0)$--neighborhood of $\beta$ must have occurred inside
the ball of radius $R$ about $\bp$. Thus, $\xi\notin U(\eta,R)$.
\end{proof}

\begin{proposition}\label{regular}
   $\cbdry^\fq X$ is regular.
\end{proposition}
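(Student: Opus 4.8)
The plan is to deduce regularity from the single containment statement: \emph{for every $\zeta\in\cbdry X$ and every integer $n\geq 1$ there is an $R\geq 1$, depending only on $\rho_\zeta$ and $n$, with $\closure{U(\zeta,R)}\subset U(\zeta,n)$.} Granting this, regularity follows at once: given $\zeta\in\cbdry^\fq X$ and a closed set $C$ with $\zeta\notin C$, first countability (\fullref{prop:topology}) yields $n$ with $U(\zeta,n)\cap C=\emptyset$; then by \fullref{cor:annulus} the open set $\interior{U(\zeta,R)}$ contains $U(\zeta,\psi(\rho_\zeta,R))\ni\zeta$, so it is an open neighborhood of $\zeta$, while $X\setminus\closure{U(\zeta,R)}$ is an open set containing $C$ (since $C\subset X\setminus U(\zeta,n)\subset X\setminus\closure{U(\zeta,R)}$), and these two open sets are disjoint because $\interior{U(\zeta,R)}\subset\closure{U(\zeta,R)}$.

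To prove the containment I would argue by contradiction: suppose $\xi\in\closure{U(\zeta,R)}$ but $\xi\notin U(\zeta,n)$. By \fullref{obs} there is a continuous $(\cL,\cA)$--quasi-geodesic $\gamma\in\xi$ with $\cL^2,\cA<n/3$ and $d(\gamma,\alpha^\zeta\cap\nbhd^c_n\bp)>\kappa(\rho_\zeta,\cL,\cA)$. Apply \fullref{cor:asymp} to $\gamma$ and $Z=\alpha^\zeta$: the asymptotic alternative \eqref{item:asymptotic} cannot hold, since an unbounded set of times at which $\gamma$ is $\kappa(\rho_\zeta,\cL,\cA)$--close to $\alpha^\zeta$ would, at a sufficiently large parameter, place $\gamma$ within $\kappa(\rho_\zeta,\cL,\cA)$ of a point of $\alpha^\zeta$ outside $\nbhd_n\bp$. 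Hence there is a last time $T_0$ with $d(\gamma_{T_0},\alpha^\zeta)=\kappa(\rho_\zeta,\cL,\cA)$, necessarily with $d(\bp,\gamma_{T_0})<n+\kappa(\rho_\zeta,\cL,\cA)$, together with the linear escape bound $d(\gamma_t,\alpha^\zeta)\geq\tfrac1{2\cL}(t-T_0)-2(\cA+\kappa(\rho_\zeta,\cL,\cA))$ for all $t$. From this escape bound one reads off a parameter $\tau$ (explicitly $\tau=T_0+2\cL(\kappa'(\rho_\zeta,2\cL+1,\cA)+2(\cA+\kappa(\rho_\zeta,\cL,\cA)))+1$) with $d(\gamma_t,\alpha^\zeta)>\kappa'(\rho_\zeta,2\cL+1,\cA)$ for every $t\geq\tau$; using $\cL^2,\cA<n/3$ and the monotonicity of $\kappa$ and $\kappa'$ in their last two arguments, both $T_0$ and $\tau$ are bounded by a quantity depending only on $\rho_\zeta$ and $n$. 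Fix $R$, depending only on $\rho_\zeta$ and $n$, with $R>(2\cL+1)\tau+\kappa(\rho_\zeta,2\cL+1,\cA)+\cA$.

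Next I would use $\xi\in\closure{U(\zeta,R)}$: for each $m$ the open set $\interior{U(\xi,m)}$ (\fullref{cor:annulus}) is a neighborhood of $\xi$, so it meets $U(\zeta,R)$; pick $\eta_m\in U(\xi,m)\cap U(\zeta,R)$. Since $\gamma$ and the reference geodesic $\alpha^\xi$ lie in the same class $\xi$, they are within finite Hausdorff distance, so \fullref{cor:asymp} applied to $\gamma$ and $Z=\alpha^\xi$ must give alternative \eqref{item:asymptotic}, whence $d(\gamma,\alpha^\xi_{[T',\infty)})\leq\kappa(\rho_\xi,\cL,\cA)$ for every $T'$; and since the geodesic $\alpha^{\eta_m}$ lies in $\eta_m\in U(\xi,m)$ we have $d(\alpha^{\eta_m},\alpha^\xi_{[m,\infty)})\leq\kappa(\rho_\xi,1,0)$. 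Set $T:=\cL\tau+11\kappa'(\rho_\xi,\cL,\cA)+\cA+1$ and take $m\geq T+6\kappa'(\rho_\xi,\cL,\cA)+6\kappa'(\rho_\xi,1,0)$; note that $T$ and $m$ are \emph{allowed} to depend on $\xi$. Then \fullref{tailwag}, applied with $\alpha=\alpha^\xi$, the given $\gamma$, $\beta=\alpha^{\eta_m}$, and $S=m$, produces a continuous $(2\cL+1,\cA)$--quasi-geodesic $\delta_m$ that agrees with $\gamma$ on an initial segment $[0,t_0]$, with $\gamma_{t_0}$ within distance $11\kappa'(\rho_\xi,\cL,\cA)$ of $\alpha^\xi_T$, and shares a tail with $\alpha^{\eta_m}$, so $\delta_m\in\eta_m$. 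The bound $\cL t_0+\cA\geq d(\bp,\gamma_{t_0})\geq T-11\kappa'(\rho_\xi,\cL,\cA)$ forces $t_0>\tau$ by the choice of $T$, hence $\delta_m(\tau)=\gamma_\tau$ and $d(\delta_m(\tau),\alpha^\zeta)>\kappa'(\rho_\zeta,2\cL+1,\cA)$. On the other hand, $\delta_m$ is a continuous $(2\cL+1,\cA)$--quasi-geodesic in $\eta_m\in U(\zeta,R)$, so there is a parameter $u$ with $\delta_m(u)$ within $\kappa(\rho_\zeta,2\cL+1,\cA)$ of a point $\alpha^\zeta_r$ with $r\geq R$; then $(2\cL+1)u+\cA\geq d(\bp,\delta_m(u))\geq R-\kappa(\rho_\zeta,2\cL+1,\cA)$ gives $u\geq\tau$ by the choice of $R$. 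Applying \fullref{lem:hausdorff} to $\alpha^\zeta$ (regarded as a $(2\cL+1,\cA)$--quasi-geodesic) and $\delta_m$, using the $\kappa(\rho_\zeta,2\cL+1,\cA)$--close pairs $(\bp,\bp)$ and $(\alpha^\zeta_r,\delta_m(u))$, yields $d_{Haus}(\alpha^\zeta_{[0,r]},\delta_m|_{[0,u]})\leq\kappa'(\rho_\zeta,2\cL+1,\cA)$, and since $\tau\leq u$ this forces $d(\delta_m(\tau),\alpha^\zeta)\leq\kappa'(\rho_\zeta,2\cL+1,\cA)$ --- contradicting the previous sentence.

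The main obstacle is precisely the constant bookkeeping in the last paragraph: the threshold $R$ (and the parameter $\tau$) must be controlled using only $\rho_\zeta$ and $n$, which is why \fullref{obs} is essential, forcing $\cL^2,\cA<n/3$ for the offending $\gamma$; meanwhile the auxiliary quantities $T$ and $m$ entering the \fullref{tailwag} application may safely depend on $\xi$ (in particular on $\rho_\xi$), because $\eta_m$ is chosen only after $\xi$ and $\gamma$ have been fixed. The remaining work --- verifying that the explicit choices of $\tau$, $T$, $m$, $R$ meet the hypotheses of the cited results and that $R$ indeed depends only on $\rho_\zeta$ and $n$ --- is routine.
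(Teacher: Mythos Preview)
Your proof is correct and follows essentially the same approach as the paper's: reduce regularity to the containment $\closure{U(\zeta,R)}\subset U(\zeta,n)$, assume a bad point $\xi$ in the closure but outside $U(\zeta,n)$, pick an offending quasi-geodesic $\gamma\in\xi$ via \fullref{obs}, use membership in the closure to find approximants $\eta_m\in U(\zeta,R)\cap U(\xi,m)$, and then apply \fullref{tailwag} to transplant $\gamma$'s bad initial segment into a quasi-geodesic $\delta_m\in\eta_m$, contradicting $\eta_m\in U(\zeta,R)$. The only cosmetic difference is that the paper derives the final contradiction by invoking \fullref{keylemma} (as in the proof of \fullref{lem:annulus}), whereas you use \fullref{lem:hausdorff}; both routes work, and your version makes the constant-tracking more explicit.
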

\begin{proof}
Suppose $C\subset \cbdry^\fq X$ is closed and $\zeta\in C^c$. 
Then $C^c$ is a neighborhood of $\zeta$, so there exists $r'$ such
that for all $r\geq r'$ we have $U(\zeta,r)\subset C^c$. Suppose:
\begin{equation}
   \label{eq:7}
   \forall \zeta\in\cbdry^\fq X,\, \exists r'\geq 1,\, \forall r\geq r',\, \exists
   R>r,\, \closure{U(\zeta,R)}\subset U(\zeta,r)
 \end{equation}
Then there exists an $R>r$ such that
$\closure{U(\zeta,R)}\subset U(\zeta,r)\subset C^c$, so $C$ is
contained in an open set $\closure{U(\zeta,R)}^c$ that is disjoint
from $U(\zeta,R)$.
By \fullref{prop:topology}, $U(\zeta,R)$ is a neighborhood of $\zeta$,
so it contains an open set $U$ that contains $\zeta$.
The disjoint open sets $U$ and $\closure{U(\zeta,R)}^c$ separate
$\zeta$ and $C$, so (\ref{eq:7}) implies regularity.

The proof of (\ref{eq:7}) is similar to the proof of
\fullref{lem:annulus}: suppose given $r$ and $\zeta$ there is no 
$R$ satisfying the claim. Then there exists a point $\eta\in\closure{U(\zeta,R)}\cap
U(\zeta,r)^c$.
Now $\eta\in\closure{U(\zeta,R)}$
implies that for all $n\in\mathbb{N}$ there exists $\xi_n\in
U(\zeta,R)\cap U(\eta,n)$, while $\eta\notin U(\zeta,r)$ implies there
exist $\cL^2,\,\cA<r/3$ and a continuous $(\cL,\cA)$--quasi-geodesic
$\gamma\in\eta$ such that $d(\gamma,N^c_r\bp\cap\alpha^\zeta)>\kappa(\rho_\zeta,\cL,\cA)$.
For sufficiently large $n$ we wag the tail of $\gamma$ by
\fullref{tailwag} to produce a continuous
$(2\cL+1,\cA)$--quasi-geodesic $\delta\in\xi_n$ that agrees with
$\gamma$ on a long initial segment.
If $R$ is large enough this sets up a contradiction between the fact
that $\xi_n\in U(\zeta,R)$ and the fact that $\gamma$ witnesses
$\eta\notin U(\zeta,r)$, so for large enough $R$ we have
$\closure{U(\zeta,R)}\subset U(\zeta,r)$, as desired.
\end{proof}

\bigskip

Generally in this paper we will work directly with the topology on the contracting
boundary. 
However, it is worth mentioning that this object that we have called a
`boundary' really is a topological boundary.

\begin{definition}
  A \emph{bordification} of a Hausdorff topological space $X$ is a Hausdorff space
  $\hat{X}$ containing $X$ as an open, dense subset. 
\end{definition}

The contracting boundary of a proper geodesic metric space provides a
bordification of $X$ by $\hat{X}:=X\cup\cbdry X$ as follows. 
For $x\in X$ take a neighborhood basis for $x$ to be metric balls
about $x$. 
For $\zeta\in\cbdry X$ take a neighborhood basis for $\zeta$ to be
sets $\hat{U}(\zeta,r)$ consisting of $U(\zeta,r)$ and points $x\in
X$ such that we have $d(\gamma,N_r^c\bp\cap\alpha^\zeta)\leq \kappa(\rho_\zeta,\cL,\cA)$ for every $\cL\geq 1$, $\cA\geq 0$, and continuous
$(\cL,\cA)$--quasi-geodesic segment $\gamma$ with endpoints $\bp$ and
$x$.

\begin{proposition}
  $\hat{X}:=X\cup\cbdry X$ topologized as above defines a first
  countable bordification of $X$
  such that the induced topology on $\cbdry X$ is the topology of
  fellow-travelling quasi-geodesics. 
\end{proposition}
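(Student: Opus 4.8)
The plan is to verify, in order: that the prescribed neighborhood bases satisfy the axioms for a topology on $\hat X$; that $X$ is an open dense subset; that $\hat X$ is first countable and Hausdorff; and that the subspace topology on $\cbdry X$ coincides with $\fq$.

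Most of these are quick. First countability holds because $\{\hat U(\zeta,n)\}_{n\in\mathbb N}$ at $\zeta\in\cbdry X$ and the balls of radius $1/n$ at $x\in X$ are countable neighborhood bases. Since the points of $\cbdry X$ are not points of the metric space, every metric ball lies in $X$, so $X$ is open; and $X$ is dense because $\alpha^\zeta_t\in\hat U(\zeta,r)$ whenever $t\geq r$ -- a continuous quasi-geodesic from $\bp$ to $\alpha^\zeta_t$ ends on $\nbhd^c_r\bp\cap\alpha^\zeta$, so the relevant distance is $0$ -- hence $\alpha^\zeta_t\to\zeta$. For the Hausdorff property: two points of $\cbdry X$ are separated as in the proof of \fullref{hausdorff}; two points of $X$ by the metric; and $\zeta\in\cbdry X$ is separated from $x\in X$ by observing that a geodesic from $\bp$ to a point of $\hat U(\zeta,r)\cap X$ must pass within $\kappa(\rho_\zeta,1,0)$ of $\nbhd^c_r\bp\cap\alpha^\zeta$, whence $\hat U(\zeta,r)\cap X\subseteq\nbhd^c_{r-\kappa(\rho_\zeta,1,0)}\bp$ is disjoint from a small ball about $x$ once $r>d(\bp,x)+\kappa(\rho_\zeta,1,0)+1$. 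Finally, $\hat U(\zeta,r)\cap\cbdry X=U(\zeta,r)$ by construction and $\{U(\zeta,n)\}_{n\in\mathbb N}$ is a neighborhood basis for $\fq$ at $\zeta$ by \fullref{prop:topology}, so the subspace topology and $\fq$ have the same neighborhood bases and agree.

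The substance is the remaining topology axiom: for each $\zeta$ and $r\geq1$ there is $R\geq r$ so that every $y\in\hat U(\zeta,R)$ has a basic neighborhood inside $\hat U(\zeta,r)$; granting this, the reasoning in the proof of \fullref{prop:topology} shows the $\hat U(\zeta,n)$ are genuine neighborhoods. When $y=\eta\in\cbdry X$ the inclusion $U(\eta,R')\subseteq U(\zeta,r)$ is \fullref{lem:annulus}; the new content is to also arrange $\hat U(\eta,R')\cap X\subseteq\hat U(\zeta,r)\cap X$, and, when $y=x\in X\cap\hat U(\zeta,R)$, to produce $\epsilon>0$ with $B(x,\epsilon)\subseteq\hat U(\zeta,r)$. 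I would handle both by re-running the divagation estimates of \fullref{sec:contraction2} for quasi-geodesic \emph{segments}: by the reasoning of \fullref{obs}, a continuous $(\cL,\cA)$--quasi-geodesic based at $\bp$ that witnesses failure of membership in $\hat U(\zeta,r)$ must have $\cL^2,\cA<r/3$ (since $\bp$ lies on it and $d(\bp,\nbhd^c_r\bp\cap\alpha^\zeta)=r$), so with the witness constants bounded one extends such a witness -- by a geodesic toward its putative endpoint, and if necessary further along $\alpha^\eta$ -- and invokes segment versions of \fullref{tailwag} and \fullref{keylemma} to contradict $x\in\hat U(\zeta,R)$, respectively $\eta\in U(\zeta,R)$, following the pattern of the proofs of \fullref{lem:annulus} and \fullref{regular}.

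The main obstacle is the interior-point case, and specifically the constant bookkeeping it forces. One cannot argue by naive perturbation: replacing $x$ by a nearby $x'$ enlarges the additive quasi-geodesic constant, and $\kappa(\rho_\zeta,\cL,\cA)$ is non-decreasing in that constant, so the perturbation pushes the estimate the wrong way. The slack must instead come from taking $R$ large relative to $r$, together with the freedom -- once $x$ is fixed -- to choose $\epsilon$ below a threshold small enough that all the witness constants, hence all the relevant values of $\kappa$, stay bounded in terms of $\rho_\zeta$ and $r$ alone. Threading these choices ($R$ as a function of $\rho_\zeta$ and $r$; then $\epsilon$ as a function of $x$; then the intermediate times $T,S$ passed to \fullref{tailwag}) through the tail-wagging construction so that everything remains consistent is the delicate part; the geometric estimates themselves are already subsumed in the analysis carried out for boundary points.
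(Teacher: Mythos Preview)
Your approach is correct and is essentially what the paper does---the paper's own proof simply says ``a similar argument to that of \fullref{prop:topology}'' and leaves the details to the reader, so you are filling in precisely the gap the authors elide. Your density argument is in fact cleaner than the paper's: you observe directly that any quasi-geodesic from $\bp$ to $\alpha^\zeta_t$ terminates on $\alpha^\zeta\cap N^c_r\bp$, whereas the paper invokes uniform contraction of initial subsegments.

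One simplification is available in the interior-point case you flag as the main obstacle. You worry that $\epsilon$ must be chosen depending on $x$, but in fact $\epsilon=1$ works uniformly once $R$ is large enough in terms of $\rho_\zeta$ and $r$. Here is why: a witness $\gamma'$ to $x'\notin\hat U(\zeta,r)$ has $\cL^2,\cA<r/3$ by \fullref{obs}; appending a geodesic of length $\leq 1$ to reach $x$ yields a continuous $(\cL,\cA+2)$--quasi-geodesic, whose constants are still bounded purely in terms of $r$. Since $x\in\hat U(\zeta,R)$, this extended path comes $\kappa(\rho_\zeta,\cL,\cA+2)$--close to $\alpha^\zeta\cap N^c_R\bp$, hence $\gamma'$ comes within $J:=1+\kappa(\rho_\zeta,\sqrt{r/3},r/3+2)$ of $\alpha^\zeta$ outside $N_R\bp$. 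Now \fullref{keylemma} gives a point of $\alpha^\zeta$ at distance $\kappa(\rho_\zeta,\cL,\cA)$ from $\gamma'$ outside the ball of radius $R-\theconstantformerlyknownasLambda J-\lambda(\rho_\zeta,\sqrt{r/3},r/3)$, and this exceeds $r$ once $R$ is large enough. So the bookkeeping is less delicate than you fear: all constants are already controlled by $r$ and $\rho_\zeta$, and no dependence on $x$ enters. The same mechanism handles $\hat U(\eta,R')\cap X\subset\hat U(\zeta,r)$ without needing the full tail-wagging lemma.
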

\begin{proof}
  A similar argument to that of \fullref{prop:topology} shows we have defined
  a neighborhood basis in a topology for each point in $\hat{X}$, and
  the topology agrees with the metric topology on $X$ and topology
  $\fq$ on $\cbdry X$ by construction. 
That $\hat{X}$ is Hausdorff follows from \fullref{hausdorff}.
$X$ is clearly open in $\hat{X}$. 
To see that $X$ is dense, consider
$\zeta\in\cbdry X$, which, by definition, is an equivalence class of contracting
quasi-geodesic rays. 
For any quasi-geodesic ray $\gamma\in\zeta$ we
have that $(\gamma_n)_{n\in\mathbb{N}}$ is a sequence in $X$
converging to $\gamma$, because the subsegments $\gamma_{[0,n]}$ are
uniformly contracting. 
\end{proof}

\begin{definition}\label{def:limitset}
  If $G$ is a finitely generated group acting properly discontinuously
  on a proper geodesic metric space $X$ with basepoint $\bp$ we
  define the \emph{limit set} $\Lambda(G):=\closure{G\bp}\setminus G\bp$ of $G$ to be the topological frontier in
  $\hat{X}$ of the orbit $G\bp$ of the basepoint.
\end{definition}

\section{Quasi-isometry invariance}\label{sec:invariance}
In this section we prove quasi-isometry invariance of the \thistopology.
\begin{theorem}\label{thm:continuity}
Suppose $\phi\from X\to X'$ is a quasi-isometric embedding between proper geodesic
metric spaces.
If $\phi$ takes contracting quasi-geodesics to contracting quasi-geodesics then it
induces an injection $\cbdry \phi\from
\cbdry^\fq X\to\cbdry^\fq X'$ that is an open mapping onto its image with the subspace topology.
If $\phi(X)$ is a contracting subset of $X'$ then $\cbdry \phi$ is
continuous.
\end{theorem}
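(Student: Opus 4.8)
The plan is to analyze the induced map at three levels: as a map of sets, as an open map onto its image, and---under the stronger hypothesis---as a continuous map. First I would check that $\cbdry\phi$ is well-defined and injective. Given $\zeta\in\cbdry X$ with representative quasi-geodesic $\gamma$, the image $\phi\circ\gamma$ is a quasi-geodesic ray in $X'$; composing with a geodesic from $\bp'$ to $\phi(\gamma_0)$ gives a quasi-geodesic ray based at $\bp'$, and by hypothesis it is contracting, so we get a point of $\cbdry X'$. Two Hausdorff-equivalent rays in $X$ have Hausdorff-equivalent images since $\phi$ is a quasi-isometric embedding, so $\cbdry\phi$ is well-defined; conversely, a quasi-isometry inverse $\bar\phi$ (on $\phi(X)$, extended coarsely) shows that non-equivalent classes have non-equivalent images, giving injectivity. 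The effective control in \fullref{Hausdorff}, \fullref{cor:uniformcontraction}, and \fullref{lem:hausdorff} ensures all constants here depend only on the quasi-isometry constants of $\phi$ and on $\rho_\zeta$.

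Next, for openness onto the image: fix $\zeta$ and $r\geq 1$; I must produce $R$ so that $\cbdry\phi\bigl(U(\zeta,r)\bigr)$ contains $U'(\cbdry\phi(\zeta),R)\cap\cbdry\phi(\cbdry X)$. Let $\alpha=\alpha^\zeta$ and let $\alpha'$ be the reference geodesic for $\cbdry\phi(\zeta)$ in $X'$; as in \fullref{prop:choices}, $\phi\circ\alpha$ is a contracting quasi-geodesic asymptotic to $\alpha'$, so by \fullref{lem:hausdorff} they are uniformly Hausdorff-close. Take $\eta'\in U'(\cbdry\phi(\zeta),R)$ of the form $\cbdry\phi(\eta)$, and let $\gamma\in\eta$ be an arbitrary continuous $(\cL,\cA)$--quasi-geodesic; by \fullref{obs} we may assume $\cL^2,\cA<r/3$. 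Then $\phi\circ\gamma$ (tamed, and prefixed by a short geodesic) is a continuous quasi-geodesic in $\eta'$, with constants controlled by $\cL,\cA$ and $\phi$; since $\eta'\in U'(\cdot,R)$ it fellow-travels $\alpha'$ out past radius $R$ in $X'$, hence fellow-travels $\phi\circ\alpha$ out to a large radius, hence (pulling back by $\bar\phi$ and using that $\bar\phi\circ\phi$ is close to the identity) $\gamma$ comes $\kappa(\rho_\zeta,\cL,\cA)$--close to $\alpha$ at a point of large $d(\cdot,\bp)$. The divagation estimate \fullref{keylemma} converts "$\gamma$ fellow-travels $\phi^{-1}(\alpha')$ far out in $X$" into "$\gamma$ fellow-travels $\alpha$ outside $N_r\bp$", once $R$ is chosen large enough in terms of $r$, $\rho_\zeta$, and $\phi$. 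This gives $\eta\in U(\zeta,r)$ and hence openness.

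Finally, continuity when $\phi(X)$ is a contracting subset of $X'$. Here I must show: for each $\zeta$ and each $r\geq 1$ there is $R\geq 1$ with $\cbdry\phi\bigl(U(\zeta,R)\bigr)\subset U'(\cbdry\phi(\zeta),r)$. The point of the hypothesis "$\phi(X)$ contracting" is that a quasi-geodesic ray $\gamma'$ in $X'$ representing $\cbdry\phi(\eta)$, which a priori could wander anywhere in $X'$, must in fact stay in a bounded neighborhood of $\phi(X)$: this follows from \fullref{cor:asymp} applied to $\phi(X)$ together with the fact that $\gamma'$ has its basepoint near $\phi(X)$ and is asymptotic to $\phi\circ($something in $X)$, so case \eqref{item:asymptotic} of \fullref{cor:asymp} applies. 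Once $\gamma'$ is trapped near $\phi(X)$, the quasi-isometry inverse $\bar\phi$ carries it back to a quasi-geodesic $\gamma$ in $X$ representing $\eta$ (with uniformly controlled constants), and now I run the previous fellow-travelling argument in the other direction: if $\eta\in U(\zeta,R)$ then $\gamma$ fellow-travels $\alpha$ far out in $X$, so $\phi\circ\gamma$ fellow-travels $\phi\circ\alpha$, hence $\alpha'$, far out in $X'$; since $\gamma'$ is Hausdorff-close to $\phi\circ\gamma$ by \fullref{lem:hausdorff} (both being quasi-geodesics in the class $\cbdry\phi(\eta)$), and since the constants are uniform in the quasi-geodesic $\gamma'$ we started with, we get $\cbdry\phi(\eta)\in U'(\cbdry\phi(\zeta),r)$ provided $R$ is large in terms of $r$, $\rho_\zeta$, $\rho_{\phi(X)}$, and $\phi$. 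The main obstacle is the bookkeeping in this last step: without the "$\phi(X)$ contracting" hypothesis an arbitrary representative of $\cbdry\phi(\eta)$ need not be controlled by $\phi$ at all, and one must be careful that the trapping-near-$\phi(X)$ estimate is uniform over all quasi-geodesics in the class (which is exactly what \fullref{cor:asymp}\eqref{item:asymptotic} and the effectivity clause of \fullref{morseequivalent} provide); assembling the chain $\gamma'\rightsquigarrow\gamma\rightsquigarrow$ fellow-travels $\alpha$ $\rightsquigarrow\phi\circ\gamma$ fellow-travels $\alpha'\rightsquigarrow\gamma'$ fellow-travels $\alpha'$ with all constants depending only on the allowed data, and not on $\gamma'$, is where the real work lies.
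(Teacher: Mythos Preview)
Your proposal is correct and follows essentially the same route as the paper. The paper organizes the argument slightly differently: it proves continuity first (via exactly your chain $\gamma'\rightsquigarrow$ project to $\phi(X)\rightsquigarrow$ pull back by $\bar\phi\rightsquigarrow$ use $U(\zeta,R)\rightsquigarrow$ push forward and apply \fullref{keylemma}), and then obtains openness by observing that $\bar\phi\from\phi(X)\to X$ has coarsely dense, hence contracting, image, so the same continuity argument applied to $\bar\phi$ yields the analogue of your openness inequality. Your direct openness argument (push $\gamma$ forward, use the $U'$ condition on $\phi\circ\gamma$, pull back) is exactly what unwinding the paper's ``apply the argument to $\bar\phi$'' gives. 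One small technical point: for the trapping step in the continuity direction the paper invokes \fullref{lemma:uniformproper} rather than \fullref{cor:asymp}\eqref{item:asymptotic} directly, because $\bar\phi$ is only defined on $\phi(X)$ and one must project $\gamma'$ to $\phi(X)$ before pulling back; your parenthetical ``extended coarsely'' and your identification of the uniformity issue in the last paragraph show you have this covered.
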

We will see in \fullref{morsepreserving} that if $\phi(X)$ is
contracting then $\phi$ does indeed take contracting quasi-geodesics
to contracting quasi-geodesics, so we get the following corollary of \fullref{thm:continuity}.
\begin{corollary}\label{thm:qiinvariance}
If $\phi\from X\to X'$ is a quasi-isometric embedding between proper geodesic
metric spaces and $\phi(X)$ is contracting in $X'$ then $\cbdry\phi$
is an embedding.
In particular, if $\phi$ is a quasi-isometry then $\cbdry\phi$ is a homeomorphism.
\end{corollary}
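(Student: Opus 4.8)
The corollary reduces to \fullref{thm:continuity}. Indeed, \fullref{morsepreserving} supplies the hypothesis of \fullref{thm:continuity} from ``$\phi(X)$ contracting'', so $\cbdry\phi$ is an injective, continuous, open-onto-its-image map, i.e.\ an embedding. If $\phi$ is moreover a quasi-isometry, a quasi-isometry inverse $\bar\phi$ has $\bar\phi(X')$ at bounded Hausdorff distance from $X$, hence contracting (the whole space is vacuously $0$--contracting; now apply \fullref{Hausdorff}), so $\cbdry\bar\phi$ is also defined; since $\bar\phi\phi$ and $\phi\bar\phi$ lie at bounded distance from the respective identities and $\cbdry$ sends a self-map at bounded distance from the identity to the identity (it only replaces each quasi-geodesic ray by a Hausdorff-equivalent one), $\cbdry\bar\phi$ is a two-sided inverse of $\cbdry\phi$, and a bijective embedding is a homeomorphism. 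So the plan is to prove \fullref{thm:continuity}. The map $\cbdry\phi$ sends the class of a contracting quasi-geodesic ray $\gamma$ from $\bp$ to the class of a taming of $\phi\circ\gamma$ (contracting by hypothesis), using \fullref{prop:choices} to take $\bp':=\phi(\bp)$ as basepoint of $X'$; the two inequalities defining a quasi-isometric embedding give well-definedness (the upper bound: Hausdorff-equivalent rays have Hausdorff-equivalent images) and injectivity (the lower bound: images at bounded Hausdorff distance force the originals to be so). Throughout I fix, via \fullref{lem:hausdorff}, a constant $D_0=D_0(\rho_\zeta,\phi)$ with $\alpha^{\cbdry\phi(\zeta)}$ within Hausdorff distance $D_0$ of $\phi\circ\alpha^\zeta$, since both represent $\cbdry\phi(\zeta)$.

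Write $\zeta':=\cbdry\phi(\zeta)$ and $\xi':=\cbdry\phi(\xi)$. Since the $U(\zeta,n)$ form neighborhood bases (\fullref{prop:topology}) and $\cbdry\phi$ is injective, openness onto the image amounts to: for each $\zeta$ and $r$ there is $R$ with $\xi\notin U(\zeta,r)\Rightarrow\xi'\notin U'(\zeta',R)$. From $\xi\notin U(\zeta,r)$ and \fullref{obs}, take a continuous $(\cL,\cA)$--quasi-geodesic witness $\gamma\in\xi$ with $\cL^2,\cA<r/3$; directly from the witness property, all $\kappa(\rho_\zeta,\cL,\cA)$--close approaches of $\gamma$ to $\alpha^\zeta$ occur inside $\nbhd_r\bp$. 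Its tamed image $\phi\circ\gamma\in\xi'$ is a continuous $(\cL',\cA')$--quasi-geodesic with $\cL',\cA'$ depending only on $\cL,\cA,\phi$. Were $\phi\circ\gamma$ to come $\kappa(\rho_{\zeta'},\cL',\cA')$--close to a point of $\alpha^{\zeta'}$ beyond radius $R$, moving that close point onto $\phi\circ\alpha^\zeta$ at cost $D_0$ and applying the lower quasi-isometry bound would make $\gamma$ pass within a controlled distance of $\alpha^\zeta$ at a comparably far-out point; \fullref{keylemma} then supplies a point of $\alpha^\zeta\cap\nbhd^c_r\bp$ at distance exactly $\kappa(\rho_\zeta,\cL,\cA)$ from $\gamma$, contradicting the witness property as soon as that far-out point has radius exceeding the threshold $r+\theconstantformerlyknownasLambda J+\lambda(\rho_\zeta,\cL,\cA)$ that \fullref{keylemma} permits. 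That threshold is uniform in the witness because $\cL^2,\cA<r/3$, so any $R$ above the resulting bound makes $\phi\circ\gamma$ a witness to $\xi'\notin U'(\zeta',R)$.

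For continuity I must find, given $R$, a radius $r$ with $\cbdry\phi(U(\zeta,r))\subseteq U'(\zeta',R)$: whenever $\xi\in U(\zeta,r)$, every continuous $(\cL',\cA')$--quasi-geodesic $\gamma'\in\xi'$ must approach $\alpha^{\zeta'}$ to within $\kappa(\rho_{\zeta'},\cL',\cA')$ beyond radius $R$. The obstacle --- and the step I expect to be the crux --- is that such a $\gamma'$ need not be $\phi$ of anything, and its Hausdorff distance from $\phi\circ\alpha^\xi$, though finite, is \emph{not} controlled uniformly in $\xi$, so a naive pull-back has uncontrolled constants. The resolution exploits that $\phi(X)$ is contracting with a \emph{fixed} function $\rho_{\phi(X)}$: by \fullref{obs} one may assume $(\cL')^2,\cA'<R/3$, and since $\gamma'$ is asymptotic into $\phi(X)$ it cannot realize the linear-escape alternative of \fullref{cor:asymp}, so $\gamma'$ lies in the $\kappa'(\rho_{\phi(X)},\cL',\cA')$--neighborhood of $\phi(X)$ --- a bound depending only on that fixed function and on $\cL',\cA'$. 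Choosing $\phi$--preimages of the points of $\gamma'$ then yields a continuous quasi-geodesic $\delta\in\xi$ with uniformly controlled constants such that $\phi\circ\delta$ is uniformly Hausdorff-close to $\gamma'$. Feeding $\delta$ into $\xi\in U(\zeta,r)$ and transporting the resulting close approach forward through $\phi$ and over to $\alpha^{\zeta'}$ via $D_0$ shows $\gamma'$ comes within a controlled distance of a point of $\alpha^{\zeta'}$ at radius at least roughly $r/\cL_\phi^2$ from $\bp'$; a final application of \fullref{keylemma} upgrades this to a $\kappa(\rho_{\zeta'},\cL',\cA')$--close approach beyond radius $R$, provided $r$ exceeds roughly $\cL_\phi^2$ times $R$ plus the accumulated uniform constants. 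As this works for every $\gamma'$, $\xi'\in U'(\zeta',R)$, completing \fullref{thm:continuity} and hence the corollary; the main work is pinning down that $\delta$'s constants are independent of $\xi$ (where the fixed contraction function of $\phi(X)$ enters, through \fullref{cor:asymp}) and tracking the chain of constants through the two uses of \fullref{keylemma}.
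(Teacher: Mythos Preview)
Your first paragraph is exactly the paper's proof of the corollary: reduce to \fullref{thm:continuity} via \fullref{morsepreserving}, then handle the quasi-isometry case by noting that $\phi(X)$ (and $\bar\phi(X')$) is coarsely dense, hence contracting. The remainder re-proves \fullref{thm:continuity} itself, which is unnecessary here since that theorem is proved separately; your argument for it nonetheless matches the paper's---the continuity half is essentially identical (use the fixed contraction function of $\phi(X)$ via \fullref{cor:asymp}/\fullref{lemma:uniformproper} to project $\gamma'$ onto $\phi(X)$ with constants independent of $\xi$, pull back, apply \fullref{keylemma}), while for openness you push a witness $\gamma$ forward directly, whereas the paper instead applies its continuity argument to the quasi-inverse $\bar\phi\from\phi(X)\to X$.
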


\begin{remark}
  Cordes \cite{Cor15} proves a version of \fullref{thm:continuity} and
  \fullref{thm:qiinvariance}
  for the Morse boundary. The construction of the injective map is
  exactly the same. 
For continuity, 
he defines a map between contracting boundaries to be \emph{Morse-preserving} if for each
$\mu$ there is a $\mu'$ such that the map takes boundary points with a
$\mu$--Morse representative to boundary points with a $\mu'$--Morse representative, and shows that if
$\phi$ is a quasi-isometric embedding that induces a Morse-preserving
map $\cbdry\phi$ on the contracting boundary then $\cbdry\phi$ is continuous in the
direct limit topology. 

Similarly, let us say that $\phi$ is \emph{Morse-controlled} if for
each $\mu$ there exists $\mu'$ such that $\phi$ takes $\mu$--Morse
geodesics to $\mu'$--Morse geodesics.
A Morse-controlled quasi-isometric embedding induces a
Morse-preserving boundary map.
We will see in \fullref{morsepreserving}  that the
hypothesis that $\phi(X)$ is a contracting set
implies that $\phi$ is Morse-controlled. 
\end{remark}

Cayley graphs of a fixed group with respect to different finite
generating sets are quasi-isometric, so \fullref{thm:qiinvariance}
allows us to define the contracting boundary of a finitely generated
group, independent of a choice of generating set.
\begin{definition}
  If $G$ is a finitely generated group define $\cbdry^\fq G$ to be
  $\cbdry^\fq X$ where $X$ is any Cayley graph of $G$ with respect to
  a finite generating set.
\end{definition}

\smallskip

The hypothesis in \fullref{thm:continuity} that $\phi(X)$ is contracting already implies that
it is undistorted, so in fact we do not need to explicitly require
$\phi$ to be a quasi-isometric embedding. 
We can relax the hypotheses by only requiring $\phi$ to be coarse
Lipschitz and uniformly proper. 
This is illustrated by the following easy lemma. 

A map $\phi\from X \to X'$ between metric spaces is \emph{coarse
  Lipschitz} if there are constants $\cL\geq 1$ and $\cA\geq 0$ such
that $d(\phi(x),\phi(x'))\leq \cL d(x,x')+\cA$ for all $x,x'\in X$.
It is \emph{uniformly proper} if there exists a non-decreasing
function $\chi\from [0,\infty)\to
[0,\infty)$ such that $d(x,x')\leq \chi(d(\phi(x),\phi(x')))$ for
all $x,x'\in X$.
Note that if $X$ is geodesic and $\phi$ is coarse Lipschitz and
uniformly proper then $\chi(r)>0$ once $r>\cA$.

\begin{lemma}\label{cor:qiembedding}
If $\phi\from X \to X'$ is a coarse Lipschitz, uniformly proper map between geodesic metric spaces
and $Z\subset X$ has quasi-convex image in $X'$ then $\phi|_Z\from
Z\to X'$ is a quasi-isometric embedding.
\end{lemma}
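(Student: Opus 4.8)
The plan is to unwind the definitions and show that the two-sided quasi-isometry inequality holds on $Z$. The upper bound comes for free from coarse Lipschitz: for $z,z'\in Z$ we have $d_{X'}(\phi(z),\phi(z'))\leq \cL d_X(z,z')+\cA$, so we only have to produce a lower bound of the form $\frac{1}{\cL'}d_X(z,z')-\cA'\leq d_{X'}(\phi(z),\phi(z'))$ for suitable constants. The map $\phi$ is uniformly proper, so $d_X(z,z')\leq \chi(d_{X'}(\phi(z),\phi(z')))$; the trouble is that $\chi$ is merely a non-decreasing function, not an affine one, so this alone does not give a quasi-isometric lower bound. The quasi-convexity of $\phi(Z)$ in $X'$ is exactly what will fix this.

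First I would fix a geodesic $\sigma$ in $X$ from $z$ to $z'$, say of length $\ell=d_X(z,z')$, and sample it at the integer points $z=x_0,x_1,\dots,x_n,x_{n+1}=z'$ with $n=\lfloor\ell\rfloor$, so consecutive samples are at distance at most $1$ in $X$ and hence their $\phi$-images are at distance at most $\cL+\cA=:D_0$ in $X'$. Concatenating geodesics between consecutive images $\phi(x_i)$ produces a path $p$ in $X'$ from $\phi(z)$ to $\phi(z')$ of length at most $(n+1)D_0\leq (\ell+1)D_0$. Since $\phi(Z)$ is, say, $\cA_0$--quasi-convex, any geodesic in $X'$ from $\phi(z)$ to $\phi(z')$ lies in $\bar N_{\cA_0}\phi(Z)$; the path $p$ need not be a geodesic, but its length bounds $d_{X'}(\phi(z),\phi(z'))$, and that is all I need. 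The point is to cover the geodesic $[\phi(z),\phi(z')]$ by a bounded number of balls whose centers, after moving a bounded amount, lie in $\phi(Z)$, then pull those centers back by $\phi$ into $Z$.

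Concretely: let $m:=\lceil d_{X'}(\phi(z),\phi(z'))\rceil$ and choose points $\phi(z)=y_0,y_1,\dots,y_m=\phi(z')$ along a geodesic in $X'$ with $d_{X'}(y_j,y_{j+1})\leq 1$. Quasi-convexity gives $w_j\in Z$ with $d_{X'}(y_j,\phi(w_j))\leq \cA_0$, so $d_{X'}(\phi(w_j),\phi(w_{j+1}))\leq 1+2\cA_0=:D_1$, whence by uniform properness $d_X(w_j,w_{j+1})\leq \chi(D_1)=:D_2$. Also $d_X(z,w_0)\leq \chi(\cA_0)$ and $d_X(w_m,z')\leq\chi(\cA_0)=:D_3$. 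Then by the triangle inequality in $X$,
\[
d_X(z,z')\leq 2D_3 + m D_2 \leq 2D_3 + D_2\bigl(d_{X'}(\phi(z),\phi(z'))+1\bigr),
\]
which rearranges to a genuine quasi-isometric lower bound
\[
d_{X'}(\phi(z),\phi(z'))\geq \frac{1}{D_2}d_X(z,z') - \Bigl(1+\frac{2D_3}{D_2}\Bigr).
\]
Combined with the Lipschitz upper bound, $\phi|_Z$ is an $(\cL',\cA')$--quasi-isometric embedding with $\cL':=\max\{\cL,D_2\}$ and $\cA':=\max\{\cA,\,1+2D_3/D_2\}$.

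The only mild subtlety — and the step I would be most careful about — is the degenerate range where $d_{X'}(\phi(z),\phi(z'))$ is small (say below $\cA$), since then the sampling above yields $m=0$ or $1$ and the chain of $w_j$'s is trivial; there one simply uses $d_X(z,z')\leq\chi(d_{X'}(\phi(z),\phi(z')))\leq\chi(\cA)$, which is absorbed into the additive constant $\cA'$ after enlarging it. (The remark preceding the lemma, that $\chi(r)>0$ once $r>\cA$, together with $\phi$ being a well-defined map on a geodesic space, is what guarantees $\chi$ is finite-valued so all the constants above make sense.) No genuinely hard point arises; the lemma is exactly the observation that quasi-convexity of the image upgrades "uniformly proper" to "undistorted" on $Z$.
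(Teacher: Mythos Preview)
Your argument is correct and is essentially the same as the paper's: the paper defers the proof to \fullref{lemma:uniformproper} and notes that \fullref{cor:qiembedding} ``follows by the same argument applied to a geodesic,'' namely sampling a geodesic in $X'$ between $\phi(z)$ and $\phi(z')$ at integer points, using quasi-convexity to project the sample points to $\phi(Z)$, and pulling back via uniform properness to bound $d_X(z,z')$ linearly in $d_{X'}(\phi(z),\phi(z'))$. Your first paragraph about pushing forward a geodesic from $X$ is redundant (it only reproduces the coarse Lipschitz upper bound you already noted), but the core ``Concretely:'' paragraph is exactly the intended proof.
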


We will prove
 a stronger statement than this in \fullref{lemma:uniformproper}.  

\begin{lemma}\label{morsepreserving}
Suppose $\phi\from X \to X'$ is a coarse Lipschitz, uniformly proper map between geodesic metric spaces
and $Z\subset X$.
If $\phi(X)$ is Morse and $Z$ is Morse then $\phi(Z)$ is Morse.
If $\phi(Z)$ is Morse then $Z$ is Morse.
Moreover, the Morse function of $Z$ determines the Morse function of
$\phi(Z)$, and vice versa, up to functions depending on $\phi$.
\end{lemma}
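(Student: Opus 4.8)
The plan is to deduce everything from the equivalence of the Morse and recurrence characterizations in \fullref{morseequivalent}, using the $t$--recurrence formulation, which is the one that transfers cleanly across a coarse Lipschitz, uniformly proper map. First I would fix the coarse Lipschitz constants $(\cL,\cA)$ for $\phi$ and the uniform properness function $\chi$, and recall from the remark after \fullref{cor:qiembedding} that $\chi(r)>0$ for $r>\cA$. The key observation is that a path $p$ in $X$ of length $\ell(p)$ with endpoints $x,y$ maps under $\phi$ to a path $\phi(p)$ in $X'$ (after taming by geodesic interpolation over a fine subdivision, using the Taming lemma) whose length is controlled: subdividing $p$ into segments of length $1$ and applying the coarse Lipschitz bound on each shows $\ell(\phi(p))\leq (\cL+\cA)\ell(p)+(\cL+\cA)$, while $d(\phi(x),\phi(y))\geq$ nothing useful in general — but $d(x,y)\leq\chi(d(\phi(x),\phi(y)))$, so if $\phi(x),\phi(y)$ are a bounded distance apart then so are $x,y$. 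Conversely $d(\phi(x),\phi(y))\leq \cL d(x,y)+\cA$. These two inequalities let us pass the "short path with endpoints on the set" hypothesis of $t$--recurrence back and forth.

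For the first statement, suppose $\phi(X)$ is Morse (equivalently $t$--recurrent for some fixed $t$, with some defining function) and $Z$ is Morse (so $Z$ is $t$--recurrent); I want $\phi(Z)$ recurrent. Given $C\geq 1$, take a path $q$ in $X'$ with endpoints $\phi(x_0),\phi(x_1)$ on $\phi(Z)$ and $\ell(q)\leq C\,d(\phi(x_0),\phi(x_1))$. First use $t$--recurrence of $\phi(X)$ to replace $q$ by a path that stays boundedly close to $\phi(X)$ and passes near a point of $\phi(X)$ in the "middle third" — more precisely, since $\phi(X)$ is Morse, $q$ lies in a bounded neighbourhood of $\phi(X)$; pull $q$ back pointwise to a coarse path $\hat q$ in $X$ via a coarse section of $\phi$ (choosing for each point of $q$ a preimage under $\phi$ within bounded distance, which exists because $q\subset \bar N_{r}\phi(X)$), connect consecutive chosen preimages by geodesics, and tame. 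Uniform properness bounds the jumps, so $\hat q$ is a path in $X$ with endpoints $x_0,x_1\in Z$ and $\ell(\hat q)\leq C'\,d(x_0,x_1)$ for $C'$ depending on $C,\cL,\cA,\chi$ and the Morse function of $\phi(X)$. Now apply $t$--recurrence of $Z$: there is $z\in Z$ with $d(\hat q, z)\leq D$ and $\min\{d(z,x_0),d(z,x_1)\}\geq t\,d(x_0,x_1)$. Push forward: $\phi(z)\in\phi(Z)$, $d(q,\phi(z))\leq \cL(D+\text{(interpolation error)})+\cA$, and $\min\{d(\phi(z),\phi(x_0)),d(\phi(z),\phi(x_1))\}\geq \tfrac1\cL t\,d(x_0,x_1)-\cA\geq \tfrac1\cL t\cdot\tfrac1\cL d(\phi(x_0),\phi(x_1))-2\cA$. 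This is not literally a middle-$t'$ bound because of the additive $\cA$, but for $d(\phi(x_0),\phi(x_1))$ large it gives a middle-$t'$ bound for a slightly smaller $t'$, and for $d(\phi(x_0),\phi(x_1))$ bounded there is nothing to prove (any point of $\phi(Z)$ is within a universal distance). Hence $\phi(Z)$ is $t'$--recurrent, so Morse by \fullref{morseequivalent} \eqref{item:trecurrent}$\Rightarrow$\eqref{item:morse}, with defining function depending only on those of $Z$ and $\phi(X)$ and on $\phi$.

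For the second statement, suppose $\phi(Z)$ is Morse. Given $C\geq 1$ and a path $p$ in $X$ with endpoints $x_0,x_1\in Z$ and $\ell(p)\leq C\,d(x_0,x_1)$, push forward: $\phi(p)$ (tamed) has endpoints $\phi(x_0),\phi(x_1)\in\phi(Z)$ and length $\leq (\cL+\cA)(\ell(p)+1)$. If $d(\phi(x_0),\phi(x_1))$ is bounded then $d(x_0,x_1)$ is bounded by uniform properness and there is nothing to prove; otherwise $\ell(\phi(p))\leq C'' d(\phi(x_0),\phi(x_1))$ for a suitable $C''$. By $t$--recurrence of $\phi(Z)$ there is $w\in\phi(Z)$, say $w=\phi(z)$ with $z\in Z$, such that $d(\phi(p),\phi(z))\leq D$ and $\min\{d(\phi(z),\phi(x_0)),d(\phi(z),\phi(x_1))\}\geq t\,d(\phi(x_0),\phi(x_1))$. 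Uniform properness turns the first into $d(p,z)\leq \chi(D+\text{error})$, and the coarse Lipschitz bound turns the second into $\min\{d(z,x_0),d(z,x_1)\}\geq \tfrac1\cL(t\,d(\phi(x_0),\phi(x_1)))-\cA\geq \tfrac{t}{\cL}(\tfrac1\cL d(x_0,x_1)-\cA)-\cA$, again a middle-$t'$ bound for large $d(x_0,x_1)$ and vacuous otherwise. So $Z$ is $t'$--recurrent, hence Morse, with function depending only on that of $\phi(Z)$ and on $\phi$.

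The main obstacle is the bookkeeping of the $t\mapsto t'$ degradation and the "large versus bounded $d(x_0,x_1)$" dichotomy: the additive constants $\cA$ in the coarse Lipschitz inequality mean one never gets the middle-$t$ estimate on the nose, only asymptotically, so the cleanest route is to observe that recurrence with a slightly worse $t$ on all sufficiently separated endpoint pairs, together with the trivial bound for close pairs, still yields $t'$--recurrence in the precise sense of the definition; alternatively one works directly with the Morse (quasi-geodesic) characterization and taming, but then one must verify that the tamed image of a quasi-geodesic with endpoints in $Z$ is a quasi-geodesic with endpoints in $\phi(Z)$ and conversely, which is essentially the same coarse-section argument. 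Either way, the geometric content is light and contained in the two inequalities $d(\phi(x),\phi(x'))\leq \cL d(x,x')+\cA$ and $d(x,x')\leq\chi(d(\phi(x),\phi(x')))$ plus the Morse-implies-bounded-neighbourhood fact used to build the coarse section.
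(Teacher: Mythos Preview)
Your argument for the first implication has a genuine gap. You assert that ``since $\phi(X)$ is Morse, $q$ lies in a bounded neighbourhood of $\phi(X)$,'' where $q$ is a path with $\ell(q)\leq C\,d(\phi(x_0),\phi(x_1))$. This is false: the Morse property controls \emph{quasi-geodesics} with endpoints on the set, not arbitrary bounded-ratio paths. For a counterexample, take $\phi(X)$ to be a geodesic line in a tree and let $q$ travel along the line, detour distance $n$ into a side branch and back, then continue along the line; the length-to-endpoint-distance ratio stays bounded while $q$ strays arbitrarily far from the line. Without $q$ close to $\phi(X)$ you cannot build the coarse section pulling $q$ back to $X$, so the rest of the argument collapses. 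The paper avoids this by verifying the Morse property of $\phi(Z)$ directly rather than via recurrence: given an $(L,A)$--quasi-geodesic $\gamma$ with endpoints on $\phi(Z)\subset\phi(X)$, the Morse hypothesis on $\phi(X)$ gives $\gamma\subset\bar N_{\mu(L,A)}\phi(X)$ for free, and then one pulls back (this is the content of \fullref{lemma:uniformproper}) to a quasi-geodesic $\delta$ in $X$ with endpoints on $Z$; now $Z$ Morse gives $\delta$ close to $Z$, hence $\gamma$ close to $\phi(Z)$.

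Your argument for the second implication is essentially the paper's, but you are silently using more than coarse Lipschitz plus uniform properness. The step $d(\phi(x_0),\phi(x_1))\geq \tfrac{1}{\cL}d(x_0,x_1)-\cA$ is not available from those hypotheses alone; you need that $\phi|_Z$ is a quasi-isometric embedding, which follows from \fullref{cor:qiembedding} once $\phi(Z)$ is Morse (hence quasi-convex). The paper makes this explicit and then runs exactly your push-forward/recurrence/pull-back argument, with one further wrinkle you skipped: the ``nothing to prove when $d(x_0,x_1)$ is bounded'' case still requires exhibiting a point $z\in Z$ with $\min\{d(z,x_0),d(z,x_1)\}\geq t\,d(x_0,x_1)$, and this needs $Z$ to be coarsely connected with sufficiently large diameter, which the paper deduces from coarse connectedness of the Morse set $\phi(Z)$ plus uniform properness.
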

Before proving \fullref{morsepreserving} let us consider some examples to motivate the
hypotheses. If $X$ is a Euclidean plane, $X'$ is a line, $Z$ is a
geodesic in $X$, and $\phi$ is the composition of projection of $X$
onto $Z$ and an isometry between $Z$ and $X'$ then $\phi$ is Lipschitz
and $\phi(Z)$ is Morse, but $\phi$ is not proper and $Z$ is not
Morse. If $X$ is a line, $X'$ is a plane, $Z=X$, and $\phi$ is an
isometric embedding then $\phi$ is Lipschitz and uniformly proper and $Z$
is Morse, but $\phi(X)=\phi(Z)$ is not Morse.

In this paper we will only use the lemma in the case that $\phi(X)$ is Morse, and in this
case it is easy to prove that $Z$ is Morse when $\phi(Z)$ is. 
However, the more general statement might be of independent interest,
and requires only mild
generalizations of known results.
The proof of the first claim uses essentially the same argument as the
well-known result that quasi-convex subspaces are quasi-isometrically
embedded. The key technical point for this direction is made in 
\fullref{lemma:uniformproper} (in a more general form than needed for
\fullref{morsepreserving}, for later use).
The second claim is proved using the same strategy as used by Drutu, Mozes,
and Sapir
\cite[Lemma~3.25]{DruMozSap10}, who proved it in the case that
$X'$ is a finitely generated group, $\phi\from X\to X'$ is inclusion of
a finitely generated subgroup, and $Z$ is an infinite cyclic group.

\begin{lemma}\label{lemma:uniformproper}
  If $\phi\from X\to X'$ is a coarse Lipschitz, uniformly
  proper map between geodesic metric spaces and $Z\subset X$ has Morse
  image in
  $X'$ then for every $L\geq 1$ and $A\geq 0$ there exist $L'\geq 1$,
  $A'\geq 0$, $D'\geq 0$, and $D\geq 0$ such that for every $(L,A)$--quasi-geodesic
  $\gamma$ in
  $X'$ with endpoints on $\phi(Z)$ there is an
  $(\cL',\cA')$--quasi-geodesic $\delta$ in $X$ with endpoints in $Z$
  such that:
  \begin{itemize}
\item $\delta\subset N_{D'}(Z)$
\item $\gamma\subset N_D(\phi(\delta))$
 \item $\gamma$ and $\phi(\delta)$
  have the same endpoints. 
  \end{itemize}
\end{lemma}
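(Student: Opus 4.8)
The plan is to mimic the classical proof that quasi-convex subgroups are quasi-isometrically embedded, carried out in the coarse setting where $\phi$ is only coarse Lipschitz and uniformly proper. Fix $(\cL,\cA)$ and a quasi-geodesic $\gamma$ in $X'$ with endpoints $p = \phi(z_0)$ and $q = \phi(z_1)$ on $\phi(Z)$. First I would tame $\gamma$ (via the Taming Lemma) so that it is continuous, at the cost of replacing $(\cL,\cA)$ by controlled new constants. Since $\phi(Z)$ is $\mu$--Morse, $\gamma$ lies in $\bar N_{\mu(\cL,\cA)}(\phi(Z))$; write $M:=\mu(\cL,\cA)$ for this Morse constant. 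Now I would build $\delta$ by sampling $\gamma$ at the integer points $\gamma_0,\gamma_1,\dots,\gamma_N$ (plus the endpoint), choosing for each $\gamma_i$ a point $x_i\in Z$ with $d(\phi(x_i),\gamma_i)\le M$, with $x_0 = z_0$ and $x_N = z_1$ chosen to be the given endpoints, and then connecting consecutive $x_i$ by geodesics in $X$ to get a continuous path $\delta$. The first bullet, $\delta\subset N_{D'}(Z)$, follows once we bound the length of each connecting geodesic: by the coarse Lipschitz and uniformly proper hypotheses, $d(x_i,x_{i+1})\le\chi(d(\phi(x_i),\phi(x_{i+1})))\le\chi(2M + d(\gamma_i,\gamma_{i+1}))\le\chi(2M+\cL+\cA)=:\ell$, so every point of $\delta$ is within $\ell/2$ of some $x_i\in Z$; take $D' := \ell/2$.

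For the quasi-geodesic estimate on $\delta$, the upper bound is immediate: $\delta$ is a concatenation of $N$ geodesics each of length $\le\ell$, so $d(\delta(s),\delta(t))\le|s-t|$ in arclength parametrization, and reparametrizing proportionally gives the Lipschitz-type bound. The lower bound is the place where Morse-ness of $\phi(Z)$ is used crucially — and I expect this to be the main obstacle. The issue is that $\delta$ could in principle backtrack: the $x_i$ are only controlled through $\gamma_i$, and $\phi$ need not be injective, so a priori $\delta$ could be very long while its endpoints are close. To rule this out, I would apply $\phi$ to $\delta$: $\phi(\delta)$ is a path in $X'$ whose vertices $\phi(x_i)$ are within $M$ of the points $\gamma_i$ on $\gamma$, so $\phi(\delta)$ Hausdorff-fellow-travels $\gamma$, giving the third bullet ($\gamma\subset N_D(\phi(\delta))$ with $D$ depending on $\ell$, $\cL$, $\cA$, $M$ via coarse Lipschitzness applied along the connecting geodesics), and also showing $\phi(\delta)$ is itself a quasi-geodesic with endpoints $p,q$. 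Now I would use the Morse property of $\phi(Z)$ together with recurrence (Theorem \ref{morseequivalent}) — or more directly, the observation that a path in $X'$ from $p$ to $q$ that stays within bounded distance of the genuine quasi-geodesic $\gamma$ cannot be too inefficient, since otherwise it would be an $(L'',A'')$--quasi-geodesic for no $(L'',A'')$; more carefully, I would partition $\delta$ into maximal arcs on which $\phi(\delta)$ makes definite progress along $\gamma$ and use that $\phi$ is uniformly proper to pull the length bound for $\phi(\delta)$ back to a length bound for $\delta$. Concretely: $\mathrm{length}(\delta)\le N\ell$, and $N$ is comparable to $\mathrm{length}(\gamma)$, which is at most $\cL\, d(p,q)+\cA\cdot(\text{number of steps})$-type bound; meanwhile $d(x_0,x_1)\ge\chi^{-1}$-type estimates turn $d(p,q)$ into a lower bound for $d(z_0,z_1)$. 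Assembling these, $\mathrm{length}(\delta)\preceq d(z_0,z_1)$ with explicit constants, which is exactly the lower quasi-geodesic inequality for the arclength parametrization of $\delta$; reparametrize to obtain constants $(\cL',\cA')$ depending only on $\cL$, $\cA$, the coarse-Lipschitz constants of $\phi$, the properness function $\chi$, and the Morse function $\mu$.

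The bookkeeping to make all four output constants $\cL',\cA',D',D$ depend only on the stated data is routine but tedious; the one genuinely delicate point, as noted, is converting "$\phi(\delta)$ stays close to the honest quasi-geodesic $\gamma$" into "$\delta$ is efficient in $X$", and this is precisely where the Morse hypothesis on $\phi(Z)$ (rather than mere quasi-convexity) is needed, since without properness of $\phi$ the fibers could be unbounded and $\delta$ could wander. I would handle this by choosing the sample points $x_i$ greedily — skipping indices whenever $d(\phi(x_i),\phi(x_{i+1}))$ is small, so that consecutive retained vertices are at definite distance in $X'$, hence (by uniform properness in the other direction, i.e.\ coarse Lipschitzness) at controlled-from-below distance governs the count — ensuring the number of retained vertices is $\asymp d(p,q)$ and thus $\asymp d(z_0,z_1)$ up to the properness function. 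This is the standard "efficient subdivision" trick and completes the proof.
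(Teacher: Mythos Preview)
Your construction is the same as the paper's: sample $\gamma$ at integer parameter values, project each sample to a point of $Z$ via the Morse bound, and connect consecutive points by geodesics in $X$. The bounds for $D$ and $D'$ go through exactly as you describe.

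The difference is in how you verify the lower quasi-geodesic inequality for $\delta$, and here you make your life much harder than necessary. You parametrize $\delta$ by arclength and then worry about backtracking, invoking recurrence and a greedy resampling trick. Note also that ``$\mathrm{length}(\delta)\preceq d(z_0,z_1)$'' is only the endpoint case of the lower inequality, not the full condition $|s-t|\le L'd(\delta_s,\delta_t)+A'$ for all $s,t$; you gesture at the right fix (``more carefully\ldots''), but never actually carry it out.

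The paper avoids all of this with one observation: parametrize $\delta$ on the \emph{same} interval $[0,T]$ as $\gamma$, with $\delta_i=x_i$ at integers and geodesic interpolation in between. Each interpolating segment has length at most $\chi(L+A+2\mu(L,A))$, so $\delta$ is Lipschitz with that constant. For the lower bound, since $\phi$ is coarse Lipschitz and each segment of $\delta$ is short, $\phi(\delta_a)$ stays within a uniform constant $C$ of $\gamma_a$ for \emph{every} $a\in[0,T]$, not just integers. Hence for any $a,b$,
\[
L_\phi\, d(\delta_a,\delta_b)+A_\phi \ge d(\phi(\delta_a),\phi(\delta_b)) \ge d(\gamma_a,\gamma_b)-2C \ge \frac{|b-a|}{L}-A-2C,
\]
which is the lower inequality directly. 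No recurrence, no greedy subdivision, no separate treatment of sub-arcs: the parametrization does the bookkeeping for you, and the ``backtracking'' worry simply evaporates because $\phi(\delta)$ is pinned to $\gamma$ pointwise along the shared parameter.
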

The proof, briefly, is to project $\gamma$ to $\phi(Z)$ and then pull
the image back to $X$.
\begin{proof}
Suppose $\phi$ is $\chi$--uniformly proper,
$(\cL_\phi,\cA_\phi)$--coarse-Lipschitz, and $\phi(Z)$ is $\mu$--Morse. 
Suppose the domain of $\gamma$ is $[0,T]$.
For $z\in \{0,T\}$ choose $\delta_z\in Z$ such that $\phi(\delta_z)=\gamma_z$.
For $z\in \mathbb{Z}\cap (0,T)$ choose $\delta_z\in Z$ such that $d(\phi(\delta_z),\gamma_z)\leq \mu(L,A)$.
Complete $\delta$ to a map on $[0,T]$ by connecting the dots by
geodesic interpolation in $X$. 
For $D:=L/2+A+\mu(L,A)$
we have $\gamma\subset \bar{N}_D(\phi(\delta))$.
 Since the reparameterized geodesic
segments used to build $\delta$ have endpoints on $Z$ and length at
most $\chi(\cL+\cA +2\mu(\cL,\cA))$, by choosing
$D':=\chi(\cL+\cA+2\mu(\cL,\cA))/2$ we have 
$\delta\subset\bar{N}_{D'}(Z)$, and, furthermore, $\delta$ is $\chi(\cL+\cA +2\mu(\cL,\cA))$--Lipschitz.
For any $a \in [0,T]$ we have $d(\phi(\delta_a),\gamma_a) \leq \cL_\phi D'+\cA_\phi +D$.
Finally, for $a,b \in [0,T]$:
\begin{align*}
  \cL_\phi d(\delta_a,\delta_b)+\cA_\phi&\geq
                                         d(\phi(\delta_a),\phi(\delta_b))\\
&\geq d(\gamma_a,\gamma_b)-2(\cL_\phi D'+\cA_\phi +D)\\
&\geq \frac{|b-a|}{\cL}-\cA-2(\cL_\phi D'+\cA_\phi +D)
\end{align*}
Thus, $\delta$ is an $(\cL',\cA')$--quasi-geodesic for
$\cL':=\max\{\cL_\phi\cL, \chi(\cL+\cA +2\mu(\cL,\cA))\}$ and $\cA':=(3\cA_\phi+\cA+2\cL_\phi D'+2D)/\cL_\phi$.
\end{proof}
\fullref{cor:qiembedding} follows by the same argument applied to a
geodesic.
\begin{proof}[Proof of \fullref{morsepreserving}]
Suppose $\phi(X)$ and $Z$ are Morse. 
A quasi-geodesic $\gamma$ in $X'$ with endpoints on $\phi(Z)$ has
endpoints on $\phi(X)$, which is Morse.
Apply \fullref{lemma:uniformproper} to get a quasi-geodesic $\delta$
in $X$ such that $\phi(\delta)$ is coarsely equivalent to $\gamma$. 
We can, and do, choose $\delta$ so that it has endpoints on $Z$.
Since
$Z$ is Morse,  $\delta$ stays close to $Z$, so $\phi(\delta)$ stays
close to $\phi(Z)$, so $\gamma$ is close to $\phi(Z)$. Thus, $\phi(Z)$
is Morse.

Now suppose $\phi(Z)$ is Morse.
By \fullref{cor:qiembedding}, $\phi$ restricted to $Z$ is a
quasi-isometric embedding. 
Suppose
that it is an $(\cL,\cA)$--quasi-isometric embedding. (These constants
are at least the coarse Lipschitz constants, so we will also assume
$\phi$ is $(\cL,\cA)$--coarse Lipschitz on all of $X$.)
Suppose $\phi(Z)$ is $\mu$--Morse.
The Morse property implies that $\phi(Z)$ is $(2\mu(1,0)+1)$--coarsely
connected, so $Z$ is $E$--coarsely connected for $E:=\cL(2\mu(1,0)+1+\cA)$.
Let $t:=\frac{1}{6\cL^2}$. 
If $Z$ has diameter at most $\frac{1+2t}{1-2t}E$ then it is
$\mu'$--Morse for $\mu'$ the function $\frac{1+2t}{1-2t}E$,
which depends only on $\cL$, $\cA$, and $\mu$.
In this case we are done. Otherwise we prove that $Z$ is
$t$--recurrent and apply \fullref{morseequivalent}.

We fix $C\geq 1$ and produce the corresponding $D$ from the definition
of recurrence.

Since the diameter of $Z$ is bigger than $\frac{1+2t}{1-2t}E$, the fact
that $Z$ is $E$--coarsely connected implies that for every $a,\, b\in Z$
there exists a point $c\in Z$ such that $td(a,b)+E\geq
\min\{d(a,c),\,d(b,c)\}\geq td(a,b)$:
if $d(a,b)\geq\frac{1}{1-2t}E$ then $c$ may be found on a coarse path from $a$ to $b$,
otherwise $c$ may be found on a coarse path joining $a$ to one of two points separated by more than $\frac{1+2t}{1-2t}E$.
Such a point $c$ is within distance $td(a,b)+E$ of every path with
endpoints $a$ and $b$, so for any fixed $K\geq
0$ we may restrict our attention to the case $d(a,b)>K$ by assuming $D$ is at least $tK+E$.

Suppose $p$ is path in $X$
with endpoints $a$ and $b$ on $Z$ such that $p$ has length $|p|$ at most
$Cd(a,b)$ and $d(a,b)> 8\cL(\cA+1)$.
Subdivide $p$ into $\lceil|p|\rceil$ many subsegments, all but
possibly the last of
which has length 1. 
Denote the endpoints of these subsegments
$a=x_0,x_1,\dots, x_{\lceil |p|\rceil}=b$.
Let $q$ be a path in $X'$ obtained by connecting each $\phi(x_i)$ to
$\phi(x_{i+1})$ by a geodesic. 
Then $q$ is a path of length at most $(\cL+\cA)\lceil |p|\rceil \leq (\cL+\cA)\frac{9}{8}|p|$ that coincides
with $\phi(p)$ on $\phi(\{x_0,\dots,x_{\lceil |p|\rceil}\})$. 
Since $\phi$ is an $(\cL,\cA)$--quasi-geodesic embedding of $Z$ we have
that the distance between the endpoints of $q$ is at least $d(a,b)/\cL
-\cA \geq \frac{7}{8L}d(a,b)$, so that $|q|<C'd(\phi(a),\phi(b))$ for $C'=9C\cL(\cL+\cA)/7$.
Since $\phi(Z)$ is Morse it is recurrent, so given $t':=1/3$ and $C'$ as above there is a $D'\geq
0$ and $z\in Z$ such that $\min\{d(\phi(z),\phi(a)),d(\phi(z),\phi(b))\}\geq d(\phi(a),\phi(b))/3$ and $d(\phi(z),q)\leq D'$.
Thus, there is some $i$ such that $d(\phi(x_i),\phi(z))\leq D'+(\cL+\cA)/2$. 
If $\phi$ is $\chi$--uniformly proper then $d(x_i,z)\leq
D:=\max\{\chi(D'+(\cL+\cA)/2),\, 8t\cL(\cA+1)+E\}$. 
It remains to check that $z$ is sufficiently far from the endpoints of
$p$. 
This follows easily from our choice of $t$, the distance bound between $\phi(z)$ and
the endpoints of $q$, and the assumption $d(a,b)>8\cL\cA$, by using the fact that $\phi|_Z$ is an
$(\cL,\cA)$--quasi-isometric embedding.
\end{proof}

\begin{corollary}\label{morsetransitive}
If $G$ is a finitely generated group and $Z$ is a subset of a finitely
generated subgroup $H$ of $G$ such that $Z$ is Morse in $G$ then $Z$ is Morse
in $H$.
If $G$ is a finitely generated group and $H$ is a Morse subgroup of
$G$ then every Morse subset $Z$ of $H$ is also Morse in $G$.
\end{corollary}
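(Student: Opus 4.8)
The plan is to deduce both statements from \fullref{morsepreserving} by applying it to the inclusion of a Cayley graph of $H$ into a Cayley graph of $G$. Fix finite generating sets for $G$ and for $H$, with associated Cayley graphs $X_G$ and $X_H$, and let $\iota\from X_H\to X_G$ be the map sending each vertex to the corresponding element of $G$ and each edge of $X_H$ to a geodesic of $X_G$ joining the images of its endpoints; by the quasi-isometry invariance of the Morse property recalled in \fullref{sec:prelim} the choice of generating sets will not matter. Since each generator of $H$ has bounded word length, say at most $M$, in $G$, we get $d_{X_G}(\iota(h),\iota(h'))\leq M\,d_{X_H}(h,h')$ for all $h,h'\in H$, so $\iota$ is coarse Lipschitz, and $\iota(X_H)$ lies within Hausdorff distance $M$ of $H$ in $X_G$.

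The one point that needs genuine verification, and the only real content of the argument, is that $\iota$ is uniformly proper; this is what licenses the application of \fullref{morsepreserving}. Here it is crucial that $H$ is \emph{finitely generated}: then $X_H$ is proper, and since $\iota$ is injective on the vertex set $H$, for each $r\geq 0$ the $r$--ball about the identity of $X_G$ contains only finitely many elements of $H$; hence \[\chi(r):=\max\{d_{X_H}(1,h)\mid h\in H,\ d_{X_G}(1,h)\leq r\}\] is a well-defined non-decreasing function, and left-invariance of the metrics gives $d_{X_H}(h,h')\leq\chi\bigl(d_{X_G}(\iota(h),\iota(h'))\bigr)$ for all $h,h'\in H$, so that $\iota$ is $\chi$--uniformly proper. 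I want to stress that $\chi$ need not be linear --- $H$ may be arbitrarily distorted in $G$, so $\iota$ need not be a quasi-isometric embedding --- but uniform properness asks only that such a $\chi$ exist, not that it be well behaved, and this is the subtlety I expect a reader to stumble over.

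Granting that $\iota$ is coarse Lipschitz and uniformly proper, both statements drop out. For the first statement, $Z\subseteq H$ gives $\iota(Z)=Z$, which is Morse in $X_G$ by hypothesis, so the second assertion of \fullref{morsepreserving} yields that $Z$ is Morse in $X_H$, that is, in $H$. For the second statement, $H$ is Morse in $X_G$ by hypothesis, hence so is $\iota(X_H)$, being within bounded Hausdorff distance of $H$ and the Morse property being preserved under bounded Hausdorff perturbation (combine \fullref{morseequivalent} with \fullref{Hausdorff}); since $Z$ is Morse in $X_H$, the first assertion of \fullref{morsepreserving}, applied with $\phi(X)=\iota(X_H)$, gives that $\iota(Z)=Z$ is Morse in $X_G$, that is, in $G$. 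The effectiveness clause of \fullref{morsepreserving} moreover shows that the Morse data of $Z$ in $H$ and in $G$ determine one another up to functions depending only on $M$ and $\chi$.
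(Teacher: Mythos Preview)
Your argument is correct and is exactly the intended derivation: the corollary has no separate proof in the paper because it follows immediately from \fullref{morsepreserving} applied to the inclusion of Cayley graphs, which is precisely what you do. The only point worth noting is that for the second statement you tacitly use that a Morse subgroup of a finitely generated group is itself finitely generated (so that $X_H$ exists); this is implicit in the paper as well, since Morse implies quasi-convex and quasi-convex subgroups are finitely generated, cf.\ \fullref{cor:contractingsubgroup}.
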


\begin{proof}[{Proof of \fullref{thm:continuity}}]
Since the topology is basepoint invariant we choose $\bp\in X$ and let
$\bp':=\phi(\bp)\in X'$. 

Suppose $\phi$ is an $(\cL,\cA)$--quasi-isometric embedding, and
suppose $\bar{\phi}\from \phi(X)\to X$ is an $(\cL,\cA)$--quasi-isometry inverse to $\phi$.
We assume $\sup_{x\in X'} d(\phi\circ\bar{\phi}(x),x)\leq \cA$.

The quasi-isometric embedding $\phi$ induces an injective map between equivalence classes of
quasi-geodesic rays based at $\bp$ and equivalence classes of
quasi-geodesic rays based at $\bp'$.
The hypothesis that $\phi$ sends contracting quasi-geodesics to
contracting quasi-geodesics implies that it takes equivalence classes of contracting quasi-geodesic
rays to equivalence classes of contracting quasi-geodesic rays, so $\phi$ induces an injection $\cbdry\phi\from \cbdry
X\to\cbdry X'$.

\paragraph{\itshape Continuity:}
Assume $\phi(X)$ is $\rho$--contracting.
By \fullref{morsepreserving}, $\phi$ sends contracting quasi-geodesic
rays to contracting quasi-geodesic rays, so we have an injective map
$\cbdry \phi$ as above.
We claim that:
\begin{equation}
  \label{eq:15}
  \forall \zeta\in\cbdry\phi(\cbdry X),\,\forall r>1,\,\exists
  R'>1,\,\forall R\geq R',\, U((\cbdry\phi)^{-1}(\zeta),R)\subset 
(\cbdry\phi )^{-1}(U(\zeta,r))
\end{equation}

Given the claim, 
let $U'$ be an open set in $\cbdry^\fq X'$.
For each $\zeta\in U'\cap \cbdry\phi(\cbdry X)$ there exists an
$r_\zeta$ such that $U(\zeta,r_\zeta)\subset U'$.
Apply \eqref{eq:15} to get an $R_\zeta$, and choose an open
neighborhood  of $(\cbdry\phi)^{-1}(\zeta)$ contained in
$U((\cbdry\phi)^{-1}(\zeta),R_\zeta)$.
Let $U$ be the union of
these open sets for all $\zeta\in U'\cap \cbdry\phi(\cbdry X)$.
Then $U$ is an open set and \eqref{eq:15} implies $U=(\cbdry\phi)^{-1}(U')$.

To prove the claim we play our usual game of supposing the converse,
deriving a bound on $R$, and then choosing $R$ to be larger than that
bound.
The key point is that all of the constants involved are bounded in
terms of $\zeta$, $(\cbdry\phi)^{-1}(\zeta)$, $r$, and $\rho$.

Suppose for given $\zeta\in\cbdry\phi(\cbdry X)$ and $r>1$ there exists an $R>1$ and a point  $\eta\in
U((\cbdry\phi)^{-1}(\zeta),R)$ such that
$\eta\notin(\cbdry\phi)^{-1}(U(\zeta,r))$.
The latter implies there exists a continuous
$(\cL',\cA')$--quasi-geodesic $\gamma\in\cbdry\phi(\eta)$ witnessing
$\cbdry\phi(\eta)\notin U(\zeta,r)$.
By \fullref{obs}, the quasi-geodesic constants of $\gamma$ are bounded
in terms of $r$. 
We must adjust $\gamma$ to get it into the domain of $\bar{\phi}$.
Since $\cbdry\phi(\eta)$ is in the image of $\cbdry\phi$, the quasi-geodesic $\gamma$ is asymptotic to a quasi-geodesic
contained in $\phi(X)$, so $\gamma$ is contained in a bounded
neighborhood of $\phi(X)$.
Since $\phi(X)$ as a whole is $\rho$--contracting, we can replace $\gamma$ by a
projection $\gamma'$ of $\gamma$ to $\phi(X)$ as \fullref{lemma:uniformproper}.
The Hausdorff distance between $\gamma$ and $\gamma'$ is
bounded\footnote{If we had only assumed $\phi$ to be Morse-controlled
  this bound would depend on the Morse/contraction function of $\eta$,
which can be arbitrarily bad, even for $\eta$ in a small  neighborhood of $\zeta$.} in
terms of $\rho$ and the quasi-geodesic constants of $\gamma$, hence by
$r$, and the additive quasi-geodesic constant of $\gamma'$ increases
by at most twice the Hausdorff distance.

Tame $\bar{\phi}(\gamma')$ to get a continuous quasi-geodesic
$\hat{\gamma}\in\eta$.
The Hausdorff distance between them and the quasi-geodesic constants $(\cL'',\cA'')$
of $\hat{\gamma}$ are bounded in terms of the quasi-isometry constants
of $\gamma'$ and $\phi$.

\begin{figure}[h]
  \centering
\labellist
\small
\pinlabel $\zeta$ [l] at 440 80
\pinlabel $\alpha':=\alpha^\zeta$ [l] at 425 101
\pinlabel $\eta$ [b] at 134 173
\pinlabel $\cbdry\phi(\eta)$ [b] at 380 171
\pinlabel $(\cbdry\phi)^{-1}(\zeta)$ [l] at 185 83
\pinlabel $x$ [tr] at 124 56
\pinlabel $\bp$ [r] at 3 11
\pinlabel $\bp'$ [r] at 242 11
\pinlabel $\alpha:=\alpha^{(\cbdry\phi)^{-1}(\zeta)}$ [l] at 164 115
\pinlabel $\bar{\phi}(\gamma')$ [r] at 122 118
\pinlabel $\hat{\gamma}$ [l] at 130 118
\pinlabel $\gamma$ [l] at 381 118
\pinlabel $\phi(x)$ [l] at 421 20
\pinlabel $\hat{\alpha}$ [t] at 301 0
\pinlabel $\phi$ [t] at 210 47
\pinlabel $\phi(\alpha)$ [t] at 330 1
\pinlabel $x''$ [b] at 386 71
\tiny
\pinlabel $\leq J_1$ [t] at 379 20 
\pinlabel $\leq J_2$ [tl] at 425 50
\pinlabel $\geq R$ [t] at 79 5
\pinlabel $\leq J_0$ [t] at 170 14
\endlabellist
  \includegraphics[width=.8\textwidth]{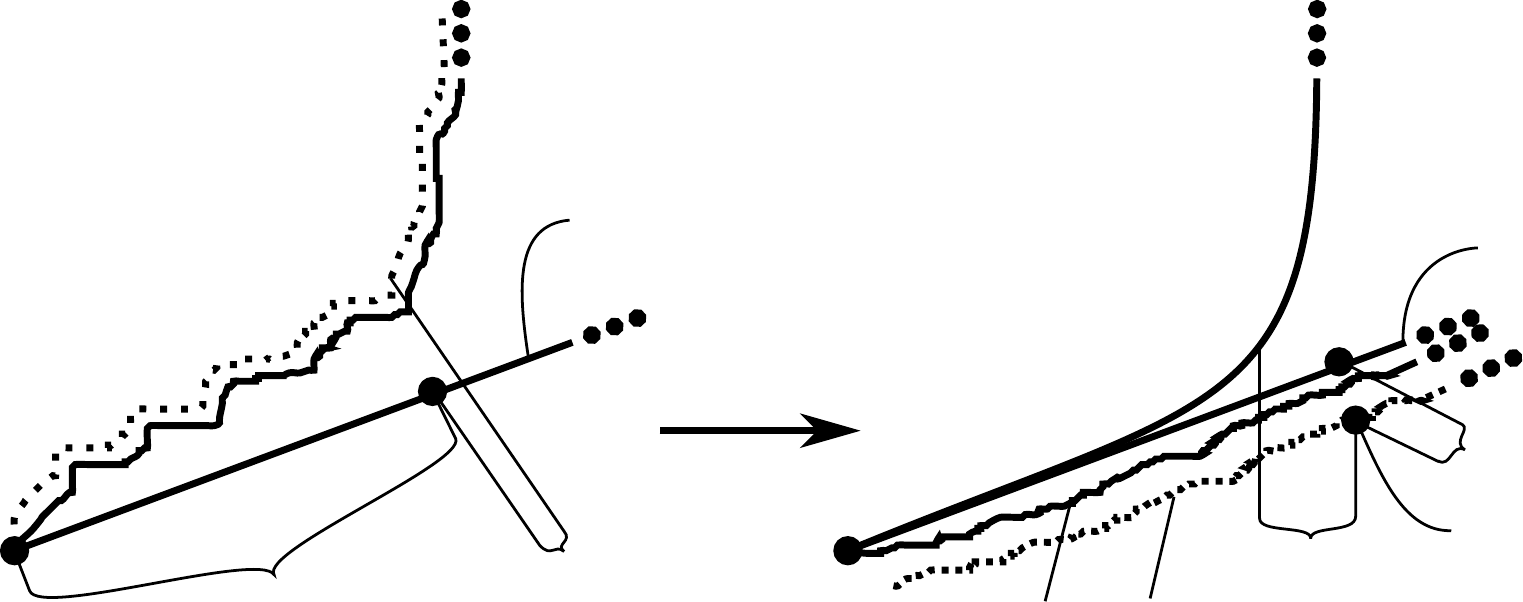}
  \caption{Setup for \fullref{thm:continuity}}
  \label{fig:qiinvariance}
\end{figure}
Let $\alpha:=\alpha^{(\cbdry\phi)^{-1}(\zeta)}$.
Since $\hat{\gamma}\in\eta\in
U((\cbdry\phi)^{-1}(\zeta),R)$, 
there exists $x\in\alpha$ such that $d(\bp,x)\geq R$ and
$d(x,\hat{\gamma})\leq \kappa(\rho_{(\cbdry \phi)^{-1}(\zeta)},
\cL'',\cA'')$.
By the argument of the previous paragraph, $\kappa(\rho_{(\cbdry \phi)^{-1}(\zeta)},
\cL'',\cA'')$ can be bounded in terms of $\cL$, $\cA$, $r$, $\rho$,
$\rho_\zeta$, and $\rho_{(\cbdry \phi)^{-1}(\zeta)}$.
This bound plus the Hausdorff distance to $\bar{\phi}(\gamma')$ give a bound $d(\bar{\phi}(\gamma'),x)\leq J_0$.
Push forward by $\phi$ to get 
$d(\gamma,\phi(x))\leq J_1:=\cL J_0+2\cA+d_{\mathrm{Haus}}(\gamma,\gamma')$.
We also know $d(\phi(x),\bp')\geq R/\cL - \cA$.

The quasi-isometric embedding $\phi$ sends the geodesic $\alpha$ to an
$(\cL,\cA)$--quasi-geodesic $\phi(\alpha)$  asymptotic to $\alpha':=\alpha^\zeta$
with $\phi(\alpha)_0=\phi(\bp)=\bp'$.
Tame $\phi(\alpha)$ to produce a continuous
$(\cL,2\cL+2\cA)$--quasi-geodesic $\hat{\alpha}$ at Hausdorff distance at
most $\cL+\cA$ from $\phi(\alpha)$.
Since $\hat{\alpha}\in \zeta$ we have that $\hat{\alpha}$ is contained in
the $\kappa'(\rho_\zeta,\cL,2\cL+2\cA)$--neighborhood of
$\alpha'$, so $\phi(\alpha)$ is contained in the
$J_2$--neighborhood
of $\alpha'$ for $J_2:=\kappa'(\rho_\zeta,\cL,2\cL+2\cA)+\cL+\cA$.
In particular, $d(\phi(x),\alpha')\leq J_2$.
Let $x''$ be the closest point of $\alpha'$ to $\phi(x)$, so that $d(\gamma,x'')\leq J_1+J_2$ and $d(\bp',x'')\geq R/\cL-\cA-J_2$.
By \fullref{keylemma}, since $\gamma$ is an
$(\cL',\cA')$--quasi-geodesic, if $y$ is the last point of
$\alpha'$ such that $d(\gamma,y)=\kappa(\rho_\zeta,\cL',\cA')$ then:
\begin{equation}\label{eq:8}
d(\bp',y)\geq R/\cL-\cA-J_2-\theconstantformerlyknownasLambda(J_1+J_2)-\lambda(\rho_\zeta,\cL',\cA')
\end{equation}

Everything except $R$ in (\ref{eq:8}) can be bounded in
terms of  $\cL$, $\cA$, $r$, $\rho$, $\rho_\zeta$, and $\rho_{(\cbdry \phi)^{-1}(\zeta)}$, so, given $r$ and $\zeta$ we can choose $R$ large enough to guarantee
$d(\bp',y)>r$.
For such an $R$, we have $\cbdry\phi(\eta)\in U(\zeta,r)$ for every
$\eta\in U((\cbdry\phi)^{-1}(\zeta),R)$.
This finishes the proof of claim \eqref{eq:15}, so we conclude
$\cbdry\phi$ is continuous if $\phi(X)$ is contracting.

\paragraph{\itshape Open mapping:}
The image of $\bar{\phi}$ is coarsely dense in $X$, so it is
contracting. 
  Thus, we can apply the argument of the proof of continuity above to
  $\bar{\phi}$ to get the following analogue of \eqref{eq:15}, noting
  that $\cbdry\bar{\phi}=(\cbdry\phi)^{-1}$:
\begin{equation}
  \label{eq:16}
  \forall \zeta\in\cbdry X,\,\forall r>1,\,\exists R'>1,\,\forall
R\geq
R',\,\cbdry\phi(U(\zeta,r))\supset U(\cbdry\phi(\zeta),R)\cap\cbdry\phi(\cbdry X)
\end{equation}

Let  $U$ be an open set in $\cbdry^\fq X$.
For every $\zeta\in U$ there exists $r_\zeta$ such that
$U(\zeta,r_\zeta)\subset U$. 
Apply \eqref{eq:16} to get $R_\zeta$, and let $U_\zeta$ be an open
neighborhood of $\cbdry\phi(\zeta)$ contained in $U(\cbdry\phi(\zeta),R_\zeta)$.
Then $U':=\bigcup_{\zeta\in
  U}U_\zeta$ is an open set in $\cbdry^\fq X'$ containing
$\cbdry\phi(U)$.
The choices of the $R_\zeta$, by \eqref{eq:16}, imply
that $U'\cap\cbdry\phi(\cbdry^\fq X)=\cbdry\phi(U)$.
\end{proof}

\bigskip

One reason it may be convenient to weaken the stated quasi-isometric
embedding hypothesis is that the orbit map of a properly discontinuous group action of a finitely
generated group on a proper geodesic metric space is always coarse Lipschitz and uniformly proper, so we get the following consequences of \fullref{thm:qiinvariance}.

\begin{proposition}\label{prop:limset}
  Suppose $G$ acts properly discontinuously on a proper geodesic metric
  space $X$.
Suppose the orbit map $\phi\from g\mapsto g\bp$ takes contracting
quasi-geodesics to contracting quasi-geodesics and has quasi-convex
image. Then:
  \begin{itemize}
  \item  $G$ is  finitely generated.
\item The orbit map  $\phi\from g\mapsto g\bp$ is a quasi-isometric
  embedding.
\item The orbit map
  induces an injection $\cbdry\phi\from \cbdry^\fq G\to\cbdry^\fq X$
  that is an open mapping onto its image, which is $\limit(G)$ (recall
  \fullref{def:limitset}).
In particular, if $\limit(G)$ is compact then so is $\cbdry^\fq G$.
  \end{itemize}
If $\phi(G)$ is contracting in $X$ then the above are true and
$\cbdry\phi$ is an embedding, so $\cbdry^\fq G$ is homeomorphic to $\limit(G)$.
\end{proposition}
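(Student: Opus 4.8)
The plan is to string together the tools already in place. Once $G$ is known to be finitely generated, write $X_G$ for a Cayley graph of $G$ with respect to a finite generating set and extend the orbit map to $\phi\from X_G\to X$ by sending each edge to a geodesic; this is harmless coarsely, and by \fullref{Hausdorff} the extension still takes contracting quasi-geodesics to contracting quasi-geodesics. For finite generation, let $A$ be a quasi-convexity constant for $G\bp$ and put $S:=\{g\in G\mid d(\bp,g\bp)\leq 2A+1\}$, a finite set since the action is properly discontinuous and $X$ is proper. Given $g\in G$, subdivide a geodesic from $\bp$ to $g\bp$ into unit pieces with consecutive endpoints $\bp=x_0,\dots,x_n=g\bp$, pick $g_i\in G$ with $d(x_i,g_i\bp)\leq A$ (using quasi-convexity) and $g_0=e$, $g_n=g$; then $g_i^{-1}g_{i+1}\in S$, so $S$ generates $G$. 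The extended orbit map is coarse Lipschitz, with constant controlled by $\max_{s\in S}d(\bp,s\bp)$, and uniformly proper, because for each $r$ the set $\{g\mid d(\bp,g\bp)\leq r\}$ is finite; since $\phi(X_G)$ is at finite Hausdorff distance from the quasi-convex set $G\bp$, it is quasi-convex, so \fullref{cor:qiembedding} shows $\phi\from X_G\to X$ is a quasi-isometric embedding, which gives the second bullet.

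Now $\phi\from X_G\to X$ is a quasi-isometric embedding of proper geodesic spaces taking contracting quasi-geodesics to contracting quasi-geodesics, so \fullref{thm:continuity} produces an injection $\cbdry\phi\from\cbdry^\fq G\to\cbdry^\fq X$ that is an open map onto its image with the subspace topology. Everything then reduces to the identity $\cbdry\phi(\cbdry^\fq G)=\limit(G)$. Granting this, $(\cbdry\phi)^{-1}\from\limit(G)\to\cbdry^\fq G$ is a continuous bijection, so compactness of $\limit(G)$ forces $\cbdry^\fq G$ to be compact. Moreover, if $\phi(G)$ is contracting in $X$, then $\phi(G)$ is quasi-convex (a contracting set is quasi-convex, by \fullref{GIT} applied to the two halves of a geodesic joining two of its points) and, by \fullref{morsepreserving}, $\phi$ takes contracting quasi-geodesics to contracting quasi-geodesics, so the earlier conclusions apply; then the last clause of \fullref{thm:continuity} makes $\cbdry\phi$ continuous, hence a homeomorphism onto $\limit(G)$.

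For the inclusion $\cbdry\phi(\cbdry^\fq G)\subseteq\limit(G)$, take $\zeta\in\cbdry^\fq G$ with geodesic representative $\alpha$ based at $e$; the vertices $g_n:=\alpha(n)$ lie in $G$, so $g_n\bp=\phi(\alpha(n))$ are points marching out along the contracting quasi-geodesic ray $\phi\circ\alpha$, which represents $\eta:=\cbdry\phi(\zeta)$. As in the proof that $X$ is dense in $\hat X$ (using \fullref{cor:asymp} and \fullref{lem:hausdorff} to see that any continuous quasi-geodesic segment from $\bp$ ending far out along $\phi\circ\alpha$ must fellow-travel it past any prescribed radius), $g_n\bp\to\eta$ in $\hat X$; since $\eta\in\cbdry X$ this gives $\eta\in\limit(G)$. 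Conversely, let $\eta\in\limit(G)$, with $\rho$--contracting geodesic representative $\alpha=\alpha^\eta$, and choose $g_n\bp\to\eta$ in $\hat X$ (possible as $\hat X$ is first countable), necessarily with $d(\bp,g_n\bp)\to\infty$. Convergence forces each geodesic $[\bp,g_n\bp]$ to fellow-travel $\alpha$ out to radius $\to\infty$, and these geodesics lie in $\bar N_A(G\bp)$ by quasi-convexity, so $\alpha\subset\bar N_{A+\kappa'(\rho,1,0)}(G\bp)$ by \fullref{lem:hausdorff}. Choose $h_t\in G$ with $d(h_t\bp,\alpha(t))\leq A+\kappa'(\rho,1,0)$ for $t\in\mathbb{Z}_{\geq 0}$ and $h_0=e$; consecutive $h_t\bp$ are uniformly close, and since $\phi|_G$ is a quasi-isometric embedding the edge-path $h$ in $X_G$ through the $h_t$ is a continuous quasi-geodesic ray based at $e$. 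Its image $\phi\circ h$ is at bounded Hausdorff distance from $\alpha$, hence is a contracting quasi-geodesic asymptotic to $\alpha$; by the second half of \fullref{morsepreserving} together with \fullref{morseequivalent}, $h$ itself is contracting in $X_G$, so $\zeta:=[h]\in\cbdry^\fq G$ and $\cbdry\phi(\zeta)=[\phi\circ h]=\eta$.

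I expect the identification $\cbdry\phi(\cbdry^\fq G)=\limit(G)$, and in particular the inclusion $\limit(G)\subseteq\cbdry\phi(\cbdry^\fq G)$, to be the main obstacle: one must manufacture, out of an abstract limit point of the orbit, a genuine contracting quasi-geodesic ray in $X_G$. Quasi-convexity of the orbit is precisely what confines a representative geodesic to a bounded neighborhood of $G\bp$ so that it can be pulled back through $\phi$, and \fullref{morsepreserving} is what carries the contraction property back from $X$ to $X_G$; the remaining points are bookkeeping with the constants $\kappa$, $\kappa'$ and the fellow-travelling estimates of \fullref{cor:asymp} and \fullref{lem:hausdorff}.
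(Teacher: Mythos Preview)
Your proof is correct and follows essentially the same route as the paper's: the standard Milnor--Svarc style argument for finite generation and quasi-isometric embedding, then \fullref{thm:continuity} for the open injection, then the identification of the image with $\limit(G)$ by showing $\alpha^\eta$ lies in a bounded neighborhood of $G\bp$ and pulling back. Your write-up is in fact more explicit than the paper's at one point: when you pull $\alpha^\eta$ back to a ray $h$ in $X_G$, you justify that $h$ is contracting via the second clause of \fullref{morsepreserving}, whereas the paper simply asserts the pulled-back ray is contracting; you also invoke \fullref{lem:hausdorff} where the paper cites \fullref{keylemma}, but either works here.
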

\begin{proof}
Suppose $G\bp$ is $Q$--quasi-convex.
A standard argument shows that $G$ is finitely generated and $\phi$ is
a quasi-isometric embedding. 

Since $\phi$ takes contracting quasi-geodesics to contracting
quasi-geodesics it induces an open injection $\cbdry\phi$ onto its image by
\fullref{thm:continuity}.

A point in $\zeta\in\cbdry^\fq G$ is sent to the equivalence class
of the contracting quasi-geodesic ray $\phi(\alpha^\zeta)$. 
The sequence $(\phi(\alpha^\zeta_n))_{n\in\mathbb{N}}$
converges to $\cbdry\phi(\zeta)$ in $\hat{X}$, so the image of $\cbdry
\phi$ is contained in $\limit(G)$. 

Conversely, suppose $\zeta\in\limit(G)$. 
Then there is a sequence
$(g_n\bp)_{n\in\mathbb{N}}$ converging in $\hat{X}$ to
$\zeta\in\cbdry^\fq X$. 
By passing to a subsequence, we may assume $g_n\bp\in
\hat{U}(\zeta,n)$ for all $n$.
The definition of $\hat{U}(\zeta,n)$ implies that for any chosen
geodesic $\gamma^n$ from $\bp$ to $g_n\bp$ there are points $x_n$
on $\alpha^\zeta$ and $y_n$ on $\gamma^n$ such that $d(x_n,y_n)\leq\kappa:=
\kappa(\rho_\zeta,1,0)$ and $d(\bp,x_n)\geq n$. 
Since $G\bp$ is quasi-convex, there exists $g'_n\in G$ such that
$d(y_n,g'_n\bp)\leq Q$. 
Thus, the set $G\bp\cap
N_{\kappa+Q}\alpha^\zeta$ is unbounded. 
If $g\bp\in N_{\kappa+Q}\alpha^\zeta$ then an application of
\fullref{keylemma} implies that a geodesic from $\bp$ to $g\bp$ has
an initial segment that is $\kappa'(\rho_\zeta,1,0)$--Hausdorff
equivalent to an initial segment of $\alpha^\zeta$, and the length of
these initial segments is $d(\bp,g\bp)$ minus a constant depending on
$\zeta$ and $Q$, but not $g$. 
Since we can take $d(\bp,g\bp)$ arbitrarily large, and since every
geodesic from $\bp$ to $g\bp$ is contained in the $Q$--neighborhood
of $G\bp$, we conclude that $\alpha^\zeta$ is 
contained in a bounded neighborhood of $G\bp$. 
Now project $\alpha^\zeta$ to $G\bp$ and pull back to
$G$ to get a contracting quasi-geodesic ray whose $\phi$--image is
asymptotic to $\alpha^\zeta$, which shows
$\zeta\in\cbdry\phi(\cbdry^\fq G)$.

If $\phi(G)$ is contracting then $\phi$ does indeed take contracting
quasi-geodesics to contracting quasi-geodesics, by
\fullref{morsepreserving}, and have quasi-convex image, so the
previous claims are true and \fullref{thm:qiinvariance} says
$\cbdry\phi$ is an embedding.
\end{proof}

\begin{corollary}\label{cor:contractingsubgroup}
  If $H$ is a subgroup of a finitely generated group
  $G$ and $H$ is contracting in $G$ then $H$ is finitely generated and the inclusion $\iota\from
  H\to G$ induces an embedding $\cbdry\iota \from\cbdry^\fq H\to\cbdry^\fq G$.
\end{corollary}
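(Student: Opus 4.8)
The plan is to obtain this as a direct application of \fullref{prop:limset} to the left-multiplication action of $H$ on a Cayley graph of $G$.

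Fix a finite generating set for $G$, let $X$ be the resulting Cayley graph with basepoint $\bp$ the identity vertex, and let $H\leq G$ act on $X$ by left multiplication. Since $G$ acts freely and transitively on the vertex set, this action is properly discontinuous, hence so is its restriction to $H$. The orbit map $\phi\from h\mapsto h\bp$ has image $H\bp$, which is exactly the subset $H$ of the vertex set of $X$; by hypothesis this is a contracting subset of $X$. In particular $\phi(H)$ is Morse by \fullref{morseequivalent}, hence quasi-convex (geodesics are $(1,0)$--quasi-geodesics), and by \fullref{morsepreserving} the orbit map carries contracting quasi-geodesics to contracting quasi-geodesics. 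Thus all the hypotheses of \fullref{prop:limset} hold, and in fact we are in its final case.

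Invoking \fullref{prop:limset}, we conclude that $H$ is finitely generated, that $\phi$ is a quasi-isometric embedding, and that the induced map $\cbdry\phi\from\cbdry^\fq H\to\cbdry^\fq X$ is a topological embedding (onto $\limit(H)$). Since $X$ is a Cayley graph of $G$ we have $\cbdry^\fq X=\cbdry^\fq G$, and since the orbit map $\phi$ agrees on vertices with the inclusion $\iota\from H\to G$ of Cayley graphs, the boundary maps they induce coincide: $\cbdry\phi=\cbdry\iota$. This yields the desired embedding $\cbdry\iota\from\cbdry^\fq H\to\cbdry^\fq G$. The only step requiring any care --- and the nearest thing to an obstacle in what is essentially a corollary --- is this final identification of the orbit-map boundary map with the inclusion-induced boundary map, together with the observation that $\cbdry^\fq G$ is independent of the chosen generating set (\fullref{prop:choices}, \fullref{thm:qiinvariance}); the substantive content is entirely contained in \fullref{prop:limset}.
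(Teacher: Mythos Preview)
Your proof is correct and follows exactly the approach implicit in the paper: the corollary is stated immediately after \fullref{prop:limset} with no separate proof, and you have simply spelled out why the last clause of that proposition applies when $H$ acts on a Cayley graph of $G$ with contracting orbit. The only mild redundancy is that you verify quasi-convexity and the contracting-preserving property separately before invoking the proposition, whereas the final sentence of \fullref{prop:limset} already packages these deductions; but this does no harm.
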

Properly speaking, we ought to require that $H$ is a contracting subset of
the Cayley graph of $G$ with respect to some specified generating set,
but it follows from \fullref{morseequivalent} that the property of
being a contracting subset does not depend on the choice of metric
within a quasi-isometry class. 
\begin{corollary}\label{cor:he}
  If $H$ is a  hyperbolically embedded subgroup (in the
  sense of \cite{DahGuiOsi17}) in a
  finitely generated group $G$ then $\cbdry\iota \from\cbdry^\fq
  H\to\cbdry^\fq G$ is an embedding.
A special case is that of a peripheral subgroup of a relatively
hyperbolic group.
\end{corollary}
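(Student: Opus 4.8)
The plan is to reduce the statement to \fullref{cor:contractingsubgroup}. That corollary asserts precisely the desired conclusion --- including that $H$ is finitely generated, so that $\cbdry^\fq H$ is defined --- under the single hypothesis that $H$ is a contracting subset of $G$. By \fullref{morseequivalent} this hypothesis is equivalent to requiring $H$ to be a Morse subset of $G$, a condition which, as noted after that theorem, is independent of the choice of finite generating set. So the whole corollary follows once we know: \emph{a hyperbolically embedded subgroup $H$ of a finitely generated group $G$ is a Morse subset of $G$.}

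This is a theorem of Sisto \cite{Sis16}, which I would cite directly. If one wants to see it in the present language: pick a relative generating set $X$ realizing $H$ as hyperbolically embedded in $(G,X)$ and enlarge it to a finite generating set $S$ of $G$, so that the relative Cayley graph $\widehat{\Gamma}:=\mathrm{Cay}(G,S\sqcup H)$ is hyperbolic and the induced ``relative metric'' on $H$ is locally finite. Local finiteness of the relative metric together with hyperbolicity of $\widehat{\Gamma}$ forces $H$ to be undistorted in $\mathrm{Cay}(G,S)$. Then, given an $(\cL,\cA)$--quasi-geodesic $\gamma$ in $\mathrm{Cay}(G,S)$ with both endpoints in $H$, its image in $\widehat{\Gamma}$ is a quasi-geodesic with controlled constants joining two vertices of $H$, hence is Hausdorff close in $\widehat{\Gamma}$ to a single $H$--edge between those vertices; a bounded coset penetration type estimate, comparing distances measured in $\mathrm{Cay}(G,S)$ and in $\widehat{\Gamma}$, then upgrades this to $\gamma\subset\bar{N}_{\mu(\cL,\cA)}H$ with $\mu$ depending only on $\cL$, $\cA$ (and the hyperbolically embedded structure), which is exactly the Morse condition.

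For the final sentence of the corollary, a peripheral subgroup of a relatively hyperbolic group is hyperbolically embedded by \cite{DahGuiOsi17}, so it is covered by the general case. The only step requiring real geometry rather than bookkeeping is the implication \emph{hyperbolically embedded} $\Rightarrow$ \emph{Morse}: treated as a black box via \cite{Sis16} the corollary is immediate, and in the direct argument above the delicate point is the coset penetration estimate relating the two metrics. Everything after that --- passing from Morse to contracting by \fullref{morseequivalent}, and from contracting to the embedding $\cbdry\iota$ by \fullref{cor:contractingsubgroup} --- is automatic.
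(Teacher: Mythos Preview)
Your proposal is correct and follows essentially the same route as the paper: cite Sisto \cite{Sis16} for the fact that hyperbolically embedded subgroups are Morse, convert Morse to contracting via \fullref{morseequivalent}, and then invoke \fullref{cor:contractingsubgroup}. The paper additionally remarks that for peripheral subgroups the Morse property was already established by Dru\c{t}u and Sapir \cite{DruSap05}, but otherwise the arguments coincide.
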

\begin{proof}
  Sisto \cite{Sis16} shows hyperbolically embedded
  subgroups are Morse, hence contracting.
Peripheral subgroups of relatively hyperbolic groups are a motivating
example for the definition of hyperbolically embedded subgroups in
\cite{DahGuiOsi17}, but in this special case the fact they are Morse
was already shown by Drutu and Sapir \cite{DruSap05}.
\end{proof}

Together with \fullref{thm:qiinvariance}, \fullref{morsetransitive} implies:
\begin{corollary}
If $G$ is a finitely generated group and $Z$ is a Morse subset of $G$
then $\cbdry^\fq Z$ embeds into $\cbdry^\fq H$ for every finitely generated subgroup $H$ of $G$ containing $Z$.
In particular, if $\cbdry^\fq Z$ is non-empty then so is $\cbdry^\fq
H$, and if $\cbdry^\fq Z$ contains a non-trivial connected component
then so does $\cbdry^\fq H$.
\end{corollary}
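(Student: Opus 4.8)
The plan is to realise $\cbdry^\fq Z$ as the contracting boundary of a geodesic model of $Z$, use \fullref{morsetransitive} to see that $Z$ is Morse in $H$, and then quote the quasi-isometry invariance results of \fullref{sec:invariance}, exactly as the sentence introducing the corollary advertises.

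First I would make sense of $\cbdry^\fq Z$. Since $Z$ is Morse it is coarsely connected (as observed in the proof of \fullref{morsepreserving}), so for $R_0$ large enough the graph $Y$ whose vertex set is a maximal $1$--separated subset $N$ of $Z$, with an edge joining two vertices whenever their distance in (the Cayley graph of) $G$ is at most $R_0$, is a connected, locally finite, hence proper, geodesic metric space; moreover the vertex inclusion extends to a coarse Lipschitz map $\phi\from Y\to H$ sending each edge to a geodesic of length at most $R_0$. I claim $\phi$ is a quasi-isometric embedding. The upper bound is immediate; the lower bound amounts to saying that the bounded--hop metric $d_Y$ on $N$ is quasi-isometric to the metric it inherits from $H$, and this is exactly where the Morse hypothesis enters. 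Applying \fullref{lemma:uniformproper} to the identity map of the Cayley graph of $H$ (which is coarse Lipschitz and uniformly proper) with the subset $Z$, any geodesic of $H$ with endpoints in $Z$ is tracked by a quasi-geodesic lying in a bounded neighbourhood of $Z$; projecting that quasi-geodesic to $Z$ yields a path in $Z$ of comparable length, so $d_Y\preceq d_H|_Z$, while $d_H|_Z\le d_Y$ is trivial. By \fullref{thm:qiinvariance} the homeomorphism type of $\cbdry^\fq Y$ is independent of $R_0$ and of the chosen net, and this is what I would take $\cbdry^\fq Z$ to mean. (Equivalently one could run the construction inside any Cayley graph of any finitely generated group containing $Z$ as a Morse subset; the corollary is precisely the assertion that this makes no difference.)

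Next, by the first assertion of \fullref{morsetransitive}, the hypothesis that $Z$ is Morse in $G$ together with $Z\subseteq H$ gives that $Z$ is Morse in $H$. Now $\phi(Y)$ lies within finite Hausdorff distance of $Z$ (at most $R_0$), hence is contracting in $H$ by \fullref{Hausdorff}. Thus $\phi\from Y\to H$ is a quasi-isometric embedding between proper geodesic metric spaces whose image is contracting, so \fullref{thm:qiinvariance} applies directly and says that $\cbdry\phi\from\cbdry^\fq Y\to\cbdry^\fq H$ is a topological embedding; that is, $\cbdry^\fq Z$ embeds in $\cbdry^\fq H$. If one prefers not to use that a contracting image automatically forces $\phi$ to preserve contracting quasi-geodesics, one may instead argue directly via \fullref{morsepreserving} that $\phi$ carries Morse (equivalently contracting, by \fullref{morseequivalent}) quasi-geodesics of $Y$ to Morse quasi-geodesics of $H$ --- since both $\phi(Y)$ and any such quasi-geodesic are Morse --- and then invoke \fullref{thm:continuity}.

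The two special cases are then formal. An embedding is injective, so $\cbdry^\fq Z\neq\emptyset$ forces $\cbdry^\fq H\neq\emptyset$; and an embedding is a homeomorphism onto its image, so if $\cbdry^\fq Z$ has a connected component with more than one point, its image is a connected subset of $\cbdry^\fq H$ with more than one point, whence the component of $\cbdry^\fq H$ containing that subset is non-trivial. The only genuinely non-formal ingredient is the quasi-isometric embedding claim of the second paragraph: the bounded--hop path metric of a Morse subset agrees with the subspace metric up to quasi-isometry. I expect this to be the main point requiring care, though it is essentially standard and, as indicated, is a special case of \fullref{lemma:uniformproper} (compare also \fullref{cor:qiembedding}).
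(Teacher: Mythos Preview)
Your proof is correct and follows the same route as the paper, which disposes of the corollary in a single sentence (``Together with \fullref{thm:qiinvariance}, \fullref{morsetransitive} implies:'') without further elaboration. You have supplied the details the paper omits, most notably a concrete interpretation of $\cbdry^\fq Z$ via a Rips-type model and the verification that this model maps into $H$ by a quasi-isometric embedding with contracting image. One minor point of presentation: your appeal to \fullref{lemma:uniformproper} in the second paragraph already presupposes that $Z$ is Morse in $H$, so you should invoke \fullref{morsetransitive} \emph{before} that step rather than in the following paragraph; the logic is sound but the order of exposition is slightly inverted.
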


\section{Comparison to other topologies}\label{sec:comparison}
\begin{definition}\label{def:Vnbhds}
Let $X$ be a proper geodesic metric space.
Take $\zeta\in\cbdry X$.
Fix a geodesic ray $\alpha\in\zeta$.
 For each $r\geq 1$ define $V(\zeta,r)$ to be the set of
 points $\eta\in\cbdry X$ such that for every geodesic ray $\beta\in\eta$ we have
 $d(\beta,\alpha\cap \nbhd^c_{r}\bp)\leq \kappa(\rho_\zeta,1,0)$.
\end{definition}
The same argument as \fullref{prop:topology} shows that
$\{V(\zeta,n)\mid n\in\mathbb{N}\}$ gives a neighborhood basis at $\zeta$ for a topology
$\fg$ on
$\cbdry X$. 
We call $\fg$ the \emph{topology of fellow-travelling geodesics}.
It is immediate from the definitions that  $\fq$ is a refinement of 
$\fg$. 
The topology $\fg$ need not be preserved by quasi-isometries of $X$
\cite{Cas16gromovboundary}. 
It is an open question whether $\fg$ is
preserved by quasi-isometries when $X$ is the Cayley graph of a
finitely generated group.

One might also try  to take $V'(\zeta,r)$ to be the set of
 points $\eta\in\cbdry X$ such that for \emph{some} geodesic ray $\beta\in\eta$ we have
 $d(\beta,\alpha\cap \nbhd^c_{r}\bp)\leq \kappa(\rho_\zeta,1,0)$.
Let $\fg'$ denote the resulting topology.
Beware that in general $\{V'(\zeta,r)\mid r\geq 1\}$ is only a filter base
converging to $\zeta$, not necessarily a neighborhood base of $\zeta$
in $\fg'$; the sets $V'(\zeta,r)$ might not be neighborhoods of $\zeta$. 

\begin{lemma}\label{lem:uniformlycontractingmeansallthetopologiesarethesame}
 Let  $X$ be a proper geodesic metric space.
Let $\bdry_\rho X=\{\zeta\in\cbdry X\mid \rho_\zeta\leq\rho\}$, i.e.\ $\zeta \in \bdry_\rho X$ if all geodesics $\alpha \in \zeta$ are $\rho$--contracting.
The topologies on $\bdry_\rho X$ generated by taking, for each
$\zeta\in\bdry_\rho X$ and $r\geq 1$, the sets $U(\zeta,r)\cap \bdry_\rho
X$, $V(\zeta,r)\cap \bdry_\rho X$, or $V'(\zeta,r)\cap \bdry_\rho X$,
are equivalent. 
\end{lemma}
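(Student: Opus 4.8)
The plan is to show that for each $\zeta\in\bdry_\rho X$ the three families of sets $\{U(\zeta,r)\cap\bdry_\rho X\}_{r\ge1}$, $\{V(\zeta,r)\cap\bdry_\rho X\}_{r\ge1}$, and $\{V'(\zeta,r)\cap\bdry_\rho X\}_{r\ge1}$ are mutually cofinal at $\zeta$, meaning each member of any one family is sandwiched between two members of each of the others. Each family is nested in $r$ (a larger $r$ gives a smaller set, since $N^c_r\bp$ shrinks), so each family determines a topology on $\bdry_\rho X$ by the rule that $W$ is open precisely when every $\zeta\in W$ admits a member of the family containing $\zeta$ and contained in $W$ --- this is exactly the rule by which $\fq$ and $\fg$ were built in \fullref{sec:topology} and \fullref{sec:comparison}, and it is the natural reading of ``the topology generated by these sets.'' Granting the cofinality, a set that is open for one of the three families is, by reading off the sandwiching, open for each of the others, so the three topologies coincide. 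Throughout we use the same reference geodesic ray $\alpha^\zeta\in\zeta$ in all three definitions; this is harmless since the resulting topologies are already known not to depend on such choices.

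Half of the cofinality is immediate. A geodesic ray is a continuous $(1,0)$--quasi-geodesic, and $\kappa(\rho_\zeta,1,0)$ is the constant used in the definitions of $V$ and $V'$; hence the defining condition of $U(\zeta,r)$, specialised to $\cL=1$, $\cA=0$, is precisely the defining condition of $V(\zeta,r)$, while the ``for every geodesic ray'' condition of $V(\zeta,r)$ forces the ``for some geodesic ray'' condition of $V'(\zeta,r)$ because every point of $\cbdry X$ has a geodesic representative. Thus $U(\zeta,r)\subseteq V(\zeta,r)\subseteq V'(\zeta,r)$ for all $\zeta$ and $r$, and it remains only to prove the one nontrivial statement: for every $\zeta\in\bdry_\rho X$ and $n\ge1$ there is an $m\ge n$ with $V'(\zeta,m)\cap\bdry_\rho X\subseteq U(\zeta,n)$. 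Call this $(\dagger)$; combined with the inclusions just noted it gives the required mutual cofinality, and the lemma follows.

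To prove $(\dagger)$, fix $\zeta\in\bdry_\rho X$ and $n\ge1$. By \fullref{obs} any quasi-geodesic witnessing $\eta\notin U(\zeta,n)$ is an $(\cL,\cA)$--quasi-geodesic with $\cL^2<n/3$ and $\cA<n/3$; since $\rho_\zeta,\rho_\eta\le\rho$ on $\bdry_\rho X$ and $\kappa$, $\kappa'$, $\lambda$ are monotone in all their arguments (monotonicity of $\kappa$ in the contraction function is visible from \fullref{def:kappa}, and $\kappa'$ and $\lambda$ are built from $\kappa$), every constant below depends only on $\rho$ and $n$. Set $J:=\kappa(\rho,1,0)+\kappa'(\rho,\sqrt{n/3},n/3)$, $\lambda_n:=\lambda(\rho,\sqrt{n/3},n/3)$, and $m:=n+2J+\lambda_n$ (recall $\theconstantformerlyknownasLambda\le2$ in \fullref{keylemma}). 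Suppose $\eta\in V'(\zeta,m)\cap\bdry_\rho X$, witnessed by a geodesic ray $\beta\in\eta$ and a point $x\in\alpha^\zeta$ with $d(\bp,x)\ge m$ and $d(x,\beta)\le\kappa(\rho_\zeta,1,0)$. Let $\gamma\in\eta$ be an arbitrary continuous $(\cL,\cA)$--quasi-geodesic with $\cL^2,\cA<n/3$. Since $\eta\in\bdry_\rho X$ the geodesic $\beta$ is $\rho$--contracting, and $\beta,\gamma$ are asymptotic rays based at $\bp$, so \fullref{lem:hausdorff} gives $d_{Haus}(\beta,\gamma)\le\kappa'(\rho,\cL,\cA)$, whence $d(x,\gamma)\le\kappa(\rho_\zeta,1,0)+\kappa'(\rho,\cL,\cA)\le J$. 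Applying \fullref{keylemma} to the $\rho_\zeta$--contracting geodesic ray $\alpha^\zeta$ and the quasi-geodesic $\gamma$, with $R:=m$ and the above $J$, the last point $y$ of the subsegment of $\alpha^\zeta$ between $\bp$ and $x$ with $d(y,\gamma)=\kappa(\rho_\zeta,\cL,\cA)$ satisfies $d(\bp,y)\ge m-\theconstantformerlyknownasLambda J-\lambda(\rho_\zeta,\cL,\cA)\ge m-2J-\lambda_n=n$. Hence $y\in\alpha^\zeta\cap N^c_n\bp$ and $d(\gamma,y)=\kappa(\rho_\zeta,\cL,\cA)$, so $\gamma$ cannot witness $\eta\notin U(\zeta,n)$. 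As $\gamma$ was arbitrary, $\eta\in U(\zeta,n)$, which proves $(\dagger)$.

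The main obstacle is $(\dagger)$, and within it the step that trades the single geodesic representative $\beta$ (furnished by $V'$) for an arbitrary quasi-geodesic representative $\gamma$ (demanded by $U$). This is precisely the role of \fullref{lem:hausdorff}, whose Hausdorff bound $\kappa'(\rho,\cL,\cA)$ is governed by $\rho$ rather than by $\rho_\eta$; this uniformity is exactly what the hypothesis $\zeta,\eta\in\bdry_\rho X$ provides, and without it the radius $m$ needed in $(\dagger)$ would be unbounded over $\eta$ in any neighbourhood of $\zeta$ --- the same phenomenon that obstructs quasi-isometry invariance of $\fg$, flagged in the footnote of \fullref{sec:invariance}. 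The remaining ingredients are routine: \fullref{obs} keeps the quasi-geodesic constants bounded in terms of $n$, and the divagation estimate \fullref{keylemma} upgrades ``$\gamma$ comes near a far point of $\alpha^\zeta$'' to ``$\gamma$ comes near a point of $\alpha^\zeta$ outside $N_n\bp$'' at the cost of a bounded decrease in radius.
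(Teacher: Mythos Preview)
Your proof is correct and follows essentially the same approach as the paper's: the easy inclusions $U(\zeta,r)\subset V(\zeta,r)\subset V'(\zeta,r)$ by definition, and then \fullref{keylemma} for the nontrivial direction $V'(\zeta,m)\cap\bdry_\rho X\subset U(\zeta,n)$, with the uniform bound $\rho_\eta\le\rho$ supplying the crucial uniformity. The paper's proof is terse (``a straightforward application of \fullref{keylemma}''), and your use of \fullref{lem:hausdorff} to pass from the geodesic witness $\beta$ to an arbitrary quasi-geodesic $\gamma$ makes explicit exactly the step the paper leaves to the reader.
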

\begin{proof}
  For each $\zeta$ and $r$ we have $U(\zeta,r)\cap \bdry_\rho
X\subset V(\zeta,r)\cap \bdry_\rho X\subset V'(\zeta,r)\cap \bdry_\rho
X$ by definition.

Given that points in $V'(\zeta,r)\cap \bdry_\rho
X$ and $U(\zeta,r)\cap \bdry_\rho
X$ are uniformly contracting, a straightforward application of
\fullref{keylemma} shows that for all $\zeta$ and $r$, for all
sufficiently large $R$ we have $V'(\zeta,R)\cap \bdry_\rho
X\subset U(\zeta,r)\cap \bdry_\rho
X$.
Also since points of $V'(\zeta,R)\cap \bdry_\rho
X$ are uniformly contracting, these do, in fact, give a neighborhood
basis at $\zeta$ for the induced topology, as in \fullref{prop:topology}.
\end{proof}

\begin{proposition}\label{lem:uniformlyasymptotic}
 Let $X$ be a proper geodesic metric space.
If $X$ is hyperbolic then $\cbdry^{\fq}X\cong \cbdry^{\fg}X\cong
\cbdry^{\fg'}X$, and these are homeomorphic to the Gromov boundary.
If $X$ is  CAT(0) then $\cbdry^{\fg}X\cong \cbdry^{\fg'}X$, and these
are homeomorphic to the subset of the visual boundary of $X$
consisting of endpoints of contracting geodesic rays, topologized as a
subspace of the visual boundary.
\end{proposition}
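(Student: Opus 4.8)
The plan is to treat the hyperbolic and CAT(0) cases separately, reducing each to a comparison of neighborhood bases; the intermediate topologies $\fg$ and $\fg'$ serve as a bridge between $\fq$ and the classical boundaries.

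\emph{Hyperbolic case.} In a proper geodesic $\delta$--hyperbolic space every geodesic is quasi-convex, hence $\rho$--contracting for a single sublinear function $\rho$ depending only on $\delta$; thus $\cbdry X=\bdry_\rho X$ in the notation of \fullref{lem:uniformlycontractingmeansallthetopologiesarethesame}, and that lemma immediately gives $\cbdry^{\fq}X\cong\cbdry^{\fg}X\cong\cbdry^{\fg'}X$. It remains to identify, say, $\cbdry^{\fg}X$ with the Gromov boundary $\bdry_\infty X$. By properness and the Arzel\`a--Ascoli theorem the natural map from geodesic rays based at $\bp$ modulo asymptoty onto $\bdry_\infty X$ is a bijection carrying the class of a ray $\alpha$ to its endpoint, and since every geodesic ray is contracting its domain is precisely the underlying set of $\cbdry X$. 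A neighborhood basis of $[\alpha]$ in $\bdry_\infty X$ is given, in the standard way through the Gromov product (see \cite{BriHae99}), by sets of endpoints of geodesic rays $\beta$ from $\bp$ with $d(\beta_r,\alpha_r)\le K\delta$ as $r\to\infty$, with $K$ universal. Since $3\le\kappa(\rho_\zeta,1,0)\le\kappa(\rho,1,0)$ is bounded above and below independently of $\zeta$ (because $\rho_\zeta\le\rho$), the $\fg$--neighborhood $V(\zeta,r)$ encodes the same type of condition --- that $\beta$ meets a uniformly bounded neighborhood of $\alpha^\zeta$ outside $\nbhd_r\bp$ --- and the two neighborhood bases are cofinal in one another by the hyperbolic instance of \fullref{lem:hausdorff}, i.e.\ stability of geodesics. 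Hence the bijection is a homeomorphism.

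\emph{CAT(0) case.} Here convexity of the metric provides a \emph{unique} geodesic ray from $\bp$ in each asymptotic class, so for $\eta\in\cbdry X$ there is only one ray $\beta\in\eta$ to test; the conditions defining $V(\zeta,r)$ and $V'(\zeta,r)$ therefore coincide, the sets $V'(\zeta,n)$ really do form a neighborhood basis, and $\fg=\fg'$ verbatim. The underlying set of $\cbdry X$ is in bijection with the set of endpoints of contracting geodesic rays from $\bp$ inside the visual boundary $\bdry_\infty X$ via $\zeta\mapsto$ the endpoint of $\alpha^\zeta$, using that on CAT(0) spaces our boundary agrees as a set with the Charney--Sultan boundary and that every geodesic in $\zeta$ is $\rho_\zeta$--contracting. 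I would then compare $\{V(\zeta,n)\}_n$ with the cone--topology neighborhood basis at the endpoint of $\alpha^\zeta$, namely the sets $\{\eta : d(\alpha^\eta_r,\alpha^\zeta_r)<\epsilon\}$ intersected with the contracting endpoints (see \cite{BriHae99} for the cone topology). One inclusion is immediate: if $d(\alpha^\eta_r,\alpha^\zeta_r)<\kappa(\rho_\zeta,1,0)$ then, since $\alpha^\zeta_r\in\nbhd^c_r\bp$, we get $d(\alpha^\eta,\alpha^\zeta\cap\nbhd^c_r\bp)<\kappa(\rho_\zeta,1,0)$, i.e.\ $\eta\in V(\zeta,r)$; so each $V(\zeta,r)$ is a subspace neighborhood of the endpoint of $\alpha^\zeta$.

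The reverse inclusion --- that every cone neighborhood contains some $V(\zeta,r)$ --- is the crux, and I expect it to be the main obstacle: the $V$--neighborhoods have the fixed ``width'' $\kappa(\rho_\zeta,1,0)$ while the cone neighborhoods have arbitrarily small width $\epsilon$, so one must genuinely shrink the fellow-travelling constant. This is where CAT(0) convexity enters. If $\eta\in V(\zeta,r)$ with geodesic ray $\beta:=\alpha^\eta$, there are $t\ge r$ and $s\ge 0$ with $d(\alpha^\zeta_t,\beta_s)\le\kappa:=\kappa(\rho_\zeta,1,0)$, hence $|t-s|\le\kappa$; convexity of $u\mapsto d(\alpha^\zeta_{ut},\beta_{us})$ on $[0,1]$ (both rays issue from $\bp$) gives $d(\alpha^\zeta_{ut},\beta_{us})\le u\kappa$, and specializing to $u=r'/t$ (valid once $r'\le t$) and then using $|t-s|\le\kappa$ to replace $\beta_{r's/t}$ by $\beta_{r'}$ yields $d(\alpha^\zeta_{r'},\beta_{r'})\le 2(r'/t)\kappa\le 2(r'/r)\kappa$. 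Thus for $r$ sufficiently large (e.g.\ $r>2r'\kappa/\epsilon$) we get $\eta\in\{d(\alpha^\eta_{r'},\alpha^\zeta_{r'})<\epsilon\}$. Since $\{V(\zeta,n)\}$ is a neighborhood basis for $\fg$ (by the argument of \fullref{prop:topology}, as recorded after \fullref{def:Vnbhds}) and the cone sets form a neighborhood basis for the subspace topology, the two bases are cofinal at every point, so the bijection is a homeomorphism and $\cbdry^{\fg}X\cong\cbdry^{\fg'}X$ is the asserted subspace of the visual boundary.
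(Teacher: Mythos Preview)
Your proof is correct and follows the same approach as the paper: reduce the hyperbolic case to \fullref{lem:uniformlycontractingmeansallthetopologiesarethesame} via uniform contraction, and reduce the CAT(0) case to uniqueness of geodesic rays. The paper's proof is much terser --- it simply asserts that the neighborhood bases described in \cite[III.H.3.6]{BriHae99} and \cite[II.8.6]{BriHae99} are equivalent to those for $\fg'$, leaving the verification to the reader --- whereas you explicitly supply the CAT(0) convexity argument needed to shrink the fixed fellow-travelling constant $\kappa(\rho_\zeta,1,0)$ down to an arbitrary $\epsilon$. That argument is the genuine content behind the paper's citation, so your version is a faithful unpacking rather than a different route; the small streamlining available is to note directly that $d(\alpha^\zeta_t,\beta_t)\le d(\alpha^\zeta_t,\beta_s)+|t-s|\le 2\kappa$ and then apply convexity of $t\mapsto d(\alpha^\zeta_t,\beta_t)$, avoiding the reparameterization by $u$.
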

\begin{proof}
 For a description of a neighborhood basis for points in the Gromov
  or visual boundary see \cite[III.H.3.6]{BriHae99} and
  \cite[II.8.6]{BriHae99}, respectively.
Note that these are equivalent to the neighborhood bases for $\fg'$.

The claim for hyperbolic spaces follows from
\fullref{lem:uniformlycontractingmeansallthetopologiesarethesame},
because geodesics in a hyperbolic space are uniformly contracting.

If $X$ is CAT(0) then $\cbdry^{\fg}X\cong\cbdry^{\fg'}X$ because there
is a unique geodesic ray in each asymptotic equivalence class.
\end{proof}

More generally, $\cbdry^{\fg}X\cong\cbdry^{\fg'}X$ if $X$ is a proper
geodesic metric space with the property that every geodesic ray in $X$
is either not contracting or has contraction function bounded by a
constant. 
This follows by the same argument as in \cite{Cas16gromovboundary}.

\bigskip
Next, we recall the \emph{direct limit topology}, $\dl$, on $\cbdry X$
of Charney
and Sultan \cite{ChaSul15} and Cordes \cite{Cor15}. 

For a given contraction function $\rho$ consider the set $\bdry_\rho
X$ of points $\zeta$ in $\bdry_c X$ such that one can take
$\rho_\zeta\leq \rho$, as in \fullref{lem:uniformlycontractingmeansallthetopologiesarethesame}.
The topologies $\fq$, $\fg$, and $\fg'$ on $\bdry_\rho
X$ coincide by \fullref{lem:uniformlycontractingmeansallthetopologiesarethesame}.
For $\rho\leq\rho'$ the inclusion $\bdry_\rho X\into \bdry_{\rho'}X$ is
continuous, and $\cbdry X$, as a set, is the direct limit of this system of
inclusions over all contraction functions.

Let $\dl$ be the direct limit topology on $\cbdry X$, that is, the
finest topology on $\cbdry X$ such that all of the inclusion maps
$\bdry_\rho X\into \cbdry X$ are continuous.

\begin{proposition}\label{dlrefinesfq}
  $\dl$ is a refinement of $\fq$.
\end{proposition}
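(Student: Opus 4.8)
The plan is to show that the identity map $\cbdry^\dl X\to\cbdry^\fq X$ is continuous; since its underlying function is a bijection, this is precisely the assertion that every $\fq$--open set is $\dl$--open, that is, that $\dl$ refines $\fq$. By definition $\dl$ is the finest topology on $\cbdry X$ for which every inclusion $\iota_\rho\from\bdry_\rho X\into\cbdry X$ is continuous, where $\bdry_\rho X$ carries the topology described in \fullref{lem:uniformlycontractingmeansallthetopologiesarethesame}. So by the universal property of the direct limit topology, it suffices to check, for each sublinear function $\rho$, that the composite inclusion $\bdry_\rho X\into\cbdry^\fq X$ is continuous.

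To do this I would fix a sublinear $\rho$, a point $\zeta\in\bdry_\rho X$, and an $\fq$--open set $W$ containing $\zeta$. By \fullref{prop:topology} the collection $\{U(\zeta,n)\mid n\in\mathbb{N}\}$ is a neighborhood basis at $\zeta$ in $\cbdry^\fq X$, so $W\supseteq U(\zeta,n)$ for some $n$. Then $\iota_\rho^{-1}(W)=W\cap\bdry_\rho X\supseteq U(\zeta,n)\cap\bdry_\rho X$, and this last set is a neighborhood of $\zeta$ in $\bdry_\rho X$: this is exactly the content of \fullref{lem:uniformlycontractingmeansallthetopologiesarethesame} together with its proof, which shows that the sets $U(\zeta,r)\cap\bdry_\rho X$ form a neighborhood basis at $\zeta$ for the topology on $\bdry_\rho X$. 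Since $\zeta$ was an arbitrary point of $W\cap\bdry_\rho X$, this preimage is a neighborhood of each of its points, hence open in $\bdry_\rho X$; therefore $\iota_\rho$ is continuous. Running over all $\rho$ gives continuity of $\cbdry^\dl X\to\cbdry^\fq X$, as desired.

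I do not expect a serious obstacle here, since the geometric work has already been carried out in establishing \fullref{prop:topology}, \fullref{cor:annulus}, and \fullref{lem:uniformlycontractingmeansallthetopologiesarethesame}, and no new divagation estimate is required. The only point that must be handled with care is the comparison between the intrinsic topology on each $\bdry_\rho X$ (the one appearing in the definition of $\dl$) and the subspace topology it inherits from $\cbdry^\fq X$: one needs the former to be at least as fine as the latter. This follows because \fullref{cor:annulus} sandwiches each $U(\zeta,r)$ between genuine $\fq$--open sets while \fullref{lem:uniformlycontractingmeansallthetopologiesarethesame} identifies the sets $U(\zeta,r)\cap\bdry_\rho X$ as a neighborhood basis in $\bdry_\rho X$, so the two topologies on $\bdry_\rho X$ in fact coincide and the argument above goes through.
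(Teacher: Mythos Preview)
Your proof is correct and follows essentially the same approach as the paper: both reduce to the universal property of the direct limit and then observe, via \fullref{lem:uniformlycontractingmeansallthetopologiesarethesame}, that the intrinsic topology on each $\bdry_\rho X$ coincides with the subspace topology inherited from $\cbdry^\fq X$, making the inclusion continuous. The paper's version is terser, simply asserting that one may take the subspace topology on $\bdry_\rho X$, while you spell out why the neighborhood bases match.
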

\begin{proof}
 The universal property of the direct limit topology says that a map
 from the direct limit is continuous if and only if the precomposition with
 each inclusion map is continuous.
Thus, it suffices to show the inclusion $\bdry_\rho X\into \cbdry
^\fq X$ is continuous.
This is clear from
\fullref{lem:uniformlycontractingmeansallthetopologiesarethesame},
since we can take the topology on $\bdry_\rho X$ to be the subspace
topology induced from $\bdry_\rho X\into \cbdry
^\fq X$.
\end{proof}

\begin{lemma}\label{lem:equivalenttoCSC} 
  $\cbdry^\dl X$ is homeomorphic to Cordes's Morse boundary.
\end{lemma}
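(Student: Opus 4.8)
The plan is to exhibit $\cbdry^\dl X$ and Cordes's Morse boundary as direct limits of the \emph{same} directed system of spaces. Recall from \cite{Cor15} that, after fixing the basepoint $\bp$, Cordes's Morse boundary is $\varinjlim_\mu \bdry_\mu X$, where $\mu$ ranges over the directed set of Morse gauges, $\bdry_\mu X$ denotes the set of asymptotic classes of $\mu$--Morse geodesic rays based at $\bp$ equipped with the compact-open topology (equivalently, the topology of uniform convergence on bounded sets applied to geodesic representatives), and $\bdry_\mu X\hookrightarrow\bdry_{\mu'}X$ for $\mu\leq\mu'$.

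First I would match the underlying sets and the two stratifications. By \fullref{morseequivalent} a geodesic is contracting if and only if it is Morse, with the defining functions determining one another effectively, and every $\zeta\in\cbdry X$ contains a contracting, hence Morse, geodesic ray, unique up to asymptotic equivalence among rays from $\bp$; thus $\zeta\mapsto$ (the asymptotic class of a contracting geodesic representative) is a bijection from the underlying set of $\cbdry X$ to that of the Morse boundary. If $\mu(\rho)$ is the Morse gauge determined by a contraction function $\rho$, then $\bdry_\rho X\subseteq\bdry_{\mu(\rho)}X$; and if $\rho(\mu)$ is the contraction function determined by $\mu$, then a $\mu$--Morse geodesic ray is $\rho(\mu)$--contracting, so by the definition of $\rho_\zeta$ (a supremum over geodesic representatives, non-decreasing in $r$) we get $\rho_\zeta\leq\rho(\mu)$, i.e.\ $\bdry_\mu X\subseteq\bdry_{\rho(\mu)}X$. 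Hence the directed families of subsets $\{\bdry_\rho X\}$ and $\{\bdry_\mu X\}$ of the common underlying set $S$ are cofinally interleaved.

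Next I would identify the topologies on the strata. By \fullref{lem:uniformlycontractingmeansallthetopologiesarethesame} and the discussion preceding \fullref{dlrefinesfq}, the subspace topology on $\bdry_\rho X$ induced from $\cbdry^\fq X$ is the one with neighborhood basis $\{V'(\zeta,r)\cap\bdry_\rho X\}$. Since every geodesic representative of a point of $\bdry_\rho X$ is $\rho$--contracting, \fullref{keylemma} together with \fullref{lem:hausdorff} (and \fullref{GIT}) show that, for $\eta\in\bdry_\rho X$, a geodesic $\beta\in\eta$ passes within $\kappa(\rho_\zeta,1,0)$ of $\alpha^\zeta\cap\nbhd^c_r\bp$ if and only if $\beta$ and $\alpha^\zeta$ are uniformly close on a ball of radius comparable to $r$ about $\bp$; hence the sets $V'(\zeta,r)\cap\bdry_\rho X$ form a neighborhood basis for the compact-open topology on $\bdry_\rho X$. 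Since the compact-open topology on a family of $\bp$--based geodesic rays in $X$ is intrinsic to those rays (independent of which stratum they are viewed inside), every inclusion $\bdry_\rho X\subseteq\bdry_{\mu(\rho)}X$ and $\bdry_\mu X\subseteq\bdry_{\rho(\mu)}X$ is a topological embedding.

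Finally I would run the formal direct-limit argument. If $U\subseteq S$ has open intersection with every $\bdry_\rho X$, then for each Morse gauge $\mu$ we have, via the embedding $\bdry_\mu X\subseteq\bdry_{\rho(\mu)}X$, that $U\cap\bdry_\mu X=(U\cap\bdry_{\rho(\mu)}X)\cap\bdry_\mu X$ is open in $\bdry_\mu X$; the symmetric argument gives the converse. Thus the $\dl$--topology and the Morse-boundary topology on $S$ coincide, so $\cbdry^\dl X$ is homeomorphic to Cordes's Morse boundary. The main obstacle is the middle step: one must genuinely translate the defining condition of $V'(\zeta,r)$ (``some geodesic representative meets a far point of $\alpha^\zeta$'') into uniform fellow-travelling of geodesic representatives on large balls, so as to recover a compact-open neighborhood basis on each uniformly contracting stratum; this is exactly where the contraction estimates are used, while everything else is formal category theory.
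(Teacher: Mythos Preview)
Your proof is correct and follows the same strategy as the paper: show that the contracting strata $\{\bdry_\rho X\}$ and the Morse strata $\{\bdry_\mu X\}$ are cofinally interleaved via \fullref{morseequivalent}, so the two direct limits coincide. The paper's argument is much terser---it records the cofinal interleaving and then cites \cite[Remark~3.4]{Cor15} for the conclusion---whereas you explicitly carry out the ``middle step'' of matching the $V'$--neighborhood bases on $\bdry_\rho X$ with the compact-open topology on geodesic representatives, which is work the paper delegates to that citation; otherwise the two arguments are the same.
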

\begin{proof}
Cordes considers Morse geodesic rays, and defines the Morse boundary
to be the set of asymptotic equivalence classes of Morse geodesic rays
based at $\bp$, topologized by taking the direct limit topology of the
system of uniformly Morse subsets.
By \fullref{morseequivalent}, a collection of uniformly Morse rays is contained in a
collection of uniformly contracting rays, and vice versa. 
It follows as in \cite[Remark~3.4]{Cor15} that the direct limit topology over
uniformly Morse points and the direct limit topology over uniformly
contracting points agree on $\cbdry X$.
\end{proof}

As with the other topologies, if $X$ is hyperbolic then $\cbdry^\dl X$
is homeomorphic to the Gromov boundary.
Thus, if $X$ is a proper geodesic hyperbolic metric space then all of the above
topologies yield a compact contracting boundary. 
Conversely, Murray \cite{Mur15} showed if $X$ is a complete CAT(0) space admitting a properly discontinuous,
cocompact, isometric group action, and if $\cbdry^\dl
X$ is compact and non-empty, then $X$ is hyperbolic.
Work of Cordes and Durham \cite{CorDur16} shows that if the
contracting boundary, with topology $\dl$, of a finitely generated group is non-empty and
compact then the group  is hyperbolic.
We will prove  this for  $\fq$ in \fullref{sec:compact}.

\bigskip

We have shown that all of the topologies we consider agree for
hyperbolic groups. 
More generally, we could ask about relatively hyperbolic groups.
There are many ways to define relatively hyperbolic groups \cite{Far98,Bow12,Osi06,GroMan08,Dru09,DruSap05,MR2684983,Sis13projection}, all of
which are equivalent in our setting.
Let $G$ be a finitely generated group that is hyperbolic relative to
a collection of \emph{peripheral} subgroups $\mathcal{P}$.
Fix a finite generating set for $G$.
We again use $G$ to denote the Cayley graph of $G$ with respect to
this generating set.
Let $\tilde{G}$ be the \emph{cusped space} obtained by gluing a
combinatorial horoball onto each left coset of a peripheral subgroup,
as in \cite{GroMan08}.
The cusped space is hyperbolic, and its boundary $\bdry\tilde{G}$ is
the Bowditch boundary of $(G,\mathcal{P})$.
Points in the Bowditch boundary that are fixed by a conjugate of a
peripheral subgroup are known as \emph{parabolic points}, and the
remaining points are known as \emph{conical points}.
As described, $G$ sits as a
subgraph in $\tilde{G}$.

\begin{theorem}\label{thm:relhyp}
  If a finitely generated group $G$ is hyperbolic relative to
  $\mathcal{P}$, then the inclusion $\iota\from
  G\into \tilde{G}$ induces a continuous, $G$--equivariant map $\iota_*\from \cbdry^\fg
  G\to \bdry \tilde{G}$ that is injective at conical points.

For $\iota_*\from \cbdry^\fq
  G\to \bdry \tilde{G}$, the preimage of a parabolic point is the
  contracting boundary of its stabilizer subgroup embedded in $\cbdry^\fq
  G$ as in \fullref{cor:he}.

Let $q\from \cbdry^\fq G\to \cbdry^\fq G/\iota_*$ be the quotient map from $\cbdry^\fq G$ to its
$\iota_*$--decomposition space, that is, the quotient space of
$\cbdry^\fq G$ obtained by collapsing to a point the preimage of each point in
$\iota_*(\cbdry^\fq G)$.
If each peripheral subgroup is hyperbolic or has empty contracting
boundary then $\iota_*\circ q^{-1}$ is an embedding.
\end{theorem}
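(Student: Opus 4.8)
The plan is to define $\iota_*$ by following geodesic rays into the cusped space $\tilde{G}$, to read off its fibers from the structure theory of relatively hyperbolic groups, and then to deduce the embedding statement from a closedness property that the hypothesis on the peripherals makes available.

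\textbf{Construction and continuity of $\iota_*$.}  I would first record that every contracting geodesic ray $\alpha$ in $G$ determines a point $\lim_n\iota(\alpha_n)\in\bdry\tilde{G}$, the sequence $(\iota(\alpha_n))$ being Cauchy in the Gromov bordification of the hyperbolic space $\tilde{G}$; moreover, by the structure theory of relatively hyperbolic groups (Drutu--Sapir \cite{DruSap05}, Osin \cite{Osi06}, Sisto \cite{Sis16}), this point is conical exactly when $\alpha$ has \emph{bounded peripheral penetration} (that is, $\diam(\alpha\cap\bar{N}_C(gP))$ is bounded by a function of its contraction function and of $C$ for every peripheral coset $gP$, in which case $\iota(\alpha)$ is a quasi-geodesic in $\tilde{G}$), and parabolic --- fixed by a conjugate $gPg^{-1}$ --- exactly when the tail of $\alpha$ lies in a bounded $G$--neighbourhood of the coset $gP$.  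I set $\iota_*(\zeta):=\lim_n\iota(\alpha^\zeta_n)$.  Independence of the representative and $G$--equivariance are immediate, since by the results of \fullref{sec:contraction} and \fullref{sec:contraction2} any two geodesic rays in $\zeta$ are uniformly $G$--Hausdorff close, hence $\tilde{G}$--Hausdorff close, hence converge to the same point.  For continuity on $\cbdry^\fg G$: if $\eta\in V(\zeta,r)$ (\fullref{def:Vnbhds}), then \fullref{lem:hausdorff} produces a prefix of a geodesic ray in $\eta$ that is $\kappa'(\rho_\zeta,1,0)$--Hausdorff close in $G$ to $\alpha^\zeta_{[0,t]}$ with $t\ge r$; transported into $\tilde{G}$, where $\alpha^\zeta$ is a $\rho_\zeta$--controlled quasi-geodesic (conical case) or lies boundedly near a coset (parabolic case), this forces $(\iota_*(\eta)\mid\iota_*(\zeta))_{\bp}\to\infty$ as $r\to\infty$, so $\iota_*(\eta)\to\iota_*(\zeta)$.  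Since $\fq$ refines $\fg$, $\iota_*$ is continuous on $\cbdry^\fq G$ as well.

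\textbf{Fibers.}  If $\xi$ is conical then any $\zeta$ with $\iota_*(\zeta)=\xi$ has $\alpha^\zeta$ of bounded peripheral penetration; two such quasi-geodesic rays in $\tilde{G}$ converging to $\xi$ are $\tilde{G}$--Hausdorff close, and a budget estimate --- a $\tilde{G}$--path of length at most $D$ between two points of $G$ makes only finitely many, boundedly deep horoball excursions, so those two points are $G$--close by an amount depending only on $D$ --- upgrades this to $G$--Hausdorff closeness, whence $\zeta$ is unique; this is injectivity at conical points.  If $\xi$ is parabolic, fixed by $P':=gPg^{-1}$, then on one hand a contracting quasi-geodesic ray in the coset $gP$ is contracting in $G$ (peripheral cosets are undistorted, so \fullref{morsetransitive} applies) and runs to infinity in the combinatorial horoball attached to $gP$, hence converges to $\xi$; this realises $\cbdry^\fq P'$ inside $\iota_*^{-1}(\xi)$ via the embedding of \fullref{cor:he}.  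On the other hand, if $\iota_*(\zeta)=\xi$ then $\alpha^\zeta$ is in the parabolic case of the dichotomy for the coset $gP$, so its tail lies in a bounded $G$--neighbourhood of $gP$; joining consecutive nearby points of $gP$ by geodesics of the undistorted coset yields a contracting quasi-geodesic ray in $gP$ representing $\zeta$, so $\zeta\in\cbdry^\fq P'$.  Thus $\iota_*^{-1}(\xi)=\cbdry^\fq P'$.  Under the hypothesis, a peripheral with empty contracting boundary contributes no point to $\iota_*(\cbdry^\fq G)$, so every parabolic point of $\iota_*(\cbdry^\fq G)$ has fiber $\bdry P'$ with $P'$ hyperbolic --- compact and uniformly contracting --- while every conical point of $\iota_*(\cbdry^\fq G)$ has a singleton fiber.

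\textbf{The embedding, and the main obstacle.}  Since $q$ is a quotient map and $\iota_*=(\iota_*\circ q^{-1})\circ q$ is continuous, $\iota_*\circ q^{-1}$ is a continuous bijection of $\cbdry^\fq G/\iota_*$ onto $\iota_*(\cbdry^\fq G)$, and it remains to prove that $\iota_*$ is a closed map onto its image --- equivalently, that the decomposition of $\cbdry^\fq G$ into the fibers $\iota_*^{-1}(\xi)$, $\xi\in\iota_*(\cbdry^\fq G)$, is upper semicontinuous with compact elements.  Compactness of the fibers was just established.  Given a closed $C\subset\cbdry^\fq G$ and a point $\xi=\iota_*(\zeta)$ of $\iota_*(\cbdry^\fq G)$ in the closure of $\iota_*(C)$, pick $\zeta_n\in C$ with $\iota_*(\zeta_n)\to\xi$; using first countability of $\cbdry^\fq G$ and splitting into the two cases --- at a conical $\xi$ the bounded-penetration property together with the budget estimate turns $\iota_*(\zeta_n)\to\xi$ into $G$--fellow-travelling of $\alpha^{\zeta_n}$ with $\alpha^\zeta$; at a parabolic $\xi$ the horoball geometry confines $\alpha^{\zeta_n}$ near the coset $gP$ and compactness of the uniformly contracting fiber $\bdry P'$ lets a subsequence settle on it --- I would extract $\zeta_{n_k}\to\zeta'\in C$ in $\cbdry^\fq G$ with $\iota_*(\zeta')=\xi$, so that $\xi\in\iota_*(C)$.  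Then $\iota_*$ is closed onto its image, and $\iota_*\circ q^{-1}$ is a homeomorphism onto its image in $\bdry\tilde{G}$.  The hard part is exactly this last step, and inside it the continuity statement on $\cbdry^\fg G$: one cannot simply invoke ``a continuous bijection from a compact space to a Hausdorff space is a homeomorphism'' since $\cbdry^\fq G$ is not compact, and the $\fq$--topology gives no a priori control on the contraction functions $\rho_{\zeta_n}$, so the corresponding quasi-geodesics in $\tilde{G}$ are not uniformly quasi-geodesic and the uniform initial segments (from $V(\zeta,r)$ or from $\iota_*(\zeta_n)\to\xi$) must be combined carefully with the cusped-space geometry; the hypothesis on the peripherals is precisely what rescues the argument, making the fibers over parabolic points compact and uniformly contracting so that approaching sequences subconverge onto them.
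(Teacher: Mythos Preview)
Your construction of $\iota_*$, the fiber analysis, and the $\fg$-continuity sketch are broadly the paper's approach, though the paper makes systematic use of Hruska's transition points to carry out the estimates.

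The embedding argument has a real gap. Your sequential-closedness reduction is sound in principle, but the key step --- extracting $\zeta_{n_k} \to \zeta'$ in $\cbdry^\fq G$ from $\iota_*(\zeta_n) \to \xi$ --- is where all the content lies, and the mechanisms you propose only control the geodesic representatives $\alpha^{\zeta_n}$. That yields $\fg$-convergence, not $\fq$-convergence: membership in $U(\zeta', r)$ demands that \emph{every} continuous $(\cL,\cA)$-quasi-geodesic in $\zeta_n$ (with $\cL^2, \cA < r/3$) fellow-travel $\alpha^{\zeta'}$, and with no control on $\rho_{\zeta_n}$ you cannot deduce this from the behavior of $\alpha^{\zeta_n}$ alone. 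Since $C$ is only $\fq$-closed and $\fq$ strictly refines $\fg$ in general, the argument does not close. The compactness of parabolic fibers that you correctly derive from the hypothesis does not fix this; it addresses where a limit would have to live, not whether the $\zeta_n$ converge to it in $\fq$. The paper instead proves openness of $\iota_* \circ q^{-1}$ directly, and the essential tool is the transition-point correspondence: the $\tilde{G}$-geodesic $[\one, \iota_*(\eta)]$ comes uniformly close to the transition points of any $G$-quasi-geodesic $\gamma \in \eta$, so a large Gromov product $(\iota_*(\zeta) \cdot \iota_*(\eta))$ forces a transition point of $\gamma$ to be $G$-close to a far-out transition point of $\alpha^\zeta$ (conical case, where such points are unbounded), whence \fullref{keylemma} places $\gamma$ where it needs to be. In the parabolic case the paper uses compactness of $\cbdry^\fq G_p$ to extract a finite cover by $\fq$-neighborhoods and a single radius $R'_p$ with $U(\xi, R'_p) \subset U$ for every $\xi \in \cbdry^\fq G_p$; Sisto's projection lemma and \fullref{keylemma} then show that any $\eta$ with large Gromov product lies in some $U(\xi, R'_p)$. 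Controlling arbitrary quasi-geodesics, not just geodesic representatives, is the point of the $\fq$-topology and cannot be sidestepped here.
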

\fullref{thm:relhyp} for $\dl$, without the embedding result, was
first observed by
Tran \cite{Tra16}.
Recall from the introduction that the embedding statement is not true
for $\dl$ (cf \cite[Remark~8.13]{Tra16}).
\begin{corollary}
  If $G$ is a finitely generated group that is hyperbolic relative to
  subgroups with empty contracting boundaries then $\cbdry^\fq
  G=\cbdry^\fg G$.
\end{corollary}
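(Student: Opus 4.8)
The plan is to leverage the embedding $\iota_*\from\cbdry^\fq G\to\bdry\tilde G$ provided by \fullref{thm:relhyp} in order to reverse the (already known) continuity of the identity map $\cbdry^\fq G\to\cbdry^\fg G$. Recall that $\cbdry^\fq G$ and $\cbdry^\fg G$ have the same underlying set $\cbdry_c G$, that the map $\iota_*$ induced by the inclusion $\iota\from G\into\tilde G$ is the same set map for both topologies, and that, as noted just after \fullref{def:Vnbhds}, $\fq$ refines $\fg$, so $\mathrm{id}\from\cbdry^\fq G\to\cbdry^\fg G$ is continuous. It therefore suffices to prove that $\mathrm{id}\from\cbdry^\fg G\to\cbdry^\fq G$ is continuous.

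First I would use the hypothesis: since every peripheral subgroup has empty contracting boundary, \fullref{thm:relhyp} says that the $\iota_*$--preimage of each parabolic point of $\bdry\tilde G$ is empty, so $\iota_*(\cbdry_c G)$ lies in the set of conical points, where \fullref{thm:relhyp} asserts $\iota_*$ is injective. Hence $\iota_*\from\cbdry_c G\to\bdry\tilde G$ is globally injective, and in particular every point-preimage of a point of $\iota_*(\cbdry^\fq G)$ is a singleton. This means the $\iota_*$--decomposition space of $\cbdry^\fq G$ is obtained by collapsing only singletons, so the quotient map $q\from\cbdry^\fq G\to\cbdry^\fq G/\iota_*$ is a homeomorphism. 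Since each peripheral subgroup has empty contracting boundary, the final assertion of \fullref{thm:relhyp} applies and $\iota_*\circ q^{-1}$ is a topological embedding; identifying $\cbdry^\fq G/\iota_*$ with $\cbdry^\fq G$ via $q$, this says that $\iota_*\from\cbdry^\fq G\to\bdry\tilde G$ is a topological embedding onto its image, so its inverse $\iota_*^{-1}\from\iota_*(\cbdry_c G)\to\cbdry^\fq G$ is continuous for the subspace topology on $\iota_*(\cbdry_c G)$.

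To finish, I would combine this with the $\fg$ part of \fullref{thm:relhyp}: the map $\iota_*\from\cbdry^\fg G\to\bdry\tilde G$ is continuous, hence so is $\iota_*\from\cbdry^\fg G\to\iota_*(\cbdry_c G)$, which is a continuous bijection. Postcomposing with $\iota_*^{-1}$ from the previous step gives a continuous map $\cbdry^\fg G\to\cbdry^\fq G$ that is the identity on $\cbdry_c G$. Thus $\fg$ refines $\fq$ as well, whence $\fq=\fg$. The one point that needs care is the passage through the decomposition space --- verifying that all the relevant point-preimages really are singletons, so that $q$ is invertible and the embedding hypothesis of \fullref{thm:relhyp} is genuinely in force --- but this is immediate from the emptiness of the peripheral contracting boundaries; the remainder of the argument is purely formal manipulation of continuous maps.
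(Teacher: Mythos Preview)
Your argument is correct and is exactly the intended derivation: the paper states this corollary without proof immediately after \fullref{thm:relhyp}, and the only way to extract it from that theorem is precisely what you do---use emptiness of the peripheral contracting boundaries to make $\iota_*$ injective (so $q$ is a homeomorphism and the embedding clause of \fullref{thm:relhyp} gives that $\iota_*\from\cbdry^\fq G\to\bdry\tilde G$ is an embedding), then use continuity of $\iota_*\from\cbdry^\fg G\to\bdry\tilde G$ to reverse the refinement $\fq\supset\fg$.
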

Since the contracting boundary of  a hyperbolic group is the same as
the Gromov boundary, we also recover the following well-known result
(see \cite{Man15} and references therein). 
\begin{corollary}
  If $G$ is hyperbolic and hyperbolic relative to $\mathcal{P}$ then
  the Bowditch boundary of $(G,\mathcal{P})$ can be obtained from the
  Gromov boundary of $G$ by collapsing to a point each embedded Gromov
  boundary of a peripheral subgroup. 
\end{corollary}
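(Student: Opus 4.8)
The plan is to read this off from \fullref{thm:relhyp}, using that for a hyperbolic group the contracting boundary \emph{is} the Gromov boundary. First I would invoke \fullref{lem:uniformlyasymptotic}: since $G$ is hyperbolic, $\cbdry^\fq G$ is homeomorphic to the Gromov boundary $\bdry G$, and $\cbdry^\fq G\cong\cbdry^\fg G$, so there is no ambiguity in which version of $\iota_*$ from \fullref{thm:relhyp} is used. Next I would verify the hypothesis of the last part of \fullref{thm:relhyp}, that every peripheral subgroup is hyperbolic or has empty contracting boundary. A peripheral subgroup $P\in\mathcal P$ is hyperbolically embedded in $G$, hence Morse in $G$ (this is the input to \fullref{cor:he}, via \cite{Sis16,DruSap05}); since $G$ is hyperbolic, a Morse subset is quasi-convex (recalled in \fullref{sec:contraction}), so $P$ is a quasi-convex, and therefore hyperbolic, subgroup of $G$, finitely generated, with $\bdry P$ embedding into $\bdry G$ as the limit set of $P$. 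Applying \fullref{lem:uniformlyasymptotic} to a Cayley graph of $P$ gives $\cbdry^\fq P\cong\bdry P$. (If $P$ is finite it contributes no parabolic points and is irrelevant; if it is infinite it is an infinite hyperbolic group, so $\bdry P\neq\emptyset$.) Thus the hypothesis holds, with ``hyperbolic'' rather than ``empty boundary''.

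Then I would apply \fullref{thm:relhyp}. It provides a continuous, $G$--equivariant map $\iota_*\from\cbdry^\fq G\to\bdry\tilde{G}$, injective at conical points, whose fibre over a parabolic point fixed by $gPg^{-1}$ is the embedded copy of $\cbdry^\fq(gPg^{-1})\cong\bdry P$, and for which $\iota_*\circ q^{-1}$ is an embedding of the $\iota_*$--decomposition space. Transporting along $\cbdry^\fq G\cong\bdry G$, the decomposition space is precisely $\bdry G$ with each $G$--translate of each peripheral limit set $\bdry P$ collapsed to a point (the singleton fibres over conical points collapsing trivially), and $\iota_*\circ q^{-1}$ is a continuous bijection onto $\iota_*(\bdry G)$ which, being an embedding, is a homeomorphism onto that image.

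To finish I would show $\iota_*$ is onto $\bdry\tilde{G}$, so that this image is the whole Bowditch boundary. Parabolic points are hit because $\cbdry^\fq P\neq\emptyset$ for the (infinite, hyperbolic) peripheral subgroups, by the description of their fibres above. For a conical point $\xi\in\bdry\tilde{G}$ I would use that the limit set of $G$ acting on the cusped space $\tilde{G}$ is all of $\bdry\tilde{G}$: choose $g_n\in G$ with $g_n\tilde{\bp}\to\xi$; conicality forces these orbit points to approach $\xi$ without descending into horoballs, so $d_G(\bp,g_n\bp)\to\infty$, and after passing to a subsequence $g_n\bp\to\eta$ for some $\eta\in\bdry G$ by compactness of $\bdry G$; compatibility of $\iota$ with the bordifications then yields $\iota_*(\eta)=\xi$. (Alternatively one may quote the classical fact that the canonical $G$--equivariant map $\bdry G\to\bdry(G,\mathcal P)$ is a surjection, cf.\ \cite{Man15}.) Combining, $\iota_*\circ q^{-1}$ is a homeomorphism from $\bdry G$, with each embedded peripheral Gromov boundary collapsed to a point, onto $\bdry\tilde{G}$, which is the assertion.

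The step I expect to be the main obstacle is this surjectivity onto conical points: pinning down carefully that a sequence $g_n\bp$ tending to a conical point of $\bdry\tilde{G}$ must subconverge in $\bdry G$ to an $\iota_*$--preimage of that point, i.e.\ the compatibility of $\iota_*$ with the two bordifications along such sequences. Everything else is a repackaging of \fullref{thm:relhyp} together with \fullref{lem:uniformlyasymptotic} and the quasi-convexity of peripheral subgroups.
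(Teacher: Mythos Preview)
Your proposal is correct and follows the same route the paper intends: the paper presents this corollary with no proof beyond the sentence ``Since the contracting boundary of a hyperbolic group is the same as the Gromov boundary, we also recover the following well-known result (see \cite{Man15} and references therein),'' so you have simply filled in what the paper leaves implicit---identifying $\cbdry^\fq G$ with $\bdry G$, checking that peripherals are hyperbolic (hence the embedding clause of \fullref{thm:relhyp} applies), and then addressing surjectivity.

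Your instinct that surjectivity is the one step requiring care is right, and your citation to \cite{Man15} is exactly what the paper does. Your direct sequential argument for conical points is workable but can be shortened: since $G$ is hyperbolic, $\bdry G$ is compact, so $\iota_*(\bdry G)$ is compact and hence closed in the Hausdorff space $\bdry\tilde{G}$; it is also $G$--invariant and non-empty, and the $G$--action on the Bowditch boundary is minimal, so the image is everything. This avoids having to track sequences through the two bordifications.
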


The following example shows that the embedding statement of
\fullref{thm:relhyp} can fail when a peripheral subgroup is
non-hyperbolic with non-trivial contracting boundary.
\begin{example}\label{ex:doublefreeproduct}
Let $A:=\langle a,b \mid [a,b]=1\rangle$, $H:=A*\langle c\rangle$, and
$G:=H*\langle d\rangle$. Since $G$ is a free product of $H$ and a
hyperbolic group, $G$ is hyperbolic relative to $H$. 

A geodesic $\alpha$ in $G$ (or $H$) is contracting if and only if there is a
bound $B$ such that $\alpha$ spends at most time $B$ in any given
coset of $A$.

Consider the sequence $(a^nd^\infty)_{n\in\mathbb{N}}$ in $\cbdry^\fq
G$. 
We have $(\iota_*(a^nd^\infty))\to \iota_*(\cbdry^\fq H)$, which is a
parabolic point in 
$\bdry\tilde{G}$.
However, $(q(a^nd^\infty))$ does not converge in
$\cbdry^\fq G/\iota_*$.
To see this, note that every edge $e$ in the Cayley graph of $G$ with one
incident vertex in $A$ determines a clopen
subset $U_e$ of $\cbdry^\fq G$ consisting of all $\zeta\in\cbdry^\fq G$
such that $\alpha^\zeta$ crosses $e$.
Let $U$ be the union of the $U_e$ for every edge $e$ incident to $A$
and labelled by $c$ or $c^{-1}$. This is an open set containing
$\cbdry^\fq H$ such that 
$q^{-1}(q(U))=U$ and $a^nd^\infty\notin U$ for all $n\in\mathbb{N}$. 
Therefore, $q(U)$ is an open set in $\cbdry^\fq G/\iota_*$ containing
the point $q(\cbdry^\fq H)$ but not containing any 
$q(a^nd^\infty)$. 
\end{example}

Before proving the theorem let us recall some of the necessary
machinery for relatively hyperbolic groups.
Any bounded set meets finitely many cosets of
the peripherals, and projections of peripheral sets  to one another are uniformly
bounded.

Given an $(\cL,\cA)$--quasi-geodesic $\gamma$, Drutu and Sapir \cite{DruSap05}
define the \emph{saturation} $\mathrm{Sat}(\gamma)$ of $\gamma$ to be
the union of $\gamma$ and all cosets $gP$ of peripheral subgroups
$P\in\mathcal{P}$ such that $\gamma$ comes within distance
$M$ of $gP$,  where $M$ is a number depending on $\cL$ and $\cA$.
\cite[Lemma~4.25]{DruSap05} says there exists $\mu$ independent of
$\gamma$ such that the saturation of $\gamma$ is $\mu$--Morse.
It follows that the analogous saturation of $\gamma$ in $\tilde{G}$,
that is, the union of $\gamma$ and all horoballs sufficiently close to
$\gamma$, is also Morse.

Sisto \cite{Sis13projection} extends these results, showing, in
particular, that peripheral subgroups are strongly
contracting.\footnote{Sisto does not use the term `strongly
  contracting', but observe it is equivalent to the first two
  conditions of \cite[Definition~2.1]{Sis13projection}.}

The other key definition is that of a \emph{transition point} of
$\gamma$, as defined by Hruska \cite{MR2684983}.
The idea is that a point of $\gamma$ is \emph{deep} if it is contained
in a long subsegment of $\gamma$ that is contained in a neighborhood
of some $gP$, and a point is a transition point if it is not deep.
A quasi-geodesic in $\gamma$ is bounded Hausdorff
distance from a path of the form
$\beta_0+\alpha_1+\beta_1+\alpha_2+\cdots$ where the $\beta_i$ are
shortest paths connecting some $g_iP_i$ to some $g_{i+1}P_{i+1}$ and
the $\alpha_i$ are paths in $g_iP_i$.
The transition points are
the points close to the $\beta$--segments. 
In $\tilde{G}$ there is an obvious way to shorten such a path by
letting the $\alpha$--segments relax into the corresponding
horoballs. 
If the endpoints of $\alpha_i$ are $x$ and $y$,
this replaces $\alpha_i$ with a segment of length roughly $2\log_2
d_G(x,y)$ in $\tilde{G}$. 
This is
essentially all that happens: if $\gamma$ is a quasi-geodesic in $G$
then take a geodesic $\hat{\gamma}$ with the same endpoints as
$\gamma$ in the coned-off space $\hat{G}$
obtained by collapsing each coset of a peripheral subgroup.
Lift $\hat{g}$ to a nice $\alpha$-$\beta$ path in $G$ as above. 
The $\beta$--segments are coarsely well-defined, because the cosets of
peripheral subgroups are strongly contracting, and the union of the
$\beta$ segments is Hausdorff equivalent to the set of transition
points of $\gamma$.
Only the endpoints of the $\alpha$--segments are coarsely well
defined, but relaxing the $\alpha$-segments to geodesics in the
corresponding horoball yields a uniform quasi-geodesic in $\tilde{G}$ (see
\cite[Section~7]{Bow12}).
Since $\tilde{G}$ is hyperbolic, this is within bounded Hausdorff
distance of any $\tilde{G}$ geodesic $\tilde{\gamma}$ with the same endpoints as $\gamma$.
In particular, $\tilde{\gamma}$ comes boundedly close to the
transition points of $\gamma$.

\begin{proof}[{Proof of \fullref{thm:relhyp}}]
We omit $\iota$ from the notation and think of $G$ sitting as a
subgraph of $\tilde{G}$.
First we show that for $\zeta\in\cbdry G$ the sequence
$(\alpha_n^\zeta)_{n\in\mathbb{N}}$ converges to a point of $\bdry\tilde{G}$. 
Distances in $G$ give an upper bound for distances in $\tilde{G}$, so
all quasi-geodesics in $G$ asymptotic to $\alpha^\zeta$ also converge to this
point in $\bdry\tilde{G}$, which we define to be $\iota_*(\zeta)$.
Let $(x \cdot y):=\frac{1}{2}(d(\one,x)+d(\one, y)-d(x,y))$ denote the Gromov product of $x,y \in \tilde{G}$ with respect to the basepoint
$\one$ corresponding to the identity element of $G$. 
(See
\cite[Section~III.H.3]{BriHae99} for background on boundaries of
hyperbolic spaces.)

To see that the sequence $(\alpha^\zeta_n)_{n\in\mathbb{N}}$ does indeed converge,
there are two cases.
If $\alpha^\zeta$ has unbounded projection to
$gP$ for some $g\in G$ and $P\in\mathcal{P}$,
then a tail of $\alpha^\zeta$ is contained in a bounded neighborhood
of $gP$, but leaves every bounded subset of $gP$.
It follows that $(\alpha^\zeta_n)$
converges to the parabolic point in $\bdry \tilde{G}$ fixed by
$gPg^{-1}$ corresponding to the horoball attached to $gP$.
Furthermore, the projection of the tail of $\alpha^\zeta$ to $gP$ is a
contracting quasi-geodesic ray in $gP$ (by \fullref{morsetransitive}), so $P$ has non-trivial contracting
boundary.

The other case is that $\alpha^\zeta$ has bounded (not necessarily
uniformly!) projection to every $gP$.
Now, given any $r$ there are only finitely many horoballs in
$\tilde{G}$ that meet the $r$--neighborhood of $\one$. 
Since $\alpha^\zeta$ has bounded projection to each of these,
for sufficiently large $s$ none of these are in
$\mathrm{Sat}(\alpha^\zeta_{[s,\infty)})$.
Since $\mathrm{Sat}(\alpha^\zeta_{[s,\infty)})$ is $\mu$--Morse in
$\tilde{G}$ for some $\mu$ independent of $\alpha^\zeta$, for
any $m,\, n\geq s$, geodesics connecting $\alpha^\zeta_m$ and
$\alpha^\zeta_{n}$ in $\tilde{G}$ stay outside the $(r-\mu(1,0))$--ball
about $\one$. 
We conclude
$\lim_{m,\,n\to\infty}(\alpha^\zeta_m\cdot\alpha^\zeta_n)_{\tilde{G}}=\infty$,
so $(\alpha^\zeta_n)_{n\in\mathbb{N}}$ converges to a point in
$\bdry\tilde{G}$, which, in this case, is a conical point. 

If $\alpha^\zeta$ and $\alpha^\eta$ tend to the same conical point in
$\bdry\tilde{G}$ then the sets of transition points of $\alpha^\zeta$
and $\alpha^\eta$ are unbounded and at bounded Hausdorff distance from
one another in $G$. 
Since they are contracting geodesics in $G$ they
can only come close on unbounded sets if they are in fact asymptotic,
so $\iota_*$ is injective at conical points.

{\itshape Continuity:}
To show $\iota_*\from\cbdry^\fg G\to \bdry\tilde{G}$ is continuous we
show that for all $\zeta\in\cbdry G$ and all $r$ there exists an $R$
such that for all $\eta\in V(\zeta,R))$ we have
$(\iota_*(\zeta)\cdot\iota_*(\eta))_{\tilde{G}}>r$.

Recall that there is a bound $B$ such that a $\tilde{G}$ geodesic
comes $B$--close to the transition points of a $G$ geodesic with the
same endpoints.
There exists $B'$ so that $\diam \pi_{gP}(x)\leq B'$ for each $x \in G, g \in G, P \in \mathcal{P}$, and so that for any deep point $x$ of a geodesic along $gP$ we have $\diam \{x\}\cup \pi_{gP}(x) \leq B'$.
Finally, there exists a constant $B''$ depending on $B$ so that if $x, y \in G$ satisfy $d(\pi_{gP}(x),\pi_{gP}(y))\geq B''$ for some $g \in G, P \in \mathcal{P}$, then any geodesic from $x$ to $y$ has a deep component along $gP$ whose transition points at the ends are within $B''$ of $\pi_{gP}(x)$ and $\pi_{gP}(y)$, respectively.

Suppose $\zeta$ and $r$ are given.
If $\iota_*(\zeta)$ is conical then given any $r'\geq 0$ there is an $R'$ such
that for all $n\geq R'$ we have
$d_{\tilde{G}}(\one,\alpha^\zeta_n)>r'$.
Choose $R\geq R'$ such that $\alpha^\zeta_R$ is a transition point,
and moreover that any deep component along $\alpha^\zeta$ within $\kappa'(\rho_\zeta,1,0)+B'+B''$ of $\alpha^\zeta_R$ has distance at least $R'$ from $\one$.
If $\eta\in V(\zeta,R)$ then $\alpha^\eta$ comes
$\kappa(\rho_\zeta,1,0)$--close to $\alpha^\zeta_{[R,\infty)}$ in
$G$, so there is a point $\alpha^\eta_t$ that is 
$\kappa'(\rho_\zeta,1,0)$--close to $\alpha^\zeta_R$.
If $\alpha^\eta_t$ is a deep point of $\alpha^\eta$, let $g'P'$ be the corresponding coset.
If $d(\pi_{g'P'}(\one),\alpha^\eta_t)>J:=B''+2B'+2\kappa'(\rho,1,0))$ then the geodesic $\alpha^\zeta$ must also have a deep component along $g'P'$ with one endpoint $(\kappa'(\rho_\zeta,1,0)+B'+B'')$--close to $\alpha^\zeta_R$ and the other $z:= \alpha^\zeta_{s} \in \bar{N}_{B''}\pi_{g'P'}(\one)$; by assumption, $s \geq R'$.
If $\alpha^\eta_t$ is a transition point of $\alpha^\eta$, or if $d(\pi_{g'P'}(\one),\alpha^\eta_t)\leq J$, then $z:=\alpha^\zeta_R$ is $(J+B'')$--close to a transition point of $\alpha^\eta$.
In either case then, $z$ is a transition point of $\alpha^\zeta$ which is $(J+B'')$--close to a transition point of $\alpha^\eta$, and has $d(z,\one) \geq R'$.
Thus, by the choice of $B$, there are points $x\in[\one,\iota_*(\zeta)]$ and
$y\in[\one,\iota_*(\eta)]$ with $d_{\tilde{G}}(\one,x)\geq r'-B$ and
$d(x,y)\leq 2B+J+B''$.
This allows us to bound $(\iota_*(\zeta)\cdot\iota_*(\eta))$ below in
terms of these constants and the hyperbolicity constant for
$\tilde{G}$, and by choosing $r'$ large enough we guarantee $(\iota_*(\zeta)\cdot\iota_*(\eta))>r$.

Now suppose $\iota_*(\zeta)$ is parabolic. 
Then there is some $R_0\geq 0$, $M\geq 0$, $g\in G$, and $P\in\mathcal{P}$ such
that $\alpha^\zeta_{[R_0,\infty)}\subset N_MgP$.
For $R\gg R_0$, if $\eta\in V(\zeta,R)$ then $\alpha^\eta$ comes within
distance $\kappa(\rho_\zeta,1,0)$ of $\alpha^\zeta_{[R,\infty)}$.
If $\iota_*(\zeta)\neq\iota_*(\eta)$ then eventually $\alpha^\eta$
escapes from $gP$, so it has a transition point at $G$--distance
greater than $R-R_0-\kappa(\rho_\zeta,1,0)$ from $\alpha^\zeta_{R_0}$.
This implies $\diam \pi_{gP}([\one,\iota_*(\eta)])>R-R_0-C$, where $C$
depends on $M$, $B$, the contraction function of $gP$, and
$\kappa(\rho_\zeta,1,0)$. 
It follows from the geometry of the horoballs that for $\iota_*(\zeta)$ 
is the parabolic boundary point corresponding to $gP$ and $y\in
\bdry\tilde{G}$ we have $(\iota_*(\zeta)\cdot y)$ is roughly $d_{\tilde{G}}(\one, gP)+\log_2\diam
\pi_{gP}(\one)\cup\pi_{gP}(y)$, so by choosing $R>2^r+R_0+C$ we guarantee $(\iota_*(\zeta)\cdot\iota_*(\eta))>r+d_{\tilde{G}}(\one,gP) \geq r$.

{\itshape Embedding:} 
Suppose that $U'\subset \cbdry^\fq G/\iota_*$ is open. 
Define $U:=q^{-1}(U')$, which is open in $\cbdry^\fq G$. 
We claim that for each $p\in \iota_*(U)$ there exists an
$R_p>0$ such that for $p'\in \iota_*(U)$, if $(p\cdot
p')>R_p$ then $\iota_*^{-1}(p')\subset U$.
Given the claim, the proof concludes by choosing, for each
$p\in\iota_*(U)$, an open neighborhood $V_p$ of $p$ such that
$V_p\subset \{p'\in \bdry\tilde{G} \mid
(p\cdot p')>R_p\}$, and setting $V=\bigcup_{p\in\iota_*(U)}V_p$.
Then $V$ is open and 
$\iota_*(\cbdry^\fq
G)\cap V = \iota_*(U)$, so that  $\iota_*\circ q^{-1}(\cbdry^\fq
G/\iota_*)\cap V = \iota_*\circ q^{-1}(U')$.

First we prove the claim when $p$ is conical. 
In this case  there is a unique point
$\zeta\in\iota_*^{-1}(p)$, and since $U$ is open there exists
$r_\zeta>1$ such that $U(\zeta,r_\zeta)\subset U$.
Let $x$ be a transition point of $\alpha^\zeta$ and
choose $R$ such that $d_{\tilde{G}}(\one,x)\leq R$.
If the claim is false then there exists an $\eta\in \cbdry^\fq G$ such
that $(\iota_*(\zeta)\cdot\iota_*(\eta))>R$ and $\eta\notin U(\zeta,r_\zeta)$.
Since $\eta\notin U(\zeta,r_\zeta)$ , there exists an $\cL$ and $\cA$ and a continuous
$(\cL,\cA)$--quasi-geodesic $\gamma\in\eta$
such that the last point $y\in\alpha^\zeta$ such that
$d_G(y,\gamma)=\kappa(\rho_\zeta,\cL,\cA)$ satisfies $d(\one,y)<r_\zeta$.
By \fullref{obs}, we can take $\cL<\sqrt{r_\zeta/3}$ and $\cA<r_\zeta/3$.

By hyperbolicity, geodesics in $\tilde{G}$ tending to $\iota_*(\zeta)$
and $\iota_*(\eta)$ remain  boundedly close together for distance
approximately $(\iota_*(\zeta)\cdot\iota_*(\eta))>R$.
Since $x$ is a transition point of $\alpha^\zeta$ there is a $B$ such
that any  geodesic $[\one,\iota_*(\zeta)]$ comes $B$--close to $x$, so some point $z'$ in a
geodesic $[\one,\iota_*(\eta)]$ also comes boundedly close to $x$.
If the point $z'$ lies in a horoball along which $\gamma$ has a deep component, whose transition points at both ends are close to $[\one,\iota_*(\zeta)]$, then this deep component must be of bounded size else $x \in \alpha^\zeta$ would not be a transition point.
It follows that $\gamma$ must contain a point $z$ at bounded distance from $x$.
Since $x$ and $z$ are transition points, we also get a bound on
$d_G(x,z)$.
Then, by
applying \fullref{keylemma}, we get an upper bound on $d_G(\one,x)$
depending on  $r_\zeta$ and $\rho_\zeta$, but independent of $\gamma$ and
$\eta$.
However, if the set of transition points of $\alpha^\zeta$ is bounded
in $G$ then it is bounded in $\tilde{G}$, which would imply $\iota_*(\zeta)=p$ is parabolic, contrary to hypothesis. 

Now suppose $p$ is parabolic. 
By hypothesis, its
stabilizer $G_p$ is a hyperbolic group conjugate into $\mathcal{P}$.
Since the maps are $G$--equivariant we may assume $G_p\in
\mathcal{P}$.
We may assume that we have chosen a generating set for $G$ extending
one for $G_p$. 
Since $G_p$ is quasi-isometrically embedded, by \fullref{cor:qiembedding}, there exist $\cL_p\geq 1,\,\cA_p\geq 0$ with
$\frac{1}{\cL_p}d_{G_p}(x,y)-\cA_p\leq d(x,y)\leq d_{G_p}(x,y)$ for
all $x,\,y\in G_p$.
The contracting boundary $\cbdry^\fq G_p$ embeds into $\cbdry^\fq G$
and is compact---it is homeomorphic to the Gromov boundary 
of $G_p$. 
Geodesic rays in $G_p$ are uniformly contracting, by hyperbolicity, so
there exists a contracting function $\rho$ such that for all
$\zeta\in\cbdry^\fq G_p\subset\cbdry^\fq G$ we have that $\zeta$ is $\rho$--contracting.

We will verify the following fact at the end of this proof:
\begin{equation}
  \label{eq:6}
 \exists R'_p>1\,\forall \xi\in\cbdry^\fq G_p,\, U(\xi,R'_p)\subset U
\end{equation}
Assuming \eqref{eq:6}, let $\eta\in\iota_*^{-1}(p')$ and let
$\gamma\in\eta$ be a continuous $(\cL,\cA)$--quasi-geodesic for some
$\cL<\sqrt{R'_p/3},\,\cA< R'_p/3$.
Since $G_p$ is strongly contracting, there exist $C$ and $C'$ such
that the diameter of $\pi_{G_p}(\alpha^\eta\cap N_C^cG_p)$ is at most
$C'$. We define $\pi_{G_p}(\eta)$ to be this finite diameter set. 
Since $\gamma$ stays $\kappa'(\rho_\eta,\cL,\cA)$--close to
$\alpha^\eta$, strong contraction implies $\pi_{G_p}(\gamma\cap
N_{\kappa'(\rho_\eta,\cL,\cA)}^c G_p)\subset N_{C'}\pi_{G_p}(\eta)$.
We apply  \cite[Lemma~1.15]{Sis13projection} to any sufficiently
long\footnote{Long enough to leave the
  $\max\{D_0(\cL,\cA),\kappa'(\rho_\eta,\cL,\cA)\}$--neighborhood of
    $G_p$ where $D_0$ is as in \cite[Lemma~1.15]{Sis13projection}.} initial subsegment
of $\gamma$ to conclude  there is a function $K$, a
point $z\in\gamma$, and a point $x\in\pi_{G_p}(\eta)$ such that
$d(x,z)\leq K(\cL,\cA)$.

Since $G_p$ is a hyperbolic group there exists a constant $D$ such
that every point is within $D$ of a geodesic ray based at $\one$. 
Let $\xi\in\cbdry^\fq G_p$ be a point such that there is a
$G_p$--geodesic $[\one,\xi]$ containing a point $w$ with $d(w,x)\leq D$.
Since this $G_p$--geodesic is a $(\cL_p,\cA_p)$--quasi-geodesic in
$G$, there exists $y'\in\alpha^\xi$ such that
$d(y',w)\leq\kappa'(\rho,\cL_p,\cA_p)$.

We have $d(z,y')\leq K(\cL,\cA)+D+\kappa'(\rho,\cL_p,\cA_p)$ and:
\begin{align*}
  d(y',\one)&\geq d(x,\one)-D-\kappa'(\rho,\cL_p,\cA_p)\\
&\geq \frac{d_{G_p}(\one,\pi_{G_p}(\eta))}{\cL_p}-\cA_p-D-\kappa'(\rho,\cL_p,\cA_p)
\end{align*}
\fullref{keylemma} implies that, for $\theconstantformerlyknownasLambda$ and $\lambda$ as in the
lemma, $\gamma$ comes within distance
$\kappa(\rho,\cL,\cA)$ of $\alpha^\xi$ outside the ball of radius:
\begin{equation}
  \label{eq:10}
\frac{d_{G_p}(\one,\pi_{G_p}(\eta))}{\cL_p}-\cA_p-D-\kappa'(\rho,\cL_p,\cA_p)-\theconstantformerlyknownasLambda(K(\cL,\cA)+D+\kappa'(\rho,\cL_p,\cA_p))-\lambda(\rho,\cL,\cA)  
\end{equation}

Now, $d_{G_p}(\one,\pi_{G_p}(\eta))\asymp
2^{(p\cdot p')}>2^{R_p}$.
Since $\cL<\sqrt{R'_p/3},\,\cA< R'_p/3$, all the negative terms
are bounded in terms of $R'_p$, so we can guarantee \eqref{eq:10} is
greater than $R'_p$ by taking $R_p$ sufficiently large.
This means the quasi-geodesic $\gamma$ does not witness $\eta\notin U(\xi,R_p')$.
Since $\gamma$ was arbitrary, $\eta\in U(\xi,R_p')$, which, by
\eqref{eq:6}, is contained in $U$. 
Thus, $\iota_*^{-1}(p')\subset U$ when $(p\cdot p')>R_p$ for $R_p$
sufficiently large with respect to $R_p'$. 

\medskip

It remains to determine $R_p'$ and verify \eqref{eq:6}.
Define:
\[\theta(s):=s+\theconstantformerlyknownasLambda(\kappa'(\rho,\sqrt{s/3},s/3)+\kappa(\rho,1,0))+\lambda(\rho,\sqrt{s/3},s/3)\]

Since $U$ is open, for every $\zeta\in\cbdry^\fq G_p$ there exists
$r_\zeta$ such that $U(\zeta,r_\zeta)\subset U$. 
For each $\zeta\in\cbdry^\fq G_p$, let $U_\zeta$ be an open
neighborhood of $\zeta$ such that $U_\zeta\subset
U(\zeta,\theta(r_\zeta))$.
Then $\{U_\zeta\}_{\zeta\in\cbdry^\fq G_p}$ is an open cover of
$\cbdry^\fq G_p$, which is compact, so there exists a finite set
$F\subset \cbdry^\fq G_p$ such that $\cbdry^\fq G_p\subset
\bigcup_{\zeta\in F}U_\zeta\subset \bigcup_{\zeta\in
  F}U(\zeta,\theta(r_\zeta))$.
Define $r:=\max_{\zeta\in F}r_\zeta$ and:
\[R_p':=r+\kappa'(\rho,1,0)+\theconstantformerlyknownasLambda(\kappa(\rho,\sqrt{r/3},r/3)+\kappa'(\rho,1,0))+\lambda(\rho,\sqrt{r/3},r/3)\]

Suppose that $\xi\in\cbdry^\fq G_p$ and $\eta\in U(\xi,R_p')$. There
exists $\zeta\in F$ such that $\xi\in U_\zeta\subset
U(\zeta,\theta(r_\zeta))$.
Let $\gamma\in\eta$ be a continuous $(\cL,\cA)$--quasi-geodesic for
some $\cL<\sqrt{r_\zeta/3},\,\cA< r_\zeta/3$.
Since $\eta\in U(\xi,R_p')$, there exist $z\in\gamma$ and $x\in
\alpha^\xi$ such that $d(x,z)\leq \kappa(\rho,\cL,\cA)$ and
$d(x,\one)\geq R_p'$.

There are now two cases to consider.
First, suppose that there exists $y'\in\alpha^\zeta$ with $d(x,y)\leq \kappa'(\rho,1,0)$.
Then $d(y',\one)\geq R_p'-\kappa'(\rho,1,0)$.
By \fullref{keylemma}, $\gamma$ comes $\kappa(\rho,\cL,\cA)$ close to
$\alpha^\zeta$ outside the ball of radius:
\[R_p'-\kappa'(\rho,1,0)-\theconstantformerlyknownasLambda(\kappa'(\rho,1,0)+\kappa(\rho,\cL,\cA))-\lambda(\rho,\cL,\cA)\]
By definition of $R_p'$ and the conditions
$\cL<\sqrt{r_\zeta/3},\,\cA< r_\zeta/3$, this radius is at least
$r$, which is at least $r_\zeta$, so $\gamma$ does not witness
$\eta\notin U(\zeta,r_\zeta)$.

The second case, where the above $y'$ does not exist, is the case
that $x$ occurs after $\alpha^\xi$ has already escaped
$\alpha^\zeta$. 
In this case there exists $x'$ between $\one$ and $x$ on $\alpha^\xi$
and $y'\in\alpha^\zeta$ such that $d(x',y')\leq \kappa(\rho,1,0)$ and
$d(\one,y')\geq \theta(r_\zeta)$.
Moreover, by \fullref{lem:hausdorff}, there exists $z'\in\gamma$ such that
$d(z',x')\leq \kappa'(\rho,\cL,\cA)$.
By \fullref{keylemma}, $\gamma$ comes within distance
$\kappa(\rho,\cL,\cA)$ of $\alpha^\zeta$ outside the ball of radius:
\[\theta(r_\zeta)-\theconstantformerlyknownasLambda(\kappa(\rho,1,0)+\kappa'(\rho,\cL,\cA))-\lambda(\rho,\cL,\cA)\]
By definition of $\theta$ and the conditions
$\cL<\sqrt{r_\zeta/3},\,\cA< r_\zeta/3$, this radius is at least $r_\zeta$, so $\gamma$ does not witness
$\eta\notin U(\zeta,r_\zeta)$.
This verifies \eqref{eq:6}.
\end{proof}

\section{Metrizability for group boundaries}\label{sec:metrizability}
In this section let $G$ be a finitely generated group with nonempty
contracting boundary.
Consider the Cayley graph of $G$ with respect to some fixed finite generating
set, which is a proper geodesic metric space we again denote $G$, and
take the basepoint to be the vertex $\one$ corresponding to the
identity element of the group.

There is a natural action of $G$ on $\cbdry^\fq G$ by homeomorphisms
defined by sending $g\in G$ to the map that takes $\zeta\in\cbdry G$ to
the equivalence class of the quasi-geodesic that is the concatenation
of a geodesic from $\one$ to $g$ and the geodesic $g\alpha^\zeta$.

The following two results generalize results of Murray \cite{Mur15}
for the case of $\cbdry^\dl X$ when $X$ is CAT(0). See also
\cite{Ham09b}.

\begin{remark}
  If $\beta\from I\to G$ is a geodesic and $\beta_m$ is a vertex for some
  $m\in\mathbb{Z}\cap I$ then $\beta_n$ is a vertex for every
  $n\in\mathbb{Z}\cap I$.
  Vertices in the Cayley graph are in one-to-one correspondence with
  group elements.
  If $Z$ is a subset of the Cayley graph we use $\beta_nZ$ to denote
  the image of $Z$ under the action by the group element corresponding to the
  vertex $\beta_n$.

  We will always parameterize bi-infinite geodesics in $G$ so that
  integers go to vertices.
\end{remark}

\begin{proposition}\label{finiteorbit}
  $G$ is virtually (infinite) cyclic if and only if $G\act\cbdry^\fq G$ has a finite orbit.
\end{proposition}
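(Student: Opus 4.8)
The plan is to prove both implications. For the easy direction, suppose $G$ is virtually infinite cyclic. Then $G$ is hyperbolic with two-point Gromov boundary, so $\cbdry^\fq G$ has exactly two points, and any action on a two-point set has a finite orbit (indeed $G$ preserves the set of the two points). Conversely, suppose $G\act\cbdry^\fq G$ has a finite orbit, say the orbit of $\zeta\in\cbdry G$ has finite size. First I would pass to a finite-index subgroup $G_0\leq G$ fixing $\zeta$; since being virtually cyclic passes between $G$ and $G_0$, it suffices to treat $G_0$, so I may assume $g\zeta=\zeta$ for all $g\in G_0$. Fix a contracting geodesic ray $\alpha:=\alpha^\zeta$ based at $\one$; it is $\rho_\zeta$--contracting.

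The key idea is that for each $g\in G_0$, the ray $g\alpha$ (suitably based) is asymptotic to $\alpha$: indeed $g\zeta=\zeta$ means the quasi-geodesic obtained by concatenating $[\one,g]$ with $g\alpha$ lies in the class $\zeta$, hence is Hausdorff-close to $\alpha$ by \fullref{lem:hausdorff}, with bound depending only on $\rho_\zeta$ and the quasi-geodesic constants, which here are uniform (a geodesic ray translated by an isometry, pre-concatenated with a segment of length $d(\one,g)$, is an $(\cL,\cA)$--quasi-geodesic with $\cL,\cA$ absolute). So there is a uniform constant $D$ with $d_{Haus}(g\alpha,\alpha)\leq D$ for all $g\in G_0$; equivalently $d(g,\alpha)\leq D+\text{const}$ for all $g\in G_0$, i.e. the whole subgroup $G_0$ lies in a bounded neighborhood of the single geodesic ray $\alpha$. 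Since $G$ acts properly discontinuously and cocompactly on its Cayley graph, and $G_0$ has finite index, $G_0$ is quasi-isometric to $G$; but $G_0$ also lies in a bounded neighborhood of $\alpha$, so $G_0$ is quasi-isometric to a subset of $\R$, forcing $G_0$ (hence $G$) to be quasi-isometric to $\R$ or to a point. A group quasi-isometric to $\R$ is virtually infinite cyclic, and a bounded group has empty contracting boundary, contradicting our standing hypothesis that $\cbdry^\fq G\neq\emptyset$. Therefore $G$ is virtually infinite cyclic.

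The main obstacle is making precise the claim that $g\alpha$ is asymptotic to $\alpha$ with \emph{uniform} constants as $g$ ranges over $G_0$. The subtlety: $g\zeta$ is defined via a concatenation $[\one,g]+g\alpha$ whose first segment has length $d(\one,g)\to\infty$, so naively the resulting quasi-geodesic has bad additive constant. One must instead argue that $g\alpha$, reparameterized from its own initial point $g\cdot\one$, is a geodesic ray (the $G$--action is by isometries) in the class of $\zeta$ only after prepending $[\one,g]$; but \fullref{lem:hausdorff} as stated needs the two rays to share a basepoint. The fix is to apply \fullref{lem:hausdorff} to $\alpha$ (based at $\one$) and the tamed concatenation $\gamma_g:=[\one,g]+g\alpha$ (also based at $\one$): taming gives a continuous $(\cL,\cA)$--quasi-geodesic with $\cL,\cA$ absolute constants by the Taming Lemma, $\gamma_g\in\zeta$ since $g\zeta=\zeta$, and both are based at $\one$, so \fullref{cor:asymp}\eqref{item:asymptotic} and \fullref{lem:hausdorff} give $d_{Haus}(\gamma_g,\alpha)\leq\kappa'(\rho_\zeta,\cL,\cA)=:D'$, a bound independent of $g$. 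Since $g\cdot\one\in\gamma_g$, we get $d(g\cdot\one,\alpha)\leq D'$ for every $g\in G_0$, which is exactly the boundedness statement needed. The rest is the standard fact that a group acting geometrically on a space and lying within bounded distance of a quasi-line is virtually $\Z$ (e.g. via the classification of groups quasi-isometric to $\R$, or directly: $G_0\to\R$, $g\mapsto$ (signed) closest-point parameter on $\alpha$, is a quasi-isometry onto a coarsely dense subset of a ray or line).
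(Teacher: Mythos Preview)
Your argument has a genuine gap at its central step. You claim that the concatenation $\gamma_g:=[\one,g]+g\alpha$ is an $(\cL,\cA)$--quasi-geodesic with $\cL,\cA$ \emph{independent of $g$}, and you invoke the Taming Lemma for this. But the Taming Lemma only upgrades a quasi-geodesic to a continuous one with comparable constants; it does not turn an arbitrary concatenation of geodesics into a quasi-geodesic. In fact your uniformity claim is false already in the simplest example: take $G=\Z=\langle a\rangle$, $\zeta=a^\infty$, $\alpha$ the ray $\{a^n:n\geq 0\}$, and $g=a^{-n}$. Then $\gamma_g$ goes from $\one$ out to $a^{-n}$ and back through $\one$ to $a^\infty$; its quasi-geodesic constants blow up with $n$. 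Consequently your conclusion $d(g,\alpha)\leq D'$ for all $g\in G_0$ is also false in this example: $d(a^{-n},\alpha)=n$. So the strategy of trapping $G_0$ in a bounded neighborhood of the single ray $\alpha$ cannot succeed as written.

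The paper's approach is quite different and avoids this issue. Rather than locating $G_0$ near $\alpha$, it uses the fixed point to propagate contraction to \emph{every} geodesic. For an arbitrary geodesic $\beta$ from $\one$ and any integer $n$, the translate $\beta_n\alpha$ is asymptotic to $\alpha$ because $\beta_n$ fixes $\zeta$. Then $\alpha$, $\beta_{[0,n]}$, and a suitable truncation of $\beta_n\alpha$ form a geodesic almost-triangle whose two non-$\beta$ sides are uniformly contracting (they are subsegments of translates of $\alpha$), so \fullref{lem:almosttriangle} forces $\beta_{[0,n]}$ to be $\rho'$--contracting with $\rho'$ depending only on $\rho$, not on $n$ or $\beta$. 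Hence all geodesics in $G$ are uniformly contracting, $G$ is hyperbolic, and the standard fact that a non-elementary hyperbolic group has no finite orbit on its boundary finishes the argument. Note that in this approach one never needs the bad concatenation $[\one,\beta_n]+\beta_n\alpha$ to be a quasi-geodesic; one only needs $\beta_n\alpha$ to be a geodesic ray asymptotic to $\alpha$, which it is.
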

\begin{proof}
  If $G$ is virtually cyclic then $|\cbdry G|=2$ and every orbit is
  finite.

Conversely, if $G$ has a finite orbit then it has a finite index
subgroup that fixes a point in $\cbdry G$. 
The inclusion of a finite index subgroup is a quasi-isometry, so we
may assume that $G$ fixes a point $\zeta\in\cbdry G$. 

Let $\alpha\in\zeta$ be geodesic and $\rho$--contracting. 
Let $\beta$ be an arbitrary geodesic ray or segment with $\beta_0=\one$.
Since $G\zeta=\zeta$, for all $n\in\mathbb{N}$ the geodesic rays
$\alpha$ and $\beta_n\alpha$ are asymptotic. 
By \fullref{GIT}, $\alpha$ and $\beta_n\alpha$ eventually stay within
distance $\kappa'_\rho$ of one another.
Truncate $\alpha$ and $\beta_n\alpha$ when their distance is
$\kappa'_\rho$. 
By \fullref{lem:subsegment}, these segments are contracting, and they
form a geodesic almost triangle with $\beta_{[0,n]}$, so, by
\fullref{lem:almosttriangle}, $\beta_{[0,n]}$ is $\rho'$--contracting
for some $\rho'\asymp \rho$ depending only on $\rho$.
Since this is true uniformly for all $n$, $\beta$ is
$\rho'$--contracting. 
Since $\beta$ was arbitrary and $G$ is homogeneous, every geodesic in
$G$ is uniformly contracting, which means $G$ is hyperbolic and
$\cbdry^\fq G$ is the Gromov boundary.
If $G$ is hyperbolic and not virtually cyclic then its boundary is
uncountable and every orbit is dense, hence infinite.
\end{proof}

\begin{proposition}\label{prop:countablebasis}
  Suppose $|\cbdry^\fq G|>2$, and fix a point $\eta\in\cbdry G$. 
For every $\zeta\in\cbdry G$ and
  every $r\geq 1$ there exists an $R'\geq 1$ such that for all $R_2\geq R_1\geq R'$
  there exist $g\in G$ such that $\zeta\in U(g\eta,R_2)\subset
  U(g\eta,R_1) \subset U(\zeta,r)$.
\end{proposition}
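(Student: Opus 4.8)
The plan is to approximate $\zeta$ on ``both sides'' by the translates $\alpha^\zeta_m\eta$ of $\eta$ under the group elements sitting at the integer points of $\alpha^\zeta$. Write $\alpha:=\alpha^\zeta$, $\rho:=\rho_\zeta$, and for $m\in\mathbb N$ let $g_m$ be the group element at the vertex $\alpha_m$. By definition of the $G$--action, $g_m\eta\in\cbdry G$ is represented by $\alpha_{[0,m]}+g_m\alpha^\eta$. Here $\alpha_{[0,m]}$ is $\rho'$--contracting with $\rho'\asymp\rho$ independent of $m$ (\fullref{lem:subsegment}), $g_m\alpha^\eta$ is $\rho_\eta$--contracting since $g_m$ acts by isometries, and $g_m\alpha^\eta$ is not asymptotic to the finite segment $\bar\alpha_{[0,m]}$. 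Applying the taming lemma together with \fullref{lem:nonasymptoticmakesquasigeodesic} to $\bar\alpha_{[0,m]}$ and $g_m\alpha^\eta$ produces a continuous $(\cL_0,\cA_0)$--quasi-geodesic $\gamma_m\in g_m\eta$ with $\cL_0,\cA_0$ depending only on $\rho,\rho_\eta$ that follows $\alpha$ from $\one$ until a point $\alpha_{m-s_0}$ and then diverges; the hypothesis $|\cbdry^\fq G|>2$ excludes the virtually cyclic case (\fullref{finiteorbit}), which is exactly what could make this construction degenerate by letting $g_m\alpha^\eta$ backtrack along $\bar\alpha_{[0,m]}$ — one checks that for large $m$ it cannot fellow-travel $\bar\alpha_{[0,m]}$ longer than a bound depending only on $\rho,\rho_\eta$, so $s_0\le c_0=c_0(\rho,\rho_\eta)$ and $\gamma_m$ agrees with $\alpha$ on $[0,m-c_0]$. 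By \fullref{lem:combination} and \fullref{lem:almosttriangle}, $\gamma_m$ is $\rho''$--contracting with $\rho''\asymp\max(\rho,\rho_\eta)$ independent of $m$; hence, by \fullref{lem:hausdorff}, $\alpha^{g_m\eta}$ lies within a Hausdorff distance $H=H(\rho,\rho_\eta)$ of $\gamma_m$, by \fullref{Hausdorff} we get $\rho_{g_m\eta}\le\bar\rho$ for a fixed sublinear $\bar\rho\asymp\max(\rho,\rho_\eta)$, and since $\gamma_m$ is an $(\cL_0,\cA_0)$--quasi-geodesic its part past parameter $m$ stays distance $>m/\cL_0-\cA_0$ from $\one$, so every point of $\alpha^{g_m\eta}$ within $m/\cL_0-\cA_0-H$ of $\one$ is $H'$--close to $\alpha$, where $H':=H+c_0+\cA_0$.

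Using \fullref{obs}, set $\kappa_r:=\kappa(\bar\rho,\sqrt{r/3},r/3)$, $\lambda_r:=\lambda(\rho,\sqrt{r/3},r/3)$, and $R':=r+H'+\theconstantformerlyknownasLambda(\kappa_r+H')+\lambda_r+1$; recall $\theconstantformerlyknownasLambda\le2$. Given $R_2\ge R_1\ge R'$, choose $m$ so large (a fixed polynomial in $R_2$ with coefficients depending on $\rho,\rho_\eta,\cL_0,\cA_0$) that $m/\cL_0-\cA_0-H$ dominates every ``$\approx m/\cL$'' threshold occurring below, and put $g:=g_m$. The middle inclusion $U(g\eta,R_2)\subseteq U(g\eta,R_1)$ is immediate from $R_2\ge R_1$ and \fullref{def:nbhds}. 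For $\zeta\in U(g\eta,R_2)$, let $\beta\in\zeta$ be a continuous $(\cL,\cA)$--quasi-geodesic; we must bound $d(\beta,\alpha^{g\eta}\cap\nbhd^c_{R_2}\one)$. If $\kappa(\rho_{g\eta},\cL,\cA)>R_2$ a point of the geodesic $\alpha^{g\eta}$ at radius in $(R_2,\kappa(\rho_{g\eta},\cL,\cA)]$ is within $\kappa(\rho_{g\eta},\cL,\cA)$ of $\beta_0=\one$; otherwise $\cL^2,\cA\le R_2/3$. Apply \fullref{cor:asymp} to $\beta$ and $\alpha^{g\eta}$ (note $d(\beta_0,\alpha^{g\eta})=0$). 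In case \eqref{item:asymptotic}, $\beta$ is $\kappa(\rho_{g\eta},\cL,\cA)$--close to $\alpha^{g\eta}$ at arbitrarily large times; since $d(\one,\beta_t)\to\infty$ the projections of such points to $\alpha^{g\eta}$ tend to infinity, so one of them meets $\alpha^{g\eta}\cap\nbhd^c_{R_2}\one$ within $\kappa(\rho_{g\eta},\cL,\cA)$. In case \eqref{eq:2}, let $T_0$ be the last time with $d(\beta_{T_0},\alpha^{g\eta})=\kappa(\rho_{g\eta},\cL,\cA)$; since $\beta$ is asymptotic to $\alpha$, \fullref{cor:asymp}\eqref{item:asymptotic} gives $\beta\subseteq\bar\nbhd_{\kappa'(\rho,\cL,\cA)}\alpha$, and combining this with the $H'$--closeness of the low-radius part of $\alpha^{g\eta}$ to $\alpha$ yields $d(\beta_t,\alpha^{g\eta})\le\kappa'(\rho,\cL,\cA)+H'$ for all $t$ up to a threshold comparable to $m/\cL$; feeding this into \eqref{eq:2} forces $T_0$ past that threshold, hence $T_0>R_2$, and then $d(\one,\beta_{T_0})=T_0$ puts the closest point of $\beta_{T_0}$ on $\alpha^{g\eta}$ into $\nbhd^c_{R_2}\one$, as needed.

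For $U(g\eta,R_1)\subseteq U(\zeta,r)$, suppose $\xi\notin U(\zeta,r)$ and let $\delta\in\xi$ be a witnessing continuous $(\cL,\cA)$--quasi-geodesic, so $d(\delta,\alpha\cap\nbhd^c_r\one)>\kappa(\rho,\cL,\cA)$; by \fullref{obs}, $\cL^2,\cA<r/3$. We claim $\delta$ witnesses $\xi\notin U(g\eta,R_1)$. If not, $\delta$ comes $\kappa(\rho_{g\eta},\cL,\cA)$--close to some $q\in\alpha^{g\eta}$ with $d(\one,q)>R_1$. If $q$ is a deep point (radius $\ge m/\cL_0-\cA_0-H$), then \fullref{cor:asymp} for $\delta$ and $\alpha^{g\eta}$, together with the excursion bound from the proof of \fullref{cor:asymp} via \fullref{QGIT} (whose constant is bounded in terms of $r,\rho,\rho_\eta$ by \fullref{obs}), produces a $\kappa(\rho_{g\eta},\cL,\cA)$--close point of $\delta$ whose projection to $\alpha^{g\eta}$ has radius in $(R_1,R_1+G_r)$ — a shallow point for $m$ huge. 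So in all cases $\delta$ comes $\kappa(\rho_{g\eta},\cL,\cA)$--close to $\alpha^{g\eta}_t$ with $R_1<t<m/\cL_0-\cA_0-H$, which is within $H'$ of some $\alpha_s$ with $s>R_1-H'\ge r$; thus $d(\delta,\alpha_s)\le\kappa(\rho_{g\eta},\cL,\cA)+H'$. \fullref{keylemma} (with $x=\alpha_s$, $R=s$, $J=\kappa(\rho_{g\eta},\cL,\cA)+H'$) then shows the last point $y$ of $\alpha$ between $\one$ and $\alpha_s$ with $d(y,\delta)=\kappa(\rho,\cL,\cA)$ satisfies $d(\one,y)\ge s-\theconstantformerlyknownasLambda J-\lambda(\rho,\cL,\cA)$, while the witnessing property of $\delta$ forces $d(\one,y)\le r$; hence $s\le r+\theconstantformerlyknownasLambda(\kappa_r+H')+\lambda_r=R'-1$, contradicting $s>R_1-H'\ge R'-H'$ once we recall that $R_1-H'< R'-1$ would force $R_1<R'$. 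Therefore $\xi\notin U(g\eta,R_1)$, which completes the proof.

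The main obstacle is the uniform control underlying the construction of $\gamma_m$: one needs $\gamma_m$, and hence the a priori choice-dependent reference geodesic $\alpha^{g_m\eta}$, to follow (respectively stay boundedly close to) $\alpha^\zeta$ on an initial segment whose length grows with $m$, with all constants — the quasi-geodesic constants $\cL_0,\cA_0$, the Hausdorff bound $H$, and the contraction bound $\bar\rho$ — independent of $m$. This is precisely why $g$ is taken at a vertex of $\alpha^\zeta$ (so that the prefix $\alpha_{[0,m]}$ is uniformly contracting via \fullref{lem:subsegment}) rather than an arbitrary group element, and it is where $|\cbdry^\fq G|>2$ is used. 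Once that control is in hand, Steps~1 and~2 are the familiar ``assume the containment fails, extract an upper bound on $R_1$ (or $R_2$), then enlarge $R'$ past it'' arguments, with \fullref{cor:asymp} — specifically the dichotomy between staying near $\alpha^{g\eta}$ and escaping it at a last, necessarily far-out, time — carrying Step~1, \fullref{keylemma} carrying Step~2, and \fullref{obs} keeping all quasi-geodesic constants bounded throughout.
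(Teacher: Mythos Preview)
Your argument has a genuine gap at the point where you assert that the backtracking of $g_m\alpha^\eta$ along $\bar\alpha_{[0,m]}$ is bounded by a constant $c_0(\rho,\rho_\eta)$ independent of $m$. You justify this by saying that $|\cbdry^\fq G|>2$ rules out the virtually cyclic case, ``which is exactly what could make this construction degenerate''. That is not correct: unbounded backtracking can occur for specific choices of $\zeta$ and $\eta$ even when $G$ is far from virtually cyclic. Take any contracting element $g\in G$, set $\zeta:=g^\infty$ and $\eta:=g^{-\infty}$. The vertices $g_m=\alpha^\zeta_m$ lie within a uniform distance $C$ of powers $g^{k_m}$ with $k_m\to\infty$, and the translated ray $g_m\alpha^\eta$ therefore stays within $2C$ of the sequence $g^{k_m},g^{k_m-1},\dots,g,1,g^{-1},\dots$. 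Hence $g_m\alpha^\eta$ fellow-travels the \emph{entire} segment $\bar\alpha_{[0,m]}$ before continuing toward $g^{-\infty}$, so your $s_0$ is comparable to $m$, not bounded. In this situation $g_m\eta$ is (coarsely) the fixed point $g^{-\infty}$, and $\alpha^{g_m\eta}$ heads away from $\alpha^\zeta$ immediately; you cannot get $\zeta\in U(g_m\eta,R_2)$ for large $R_2$ at all.

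The paper avoids precisely this obstruction by using the hypothesis $|\cbdry^\fq G|>2$ differently: via \fullref{finiteorbit} one finds $g'\in G$ with $g'\eta\neq\eta$, then a bi-infinite contracting geodesic $\beta$ from $g'\eta$ to $\eta$ through $\one$. At each vertex $\alpha_t$ one has \emph{two} candidate rays, $\alpha_t\beta_{[0,\infty)}\in\alpha_t\eta$ and $\alpha_t\bar\beta_{[0,-\infty)}\in\alpha_t g'\eta$, and since $\alpha_t\beta$ is a geodesic at most one of them can backtrack more than $2\kappa'(\rho,1,0)$ along $\alpha_{[0,t]}$. One then takes $g_t$ to be $\alpha_t$ or $\alpha_t g'$ accordingly. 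This two-sided choice is the missing ingredient; once you have it, the uniform contraction of $\alpha^{g_t\eta}$ and the two containments follow by arguments in the spirit of what you wrote (though carried out with \fullref{lem:almosttriangle} and \fullref{keylemma} directly, rather than through \fullref{lem:nonasymptoticmakesquasigeodesic}).
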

\begin{corollary}
  $\cbdry^\fq G$ is separable.
\end{corollary}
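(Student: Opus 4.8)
The plan is to produce an explicit countable dense subset, namely a single orbit. Fix any $\eta\in\cbdry^\fq G$ (which exists since we are assuming the contracting boundary is nonempty) and set $D:=G\eta=\{g\eta\mid g\in G\}$. Since $G$ is finitely generated it is countable, so $D$ is countable; it then remains only to check that $D$ is dense in $\cbdry^\fq G$.

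First I would dispose of the degenerate cases: if $|\cbdry^\fq G|\le 2$ then $\cbdry^\fq G$ is finite and hence trivially separable (take the whole space as the dense set), so we may assume $|\cbdry^\fq G|>2$ and invoke \fullref{prop:countablebasis}. To show $D$ is dense, fix $\zeta\in\cbdry^\fq G$; by \fullref{prop:topology} the sets $U(\zeta,n)$ for $n\in\mathbb{N}$ form a neighborhood basis at $\zeta$, so it suffices to find, for each $n$, a point of $D$ inside $U(\zeta,n)$. Applying \fullref{prop:countablebasis} with $r=n$ produces an $R'\ge 1$, and then, taking $R_1=R_2=R'$, a group element $g\in G$ with $\zeta\in U(g\eta,R')\subset U(\zeta,n)$. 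Since every point lies in its own basic neighborhoods --- exactly as noted in the proof of \fullref{prop:topology} using \fullref{cor:asymp} --- we have $g\eta\in U(g\eta,R')$, and hence $g\eta\in U(\zeta,n)\cap D$. Thus $\zeta\in\overline{D}$, and as $\zeta$ was arbitrary, $D$ is dense, so $\cbdry^\fq G$ is separable.

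The mathematical content here is entirely front-loaded into \fullref{prop:countablebasis}: once that approximation statement is in hand, separability is a one-line deduction, so there is no real obstacle. The only point requiring a little care is the bookkeeping around the hypothesis $|\cbdry^\fq G|>2$, which must be handled separately as above since \fullref{prop:countablebasis} is stated only in that regime.
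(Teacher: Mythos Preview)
Your proof is correct and is exactly the argument the paper has in mind: the paper states this corollary without proof, and the subsequent remark explains that ``for the corollaries we just need to know that we can push $\eta$ into $U(\zeta,r)$ via the group action,'' which is precisely what you do. Your handling of the degenerate case $|\cbdry^\fq G|\le 2$ and the extraction of $g\eta\in U(g\eta,R')\subset U(\zeta,n)$ from \fullref{prop:countablebasis} are both fine.
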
  
\begin{corollary}
 If $G$ is not virtually cyclic then
  $G\act \cbdry^\fq G$ is minimal, that is, every orbit is dense.
\end{corollary}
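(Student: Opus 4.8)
The plan is to deduce minimality directly from \fullref{prop:countablebasis}, together with \fullref{finiteorbit}. First I would record the hypothesis check needed to invoke \fullref{prop:countablebasis}: if $G$ is not virtually cyclic then, by \fullref{finiteorbit}, the action $G\act\cbdry^\fq G$ has no finite orbit; since $\cbdry^\fq G$ is nonempty (our standing assumption in this section) and $G$ acts on it with no finite orbit, it must contain more than two points, so $|\cbdry^\fq G|>2$ and \fullref{prop:countablebasis} is available.

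Next, fix a point $\eta\in\cbdry^\fq G$; the goal is to show $\overline{G\eta}=\cbdry^\fq G$. Given an arbitrary $\zeta\in\cbdry^\fq G$ and an arbitrary $r\geq 1$, I would apply \fullref{prop:countablebasis} to $\eta$, $\zeta$, and $r$, taking $R_1=R_2=R'$ in its conclusion, to obtain an element $g\in G$ with $\zeta\in U(g\eta,R')\subset U(\zeta,r)$. Since a point always lies in its own basic neighborhood --- this is the containment $\zeta\in U(\zeta,r)$ noted in the proof of \fullref{prop:topology}, which follows from \fullref{cor:asymp} --- we have $g\eta\in U(g\eta,R')$, and hence $g\eta\in U(\zeta,r)$. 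Thus every basic neighborhood $U(\zeta,r)$ of $\zeta$ meets the orbit $G\eta$, and because $\{U(\zeta,n)\mid n\in\mathbb{N}\}$ is a neighborhood basis at $\zeta$ by \fullref{prop:topology}, it follows that $\zeta\in\overline{G\eta}$. As $\zeta$ was arbitrary, $\overline{G\eta}=\cbdry^\fq G$, i.e.\ every orbit is dense.

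I do not expect any real obstacle: all of the substantive content is already packaged in \fullref{prop:countablebasis}, which produces, inside any prescribed basic neighborhood of any prescribed point, a translate of the fixed point $\eta$. The only things requiring care are the bookkeeping that non-virtual-cyclicity upgrades (via \fullref{finiteorbit}) to $|\cbdry^\fq G|>2$ so that \fullref{prop:countablebasis} applies, and the trivial observation that $g\eta\in U(g\eta,R')$.
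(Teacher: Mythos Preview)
Your proof is correct and follows precisely the approach the paper indicates in the remark following the corollary: the substantive content is entirely contained in \fullref{prop:countablebasis}, and one only needs to observe that $g\eta\in U(g\eta,R')\subset U(\zeta,r)$ to conclude that the orbit of $\eta$ meets every basic neighborhood of every point. Your bookkeeping for the hypothesis $|\cbdry^\fq G|>2$ via \fullref{finiteorbit} is exactly right.
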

\begin{remark}
For the corollaries we just need to know that we can push
$\eta$ into $U(\zeta,r)$ via the group action.
The stronger statement of \fullref{prop:countablebasis}
is used in \fullref{secondcountable} to upgrade first countable and separable to second countable.
The reason for having two parameters $R_1$ and $R_2$ is to be able to
apply \fullref{cor:annulus} in case $U(g\eta,R_1)$ is not an open set.
\end{remark}
\begin{proof}[Proof of \fullref{prop:countablebasis}]
 By \fullref{finiteorbit}, $G\act\cbdry^\fq G$ does not have a finite
 orbit, so there exists a $g'\in G$ with $\eta':=g'\eta\neq\eta$.
Let $\beta$ be a geodesic joining $\eta'$ and $\eta$.
It suffices to assume $\beta_0=\one$; otherwise, we could consider
 $\beta':=\beta^{-1}_0\beta$, which is a geodesic with $\beta'_0=\one$
and endpoints in $G\eta$.

Let $\alpha:=\alpha^\zeta$ be the geodesic representative of $\zeta$.
Choose $\rho$ so that $\alpha$, $\beta_{[0,\infty)}$, and $\bar{\beta}_{[0,-\infty)}$ are all $\rho$--contracting.

For each integral $t\gg 0$, at most one of $\alpha_t \beta_{[0,\infty)}$ and
$\alpha_t \bar{\beta}_{[0,-\infty)}$ remains in the closed $\kappa'(\rho,1,0)$--neighborhood
of $\alpha_{[0,t]}$ for distance greater than $2\kappa'(\rho,1,0)$,
otherwise we contradict the fact that $\alpha_t\beta$ is a geodesic. 
Define $g_t:=\alpha_t$ if $\alpha_t \beta_{[0,\infty)}$ does not remain in the closed $\kappa'(\rho,1,0)$--neighborhood
of $\alpha_{[0,t]}$ for distance greater than $2\kappa'(\rho,1,0)$.
Otherwise, define $g_t:=\alpha_tg'$.
For each $s\in\mathbb{N}$ consider a geodesic triangle with sides
$\alpha_{[0,t]}$, $g_t\beta_{[0,s]}$, and a geodesic $\delta^{s,t}$ joining
$\alpha_0$ to $g_t\beta_s$.
By \fullref{lem:subsegment}, the first two sides are uniformly
contracting, so $\delta^{s,t}$ is as well, by
\fullref{lem:almosttriangle}.
Since $G$ is proper, for each fixed $t$ a subsequence of the
$\delta^{s,t}$ converges to a contracting geodesic ray $\delta^t\in
g_t\eta$.
See \fullref{fig:82}.
\begin{figure}[h!]
  \centering
  \labellist
  \tiny
  \pinlabel $\beta$ [r] at 1 88
  \pinlabel $\beta_\infty=\eta$ [b] at 1 112
  \pinlabel $\beta_{-\infty}=g'\eta$ [t] at 1 2
  \pinlabel $\zeta$ [l] at 240 56
  \pinlabel $g_{t_1}\eta:=\alpha_{t_1}\eta$ [b] at 146 115
  \pinlabel $g_{t_2}\eta:=\alpha_{t_2}g'\eta$ [t] at 201 0
  \pinlabel $\alpha$ [b] at 228 57
  \pinlabel $\delta^{t_1}$ [r] at 146 106
  \pinlabel $\delta^{t_2}$ [r] at 201 11
  \pinlabel $\alpha_{t_1}\beta$ [l] at 75 16
  \pinlabel $\alpha_{t_2}\beta$ [bl] at 199 88
  \pinlabel $\delta^{1,t_1}$ [l] at 156 62
  \pinlabel $\delta^{2,t_1}$ [l] at 156 76
  \pinlabel $\delta^{3,t_1}$ [l] at 156 97
  \endlabellist
  \includegraphics{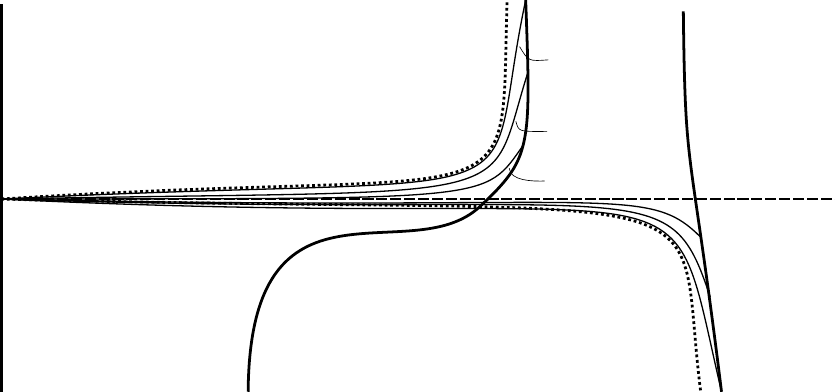}
  \caption{}\label{fig:82}
\end{figure}
Moreover, since the $\delta^{s,t}$ are uniformly contracting, the
contraction function for $\delta^t$ does not depend on $t$.
Now, for any given $t$ it is possible that $\delta^t$ does not
coincide with the  chosen representative $\alpha^{g_t\eta}$ of
$g_t\eta$, but they are asymptotic, so
\fullref{cor:uniformcontraction} tells us that uniform contraction for
the $\delta^t$ implies uniform contraction for the $\alpha^{g_t\eta}$.
Thus, there is a $\rho'$ independent of
$t$ such that $\alpha^{g_t\eta}$ is $\rho'$--contracting.
Furthermore, the  defining condition for $g_t$ guarantees that there
is a $C$ independent of $t$ such that the geodesic representative
$\alpha^{g_t\eta}$ comes within distance $\kappa(\rho,1,0)$ of $\alpha$
outside of $N_{t-C}\one$, which implies that
$\alpha^{g_t\eta}_{[0,t-C]}\subset \bar{N}_{2\kappa'(\rho,1,0)}\alpha_{[0,t-C]}$.

First we give a condition that implies $\zeta\in
U(g_t\eta,R)$. 
Suppose:
\begin{equation}
  \label{eq:4}
t\geq R+C +2\theconstantformerlyknownasLambda(\kappa'(\rho,\sqrt{R/3},R/3)+\kappa'(\rho,1,0))+\lambda(\rho',\sqrt{R/3},R/3)
\end{equation}
Suppose
that $\gamma\in\zeta$ is a continuous
$(\cL,\cA)$--quasi-geodesic. 
By \fullref{obs} is suffices to consider $\cL^2,\,\cA< R/3$.
By \fullref{cor:asymp}, $\gamma\subset \bar
N_{\kappa'(\rho,\cL,\cA)}\alpha$, so 
there is a point $\gamma_a$ that is $(2\kappa'(\rho,\cL,\cA)+2\kappa'(\rho,1,0))$--close to $\alpha^{g_t\eta}_{t-C}$.
By \fullref{keylemma}, $\gamma$ comes
$\kappa(\rho',\cL,\cA)$--close to $\alpha^{g_t\eta}$ outside the
ball around $\one$ of radius
$t-C-\theconstantformerlyknownasLambda(2\kappa'(\rho,\cL,\cA)+2\kappa'(\rho,1,0))-\lambda(\rho',\cL,\cA)$.
By (\ref{eq:4}), this is at least $R$. 
Since $\gamma\in\zeta$ was
arbitrary,  $\zeta\in U(g_t\eta,R)$.

Next, we give a condition that implies $U(g_t\eta,R)\subset
U(\zeta,r)$. Suppose:
\begin{equation}
  \label{eq:5}
t-C\geq R\geq r+\theconstantformerlyknownasLambda(2\kappa'(\rho,1,0)
+2\kappa'(\rho',\sqrt{r/3},r/3))+\lambda(\rho,\sqrt{r/3},r/3)
\end{equation}
Suppose that $\gamma$ is a continuous
$(\cL,\cA)$--quasi-geodesic such that $[\gamma]\in
U(g_t\eta,R)$.
By \fullref{obs}, it suffices to consider $\cL^2,\,\cA<r/3$.
By definition, $\gamma$ comes $\kappa(\rho',\cL,\cA)$ close to
$\alpha^{g_t\eta}$ outside $N_R\one$, so some point $\gamma_b$
is $2\kappa'(\rho',\cL,\cA)$--close to $\alpha^{g_t\eta}_R$,
which implies that 
$d(\gamma_b,\alpha_R) \leq
(2\kappa'(\rho',\cL,\cA)+2\kappa'(\rho,1,0))$.
Now apply \fullref{keylemma} to see that $\gamma$ comes
$\kappa(\rho,\cL,\cA)$--close to $\alpha$ outside the ball around
$\one$ of radius  $R-\theconstantformerlyknownasLambda(2\kappa'(\rho,1,0)
+2\kappa'(\rho',\cL,\cA))-\lambda(\rho,\cL,\cA)$, which is at least
$r$, by (\ref{eq:5}).
Thus, $U(g_t\eta,R)\subset U(\zeta,r)$.

Equipped with these two conditions, we finish the proof.
The contraction functions $\rho$ and $\rho'$ are determined by $\zeta$
and $\eta$. 
Given these and any $r\geq 1$, define $R'$ to be the right hand side of (\ref{eq:5}).
Given any $R_2\geq R_1\geq R'$, it suffices to define $g:=g_t$ for any
$t$ large enough to satisfy
both condition (\ref{eq:4}) for
$R=R_2$ and
condition (\ref{eq:5}) for $R=R_1$.
\end{proof}

\begin{proposition}\label{secondcountable}
  $\cbdry^\fq G$ is second countable.
\end{proposition}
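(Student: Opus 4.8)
The plan is to produce an explicit countable basis for $\cbdry^\fq G$, assembled from the neighborhood systems of a single fixed point $\eta$ together with the points of its orbit under $G$. If $|\cbdry^\fq G|\leq 2$ the space is finite and there is nothing to prove, so assume $|\cbdry^\fq G|>2$, which is the hypothesis needed to invoke \fullref{prop:countablebasis}. Since $G$ is countable, the family $\{U(g\eta,n)\mid g\in G,\ n\in\mathbb{N}\}$ is countable. These sets need not be open, so for each pair $(g,n)$ I would use \fullref{cor:annulus} to pick an open set $O_{g,n}$ with $U(g\eta,\psi(\rho_{g\eta},n))\subset O_{g,n}\subset U(g\eta,n)$, and then take $\mathcal{B}:=\{O_{g,n}\mid g\in G,\ n\in\mathbb{N}\}$ as the candidate countable basis.

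To verify that $\mathcal{B}$ is a basis, fix an open set $W$ and a point $\zeta\in W$. First use that $\{U(\zeta,n)\}$ is a neighborhood basis at $\zeta$ (\fullref{prop:topology}) to find $r\geq 1$ with $U(\zeta,r)\subset W$. Apply \fullref{prop:countablebasis} to $\zeta$ and $r$ to obtain a threshold $R'$, fix an integer $n\geq R'$, set $R_1:=n$, and choose $R_2\geq R_1$ (how large is settled in the next paragraph). \fullref{prop:countablebasis} then yields $g\in G$ with $\zeta\in U(g\eta,R_2)\subset U(g\eta,n)\subset U(\zeta,r)\subset W$. Provided $R_2\geq\psi(\rho_{g\eta},n)$, the nesting $U(g\eta,s')\subset U(g\eta,s)$ for $s'\geq s$ (from the proof of \fullref{prop:topology}) gives the chain $\zeta\in U(g\eta,R_2)\subset U(g\eta,\psi(\rho_{g\eta},n))\subset O_{g,n}\subset U(g\eta,n)\subset W$, so $O_{g,n}\in\mathcal{B}$ lies between $\zeta$ and $W$. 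This is exactly the use of the two-parameter form of \fullref{prop:countablebasis} anticipated in the remark accompanying that statement.

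The one genuine subtlety---the step to get right---is that $R_2$ must be named before $g$ is produced, whereas $\psi(\rho_{g\eta},n)$ a priori depends on $g$. The resolution is that the proof of \fullref{prop:countablebasis} selects its element $g=g_t$ along the fixed geodesic $\alpha^\zeta$ and shows that the geodesic representatives $\alpha^{g_t\eta}$ are uniformly contracting, with contraction function bounded by a sublinear $\rho'$ depending only on $\zeta$ and $\eta$ (not on $R_1$, $R_2$, or $t$); hence $\rho_{g\eta}\leq\rho'$ for whichever $g$ is returned. Since $\psi$ is non-decreasing in its first argument, as is clear from the definitions of $\psi$, $\kappa$, and $\lambda$, it then suffices to take $R_2:=\max\{n,\lceil\psi(\rho',n)\rceil\}$, which forces $R_2\geq\psi(\rho',n)\geq\psi(\rho_{g\eta},n)$. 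With this choice the chain above holds for every open $W$ and every $\zeta\in W$, so $\mathcal{B}$ is a countable basis and $\cbdry^\fq G$ is second countable.
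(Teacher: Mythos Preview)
Your proof is correct and follows essentially the same approach as the paper's own proof: both use \fullref{cor:annulus} to sandwich open sets $O_{g,n}$ between $U(g\eta,\psi(\rho_{g\eta},n))$ and $U(g\eta,n)$, invoke the two-parameter form of \fullref{prop:countablebasis}, and resolve the circularity in choosing $R_2$ before $g$ by exploiting the uniform contraction function $\rho'$ for the points $g_t\eta$ established inside the proof of \fullref{prop:countablebasis}. The paper's argument is slightly more terse but identical in substance.
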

\begin{proof}
  If $G$ is virtually cyclic then $\cbdry^\fq G$ is the discrete
  space with two points, and we are done. 
Otherwise, fix any $\eta\in\cbdry G$.
For each $g\in G$ and $n\in\mathbb{N}$ choose an open set $U_{g,n}$
such that $U(g\eta,\psi(\rho_{g\eta},n))\subset U_{g,n}\subset
U(g\eta,n)$ as in \fullref{cor:annulus}.

Let $U$ be a non-empty open set and let $\zeta$ be a point in $U$.
By definition of $\fq$, there exists an $r\geq 1$ such that
$U(\zeta,r)\subset U$.
Let $R'$ be the constant of \fullref{prop:countablebasis} for $\zeta$
and $r$, and let
$R'\leq R_1\in\mathbb{N}$.
As noted there, there exists a sublinear $\rho'$ such that the points $g_t\eta$ in the proof of
\fullref{prop:countablebasis} are all $\rho'$--contracting.
Define $R_2:=\psi(\rho',R_1)\geq \psi(\rho_{g_t\eta},R_1)$.
Combining \fullref{prop:countablebasis} and the definition of the sets
$U_{g,n}$, there exists $g\in G$ such that:
\[\zeta\in U(g\eta,R_2)\subset U_{g,R_1}\subset U(g\eta,R_1)\subset
U(\zeta,r)\subset U\]

Thus, $\mathcal{U}:=\{U_{g,n}\mid g\in G,\,n\in\mathbb{N}\}$ is a
countable basis for $\cbdry^\fq G$.
\end{proof}

\begin{corollary}\label{metrizable}
   $\cbdry^\fq G$ is metrizable.
\end{corollary}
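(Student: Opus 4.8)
The plan is simply to assemble the topological properties of $\cbdry^\fq G$ that have already been established and invoke a classical metrization theorem. Working in a fixed Cayley graph of $G$ with respect to a finite generating set, we regard $\cbdry^\fq G = \cbdry^\fq X$ for this proper geodesic metric space $X$. We have shown that $\cbdry^\fq X$ is Hausdorff in \fullref{hausdorff} and regular in \fullref{regular}, and \fullref{secondcountable} shows that $\cbdry^\fq G$ is second countable. By the Urysohn metrization theorem, every second countable regular Hausdorff space is metrizable, so $\cbdry^\fq G$ is metrizable.

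There is no real obstacle at this stage: all of the work went into the earlier results, and in particular the second countability of \fullref{secondcountable}, which in turn relied on the neighborhood-basis comparison in \fullref{prop:countablebasis} together with \fullref{cor:annulus}. One could alternatively observe that a regular space with a countable basis is normal and then apply the Nagata--Smirnov metrization theorem, or first-countability plus the existence of a countable basis consisting of the sets $U_{g,n}$ could be used to build an explicit metric, but neither refinement is needed: the hypotheses of Urysohn's theorem are exactly what \fullref{hausdorff}, \fullref{regular}, and \fullref{secondcountable} provide.

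\begin{proof}
By \fullref{hausdorff} and \fullref{regular}, $\cbdry^\fq G$ is Hausdorff and regular, and by \fullref{secondcountable} it is second countable. The Urysohn metrization theorem now implies that $\cbdry^\fq G$ is metrizable.
\end{proof}
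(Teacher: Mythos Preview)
Your proof is correct and essentially identical to the paper's own argument: both invoke \fullref{hausdorff}, \fullref{regular}, and \fullref{secondcountable} and then apply the Urysohn Metrization Theorem.
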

\begin{proof}
$\cbdry^\fq G$ is second countable by \fullref{secondcountable},
regular by \fullref{regular}, and Hausdorff by \fullref{hausdorff}.
  The Urysohn Metrization Theorem says every second countable, regular,
  Hausdorff space is metrizable.
\end{proof}

It is an interesting open problem to describe, in terms of the
geometry of $G$, a metric on
$\cbdry^\fq G$ that is compatible with $\fq$.

\section{Dynamics}\label{sec:dynamics}
\begin{definition}
  An element $g\in G$ is \emph{contracting} if $\mathbb{Z}\to G :
  n\mapsto g^n$ is a quasi-isometric embedding whose image is a
  contracting set.
\end{definition}
We use $g^\infty$ and $g^{-\infty}$ to denote the equivalence classes
of the contracting quasi-geodesic rays based at $\one$ corresponding
to the non-negative powers of $g$ and non-positive powers, respectively.
These are distinct points in $\cbdry G$.

\begin{lemma}\label{lem:nicenesting}
  Given a contracting
  element $g\in G$, an $r\geq 1$, and a point $\zeta\in\cbdry^\fq
  G\setminus\{g^\infty,\,g^{-\infty}\}$ there exists an $R'\geq 1$ such that for every $R\geq
  R'$ and every $n\in\mathbb{N}$ we have $U(\zeta,R)\subset g^{-n}U(g^n\zeta,r)$.
\end{lemma}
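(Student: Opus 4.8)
The plan is to treat all but finitely many $n$ with one geometric argument and the remaining values of $n$ one at a time. For each fixed $n$ the map $\zeta'\mapsto g^n\zeta'$ is a self-homeomorphism of $\cbdry^\fq G$, so by \fullref{prop:topology} the sets $g^nU(\zeta,R)$, $R\geq 1$, form a neighbourhood basis at $g^n\zeta$; since $U(g^n\zeta,r)$ is a neighbourhood of $g^n\zeta$ (again \fullref{prop:topology}) there is an $R_n\geq 1$ with $g^nU(\zeta,R_n)\subset U(g^n\zeta,r)$, hence $U(\zeta,R)\subset g^{-n}U(g^n\zeta,r)$ for all $R\geq R_n$. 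So it is enough to find $N_0\in\mathbb{N}$ and $R_\infty\geq 1$ for which the conclusion holds for every $n\geq N_0$ with $R'=R_\infty$, and then set $R':=\max\{R_\infty,R_0,\dots,R_{N_0-1}\}$. (If $G$ is virtually cyclic the hypothesis is vacuous; so assume $G$ is not virtually cyclic. Since $\cbdry^\fq G$ is Hausdorff, for $R$ large $U(\zeta,R)$ omits $g^{\pm\infty}$, and no other boundary point is fixed by a power of $g$, else it would be an endpoint of the axis $L$ below.)

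Next I would fix the geometry around $g$. Since $g$ is contracting, $\{g^m\mid m\in\Z\}$ is a bi-infinite contracting quasi-geodesic; fix a bi-infinite geodesic $L$ at finite Hausdorff distance from it, parameterised by arclength with integers at vertices, with $L_0$ a nearest point of $L$ to $\one$, $L_{[0,\infty)}\in g^\infty$, $L_{(-\infty,0]}\in g^{-\infty}$, and let $\sigma_n$ be a geodesic from $\one$ to $g^n$ (so $\sigma_n$ is uniformly Hausdorff close to $L_{[0,|g^n|]}$). Because $\zeta\notin\{g^\infty,g^{-\infty}\}$, the geodesic $\alpha:=\alpha^\zeta$ fellow-travels $L$ only along a bounded initial segment and then escapes $L$ linearly, by \fullref{cor:asymp}. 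By definition $g^n\zeta$ is represented by $\sigma_n+g^n\alpha$; combining \fullref{GIT}, \fullref{lem:almosttriangle} and \fullref{cor:uniformcontraction} as in the proof of \fullref{finiteorbit}, one obtains a sublinear function $\hat\rho$ and a constant $C$, depending only on $g$ and $\zeta$, with $\rho_{g^n\zeta}\leq\hat\rho$ for all $n$ and with $\alpha^{g^n\zeta}$ being $C$--Hausdorff close to $\sigma_n+g^n\alpha$; in particular $\alpha^{g^n\zeta}$ fellow-travels $L$ out to radius at least $|g^n|-C$ and then follows $g^n\alpha$, and $|g^n|-C$ exceeds any fixed target once $n$ is large.

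The heart of the matter is the case $n\geq N_0$, which I would run by contradiction in the style of the proofs of \fullref{regular} and \fullref{lem:annulus}. Suppose $R\geq R_\infty$ (to be chosen), $n\geq N_0$, and $\xi\in U(\zeta,R)$ with $g^n\xi\notin U(g^n\zeta,r)$; by \fullref{obs} there is a witnessing continuous $(\cL,\cA)$--quasi-geodesic $\gamma\in g^n\xi$ with $\cL^2,\cA<r/3$, and $\gamma$ leaves $\alpha^{g^n\zeta}$—equivalently, by the previous paragraph, leaves the axis $L$—while still inside $N_r\one$. The point is to convert $\gamma$ into a quasi-geodesic in $\xi$ witnessing $\xi\notin U(\zeta,R)$. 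One first notes that $g^{-n}\gamma$ is asymptotic to $g^{-n}(\sigma_n+g^n\alpha^\xi)=\bar{\sigma}_n+\alpha^\xi$, a genuine quasi-geodesic with uniform constants (the only subtlety being a bounded backtrack near $\one$ coming from the bounded initial fellow-travelling of $\alpha^\xi$ with $\alpha$), and then a surgery of the type in \fullref{tailwag} (using also \fullref{lem:nonasymptoticmakesquasigeodesic}) produces a continuous quasi-geodesic $\hat\gamma\in\xi$ whose quasi-geodesic constants depend only on those of $\gamma$, hence only on $r$, and which still records how $\gamma$ leaves $\alpha^{g^n\zeta}$. Feeding $\hat\gamma$ into $\xi\in U(\zeta,R)$ locates a point of $\alpha$ outside $N_R\one$ within a controlled distance of $\hat\gamma$; translating by $g^n$ and using that $\alpha^{g^n\zeta}$ follows $g^n\alpha$ beyond radius $|g^n|-C$ gives a point of $\alpha^{g^n\zeta}$ at distance at least $|g^n|-C\ (\gg r)$ from $\one$ within a controlled distance of $\gamma$; and \fullref{keylemma}, together with \fullref{cor:asymp}, then pushes this close point down to one outside $N_r\one$ and within $\kappa(\rho_{g^n\zeta},\cL,\cA)$ of $\gamma$, contradicting the choice of $\gamma$. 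Tracing the estimates, the resulting $R_\infty$ depends only on $g$, $\zeta$ and $r$; it does \emph{not} depend on $n$ precisely because every estimate is taken against the $g$--invariant, uniformly contracting axis $L$ rather than against a moving basepoint—which is why the basepoint-invariance of \fullref{prop:choices} cannot simply be transported here, its estimates degrading as the basepoint moves out to $g^n$.

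The main obstacle is the one highlighted in the footnote to the proof of \fullref{thm:continuity}: a general quasi-geodesic $\gamma\in g^n\xi$ carries no bound on its own contraction function, since that of $\xi$ can be arbitrarily bad even for $\xi$ arbitrarily close to $\zeta$, so one may not argue that $\gamma$ stays uniformly close to $\alpha^{g^n\xi}$ and that $\alpha^{g^n\xi}$ stays close to $\alpha^{g^n\zeta}$. The tail-wagging surgery is exactly what circumvents this: it rebuilds $\gamma$ with quasi-geodesic constants controlled by $\gamma$ alone, so that every divagation estimate in the argument sees only the good, uniform data $\rho_\zeta$, $\rho_{g^n\zeta}\leq\hat\rho$ and the constants attached to $L$. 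A secondary point of care—keeping the target in the conclusion as $\kappa(\rho_{g^n\zeta},\cL,\cA)$ rather than the larger $\kappa(\hat\rho,\cL,\cA)$—is handled as in the proof of \fullref{prop:countablebasis}: one runs \fullref{keylemma} with the contraction function $\rho_{g^n\zeta}$ itself and only afterwards bounds the subtracted $\lambda$--term using $\rho_{g^n\zeta}\leq\hat\rho$.
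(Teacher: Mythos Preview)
Your argument is essentially correct and, at its core, is the same as the paper's: one converts the witnessing quasi-geodesic $\gamma\in g^n\xi$ into a quasi-geodesic $\hat\gamma\in\xi$ based at $\one$ by prepending a geodesic toward $g^{-n}$ and taking a tail of $g^{-n}\gamma$ (this is exactly \fullref{lem:nonasymptoticmakesquasigeodesic}; your citation of \fullref{tailwag} is a slight misdirection, since no tail-swap is needed), then uses $\xi\in U(\zeta,R)$ and \fullref{keylemma} to contradict the choice of $\gamma$.

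The one substantive difference is your case split. The paper handles \emph{all} $n$ with a single $R'$, and the reason is worth noting: rather than translating by $g^n$ and using that $\alpha^{g^n\zeta}$ follows $g^n\alpha^\zeta$ past radius $|g^n|-C$ (which forces you to take $n$ large to make $|g^n|-C\gg r$), the paper stays at basepoint $\one$ and uses the thin-triangle argument for the geodesic triangle with sides $\alpha^\zeta$, $g^{-n}\alpha^{g^n\zeta}$, and a geodesic from $\one$ to $g^{-n}$. All three sides are uniformly $\rho$--contracting (independent of $n$), so the triangle is $B$--thin with $B$ independent of $n$, and hence $\alpha^\zeta_s$ is $\kappa'(\rho,1,0)$--close to $g^{-n}\alpha^{g^n\zeta}$ for all $s$ beyond a fixed $s'$ \emph{not depending on $n$}. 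This lets one pass directly from ``$\hat\gamma$ close to $\alpha^\zeta_s$'' to ``$g^{-n}\gamma$ close to $g^{-n}\alpha^{g^n\zeta}$ at a point far from $\langle g\rangle$'', apply \fullref{keylemma} there, and translate by $g^n$ only at the very last step. In fact your own estimate can be sharpened the same way: the point you locate on $\alpha^{g^n\zeta}$ is really at distance roughly $|g^n|+R$ from $\one$ up to bounded error (coming from the bounded backtrack you mention), not just $|g^n|-C$, so choosing $R$ large already suffices for every $n$ and the appeal to continuity for finitely many $n$ is unnecessary.
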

\begin{proof}
Since $g$ is contracting there is a sublinear function $\rho$ such
that all geodesic segments joining powers of $g$ as well as geodesic rays
based at $\one$ going to $g^\infty$, $g^{-\infty}$, or $g^m\zeta$ for any
$m\in\mathbb{Z}$ are all $\rho$--contracting.

Consider a geodesic triangle with sides
$g^{-m}\alpha^{g^m\zeta}$,
$\alpha^\zeta$, and a geodesic from $\one$ to $g^{-m}$ for arbitrary
$m\in\mathbb{Z}$.
All sides are $\rho$--contracting, and such a triangle is $B$--thin for some $B$ independent of $m$. 
Thus, for sufficiently large $s'$, independent of $m$, the point
$\alpha^\zeta_{s'}$ is more than $B$--far from $\langle g\rangle$, hence $B$--close to
$g^{-m}\alpha^{g^m\zeta}$.
Since $\alpha^\zeta$ and $g^{-m}\alpha^{g^m\zeta}$ are asymptotic,
they eventually come $\kappa(\rho,1,0)$--close and then stay
$\kappa'(\rho,1,0)$--close thereafter.
\fullref{GIT} says the first time they come $\kappa(\rho,1,0)$ close occurs
no later than $s'+\rho'(B)$.
Take $R'\geq s'+\rho'(B)$, which guarantees
$d(\alpha^\zeta_s,g^{-m}\alpha^{g^m\zeta})\leq \kappa'(\rho,1,0)$ for all
$m\in\mathbb{Z}$ and all $s\geq R'$.

Let $\cL'$ and $\cA'$ be the constants of
\fullref{lem:nonasymptoticmakesquasigeodesic} for $\rho$, $\cL=\sqrt{r/3}$
and $\cA=r/3$, and let $\theconstantformerlyknownasLambda$ and $\lambda$ be as in \fullref{keylemma}.
Take $T:=1+r+\theconstantformerlyknownasLambda(\kappa(\rho,\cL',\cA')+\kappa'(\rho,1,0))+\lambda(\rho,1,0)$.
We require further that $R'$ is larger than $\cL'$ and $\cA'$ and large enough so that for all $s\geq R'$
we have
$d(\alpha^\zeta_s,\langle g\rangle)>T+\kappa'(\rho,1,0)$.

Suppose, for a contradiction, that there exist some $n\in\mathbb{N}$ and
$R\geq R'$ such that there exists a point $\eta\in
U(\zeta,R)\setminus g^{-n}U(g^n\zeta,r)$.
Since $\eta\notin g^{-n}U(g^n\zeta,r)$,  for some $\cL,\,\cA$ there exists a continuous $(\cL,\cA)$--quasi-geodesic
$\gamma\in g^n\eta$ that does not come $\kappa(\rho,\cL,\cA)$--close
to $\alpha^{g^n\zeta}$ outside $N_r\one$.
By \fullref{obs}, it suffices to consider the case that $\cL<\sqrt{r/3}$
and $\cA<r/3$.

As in \fullref{lem:nonasymptoticmakesquasigeodesic}, construct a continuous $(\cL',\cA')$--quasi-geodesic
$\delta$ that first follows a geodesic from $\one$ towards $g^{-n}$, then a
geodesic segment of length $\kappa(\rho,\cL,\cA)$, and then
follows a tail of $g^{-n}\gamma$.
Since it shares a tail with $g^{-n}\gamma$, we have $\delta\in \eta$.
Since $\eta\in U(\zeta,R)$, there is some $s\geq R$ such that 
$\delta$ comes within distance $\kappa(\rho,\cL',\cA')$ of 
$\alpha^\zeta_s$.
Our choice of $R'$ guarantees that
$d(\alpha^\zeta_s,g^{-n}\alpha^{g^n\zeta})\leq \kappa'(\rho,1,0)$ and
$d(\alpha^\zeta_s,\langle g\rangle)>T+\kappa'(\rho,1,0)$.
The latter implies that the point of $\delta$ close to $\alpha^\zeta_s$
is a point of $g^{-n}\gamma$, so there is a point of $g^{-n}\gamma$
that comes within distance  $\kappa(\rho,\cL',\cA')+ \kappa'(\rho,1,0)$ of a
point $x$ of $g^{-n}\alpha^{g^n\zeta}$ such that $d(x,\langle
g\rangle)\geq T$.
\fullref{keylemma} says that there is a point $y$ on
$g^{-n}\alpha^{g^n\zeta}$ such that $d(y,g^{-n}\gamma)=
\kappa(\rho,\cL,\cA)$ and $d(x,y)\leq \theconstantformerlyknownasLambda(\kappa(\rho,\cL',\cA')+
\kappa'(\rho,1,0))+\lambda(\rho,\cL,\cA)$.
The definition of $T$ implies $d(y,g^{-n})\geq d(y,\langle g\rangle)\geq d(x,\langle g\rangle)-d(x,y)>r$. 
But then $g^ny$ is a point of $\alpha^{g^n\zeta}$ with $d(g^ny,\one)>r$
and $d(g^ny,\gamma)=\kappa(\rho,\cL,\cA)$, contradicting the
definition of $\gamma$.
\end{proof}

\begin{lemma}\label{lem:pushalongcontracting}
 Given a contracting element $g\in G$, an $r\geq 1$, and a point
 $\zeta\in\cbdry^\fq G\setminus\{g^{-\infty}\}$ there exist constants
 $R'\geq 1$ and $N$ such that for all $R\geq R'$ and $n\geq N$ we have
 $g^nU(\zeta,R)\subset U(g^\infty,r)$.
\end{lemma}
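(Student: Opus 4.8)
The plan is to follow the template used for \fullref{lem:nicenesting}, exploiting the fact that $g$ is contracting so that all the relevant geodesics are uniformly $\rho$--contracting. First I would fix a sublinear $\rho$ such that every geodesic segment joining powers of $g$, the geodesic rays $\alpha^{g^\infty}$ and (since $\zeta\neq g^{-\infty}$) the geodesic ray $g^n\alpha^{g^n\zeta}$ for each $n$ — wait, more carefully: the ray $\alpha^{g^n\zeta}$ is $g^n\alpha^\zeta$ up to bounded Hausdorff distance since $G$ acts by isometries, so uniform contraction for $\alpha^\zeta$ (and its translates) follows from \fullref{Hausdorff} together with the fact that the geodesic from $\one$ to $g^n$ is $\rho$--contracting. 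The key geometric input is that, because $\zeta\neq g^{-\infty}$, a geodesic triangle on $\one$, $g^n$, and the endpoint $g^n\zeta$ is uniformly thin, so $\alpha^{g^n\zeta}$ first runs within bounded distance of $\langle g\rangle_{[0,n]}$ (the positive-power ray of $g$, which represents $g^\infty$) and then, once past $g^n$, diverges from $\langle g\rangle$ at a definite linear rate; consequently $\alpha^{g^n\zeta}$ fellow-travels $\alpha^{g^\infty}$ out to roughly radius $n$ minus a constant independent of $n$.

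Next I would set up the contradiction in the usual style. Suppose $\xi\in g^nU(\zeta,R)$ but $\xi\notin U(g^\infty,r)$. The latter gives, by \fullref{obs}, a continuous $(\cL,\cA)$--quasi-geodesic $\gamma\in\xi$ with $\cL^2,\cA<r/3$ that never comes $\kappa(\rho,\cL,\cA)$--close to $\alpha^{g^\infty}$ outside $N_r\one$. Since $g^{-n}\xi\in U(\zeta,R)$, the translate $g^{-n}\gamma$ is a continuous $(\cL,\cA)$--quasi-geodesic in $g^{-n}\xi$; composing with a geodesic from $\one$ to $g^{-n}$ (of length $n$, up to bounded error, lying along $\langle g\rangle$) and a short connecting segment, and invoking \fullref{lem:nonasymptoticmakesquasigeodesic} exactly as in \fullref{lem:nicenesting}, produces a continuous $(\cL',\cA')$--quasi-geodesic $\delta\in g^{-n}\xi$ whose constants depend only on $\rho$, $\cL$, $\cA$ (hence only on $r$), and which shares a tail with $g^{-n}\gamma$. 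Because $g^{-n}\xi\in U(\zeta,R)$, $\delta$ comes $\kappa(\rho,\cL',\cA')$--close to some $\alpha^\zeta_s$ with $s\geq R$; choosing $R'$ large enough (larger than $\cL'$, $\cA'$, and large enough that for $s\geq R'$ the point $\alpha^\zeta_s$ is farther than a fixed threshold $T$ from $\langle g\rangle$, using linear divergence from the thin-triangle estimate) forces the close point to lie on the $g^{-n}\gamma$ part of $\delta$.

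Then I would push everything back by $g^n$: there is a point of $\gamma$ within a uniformly bounded distance of a point $x$ on $\alpha^{g^n\zeta}$ with $d(x,\langle g\rangle)\geq T$, where $T$ is chosen so that $x$ lies in the ``fellow-travelling with $\alpha^{g^\infty}$'' range — concretely, choose $T$ so that any point of $\alpha^{g^n\zeta}$ at distance $\geq T$ from $\langle g\rangle$ has already escaped the short initial segment and is $\kappa'(\rho,1,0)$--close to $\alpha^{g^\infty}$, provided $n\geq N$ for a suitable threshold $N$ depending on $T$. Apply \fullref{keylemma} to $\alpha^{g^n\zeta}$ (or rather to $\alpha^{g^\infty}$, after replacing $x$ by a nearby point on $\alpha^{g^\infty}$) and $\gamma$: the last point $y$ on $\alpha^{g^\infty}$ with $d(y,\gamma)=\kappa(\rho,\cL,\cA)$ satisfies $d(x,y)\leq \theconstantformerlyknownasLambda(\text{bounded})+\lambda(\rho,\cL,\cA)$, hence (by the definition of $T$) $d(y,\one)>r$. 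This exhibits a point $y\in\alpha^{g^\infty}$ with $d(y,\one)>r$ and $d(y,\gamma)=\kappa(\rho,\cL,\cA)$, contradicting the choice of $\gamma$.

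The main obstacle is getting the two thresholds $R'$ and $N$ to work simultaneously and independently of $n$: one needs the thin-triangle / linear-divergence estimate to say \emph{both} that $\alpha^{g^n\zeta}$ stays close to $\alpha^{g^\infty}$ up to a controlled radius (this is where $n\geq N$ enters, so that the ``close to $\langle g\rangle$'' portion of $\alpha^{g^n\zeta}$ reaches past radius $r$) \emph{and} that the escape of $\delta$ to radius $\geq R$ along $\alpha^\zeta$ lands it genuinely far from $\langle g\rangle$ (this is where $R\geq R'$ enters, via linear divergence of $\alpha^\zeta$ from $g^{-n}\langle g\rangle$, uniform in $n$ because $\zeta\neq g^{-\infty}$). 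Everything else is a bookkeeping exercise of the same flavor as \fullref{lem:nicenesting}, using \fullref{cor:asymp}, \fullref{lem:hausdorff}, \fullref{lem:nonasymptoticmakesquasigeodesic}, and \fullref{keylemma}.
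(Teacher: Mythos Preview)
There is a genuine geometric error at the heart of your argument. After translating back by $g^n$, you have a point of $\gamma$ close to $g^n\alpha^\zeta_s$ for some large $s$, and you correctly observe that this point is at distance $\geq T$ from $\langle g\rangle$. You then claim that ``any point of $\alpha^{g^n\zeta}$ at distance $\geq T$ from $\langle g\rangle$ \ldots\ is $\kappa'(\rho,1,0)$--close to $\alpha^{g^\infty}$''. This is backwards: $\alpha^{g^\infty}$ lies in a bounded neighborhood of the positive powers of $g$, so being far from $\langle g\rangle$ means being \emph{far} from $\alpha^{g^\infty}$, not close. The point $x$ you have located is on the part of $\alpha^{g^n\zeta}$ that has already diverged from $\alpha^{g^\infty}$ and is fellow-travelling $g^n\alpha^\zeta$ instead. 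So the application of \fullref{keylemma} to $\alpha^{g^\infty}$ at $x$ does not go through. (Separately, your choice of $R'$ requires $\alpha^\zeta_s$ to eventually diverge from $\langle g\rangle$, which fails when $\zeta=g^\infty$; you need to treat that case on its own, as the paper does.)

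The paper's proof is structurally different and avoids this trap. Rather than tracking $\gamma$ all the way out to $g^n\alpha^\zeta_s$, it factors the containment through an intermediate neighborhood: first it shows directly that $U(g^n\zeta,S')\subset U(g^\infty,r)$ for suitable $S'$ and all $n\geq N$, using that $\alpha^{g^n\zeta}$ fellow-travels $\alpha^{g^\infty}$ for distance at least $S'$ (this is where $n\geq N$ enters), and applying \fullref{keylemma} \emph{within} that initial fellow-travelling zone. Then it simply invokes \fullref{lem:nicenesting} to get $g^nU(\zeta,R)\subset U(g^n\zeta,S')$. Your approach can be repaired along these lines: once you know $\gamma$ comes close to $\alpha^{g^n\zeta}$ far out (via the thin triangle between $\one$, $g^n$, and $g^n\zeta$, which places $g^n\alpha^\zeta_s$ near $\alpha^{g^n\zeta}$), use \fullref{lem:hausdorff} to conclude $\gamma$ fellow-travels $\alpha^{g^n\zeta}$ on its entire initial segment, in particular on the part of length $S'$ where $\alpha^{g^n\zeta}$ is close to $\alpha^{g^\infty}$, and only then apply \fullref{keylemma} to $\alpha^{g^\infty}$. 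But at that point you have essentially reproduced the paper's argument with \fullref{lem:nicenesting} unpacked inline.
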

\begin{proof}
The lemma is easy if $\zeta=g^\infty$, so assume not.
Since $g$ is contracting the geodesics $\alpha^{g^n\zeta}$ are
uniformly contracting. 
Let $\rho$ be a sublinear function such that $\alpha^{g^\infty}$,
$\alpha^{g^{-\infty}}$, and all $\alpha^{g^n\zeta}$ are
$\rho$--contracting. 
Since these geodesics are uniformly contracting, ideal geodesic
triangles with vertices $g^\infty$, $g^{-\infty}$, and $g^n\zeta$ are
uniformly thin, independent of $n$. 
Thus, for $N$ sufficiently large and for all $n\geq N$ we have that
$\alpha^{g^n\zeta}$ stays $\kappa'(\rho,1,0)$ close to $\alpha^{g^\infty}$
for distance greater than
$S':=1+r+\lambda(\rho,\sqrt{r/3},r/3)+\theconstantformerlyknownasLambda(\kappa'(\rho,\sqrt{r/3},r/3)+\kappa'(\rho,1,0))$,
where $\theconstantformerlyknownasLambda$ and $\lambda$ are as in \fullref{keylemma}.

Suppose that $\eta\in U(g^n\zeta,S)$ for some $n\geq N$ and $S\geq
S'$. 
Let $\gamma\in\eta$ be a continuous
$(\cL,\cA)$--quasi-geodesic for some $\cL^2,\,\cA\leq r/3$.
 By hypothesis, $\gamma$ comes $\kappa(\rho,\cL,\cA)$--close to
 $\alpha^{g^n\zeta}$ outside $N_S\one$. 
Therefore, $\gamma$ stays $\kappa'(\rho,\cL,\cA)$--close to
$\alpha^{g^n\zeta}$ in $N_S\one$. 
By our choice of $N$, this implies $\gamma$ stays
$(\kappa'(\rho,\cL,\cA)+\kappa'(\rho,1,0))$--close to
$\alpha^{g^\infty}$ in $N_S\one$. 
By our choice of $S$ and \fullref{keylemma}, $\gamma$ comes
$\kappa(\rho,\cL,\cA)$--close to $\alpha^{g^\infty}$ outside the
neighborhood of $\one$ of radius:
\begin{multline*}
  S-\theconstantformerlyknownasLambda 
(\kappa'(\rho,\cL,\cA)+\kappa'(\rho,1,0))-\lambda(\rho,\cL,\cA)\\
\geq S'-\theconstantformerlyknownasLambda
(\kappa'(\rho,\sqrt{r/3},r/3)+\kappa'(\rho,1,0))-\lambda(\rho,\sqrt{r/3},r/3)>r
\end{multline*}
Since $\gamma$ was arbitrary, $\eta\in U(g^\infty,r)$, thus
$U(g^n\zeta,S)\subset U(g^\infty,r)$.

By \fullref{lem:nicenesting}, given $g$, $S'$, and $\zeta$ there exists
an $R'$ such that for all $R\geq R'$ and every $n\in\mathbb{N}$ we
have $U(\zeta,R)\subset g^{-n}U(g^n\zeta,S')$.
Thus, for this $R'$ and $N$ as above we have, for all $n\geq N$ and $R\geq R'$, that
$g^nU(\zeta,R)\subset U(g^n\zeta,S')\subset U(g^\infty,r)$.
\end{proof}

\begin{theorem}[Weak North-South dynamics for contracting elements]\label{NS}
Let $g\in G$ be a contracting element.
For every open set $V$ containing $g^\infty$ and every compact set
$C\subset\cbdry^\fq G\setminus\{g^{-\infty}\}$ there exists an $N$
such that for all $n\geq N$ we have $g^nC\subset V$.
\end{theorem}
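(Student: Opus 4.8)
The plan is to deduce the theorem from \fullref{lem:pushalongcontracting} by a compactness argument. The key point is that \fullref{lem:pushalongcontracting} gives, for each point $\zeta\in C$ (note $\zeta\neq g^{-\infty}$ since $C\subset\cbdry^\fq G\setminus\{g^{-\infty}\}$) and each $r\geq 1$, constants $R'_\zeta\geq 1$ and $N_\zeta$ so that $g^nU(\zeta,R)\subset U(g^\infty,r)$ for all $R\geq R'_\zeta$ and $n\geq N_\zeta$. The difficulty is that a priori these constants depend on $\zeta$, so a single $N$ working for all of $C$ requires extracting a finite subcover.

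First I would reduce to the case where $V$ is a basic neighborhood: since $g^\infty\in V$ and $V$ is open, by \fullref{prop:topology} there exists $r\geq 1$ with $U(g^\infty,r)\subset V$, so it suffices to find $N$ with $g^nC\subset U(g^\infty,r)$ for all $n\geq N$. Next, for each $\zeta\in C$ apply \fullref{lem:pushalongcontracting} with this $r$ to obtain $R'_\zeta$ and $N_\zeta$ such that $g^nU(\zeta,R'_\zeta)\subset U(g^\infty,r)$ for all $n\geq N_\zeta$. By \fullref{prop:topology} the set $U(\zeta,R'_\zeta)$ is a neighborhood of $\zeta$, so it contains an open set $O_\zeta$ with $\zeta\in O_\zeta\subset U(\zeta,R'_\zeta)$. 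Then $\{O_\zeta\}_{\zeta\in C}$ is an open cover of $C$; by compactness there is a finite subset $F\subset C$ with $C\subset\bigcup_{\zeta\in F}O_\zeta$. Set $N:=\max_{\zeta\in F}N_\zeta$.

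Now for any $n\geq N$, given $\xi\in C$ there is some $\zeta\in F$ with $\xi\in O_\zeta\subset U(\zeta,R'_\zeta)$, hence $g^n\xi\in g^nU(\zeta,R'_\zeta)\subset U(g^\infty,r)\subset V$. This gives $g^nC\subset V$ for all $n\geq N$, as required.

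The main obstacle is purely bookkeeping: one must be careful that \fullref{lem:pushalongcontracting} as stated is applied for the fixed value $R=R'_\zeta$ (the lemma allows any $R\geq R'_\zeta$, so this is fine), and that the passage from the neighborhood $U(\zeta,R'_\zeta)$ to an honest open set $O_\zeta$ is legitimate — this is exactly the content of \fullref{prop:topology}, which asserts that sets of the form $U(\zeta,R)$ are neighborhoods even though they need not themselves be open. No new geometric input is needed beyond \fullref{lem:pushalongcontracting}; the compactness of $C$ does all the remaining work.
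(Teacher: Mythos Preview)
Your proposal is correct and essentially identical to the paper's own proof: both reduce to a basic neighborhood $U(g^\infty,r)\subset V$, apply \fullref{lem:pushalongcontracting} pointwise on $C$, pass from the neighborhoods $U(\zeta,R_\zeta)$ to genuine open sets via \fullref{prop:topology}, extract a finite subcover by compactness, and take $N$ to be the maximum of the finitely many $N_\zeta$. Even the caveat you flag about $U(\zeta,R)$ being a neighborhood but not necessarily open is handled the same way in the paper.
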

We remark that if \fullref{NS} were true for \emph{closed} sets 
and $G$ contained contracting elements without common powers then
we could play
  ping-pong to produce a free subgroup of $G$. 
Such a result cannot be true in this generality
  because there are Tarski Monsters, non-cyclic groups such that every
  proper subgroup is cyclic, such that every non-trivial
  element is Morse, hence, contracting \cite[Theorem 1.12]{OlsOsiSap09}.
\begin{proof}[Proof of \fullref{NS}]
Since $V$ is an open set containing $g^\infty$ there exists some $r>0$
such that $U(g^\infty,r)\subset V$.
  For this $r$ and for each $\zeta\in C$ there exist $R_\zeta$ and $N_\zeta$ 
  as in \fullref{lem:pushalongcontracting} such that for all $n\geq
  N_\zeta$ we have $g^nU(\zeta,R_\zeta)\subset U(g^\infty,r)$.
By \fullref{prop:topology}, $U(\zeta,R_\zeta)$ is a neighborhood of
$\zeta$, so there exists an open set $U'_\zeta$ such that $\zeta\in
U'_\zeta\subset U(\zeta,R_\zeta)$.
The collection $\{U'_\zeta\mid \zeta\in C\}$ 
is an open cover of $C$, which is compact, so there exists a
finite subset $C'$ of $C$ such that
$\{U'_\zeta\mid \zeta\in C'\}$ covers $C$.
Define $N:=\max_{\zeta\in C'}N_\zeta$.
For every $n\geq N$ we then have:
\begin{align*}
  g^nC&\subset g^n(\bigcup_{\zeta\in C'}U'_\zeta)\subset
        g^n(\bigcup_{\zeta\in C'}U(\zeta,R_\zeta))\\
&=\bigcup_{\zeta\in C'}g^nU(\zeta,R_\zeta)\subset U(g^\infty,r)\subset V\qedhere
\end{align*}

\end{proof}

\section{Compactness}\label{sec:compact}
In this section we characterize when the contracting boundary of a group is compact, \fullref{thm:compacthyperbolic}, and give a partial characterization of when the limit set of a group is compact, \fullref{prop:compactclosure}.
\begin{theorem}\label{thm:compacthyperbolic}
  Let $G$ be an infinite,  finitely generated group. Consider the Cayley graph of $G$, which we again denote $G$,  with respect to a fixed finite generating set. The following are equivalent:
  \begin{enumerate}
\item Geodesic rays in $G$ are uniformly contracting.\label{item:raysuniformlycontracting}
\item Geodesic segments in $G$ are uniformly contracting.\label{item:uniformlycontracting}
\item $G$ is hyperbolic.\label{item:hyperbolic}
\item $\cbdry^\dl G$ is non-empty and compact.\label{item:dlcompact}
\item $\cbdry^\fq G$ is non-empty and compact.\label{item:fqcompact}
  \item Every geodesic ray in $G$ is contracting.\label{item:allcontracting}
  \end{enumerate}
\end{theorem}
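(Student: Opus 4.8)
The plan is to prove the cycle of implications
\eqref{item:raysuniformlycontracting} $\Rightarrow$ \eqref{item:uniformlycontracting} $\Rightarrow$ \eqref{item:hyperbolic} $\Rightarrow$ \eqref{item:dlcompact} $\Rightarrow$ \eqref{item:fqcompact} $\Rightarrow$ \eqref{item:allcontracting} $\Rightarrow$ \eqref{item:raysuniformlycontracting}, relying on homogeneity of the Cayley graph throughout. For \eqref{item:raysuniformlycontracting} $\Rightarrow$ \eqref{item:uniformlycontracting}: every geodesic segment $\beta_{[0,n]}$ with $\beta_0=\one$ extends, by properness and König's lemma, to a geodesic ray, or more directly, one runs the same almost-triangle argument as in \fullref{finiteorbit}: fix a $\rho$--contracting ray $\alpha$ through $\one$; a geodesic segment $\gamma$ from $\one$ can be compared with $\alpha$ and $\gamma$'s endpoint's nearest ray, forming a $C$--almost triangle whose two long sides are uniformly contracting, so \fullref{lem:almosttriangle} makes $\gamma$ uniformly contracting. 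Actually the cleanest route: a finite geodesic segment based at $\one$ is a subsegment of a geodesic ray based at $\one$ only after possibly translating, but since $G$ is homogeneous and (if infinite) contains geodesic rays, we may translate any geodesic segment to start at $\one$; then it is a subsegment of some geodesic ray (extend it indefinitely using properness), and \fullref{lem:subsegment} gives uniform contraction.

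For \eqref{item:uniformlycontracting} $\Rightarrow$ \eqref{item:hyperbolic}: uniformly contracting geodesic segments are uniformly Morse by \fullref{morseequivalent}; a geodesic metric space in which all geodesic triangles are uniformly thin is hyperbolic, and uniform Morseness of geodesics gives the thin-triangles condition directly (each side of a triangle is a geodesic with endpoints on the union of the other two sides, which is a uniform-quality quasigeodesic, hence stays uniformly close). For \eqref{item:hyperbolic} $\Rightarrow$ \eqref{item:dlcompact}: when $G$ is hyperbolic all geodesics are uniformly contracting, so $\bdry_\rho G = \cbdry G$ for a single $\rho$ and the direct limit is just this single space, which is the Gromov boundary, nonempty (since $G$ is infinite) and compact; this is noted in the discussion after \fullref{lem:equivalenttoCSC}. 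For \eqref{item:dlcompact} $\Rightarrow$ \eqref{item:fqcompact}: $\dl$ refines $\fq$ by \fullref{dlrefinesfq}, and $\cbdry^\fq G$ is Hausdorff by \fullref{hausdorff}; the continuous image (under the identity) of the compact space $\cbdry^\dl G$ is the compact set $\cbdry^\fq G$, which is therefore compact (and nonempty).

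For \eqref{item:fqcompact} $\Rightarrow$ \eqref{item:allcontracting}: this is the step I expect to be the main obstacle, since it is where compactness of the \emph{coarser} topology must be leveraged. Suppose some geodesic ray $\beta$ with $\beta_0 = \one$ is not contracting; I would build a sequence in $\cbdry^\fq G$ with no convergent subsequence. Take geodesic rays $\gamma^n$ that agree with $\beta$ on longer and longer initial segments but are themselves contracting (e.g.\ by suitably turning $\beta$ back toward $\one$-translates of a fixed contracting element after time $n$, using \fullref{lem:nonasymptoticmakesquasigeodesic} or a direct ping-pong construction to keep them quasigeodesic and contracting with bad-but-finite contraction functions $\rho_{\gamma^n}$). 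Since $\beta$ is not contracting, no single sublinear function bounds the $\rho_{\gamma^n}$; translating by group elements along $\beta$, the points $[\gamma^n] \in \cbdry^\fq G$ should be forced apart. Concretely, if a subsequence converged to some $\zeta$ with representative $\alpha^\zeta$ that is $\rho_\zeta$--contracting, then for large $n$, $[\gamma^n] \in U(\zeta, r)$ forces every quasigeodesic in $[\gamma^n]$ to fellow-travel $\alpha^\zeta$ outside $N_r\one$ with constant $\kappa(\rho_\zeta, \cL, \cA)$; but $\gamma^n$ contains long nearly-non-contracting behavior near $\one$ (inherited from $\beta$) which, via \fullref{keylemma} applied in the contrapositive, is incompatible with such fellow-travelling once $n$ is large compared to $\rho_\zeta$. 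This contradiction shows no subsequence converges, contradicting sequential compactness (valid since $\cbdry^\fq G$ is first countable by \fullref{prop:topology}).

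Finally \eqref{item:allcontracting} $\Rightarrow$ \eqref{item:raysuniformlycontracting}: this is a Baire category / compactness-of-function-space argument. The space of geodesic rays based at $\one$, with the topology of uniform convergence on compact sets, is compact (by properness and Arzelà--Ascoli). For each $N$ let $E_N$ be the set of geodesic rays $\alpha$ based at $\one$ such that $\alpha$ is $\rho_N$--contracting, where $(\rho_N)$ is a fixed exhausting sequence of sublinear functions (every sublinear function is dominated by some $\rho_N$ up to $\asymp$, which suffices after enlarging). One checks $E_N$ is closed in the compact ray space: if $\alpha^k \to \alpha$ and each $\alpha^k$ is $\rho_N$--contracting, then for any $x, y$ with $d(x,y) \le d(x,\alpha)$, approximating projections shows $\alpha$ is (a fixed enlargement of) $\rho_N$--contracting; one must be slightly careful because closest-point projection is set-valued and only upper semicontinuous, but the Geodesic Image Theorem \fullref{GIT} gives a formulation in terms of projecting geodesic segments bounded away from $\alpha$, which passes to limits cleanly. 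By \eqref{item:allcontracting} the $E_N$ cover the ray space, so by Baire some $E_N$ has nonempty interior; by homogeneity (the $G$--action is cocompact on rays up to the choice of basepoint) this forces $E_{N'}$ to be everything for some larger $N'$, i.e.\ all rays are uniformly $\rho_{N'}$--contracting. This closes the cycle. The delicate points are the semicontinuity issue in showing $E_N$ closed and the bookkeeping in \eqref{item:fqcompact} $\Rightarrow$ \eqref{item:allcontracting} to make the incompatibility quantitative; both are handled by replacing closest-point projection statements with the geodesic-segment-projection statements of \fullref{GIT} and \fullref{keylemma}.
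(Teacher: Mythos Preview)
Your cycle matches the paper's, and the easy implications \eqref{item:raysuniformlycontracting}$\Rightarrow$\eqref{item:uniformlycontracting}$\Rightarrow$\eqref{item:hyperbolic}$\Rightarrow$\eqref{item:dlcompact}$\Rightarrow$\eqref{item:fqcompact} are essentially the paper's arguments (the paper uses Papasoglu's bigon criterion for \eqref{item:uniformlycontracting}$\Rightarrow$\eqref{item:hyperbolic}, but your thin-triangles route is fine). One small warning on \eqref{item:raysuniformlycontracting}$\Rightarrow$\eqref{item:uniformlycontracting}: geodesic segments in Cayley graphs need \emph{not} extend to geodesic rays (dead-end elements exist), so ``extend it indefinitely using properness'' fails; stick to your almost-triangle alternative, which is what the paper does.

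Your \eqref{item:fqcompact}$\Rightarrow$\eqref{item:allcontracting} is, despite the muddled write-up, the contrapositive of the paper's argument. The paper takes an arbitrary ray $\alpha$, builds the quasi-geodesics $\alpha_{[0,\sigma(n)]}+\alpha_{\sigma(n)}\beta_{[0,\infty)}$ for a fixed contracting bi-infinite $\beta$, extracts a convergent subsequence by compactness, and concludes $\alpha$ is asymptotic to the limit's representative. Your ``bad contraction of $\gamma^n$'' is a red herring---the contradiction is simply that convergence forces $\beta$ to fellow-travel $\alpha^\zeta$, hence be contracting.

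The genuine gap is \eqref{item:allcontracting}$\Rightarrow$\eqref{item:raysuniformlycontracting}. Your Baire argument breaks at the homogeneity step: $G$ does \emph{not} act transitively (or even cocompactly) on the space of geodesic rays based at $\one$, so knowing that an \emph{open set} of such rays is $\rho_N$--contracting does not propagate. If rays agreeing with some $\alpha$ on $[0,R]$ are uniformly contracting, there is no group element carrying an arbitrary ray $\gamma$ from $\one$ into this set while keeping the basepoint at $\one$. The paper's remark before the proof explicitly flags that ``obvious'' convergence approaches to this implication fail, and yours is another instance. The paper's actual argument is substantially more delicate: assuming \eqref{item:allcontracting} and $\neg$\eqref{item:raysuniformlycontracting}, it first finds a single pair $(\cL,\cA)$ and bi-infinite geodesics $\gamma^n$ with $(\cL,\cA)$--quasi-geodesic detours $\delta^n$ straying distance $n$ away; it then proves an auxiliary claim \eqref{eq:12} allowing one to plant translates of arbitrarily long pieces of each $\gamma^n$ along a geodesic representative of any boundary point; using this it builds a nested sequence $U(\zeta^i,R_i)$ focusing on points whose representatives witness the bad detours $\delta^1,\delta^2,\dots$ in succession, extracts a limiting ray $\alpha$ (contracting by \eqref{item:allcontracting}), and shows $\alpha$ must pass close to both endpoints of every $g_i\delta^i$, so that the $\delta^i$ become $(\cL,\cA')$--quasi-geodesic detours from $\alpha$ of unbounded height---contradicting that $\alpha$ is Morse. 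This construction is the real content of the theorem and is not recoverable from a Baire argument.
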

\begin{remark}
Work of Cordes and Durham \cite{CorDur16} implies `\eqref{item:dlcompact} implies \eqref{item:hyperbolic}'.
Roughly the same argument we use for `\eqref{item:raysuniformlycontracting} implies \eqref{item:uniformlycontracting}' is contained in the proof of \cite[Proposition~4.2]{CorDur16}.
More interestingly, they prove \cite[Lemma~4.1]{CorDur16}  that compact subsets of the Morse boundary (of a space) are uniformly Morse, which is a more general version of `\eqref{item:dlcompact} implies \eqref{item:raysuniformlycontracting}'.
We specifically designed the \thistopology to allow sequences with decaying contraction/Morse functions to converge when geometrically appropriate, so the corresponding statement cannot be true in our setting. 
In particular, the equivalence of \eqref{item:fqcompact} and \eqref{item:allcontracting} with \eqref{item:raysuniformlycontracting}-\eqref{item:dlcompact} does not follow from their result. 
\end{remark}
\begin{remark}
Previous attempts have been made to prove results similar to `\eqref{item:allcontracting} implies \eqref{item:raysuniformlycontracting}'. We point out a difficulty in the obvious approach.
Suppose that $(\gamma^n)_{n\in\mathbb{N}}$ is a sequence of geodesics with decaying Morse functions.
Let $\delta^n$ be paths witnessing the decaying Morse functions, by which we mean that there exist $\cL$ and $\cA$ such that for each $n$ the path $\delta^n$ is an $(\cL,\cA)$--quasi-geodesic with endpoints on $\gamma^n$, and that $\delta^n$ is not contained in ${N}_n\gamma^n$. 
Let $\beta^n$ denote the subsegment of $\gamma^n$ between the endpoints of $\delta^n$.
We may assume by translation that for all $n$ the basepoint $\one$ is approximately the midpoint of $\beta^n$.
By properness, there is a subsequence of $(\gamma^n)$ that converges to a geodesic $\gamma$ through $\one$. 
One would guess that $\gamma$ is not Morse, but this is not true in general; explicit counterexamples can be constructed.
The problem is that the convergence to $\gamma$ can be much slower than the growth of $|\beta^n|$ so that the subsegment of $\gamma^n$ that agrees with $\gamma$ can be a vanishingly small fraction of $\beta^n$. In this case the segments $\delta^n$ may not have endpoints close to $\gamma$, so no conclusion can be drawn.

It seems difficult to fix this argument. 
Instead, our strategy, roughly, will be to  construct for each $i$ a translate $g_i\delta^i$ of $\delta^i$ and for each $n$ a geodesic ray that passes suitably close to both endpoints of $g_i\delta^i$ for all $i\leq n$. 
We argue that a subsequence of these rays converges to a non-Morse geodesic ray.
\end{remark}
\begin{proof}
Assume \eqref{item:raysuniformlycontracting}.
Since $G$ is infinite and finitely generated, there exists a geodesic ray $\alpha$ based at $\one$.
Recall that for $n\in\mathbb{N}$ the point $\alpha_n$ is a vertex of the Cayley graph, so it corresponds to a unique element of the group $G$. 
Thus,  $\alpha_n^{-1}\alpha$ is simply the translate of $\alpha$ by the isometry of $G$ defined by left multiplication by the element $\alpha_n^{-1}$.
Since $\one=\alpha_n^{-1}\alpha_n\in\alpha_n^{-1}\alpha$, the sequence $(\alpha_n^{-1}\alpha)_{n\in\mathbb{N}}$ has a subsequence that converges to a bi-infinite geodesic $\beta$ containing $\one$. 
By construction, subsegments of $\beta$ are close to subsegments of translates of $\alpha$, so by \fullref{lem:subsegment} and \fullref{lem:combination}, $\beta$ is contracting, with contraction function determined by the uniform bound for rays. Let $\beta^+$ and $\beta^-$ denote the two rays based at $\one$ such that $\beta=\beta^+\cup\beta^-$.

Let $g$ be an arbitrary non-trivial element of $G$.
Consider the ideal geodesic triangle with one side $g\beta$ and whose other two sides are geodesic rays based at $\one$ with endpoints $g\beta^+_\infty$ and $g\beta^-_\infty$.
The sides of this triangle are uniformly contracting, so it is uniformly thin, so there is some constant $C$ such that every point on $g\beta$ is $C$--close to one of the other two sides. 
In particular, $g$ is $C$--close to one of the other sides. 
Since the constant $C$ is independent of $g$, we have that that for every $g\in G$ there exists a geodesic ray $\gamma^g$ based at $\one$ and passing within distance $C$ of $g$.

Let $\delta$ be a geodesic segment with endpoints $h$ and $hg$ for some $g,\,h\in G$.
The contraction function of the geodesic $h^{-1}\delta$ from $\one$ to $g$ can be bounded in terms of $C$ and the contraction function of $\gamma^g$, but since rays have uniform contraction this gives us a bound for the contraction function for $h^{-1}\delta$, hence for $\delta$.
Since every geodesic segment is at Hausdorff distance at most 1/2 from a geodesic segment with endpoints at vertices, \fullref{Hausdorff} tells us that geodesic segments are uniformly contracting.
Thus, \eqref{item:raysuniformlycontracting} implies \eqref{item:uniformlycontracting}.

\medskip

If geodesic segments in $G$ are uniformly contracting then geodesic bigons are uniformly thin, so $G$ is hyperbolic by a theorem of Papasoglu \cite{MR1346209}. Thus, \eqref{item:uniformlycontracting} implies \eqref{item:hyperbolic}.

\medskip

By \cite[Theorem~3.10]{Cor15}, if $G$ is hyperbolic then $\cbdry^\dl G$ agrees with the Gromov boundary, which is compact, so \eqref{item:hyperbolic} implies \eqref{item:dlcompact}.

\medskip

$\dl$ is a refinement of $\fq$ by \fullref{dlrefinesfq}, so \eqref{item:dlcompact} implies \eqref{item:fqcompact}.

\medskip

If $G$ is virtually cyclic then \eqref{item:allcontracting} is true. 
If $G$ is not virtually cyclic then, by \fullref{finiteorbit}, if $\cbdry^\fq G$ is non-empty then it is infinite.
In particular, there are distinct points in $\cbdry^\fq G$. Choose two of them and connect them by a geodesic $\beta$, which is necessarily contracting. 
By translating $\beta$ we may assume that $\beta_0=\one$.
Suppose that $\alpha$ is an arbitrary geodesic ray based at $\one$.
As in the proof of \fullref{prop:countablebasis}, after possibly exchanging $\beta$ with $\bar{\beta}$ there is increasing $\sigma'\from\mathbb{N}\to\mathbb{N}$ such that for all $n\in\mathbb{N}$ we have that $\alpha_{\sigma'(n)}\beta_{[0,\infty)}$ does not backtrack far along $\alpha_{[0,\sigma'(n)]}$.
This means there are $\cL$ and $\cA$, independent of $n$, such that the concatenation of  $\alpha_{[0,\sigma'(n)]}$ and $\alpha_{\sigma'(n)}\beta_{[0,\infty)}$ is a continuous $(\cL,\cA)$--quasi-geodesic.

If $\cbdry^\fq G$ is compact then the sequence $(\alpha_{\sigma'(n)}\beta_\infty)_{n\in\mathbb{N}}$ has a convergent subsequence, so there is an increasing $\sigma''\from \mathbb{N}\to\mathbb{N}$ such that for $\sigma:=\sigma'\circ\sigma''$ we have  $(\alpha_{\sigma(n)}\beta_\infty)_{n\in\mathbb{N}}$ converges to a point $\zeta\in\cbdry^\fq G$. 
Then for every $r>1$ there exists an $N$ such that for all $n\geq N$ we have $\alpha_{\sigma(n)}\beta_\infty\in U(\zeta,r)$.
For $r>3\cL^2,\,3\cA$ we then have that the continuous $(\cL,\cA)$--quasi-geodesic $\alpha_{[0,\sigma(n)]}+\alpha_{\sigma(n)}\beta_{[0,\infty)}\in \alpha_{\sigma(n)}\beta_\infty$ comes $\kappa(\rho_\zeta,\cL,\cA)$--close to $\alpha^\zeta$ outside the ball of radius $r$ about $\one$, for all sufficiently large $n$, and therefore has initial segment of length at least $r$ contained in the $\kappa'(\rho_\zeta,\cL,\cA)$--neighborhood of $\alpha^\zeta$. 
Since this is true for all sufficiently large $n$ and since longer and longer initial segments of the $\alpha_{[0,\sigma(n)]}+\alpha_{\sigma(n)}\beta_{[0,\infty)}$ are initial segments of $\alpha$, we conclude that $\alpha$ is asymptotic to $\alpha^\zeta$, which implies that $\alpha$ is contracting.
Thus, \eqref{item:fqcompact} implies \eqref{item:allcontracting}.

\medskip

Finally, we prove \eqref{item:allcontracting} implies \eqref{item:raysuniformlycontracting}. 
We do so by assuming \eqref{item:allcontracting} is true and \eqref{item:raysuniformlycontracting} is false, and deriving a contradiction.
The strategy is as follows.
The fact that $\cbdry^\fq G$ is not uniformly contracting implies that no non-empty open subset of $\cbdry^\fq G$ is uniformly contracting. 
We construct a nested decreasing sequence of neighborhoods focused on points with successively worse contraction behavior.
We use properness of $G$ to conclude that a subsequence of representative geodesics of these focal points converges to a geodesic ray. 
The assumption  \eqref{item:allcontracting} implies the limiting ray is contracting, so it represents a point in $\cbdry^\fq G$, and we claim that this point is in the intersection of the nested sequence of neighborhoods. 
Furthermore, the details of the construction ensure that the limiting ray actually experiences the successively worse contraction behavior of the construction, with the conclusion that it is not a contracting ray, contradicting \eqref{item:allcontracting}.

\smallskip

If rays in $G$ fail to be uniformly contracting then so do bi-infinite geodesics in $G$.
To see this, fix a ray $\alpha$.
Any other ray has a translate $\beta$ with the same basepoint as $\alpha$. 
Since rays are contracting there is a contracting bi-infinite geodesic $\gamma$ with endpoints $\alpha_\infty$ and $\beta_\infty$. 
Now $\alpha$, $\beta$, and $\gamma$ make a geodesic triangle, so the contraction function of $\beta$ can be bounded in terms of those of $\alpha$ and $\gamma$.
If bi-infinite geodesics are all $\rho$--contracting then this would mean the contraction function for $\beta$ can be bounded in terms of only $\rho$ and $\rho_\alpha$, so rays would be uniformly contracting.
Combining this with \fullref{morseequivalent}, we have that $\neg$\eqref{item:raysuniformlycontracting} implies bi-infinite geodesics in $G$ are not uniformly Morse.
They are all Morse, as each bi-infinite geodesic can be written as a union of two rays, which are contracting, by \eqref{item:allcontracting}. 
For each $\cL\geq 1$,\,$\cA\geq 0$, and bi-infinite geodesic $\gamma$ in $G$, define $D(\gamma,\cL,\cA)$ to be the supremum of the set:
\[\{d(z,\gamma)\mid z \text{ is a point on a continuous $(\cL,\cA)$--quasi-geodesic with endpoints on $\gamma$}\}\]
Since $\gamma$ is Morse, $D(\gamma,\cL,\cA)$ exists for each $\cL$ and $\cA$.
If $\sup_\gamma D(\gamma,\cL,\cA)$ exists for every $\cL$ and $\cA$ then we can define $\mu(\cL,\cA):=\sup_\gamma D(\gamma,\cL,\cA)$ and we have that all bi-infinite geodesics are $\mu$--Morse, contrary to hypothesis, so there exist some $\cL\geq 1 $ and $\cA\geq 0$ such that for all $n\in\mathbb{N}$ there exists a bi-infinite geodesic $\gamma^n$ and a continuous $(\cL,\cA)$--quasi-geodesic $\delta^n$ with endpoints on $\gamma^n$ such that $\delta^n$ is not contained in $N_n\gamma^n$. 
By translating and shifting the parameterization of $\gamma^n$ we may assume that $\gamma^n_0=\one$ and that the distances from $\one$ to the two endpoints of $\delta^n$ differ by at most 1.

Now we make a claim and use it to finish the proof:
\begin{equation}\label{eq:12}\text{\parbox{.9\textwidth}{
Let $\gamma$ be a bi-infinite $\rho_\gamma$--contracting geodesic. Given $\zeta\in\cbdry^\fq G$, $R>1$, $r\geq 0$ there exists $\eta\in U(\zeta,R)$ and $g\in G$ such that $\alpha^\eta$ passes within distance $\kappa(\rho_\gamma,1,0)$ of both endpoints of a segment of $g\gamma$ containing $g\gamma_{[-r,r]}$.}
}\end{equation}

See \fullref{easycase} and \fullref{hardcase} for illustrations of \eqref{eq:12}.
Assuming \eqref{eq:12}, we construct a decreasing nested sequence of neighborhoods in $\cbdry^\fq G$ focusing on points with successively worse contraction behavior.
The key trick is to build extra padding into our constants to give the contraction function of the eventual limiting geodesic time to dominate.
Let $\theconstantformerlyknownasLambda$ and $\lambda$ be as in \fullref{keylemma}; recall that $\lambda(\phi,1,0)=8\kappa(\phi,1,0)$.
Let $\psi$ be as in \fullref{cor:annulus}.
Let $\zeta^{0}\in\cbdry^\fq G$ and $R_{0}>1$ be arbitrary. 
Now, supposing $\zeta^{i}$ and $R_{i}$ have been defined, 
consider $\gamma^{i+1}$. 
Let $r_{i+1}$ be the least integer greater than half the distance between endpoints of $\delta^i$ plus the quantity $(\theconstantformerlyknownasLambda+1)\kappa(\rho_{\gamma^i},1,0)+(6\theconstantformerlyknownasLambda+15)(i+1)$.
Apply \eqref{eq:12} to $\gamma^{i+1},\,\zeta^i,\,\psi(\zeta^i,R_i),\,r_{i+1}$ and get output $\zeta^{i+1}\in U(\zeta^i,\psi(\zeta^i,R_i))\subset U(\zeta^i,R_i)$ and $g_{i+1}\in G$.
Let $R''_{i+1}$ be $\kappa(\rho_{\gamma^i},1,0)$ plus the larger of the distances to $\one$ of the endpoints of the subsegment of $g_{i+1}\gamma^{i+1}$ given by \eqref{eq:12}. 
Define $R'_{i+1}:=R''_{i+1}+(\theconstantformerlyknownasLambda+1)\kappa(\rho_{\zeta^{i+1}},1,0)+9(i+1)$.
By \fullref{cor:annulus}, there is an open set $U_i$ such that $U(\zeta^i,\psi(\zeta^i,R_i))\subset U_i\subset U(\zeta^i,R_i)$, so since $\zeta^{i+1}\in U(\zeta^i,\psi(\zeta^i,R_i))$ we can choose $R_{i+1}\geq R'_{i+1}$ large enough to guarantee $U(\zeta^{i+1},R_{i+1})\subset U\subset U(\zeta^i,R_i)$.

Consider the sequence of geodesic rays $(\alpha^{\zeta^n})_{n\in\mathbb{N}}$.
Some subsequence converges to a geodesic ray $\alpha$. 
By hypothesis, all rays are contracting, so there exists some $\rho_\alpha$ such that $\alpha$ is $\rho_\alpha$--contracting. 

Pick any $i\geq \kappa'(\rho_\alpha,1,0)>\kappa(\rho_\alpha,1,0)$.
There is some $n\gg i$ such that $\alpha$ agrees with $\alpha^{\zeta^n}$ for distance $R_i+\kappa(\rho_{\zeta^i},1,0)$.
Since the neighborhoods are nested, by construction, $\zeta^n\in U(\zeta^i,R_i)$, which implies that $\alpha$ comes $\kappa(\rho_{\zeta^i},1,0)$--close to $\alpha^{\zeta^i}$ outside the ball of radius $R_i\geq R'_i$ about $\one$. 
The definition of $R'_i$ and the fact that $i>\kappa(\rho_\alpha,1,0)$ gives us, by \fullref{keylemma}, that $\alpha^{\zeta^i}$ comes $\kappa(\rho_\alpha,1,0)$--close to $\alpha$ outside the ball of radius $R''_i$ about $\one$. 
In particular, by \fullref{lem:hausdorff}, $\alpha$ passes $(\kappa'(\rho_\alpha,1,0)+\kappa(\rho_{\zeta^i},1,0))$-close to both endpoints of a subsegment of $g_i\gamma^i$ containing $g_i\gamma^i_{[-r_i,r_i]}$.
The definition of $r_i$ and the fact that $i\geq \kappa(\rho_\alpha,1,0)$ give us, by a second application of \fullref{keylemma} and \fullref{lem:hausdorff}, that $\alpha$ comes within distance $\kappa'(\rho_\alpha,1,0)$ of both endpoints of $g_i\delta^i$.
Connect the endpoints of $g_i\delta^i$ to $\alpha$ by shortest geodesic segments.
For $\cA':=\cA+2 \kappa'(\rho_\alpha,1,0)$, the resulting path $\delta_i'$ is a continuous $(\cL,\cA')$--quasi-geodesic that is contained in $\bar{N}_{\kappa'(\rho_\alpha,\cL,\cA')}(\alpha)$ but leaves the $(i-\kappa'(\rho_\alpha,1,0))$--neighborhood of the subsegment of $\alpha$ between its endpoints.
For sufficiently large $i$ this contradicts the fact that $\delta_i'$ is $(\cL,\cA')$--quasi-geodesic.

\medskip

We now prove \eqref{eq:12}.
The idea is to take an element $g$ that pushes $\gamma$ far out along $\alpha^\zeta$ and take $\eta$ to be one of the endpoints of $g\gamma$. 
Then $\alpha^\eta$ forms a geodesic triangle with a subsegment of $\alpha^\zeta$ and a subray of $g\gamma$.
Additionally, we arrange for $g\gamma_{[-r,r]}$ to be suitably far from the quasi-center of this triangle so that it is in one of the thin legs of the triangle, parallel to a segment of $\alpha^\eta$.

Let $\alpha:=\alpha^\zeta$.
Let $r'>r$ represent a number to be determined, and choose any $s>r'+2\kappa'(\rho_\gamma,1,0)$.
First, suppose that for arbitrarily large $t$ there exists $g\in N_{\kappa'(\rho_{\gamma},1,0)}\alpha_t$ such that $d(g\gamma_{s}^{-1}\gamma_{r'},\alpha_{[0,t]})\geq \kappa'(\rho_\gamma,1,0)$.
We claim that for any sufficiently large $t$ we can take such a $g$ and $\eta:=g\gamma_{s}^{-1}\gamma_{-\infty}$ as the output of \eqref{eq:12}. 
To see this, define a continuous quasi-geodesic by following $\alpha$ until we reach the first of either $\alpha_t$ or a point of $\pi_\alpha(g\gamma_{s}^{-1}\gamma_{r'})$, then follow a geodesic to $g\gamma_{s}^{-1}\gamma_{r'}$, then follow $g\gamma_{s}^{-1}\bar{\gamma}$ towards $\eta$. 
By an argument similar to \fullref{lem:nonasymptoticmakesquasigeodesic}, $\beta$ is an $(\cL,\cA)$--quasi-geodesic, for some $\cL$ and $\cA$  not depending on $g$, $r$, or $s$.
Now, $\alpha_{[0,t]}$, $g\gamma_{s}^{-1}\gamma_{(-\infty,s]}$, and $\alpha^{\eta}$ form a $\kappa'(\rho_\gamma,1,0)$--almost geodesic triangle, so the contraction function of $\alpha^\eta$ is bounded in terms of the contraction functions of $\alpha$ and $\gamma$. 
Thus, there is some $E$ such that $\beta$ and $\alpha^\eta$ are bounded Hausdorff distance $E$ from one another, independent of our choices. 
By the hypothesis on $g$ and \fullref{cor:asymp} the two sides $\alpha_{[0,t]}$ and $g\gamma_{s}^{-1}\gamma_{(-\infty,s]}$ of the almost geodesic triangle are diverging at a linear rate, and so $g\gamma_{s}^{-1}\gamma_{r'}$ is $H$--close to some point of $\alpha^\eta$ for some $H$, again independent of our choices.
Assume that we chose $r'\geq r+(\theconstantformerlyknownasLambda+1)H+9\kappa(\rho_\gamma,1,0)$.
Then, by \fullref{keylemma}, we have that $\alpha^\eta$ passes within distance $\kappa(\rho_\gamma,1,0)$ of some point of $g\gamma_{s}^{-1}\gamma_{[r,r']}$, and also of some point in $g\gamma_{s}^{-1}\gamma_{[-r',-r]}$.

We also need to show $\eta\in U(\zeta,R)$.
Any continuous $(\cL',\cA')$--quasi-geodesic in $\eta$ stays bounded Hausdorff distance $H'$ from $\beta$, with bound depending on $\cL'$ and $\cA'$, but not $g$, $s$, or $t$. 
We only need to consider $\cL'<\sqrt{R/3}$ and $\cA'<R/3$, so we can bound $H'$ in terms of $R$ (and $\rho_\alpha$ and $\rho_\gamma$).
We therefore have that such a quasi-geodesic passes $(H'+H)$--close to  $g\gamma_{s}^{-1}\gamma_{r'}$, which is $(s-r'+\kappa'(\rho_\gamma,1,0))$--close to $\alpha_t$.
Applying \fullref{keylemma} we see that such a geodesic passes $\kappa(\rho_\alpha,\cL',\cA')$--close to $\alpha$ outside the ball of radius $R$ provided that $t$ is chosen sufficiently large with respect to $R$, $s$, and the contraction functions for $\alpha$ and $\gamma$. 
By hypothesis, we can choose $t$ as large as we like, so  in this case we are done.

\begin{figure}[h]
  \centering
\labellist
\tiny
\pinlabel $\gamma_s$ [r] at 2 69
\pinlabel $\gamma_{r'}$ [r] at 2 24
\pinlabel $\gamma_{-r'}$ [r] at 2 14
\pinlabel $\alpha_t$ [t] at 188 19
\pinlabel $g=g\gamma_s^{-1}\gamma_s$ [l] at 241 27
\pinlabel $g\gamma_s^{-1}\gamma_{r'}$ [l] at 194 73
\pinlabel $g\gamma_s^{-1}\gamma$ [br] at 229 48
\pinlabel $\alpha^{\eta}$ [br] at 147 38
\pinlabel $\beta$ [r] at 157 31
\pinlabel $\eta=g\gamma_s^{-1}\gamma_{-\infty}$ [b] at 156 100
\endlabellist
  \includegraphics{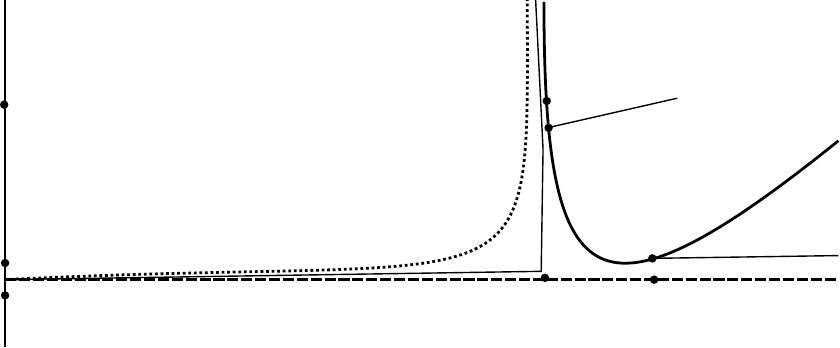}
  \caption{First case for \eqref{eq:12}.}
  \label{easycase}
\end{figure}

The other case is that there exists $T$ such that for every $g\in N_{\kappa'(\rho_{\gamma},1,0)}\alpha_{[T,\infty]}$ we have $d(g\gamma_{s}^{-1}\gamma_{r'},\alpha_{t'})<\kappa'(\rho_\gamma,1,0)$ for some $t'$ with $t>T$ and $t-t'=s-r'\pm 2\kappa'(\rho_\gamma,1,0)>0$.
Let $w$ be the word in the generators for $G$ read along the path $\gamma_{[r',s]}$.
Let $t_0>T$ be arbitrary, and let $g_0:=\alpha_{t_0}$.
Let $g_1:=g_0\gamma_{s}^{-1}\gamma_{r'}$.
By hypothesis there is a $t_1<t_0$ such that $d(g_1,\alpha_{t_1})\leq\kappa'(\rho_\gamma,1,0)$.
The segment $g_0\gamma_{s}^{-1}\gamma_{[r',s]}$ has edge label $w$ and, by \fullref{GIT}, is contained in $\bar{N}_{K}(\alpha)$ for some $K$ depending only on $\kappa'(\rho_\gamma,1,0)$ and $\rho_\gamma$.
If $t_1>T$ we can repeat, setting $g_2:=g_1\gamma_{s}^{-1}\gamma_{r'}$, so that the initial vertex $g_1$ of $g_1\gamma_{s}^{-1}\gamma_{[r',s]}$ agrees with the terminal vertex of $g_0\gamma_{s}^{-1}\gamma_{[r',s]}$.
Repeating this construction until $t_i\leq T$, we construct a path from $\alpha_t$ to $\bar{N}_{\kappa'(\rho_\gamma,1,0)}\alpha_{[0,T]}$ that is contained in the $K$--neighborhood of $\alpha$ and  whose edge label is a power of $w^{-1}$.
Since this is true for arbitrarily large $t$, we conclude that $w$ is a contracting element in $G$ and $\zeta=hw^{\infty}$ for some $h\in G$ that is $(s-r'+\kappa'(\rho_\gamma,1,0))$--close to $\alpha_T$.
Furthermore, we can also take $s'>s$ arbitrarily large and run the same argument to conclude that either we find the $g$ and $\eta$ we are looking for from the first case, or else arbitrarily long segments $\gamma_{[r',s']}$ can be sent into $\bar{N}_{K}\alpha_{[T,\infty)}$. 
We already know this tube contains an infinite path labelled by powers of $w$.
Therefore, there is $f$ which is $(s-r'+2\kappa'(\rho_\gamma,1,0))$--close to $\gamma_{s}$ such that $\gamma_{\infty}=fw^{\infty}$.

If $fw^{-\infty}=\gamma_{-\infty}$ then we can take $\eta:=\zeta$ and $g:=hw^af^{-1}$ for $a$ sufficiently large.
Otherwise, for any sufficiently large $t$ and $a$ we can take $g:=\alpha_t\gamma_s^{-1}fw^{-a}$ and $\eta:=g\gamma_{-\infty}$, see \fullref{hardcase}.
\begin{figure}[h]
  \centering
\labellist
\tiny
\pinlabel $\gamma_s$ [l] at 40 51
\pinlabel $\gamma_{r'}$ [r] at 40 24
\pinlabel $\gamma_{-r'}$ [r] at 40 14
\pinlabel $f$ [br] at 30 51
\pinlabel $fw^\infty=\gamma_\infty$ [b] at 39 99
\pinlabel $\alpha_t=\alpha_t\gamma_s^{-1}\gamma_s$ [t] at 216 18
\pinlabel $\alpha_t\gamma_s^{-1}\gamma$ [r] at 168 85
\pinlabel $\alpha_t\gamma_s^{-1}f$ [l] at 253 42
\pinlabel $\alpha_t\gamma_s^{-1}fw^{-a}$ [l] at 261 69
\pinlabel $\alpha_t\gamma_s^{-1}fw^{-a}\gamma_s$ [r] at 154 48
\pinlabel $\alpha_t\gamma_s^{-1}fw^{-a}\gamma_{r'}$ [r] at 154 68
\pinlabel $\alpha$ [t] at 97 19
\pinlabel $\alpha_t\gamma_s^{-1}fw^\infty$ [b] at 280 100
\pinlabel $\eta$ [b] at 215 100
\pinlabel $\alpha^{\eta}$ [br] at 212 48
\endlabellist
  \includegraphics{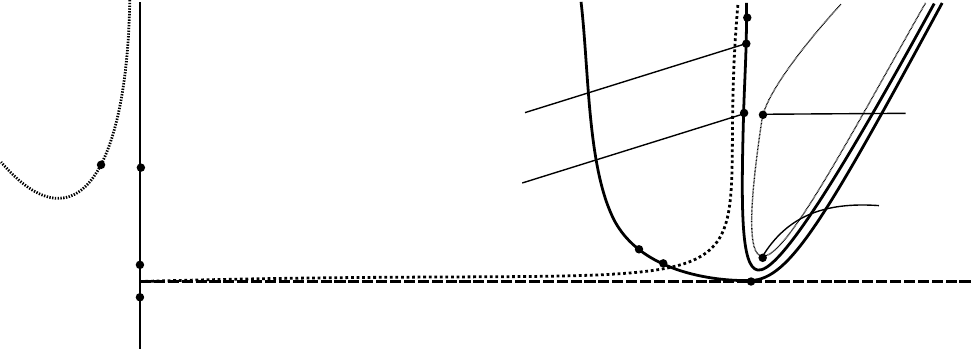}
  \caption{Second case for \eqref{eq:12}.}
  \label{hardcase}
\end{figure}
\end{proof}

\begin{proposition}\label{prop:compactclosure}
  Let $G$ be a group acting properly discontinuously on a proper geodesic metric space $X$.
Suppose that the orbit map $\phi\from g\mapsto g\bp$ takes contracting quasi-geodesics to contracting quasi-geodesics and has quasi-convex image. 
If $G\bp$ has compact closure in $\hat{X}$ then $G$ is hyperbolic.
If $G$ is infinite and hyperbolic and $\phi$ is Morse-controlled then $\limit(G)$ is non-empty and compact and $G\bp$ has compact closure in $\hat{X}$.
\end{proposition}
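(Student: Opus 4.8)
The plan is to prove the two implications separately, with \fullref{prop:limset} and \fullref{thm:compacthyperbolic} supplying most of the content.

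\emph{Compact closure implies hyperbolic.} Suppose $\closure{G\bp}$ is compact. If $G$ is finite it is hyperbolic, so assume $G$ is infinite; then \fullref{prop:limset} already gives that $G$ is finitely generated and that $\cbdry\phi$ is an open bijection from $\cbdry^\fq G$ onto $\limit(G)$. Since the action is properly discontinuous, each orbit point $g\bp$ has a metric ball neighbourhood in $\hat X$ meeting $G\bp$ only in $g\bp$; this ball lies in $X$, so it meets $\closure{G\bp}$ only in $g\bp$, whence $G\bp$ is open in $\closure{G\bp}$ and $\limit(G)=\closure{G\bp}\setminus G\bp$ is closed in the compact set $\closure{G\bp}$, hence compact. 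By \fullref{prop:limset}, $\cbdry^\fq G$ is then compact, and it is non-empty because the infinite discrete set $G\bp$ is unbounded in the proper space $X$, so that $\closure{G\bp}\neq G\bp$. Applying \fullref{thm:compacthyperbolic} to the infinite finitely generated group $G$ now yields that $G$ is hyperbolic.

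\emph{Hyperbolic plus Morse-controlled implies the rest.} Assume additionally that $G$ is infinite and hyperbolic and that $\phi$ is Morse-controlled. First I would upgrade the standing hypotheses to the statement that $\phi(G)=G\bp$ is contracting in $X$. Since $G$ is hyperbolic, \fullref{thm:compacthyperbolic} supplies one sublinear $\rho$ making every geodesic (segment or ray) in $G$ $\rho$--contracting, hence, via \fullref{morseequivalent}, uniformly $\mu$--Morse; as $\phi$ is Morse-controlled, the $\phi$--image of any such geodesic is uniformly $\mu'$--Morse, equivalently uniformly $\rho'$--contracting. Given any $(\cL,\cA)$--quasi-geodesic $\gamma$ in $X$ with endpoints $\phi(g_1),\phi(g_2)$ in $G\bp$, the image of a geodesic $[g_1,g_2]$ is a $\rho'$--contracting quasi-geodesic with those endpoints, so the Morse property forces $\gamma\subset\bar N_{\mu'(\cL,\cA)}\phi([g_1,g_2])\subset\bar N_{\mu'(\cL,\cA)}(G\bp)$; thus $G\bp$ is $\mu'$--Morse, i.e.\ contracting. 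The final clause of \fullref{prop:limset} now applies: $\cbdry\phi$ is a topological embedding of $\cbdry^\fq G$ onto $\limit(G)$. As $G$ is infinite and hyperbolic, $\cbdry^\fq G$ is the Gromov boundary of $G$, which is non-empty and compact, hence so is $\limit(G)$.

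It remains to show $\closure{G\bp}$ is compact. Here the plan is to extend the orbit map to a continuous map $\hat\phi\from\hat G\to\hat X$, where $\hat G=G\cup\cbdry^\fq G$; since $G$ is hyperbolic, $\hat G$ is the usual compactification of the hyperbolic group $G$, in particular compact, and $\hat\phi(\hat G)=G\bp\cup\cbdry\phi(\cbdry^\fq G)=G\bp\cup\limit(G)=\closure{G\bp}$ by \fullref{def:limitset} and \fullref{prop:limset}, so $\closure{G\bp}$ is compact. On $G$ the extension is $\phi$, and on $\cbdry^\fq G$ it is $\cbdry\phi$, which is continuous by \fullref{thm:continuity} since $G\bp=\phi(G)$ is contracting; the one remaining point is continuity of $\hat\phi$ at $\zeta\in\cbdry^\fq G$ along a sequence of orbit points $g_n\bp$ with $g_n\to\zeta$ in $\hat G$. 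For such a sequence the geodesics $[\one,g_n]$ fellow-travel $\alpha^\zeta$ on initial segments of length $\to\infty$, so $\delta^n:=\phi([\one,g_n])$ (tamed) is a uniformly $\rho'$--contracting quasi-geodesic from $\bp$ to $g_n\bp$ fellow-travelling $\phi(\alpha^\zeta)$, hence, by \fullref{lem:hausdorff}, fellow-travelling $\alpha^{\cbdry\phi(\zeta)}$, on an initial segment leaving every ball around $\bp$. Fix $r\geq1$; by \fullref{obs} one only needs to test $g_n\bp\in\hat U(\cbdry\phi(\zeta),r)$ against continuous $(\cL,\cA)$--quasi-geodesics $\gamma$ from $\bp$ to $g_n\bp$ with $\cL^2,\cA<r/3$. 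Any such $\gamma$ has its endpoints on the contracting set $\delta^n$, so by the Morse property it stays within a distance bounded in terms of $r$ and $\rho'$ of $\delta^n$, and a connectedness argument forces $\gamma$ to pass uniformly close to the far end of the fellow-travelling portion of $\delta^n$, hence within a bounded distance of a point of $\alpha^{\cbdry\phi(\zeta)}$ lying far from $\bp$; \fullref{keylemma} then produces a point of $\alpha^{\cbdry\phi(\zeta)}$ outside $N_r\bp$ and within $\kappa(\rho_{\cbdry\phi(\zeta)},\cL,\cA)$ of $\gamma$, once $n$ is large. So $g_n\bp\in\hat U(\cbdry\phi(\zeta),r)$ for $n$ large, which is the needed continuity.

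The main obstacle is the first step of the second implication: extracting genuine \emph{uniform} contraction from the hypothesis that $\phi$ is Morse-controlled (as opposed to merely that $\phi$ sends contracting quasi-geodesics to contracting quasi-geodesics), so that $G\bp$ is contracting and the constants in the continuity-at-infinity argument stay bounded over the relevant neighbourhoods --- this is exactly the uniformity that fails in general, as noted in the footnote to the proof of \fullref{thm:continuity}, and here it is rescued by the hyperbolicity of $G$. A secondary point to get right is the connectedness argument forcing a quasi-geodesic with endpoints on $\delta^n$ to pass near the prescribed far-out point of $\delta^n$, together with the bookkeeping of constants across the two applications of \fullref{keylemma}.
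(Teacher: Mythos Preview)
Your first implication and your argument that $G\bp$ is Morse (hence contracting) and that $\limit(G)$ is non-empty and compact match the paper's proof essentially verbatim. The divergence is in the final step, showing $\closure{G\bp}$ is compact. The paper argues directly with an open cover: finitely many members of the cover suffice for $\limit(G)$, and then any unbounded sequence in $G\bp$ missing these sets yields, after passing to a subsequence, geodesics $\gamma^n=[\bp,g_n\bp]$ converging to a uniformly contracting ray $\gamma$, whence $g_n\bp\to\gamma_\infty$ in $\hat X$, a contradiction. You instead extend $\phi$ to a continuous $\hat\phi\from\hat G\to\hat X$ and invoke compactness of $\hat G$. Your route is conceptually clean, and your continuity-at-infinity argument is correct (the ``connectedness argument'' you flag as a secondary difficulty is just \fullref{lem:hausdorff} applied to $\gamma$ and $\delta^n$, so it is not an obstacle). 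The one cost is that you need $\hat G$ to coincide with the Gromov compactification when $G$ is hyperbolic; this is true and not hard, but it is not stated in the paper, so you are importing an extra fact. The paper's covering argument avoids this by working directly in $\hat X$ with the uniform contraction of the $\phi$--images, and is somewhat shorter.
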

\begin{proof}
  If $G$ is finite then $G\bp$ is compact and $G$ is hyperbolic, so assume $G$ is infinite.
Since it has a quasi-convex orbit, $G$ is finitely generated and $\phi$ is a quasi-isometric embedding.
Since $G$ is infinite, $G\bp$ is unbounded, so if it has compact closure then $\limit(G)$ is non-empty and compact. 
By \fullref{prop:limset}, $\cbdry^\fq G$ is compact.
By \fullref{thm:compacthyperbolic}, $G$ is hyperbolic. 

Conversely, if $G$ is hyperbolic then, with respect to any finite generating set, geodesics in $G$ are uniformly Morse.
Therefore, there is a $\mu$ such that for any two points $g,\,h\in G$ there is a $\mu$--Morse geodesic $\gamma^{g,h}$ in $G$ from $g$ to $h$.
If $\phi$ is Morse-controlled, there exists a $\mu'$ depending only on $\mu$ such that $\phi(\gamma^{g,h})$ is $\mu'$--Morse. 
Any $(\cL,\cA)$--quasi-geodesic with endpoints $g\bp$ and $h\bp$ therefore stays $\mu'(\cL,\cA)$--close to $\phi(\gamma^{g,h})$, but $\phi(\gamma^{g,h})$ is a quasi-geodesic with integral points on $G\bp$, so it remains close to $G\bp$. 
Therefore, $G\bp$ is a Morse subset of $X$.
By  \fullref{prop:limset}, $\cbdry\phi\from \cbdry^\fq G\to \limit(G)$ is a homeomorphism.
Since $G$ is an infinite hyperbolic group, $\cbdry^\fq G$ is non-empty and compact, so $\limit(G)$ is as well.

It remains to show $\closure{G\bp}$ is compact.
Suppose $\mathcal{U}$ is an open cover of $\closure{G\bp}$.
Only finitely many  elements of $\mathcal{U}$ are required to cover $\limit(G)$. 
We claim that the part of $G\bp$ not covered by these finitely many sets is bounded, hence, finite, so only finitely many more elements of $\mathcal{U}$ are required to cover all of $\closure{G\bp}$. 
To see this, suppose $(g_n\bp)_{n\in\mathbb{N}}$ is an unbounded sequence in $G\bp$ that does not enter the chosen finite cover of $\limit(G)$. 
By passing to a subsequence, we may assume $d(\bp,g_n\bp)\geq n$, in which case $(g_n\bp)_{n\in\mathbb{N}}$ is a sequence with no convergent subsequence in $\hat{X}$.
For each $n$ pick a geodesic $\gamma^n$ from $\bp$ to $g_n\bp$. 
A subsequence $(\gamma^{\sigma(n)})_{n\in\mathbb{N}}$ converges to a geodesic ray $\gamma$ in $X$ based at $\bp$, but since the geodesics $\gamma^n$ were uniformly contracting, $\gamma$ is contracting.
Moreover, by uniform contraction the endpoints $g_{\sigma(n)}\bp$ converge to $\gamma_\infty$ in $\hat{X}$, which is a contradiction.
\end{proof}


\bibliographystyle{hypershort}
\bibliography{CCB} 

\end{document}